\documentclass[10pt,reqno]{amsart}

\usepackage[margin=0.98in]{geometry}

\usepackage{amsmath,amssymb,amsthm,mathtools}
\numberwithin{equation}{section}

\usepackage[T1]{fontenc}
\usepackage[utf8]{inputenc}
\usepackage{lmodern}

\usepackage{enumitem}
\setlist[enumerate]{label=(\roman*),leftmargin=2.2em}
\setlist[itemize]{leftmargin=2.2em}

\usepackage{xcolor}
\usepackage[
  colorlinks=true,
  linkcolor=blue!50!black,
  citecolor=blue!50!black,
  urlcolor=blue!50!black
]{hyperref}
\usepackage[capitalise,nameinlink]{cleveref}

\theoremstyle{plain}
\newtheorem{theorem}{Theorem}[section]
\newtheorem{proposition}[theorem]{Proposition}
\newtheorem{lemma}[theorem]{Lemma}
\newtheorem{corollary}[theorem]{Corollary}

\theoremstyle{definition}
\newtheorem{definition}[theorem]{Definition}

\theoremstyle{remark}

\crefname{assumption}{Assumption}{Assumptions}
\Crefname{assumption}{Assumption}{Assumptions}

\newcommand{\N}{\mathbb N}
\newcommand{\Z}{\mathbb Z}
\newcommand{\Q}{\mathbb Q}
\newcommand{\R}{\mathbb R}
\newcommand{\C}{\mathbb C}

\newcommand{\sequence}[1]{\left\{ {#1} \right\}}

\newcommand{\innerprod}[2]{\left\langle{#1, #2}\right\rangle}

\newcommand{\set}[1]{\left\{#1\right\}}

\newcommand{\norm}[1]{\left\lVert#1\right\rVert}

\DeclareMathOperator{\spec}{Sp}

\DeclareMathOperator{\dist}{dist}

\newcommand\restr[2]{{
  \left.\kern-\nulldelimiterspace 
  #1 
  \vphantom{\big|} 
  \right|_{#2} 
  }}

\newcommand{\specac}{\spec_{\mathrm{ac}}}
\newcommand{\specsc}{\spec_{\mathrm{sc}}}
\newcommand{\specpp}{\spec_{\mathrm{pp}}}

\newcommand{\Cof}{\mathbf{Cof}}
\newcommand{\Tot}{\mathbf{Tot}}

\newcommand{\1}{\mathbf 1}

\title[Computational Bounds for Schr\"odinger Spectral Types]{Sharp Computational Bounds for\\Spectral Types of Schr\"odinger Operators}

\author{Matthew J. Colbrook}
\address{Centre for Mathematical Sciences, University of Cambridge, United Kingdom}
\email{m.colbrook@damtp.cam.ac.uk}

\author{George Coote}
\address{Centre for Mathematical Sciences, University of Cambridge, United Kingdom}
\email{gc602@cam.ac.uk}

\subjclass[2020]{46N40, 47A10, 47-08, 65J99, 65J10, 81Q10}
\keywords{Schr\"odinger operators, spectral type, singular continuous spectrum, spectral computations, certified computation, SCI hierarchy, computer-assisted proofs}

\date{\today}

\begin{document}

\begin{abstract}
We prove sharp bounds for determining the spectral-type decomposition of Schr\"odinger operators in the spirit of Smale's program on the foundations of computation. For explicit one-dimensional self-adjoint Schr\"odinger operators $H=-{\mathrm d^2}/{\mathrm dx^2}+V$ on $L^2(\mathbb R)$, where $V\in C^\infty(\mathbb R;\mathbb R)$ is given by a finite description of all derivatives and derivative bounds, the pure point and absolutely continuous spectral sets cannot, in general, be recovered by any single limiting procedure. The singular continuous spectral set is strictly harder: it cannot, in general, be recovered by two nested limiting procedures. Analytic constructions of dichotomies realize the lower bounds: Gordon-type repetitions for pure point spectrum, high barriers for absolutely continuous spectrum, and an inverse spectral construction for singular continuous spectrum based on Riesz products, moment-killing perturbations, and a computational Gelfand--Levitan scheme. The finite-description framework also implies corresponding limitations on what can be certified in fixed formal systems (e.g., when used in computer-assisted proofs). Conversely, using wavelet-based certified computation, we prove matching upper bounds for broad classes of self-adjoint differential operators on $\mathbb R^d$ with coefficients of locally bounded variation and quantitative local variation control: two limits suffice for the pure point and absolutely continuous parts, and three for the singular continuous part. This provides a sharp hierarchy for spectral types. 
\end{abstract}

\maketitle

\newcommand{\marke}[1]{\textcolor[rgb]{0,0,1}{#1}}

\hypersetup{
	linkcolor={black!30!blue},
	citecolor={black!30!green},
	urlcolor={black!30!blue}
}

\section{Introduction}

Spectral type is a principal qualitative invariant of a quantum Hamiltonian. For a self-adjoint operator on a separable Hilbert space, the associated scalar spectral measures decompose into pure point, absolutely continuous, and singular continuous parts. These components are closely tied to quantum dynamics: pure point spectrum is typically associated with localization, absolutely continuous spectrum with transport and scattering, and singular continuous spectrum with intermediate phenomena such as fractal spectral measures and anomalous transport. For example, for the Fibonacci Hamiltonian, bounds on the fractal dimension of the spectrum have consequences for wavepacket propagation \cite{damanik2008fractal}. This point of view goes back at least to Ruelle's work and the RAGE theorem \cite{ruelle1969remark}, and remains central in the spectral theory of Schr\"odinger operators, ergodic models, random media, and quantum dynamics \cite{combes1993connections,last1996quantum,simon1990absence,damanik1999uniform,damanik2022one,damanik2025one, aizenman2015random,cycon2009schrodinger}.

Thus, deciding spectral type is a basic task in the mathematical classification of quantum phases and propagation. In practice, this task is approached through analytic, numerical, or computer-assisted methods applied to finite descriptions of Hamiltonians. The question considered in this paper is whether the spectral-type decomposition can be determined from an explicit finite description of the Hamiltonian. We prove that the answer is governed by a sharp hierarchy of limiting complexity.

Our lower bounds hold for one-dimensional self-adjoint Schr\"odinger operators
$$
    H=-\frac{\mathrm d^2}{\mathrm dx^2}+V
    \qquad\text{on }L^2(\mathbb R),
$$
with $V\in C^\infty(\mathbb R;\mathbb R)$. The input is not an arbitrary oracle containing infinitely much hidden information. Rather, it is a finite source code that computes all derivatives of $V$ and provides certified derivative bounds. Such finite descriptions include Hamiltonians given by formulas, specified by finite programs, or used in computer-assisted proofs.

Even in this setting, the pure point and absolutely continuous spectral sets cannot, in general, be recovered by a single limiting procedure. The singular continuous spectral set is strictly harder: it cannot, in general, be recovered by two nested limiting procedures. Thus the obstructions proved here are not artifacts of rough coefficients, non-effective input data, or pathological oracles. They are intrinsic to the spectral-type decomposition itself, and occur for smooth potentials given by finite descriptions.

The three lower bounds are realized by different spectral mechanisms. For pure point spectrum, the construction uses Gordon-type repetitions: in one case, arbitrarily long exact repetitions exclude eigenvalues; in the other, a compactly supported well remains and forces an eigenvalue in a fixed negative interval. For absolutely continuous spectrum, the construction uses high barriers: unbounded barriers on both half-lines destroy absolutely continuous spectrum, while the alternative leaves only finitely many barriers and hence a compactly supported perturbation of the free operator.

The singular continuous lower bound requires a more elaborate inverse spectral construction. We first build Riesz-product measures whose singular continuous component is controlled by a column condition of a binary matrix. We then add a computable absolutely continuous signed perturbation, supported away from the encoding interval, which kills all moments of the perturbation of the free spectral measure without changing this dichotomy. The moment cancellation makes the Gelfand--Levitan reconstruction flat at the origin. Reflecting the resulting half-line potential then gives a smooth whole-line Schr\"odinger operator whose singular continuous spectrum detects the encoded condition. A Weyl $m$-function estimate controls the reflected Neumann channel and prevents it from introducing additional singular continuous spectrum.

The lower bounds are sharp. We prove matching upper bounds for broad classes of self-adjoint differential operators on $\mathbb R^d$ whose coefficients have locally bounded variation and quantitative local variation control. The upper algorithms reduce spectral type to the certified computation of resolvent matrix elements in compactly supported wavelet bases. For these classes, pure point and absolutely continuous spectrum can be computed with two successive limiting procedures, while singular continuous spectrum can be computed with three. Hence spectral type is computable at a finite level of the hierarchy, but the number of limits cannot in general be reduced.

\subsection{Multiple Limits and Finite Descriptions}

Let $\Omega_{\mathrm{DS}}$ denote the class of bounded self-adjoint discrete Schr\"odinger operators on $\ell^2(\mathbb Z)$,
$$
    [A_Vx]_n=x_{n-1}+x_{n+1}+V(n)x_n.
$$
For this class (and more general bounded discrete operators), the algorithms in \cite{colbrook2019computing} use two limits:
$$
    \lim_{n_2\to\infty}\lim_{n_1\to\infty}
    \Gamma_{\diamond,n_2,n_1}(V)
    =
    \spec_\diamond(A_V),
    \qquad A_V\in\Omega_{\mathrm{DS}},\quad
    \diamond\in\{\mathrm{ac},\mathrm{pp}\},
$$
where $\spec_\diamond$ denotes the $\diamond$-spectrum. This result raises an optimality question: can the same spectral sets be computed with a single limit by a different algorithm? The answer is no. The two-limit height is necessary, even when allowing exact computation over $\mathbb R$.

The work \cite{colbrook2019computing} also gives a three-limit procedure for singular continuous spectrum of bounded self-adjoint discrete Schr\"odinger operators. However, the corresponding lower bound was obtained from a countable direct sum of discrete Schr\"odinger operators. One of the contributions of the present paper is to obtain the analogous lower bound inside a smooth one-dimensional continuum class, using inverse spectral embeddings rather than direct sums. In general, we obtain lower bounds in a substantially more rigid operator class, and hence obtain stronger results.

There is an important change in the input model. In \cite{colbrook2019computing}, the potential $V\in\ell^\infty(\mathbb Z)$ is given through the sequence $\{V(n)\}_{n\in\mathbb Z}$. Such an input may contain infinitely much noncomputable information. Here the input is a finite source code, as is typically the case in applications. The code describes the operator and supplies the quantitative information required by the algorithms. This is the formal setting for operators given by explicit formulas, finite programs, or constructions used in computer-assisted proofs. The lower bounds therefore show that, even for explicit smooth Schr\"odinger operators, resolving spectral type may require irreducibly many limiting stages. The technical task is to implement the lower-bound mechanisms uniformly within smooth spectral constructions.

We measure computational difficulty using the Solvability Complexity Index (SCI) hierarchy. Informally, a problem lies at a given level of this hierarchy if it can be solved by a prescribed number of successive limiting procedures, with the relevant form of error control. This framework was developed to classify problems in analysis and spectral theory \cite{colbrook2019compute,colbrook4,colbrook3,Jonathan_res, benartzi2020computing,ben2022complexity}; see \cref{sec:prev_work}.

\subsection{Summary of Theorems}

We now state the main results in a form sufficient for the introduction. Full definitions of the computational models and convergence notions are given in \cref{sec:prereq}.

\subsubsection{Lower Bounds on the Difficulty of Computing Spectral Type}

For a real-valued potential $V\in C^\infty(\mathbb R)$, let
$$
    H_{V,0}f=-f''+Vf,
    \qquad
    \operatorname{Dom}(H_{V,0})=C_c^\infty(\mathbb R).
$$
We define
$$
    \mathcal O_M^\infty
    =
    \left\{
        H_V:=\overline{H_{V,0}}:
        V\in C^\infty(\mathbb R;\mathbb R)
        \text{ and }H_{V,0}\text{ is essentially self-adjoint}
    \right\}.
$$
Thus every member of $\mathcal O_M^\infty$ is the self-adjoint closure of the Schr\"odinger expression initially defined on $C_c^\infty(\mathbb R)$; no additional boundary condition at infinity is part of the input.

The input describing $H_V$ is a smooth source code for $V$: for each derivative order $k$, computable point $x$, and accuracy $2^{-n}$, the code computes $V^{(k)}(x)$ to within $2^{-n}$, and for each $k$ and $R$ it supplies a certified bound for $ \sup_{|x|\le R}|V^{(k)}(x)|. $ We denote by $\mathsf{Code}_M^\infty$ the class of such codes whose decoded potential satisfies the essential-self-adjointness condition above. A valid code for $V$ is interpreted as a code for the uniquely determined operator $H_V$.

The following theorem combines \cref{thm:pp,thm:ac,thm:sc}.

\begin{theorem}[Lower bounds for smooth one-dimensional Schr\"odinger operators]
\label{thm:intro_smooth_lower_bounds}
For the Markov class $(\mathcal O_M^\infty,\mathsf{Code}_M^\infty)$, one has
$$
(\mathcal O_M^\infty,\mathsf{Code}_M^\infty,\specpp)\notin\Delta_2^M,
\qquad
(\mathcal O_M^\infty,\mathsf{Code}_M^\infty,\specac)\notin\Delta_2^M,
$$
and
$$
(\mathcal O_M^\infty,\mathsf{Code}_M^\infty,\specsc)\notin\Delta_3^M.
$$
\end{theorem}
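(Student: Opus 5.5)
The plan is the standard SCI lower-bound reduction: embed a decision problem known to be non-computable at the relevant level into the spectral-type problem via a computable map from indices to smooth source codes. For the $\Delta_2^M$ statements I would reduce from the $\Sigma_1$-complete problem of deciding, for a Turing machine index $e$ (equivalently a computable binary sequence $a=(a_j)$), whether $\exists j\colon a_j=1$. This is not solvable by a single limit of Markov algorithms, and the same holds if one lies in $\Sigma_2$ or $\Pi_2$ on the wrong side. For $\specsc$ I would reduce from a $\Sigma_3$- or $\Pi_3$-complete problem, namely the column condition on a computable binary matrix $(a_{ij})$: whether there exists $i$ such that column $i$ contains only finitely many ones. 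This cannot be decided with two limits. In each case I build a computable map $a\mapsto$ code of $V_a$ with uniform certified derivative bounds, such that $\spec_\diamond(H_{V_a})$ is one of two sets, $S_0$ or $S_1$, at positive Hausdorff (or Attouch--Wets) distance according to the answer. A hypothetical tower of height $2$ (resp.\ $3$) composed with this map, followed by thresholding the distance to $S_0$ versus $S_1$, would then decide the complete problem with too few limits. This contradicts the Markov lower bounds for the arithmetical hierarchy.

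For $\specpp$ I would use Gordon repetitions. The potential $V_a$ is built as a limit of blocks, adding one block per stage $j$ so that the code stays computable with smooth bump-function gluing and explicit derivative bounds. If $a_j=0$ for all $j$, then $V_a$ is a Gordon-type potential with arbitrarily long exact triple repetitions, and the Gordon lemma excludes all eigenvalues. If some $a_j=1$, then from that stage on the construction freezes and leaves a compactly supported negative well plus a positive tail, forcing an eigenvalue in a fixed interval $[-c_2,-c_1]$ and no others in a neighbourhood. For $\specac$ I would insert bumps of height growing without bound on both half-lines while $a_j=0$. Sparse, unbounded barriers destroy ac spectrum (by a Simon--Stolz/Pearson-type criterion or transfer matrix growth), giving $\specac=\emptyset$. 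If some $a_j=1$, only finitely many barriers are placed, so $V_a$ is compactly supported and $\specac=[0,\infty)$. Essential self-adjointness holds in both cases since $V_a$ is bounded below.

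For $\specsc$, which I expect to be the main obstacle, I follow the inverse-spectral route. First, to a matrix $(a_{ij})$ I assign a Riesz-product measure $\mu_a$ on a compact encoding interval $I\subset(0,\infty)$. Each column $i$ controls the factors on a dyadic subinterval: infinitely many nontrivial factors make the piece singular continuous (via Zygmund/Peyrière), while finitely many leave it absolutely continuous. As a result, $\specsc$ is either empty or contains a fixed closed subset according to the $\Sigma_3$ condition. Next I add a computable ac signed correction, supported away from $I$, so that $d\mu_a-d\mu_{\mathrm{free}}$ has all moments zero while keeping the measure positive. Then I run a computable Gelfand--Levitan scheme: the kernel $F$ is smooth and flat at $0$, so the half-line potential $q_a$ is $C^\infty$, flat at the origin, with computable derivative bounds, and its even reflection is a smooth whole-line $V_a$. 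The whole-line operator then decomposes into Dirichlet and Neumann channels. The Dirichlet channel carries $\mu_a$, and a Weyl $m$-function comparison shows the Neumann channel is purely ac on $I$ and contributes no sc spectrum elsewhere. The delicate points, which I would handle first, are uniform computability of the Gelfand--Levitan solution with derivative bounds, and the $m$-function estimate excluding spurious sc spectrum.
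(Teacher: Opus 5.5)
Your reduction for the two $\notin\Delta_2^M$ statements starts from a problem at the wrong level of the arithmetical hierarchy, so it cannot give the claimed bound. Deciding whether a computable binary sequence satisfies $\exists j\colon a_j=1$ is a $\Sigma_1^0$ (halting-type) question, and it \emph{is} solvable by a single limit: $\Gamma_n=\max_{j\le n}a_j$ is computable and converges to the answer, so the set lies in $\Delta_2^0$. Your pp and ac constructions realize exactly this $\Sigma_1^0$/$\Pi_1^0$ dichotomy: Gordon repetitions, respectively unbounded barriers, while all $a_j=0$, and a freeze to a compactly supported potential at the first $a_j=1$. Composing a hypothetical height-one tower with them therefore at best rules out $\Delta_1^M$. \cref{prop:dichot} would only return $S\in\Delta_2^0$, which is true and contradicts nothing. (Note also that $\Delta_2^M$ and $\Delta_3^M$ correspond to towers of height one and two, not two and three.)

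The missing idea is to reduce from a $\Pi_2^0$-complete set such as $\Tot$ (``infinitely many halting events''), and to let \emph{every} event advance the construction rather than stop it. In the paper, each active stage triples the current block by periodic repetition, and inactive stages pad with zeros. Infinitely many active stages then give a generalized Gordon potential (Damanik--Stolz). Finitely many leave a compactly supported potential containing the fixed well, hence an eigenvalue in $(-7/8,-5/8)$. Likewise, each halting event places a new pair of barriers whose height grows with the input index. This gives Simon--Spencer barriers iff $e\in\Tot$, and compact support (so $\specac=[0,\infty)$) otherwise. Only then does \cref{prop:dichot} force $\Tot\in\Delta_2^0$, which is the desired contradiction.

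For $\specsc$ your construction is essentially the paper's: Riesz products, a moment-killing ac correction supported away from the encoding interval, Gelfand--Levitan with a kernel flat at the origin, even reflection, and a lower bound on $\operatorname{Im}m_D$ from $2\rho_A\ge\rho_0$ to control the Neumann channel. Its spectral side encodes the right condition, ``some column has infinitely many ones'', which is $\Sigma_3^0$-complete via \cref{prop:matrix_reduction}. But the condition you name as the source problem, ``some column has only finitely many ones'', is $\exists i\,\exists N\,\forall j>N\,(a_{ij}=0)$. That is only $\Sigma_2^0\subseteq\Delta_3^0$ and would again yield no contradiction, so state it as the existence of an infinite column.

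Two further points need care. First, if each column lives on its own dyadic subinterval, $\specsc$ is not a fixed set. The test ball must then meet every subinterval, with Neumann control on a neighbourhood of its closure; the paper avoids this by giving each column full support on $\mathcal I$ with weight $2^{-m-1}$. Second, ``thresholding the distance'' after an inner limit can fail to converge when the inner-limit distance equals the threshold. \cref{prop:delta3bound} fixes this with two thresholds separated by a gap (hysteresis), which makes the inner $\{0,1\}$-valued limits exist and yields a $\Pi_3^0$ definition of $\Cof$.
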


Here $\Delta_k^M$ denotes the Markov class of problems solvable with $k-1$ successive limiting procedures. Thus membership in $\Delta_2^M$ corresponds to a single convergent sequence of finite outputs, while membership in $\Delta_3^M$ allows one additional nested limit. Concretely, the first two conclusions in \cref{thm:intro_smooth_lower_bounds} say that there is no finite procedure which, given a smooth source code $s$ for $H$ and an index $n\in\mathbb N$, outputs a finite set $\Gamma_n(s)\subset\mathbb Q+i\mathbb Q$ such that, for every $H\in\mathcal O_M^\infty$ and every valid source code $s$ for $H$, $\Gamma_n(s)\to\specpp(H)$ in the Attouch--Wets metric; the same statement holds with $\specpp$ replaced by $\specac$. The singular continuous conclusion says that even one inner limiting process is not enough: there is no finite procedure that, given $(n_2,n_1)\in\mathbb N^2$ and a source code $s$, produces a finite output $\Gamma_{n_2,n_1}(s)\subset\mathbb Q+i\mathbb Q$ such that, for every $H\in\mathcal O_M^\infty$ and every valid source code $s$ for $H$, the successive limits
$$
    \Gamma_{n_2}(s)=\lim_{n_1\to\infty}\Gamma_{n_2,n_1}(s),
    \qquad
    \lim_{n_2\to\infty}\Gamma_{n_2}(s)=\specsc(H)
$$
exist in the Attouch--Wets metric. The obstructions therefore persist even when the algorithm is given complete certified information about all derivatives of the potential.

\subsubsection{Upper Bounds Showing the Lower Bounds Are Sharp}

The lower bounds are complemented by constructive upper bounds. These show that the hierarchy obtained above is optimal.

We prove these upper bounds for a broad class of self-adjoint differential operators with coefficients of locally bounded variation. Put $Q_r=[-r,r]^d$. For $r>0$, let $\mathcal A_r$ be the algebra of pointwise-defined bounded Borel functions $f:Q_r\to\mathbb C$ having finite Hardy--Krause variation anchored at the upper corner $(r,\ldots,r)$. Thus, elements of $\mathcal A_r$ are actual functions, not equivalence classes modulo equality almost everywhere. For a complex-valued function, $\mathsf{TV}_{Q_r}(f)$ denotes the sum of the Hardy--Krause variations of its real and imaginary parts. We equip $\mathcal A_r$ with the norm
$$
    \|f\|_r
    =
    \|f\|_{\infty,Q_r}
    +(3^d+1)\mathsf{TV}_{Q_r}(f),
    \qquad
    \|f\|_{\infty,Q_r}:=\sup_{x\in Q_r}|f(x)|.
$$
A coefficient $a:\mathbb R^d\to\mathbb C$ has locally bounded variation if $a|_{Q_r}\in\mathcal A_r$ for every $r>0$. If $f\in C^d(Q_r;\mathbb C)$, then
$$
    \mathsf{TV}_{Q_r}(f)
    \le
    \sum_{\emptyset\ne J\subseteq\{1,\ldots,d\}}
    (2r)^{|J|}
    \bigl(
        \|\partial_J\operatorname{Re}f\|_{\infty,Q_r}
        +
        \|\partial_J\operatorname{Im}f\|_{\infty,Q_r}
    \bigr),
    \qquad
    \partial_J:=\prod_{j\in J}\partial_j.
$$
Consequently, every $C^d$ function on $\mathbb R^d$ has locally bounded Hardy--Krause variation. The class also contains many discontinuous coefficients.

Let $\mathcal D_{N,d}^{\mathrm{BV}}$ be the class of self-adjoint operators on $L^2(\mathbb R^d)$ of the form
$$
    T=\sum_{|\alpha|\le N}a_\alpha\partial^\alpha
$$
which have coefficients $a_\alpha$ of locally bounded variation, and for which $C_c^\infty(\mathbb R^d)$ is a core. When an operator is presented to the algorithm, each coefficient $a_\alpha$ is supplied as a designated pointwise representative $a_\alpha:\mathbb R^d\to\mathbb C$. The differential operator depends only on the almost-everywhere equivalence class of this representative, whereas every point evaluation and every local $\mathcal A_r$-bound below refers to the same designated representative.

The algorithm is given point evaluations of the coefficients at rational points and, in addition, local variation bounds: a sequence $\{c_j(T)\}_{j\in\mathbb N}$ such that
$$
    \max_{|\alpha|\le N}\|a_\alpha\|_r
    \le c_{\lceil r\rceil}(T),
    \qquad r>0.
$$
We denote this information by $\mathsf{Eval}_{N,d}^{\mathrm{BV}}$.

\begin{theorem}[Upper bounds for spectral type]
\label{thm:main_pos}
For the class $(\mathcal D_{N,d}^{\mathrm{BV}},\mathsf{Eval}_{N,d}^{\mathrm{BV}})$, one has
$$
(\mathcal D_{N,d}^{\mathrm{BV}},
 \mathsf{Eval}_{N,d}^{\mathrm{BV}},
 \specpp)
\in\Sigma_2^A,
\qquad
(\mathcal D_{N,d}^{\mathrm{BV}},
 \mathsf{Eval}_{N,d}^{\mathrm{BV}},
 \specac)
\in\Sigma_2^A,
$$
and
$$
(\mathcal D_{N,d}^{\mathrm{BV}},
 \mathsf{Eval}_{N,d}^{\mathrm{BV}},
 \specsc)
\in\Sigma_3^A.
$$
The same classifications hold in the corresponding Markov formulation, where the coefficients and local variation bounds are supplied by source code.
\end{theorem}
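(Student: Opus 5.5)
The plan is to reduce everything to certified computation of spectral measures of compactly supported test vectors, and then to reuse the discrete-case architecture of \cite{colbrook2019computing}, with a careful count of the limits.

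\textbf{Step 1: resolvent matrix elements.} Fix a compactly supported orthonormal wavelet basis $\{\psi_j\}$ of $L^2(\mathbb R^d)$, for instance tensor Daubechies wavelets of sufficient regularity. Consider the finite-section matrices $P_m T P_m$ restricted to the span of wavelets supported in $Q_m$ up to scale $m$. The entries $\langle T\psi_j,\psi_k\rangle=\sum_\alpha\int a_\alpha\,\partial^\alpha\psi_j\,\overline{\psi_k}$ can be approximated from point evaluations of the coefficients by Koksma--Hlawka quadrature. The Hardy--Krause norm $\|a_\alpha\|_r\le c_{\lceil r\rceil}(T)$ gives an explicit quadrature error, which is why $\mathcal A_r$ (an algebra under products with smooth wavelets) is used.

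Because $C_c^\infty$ is a core, the finite sections converge in the strong resolvent sense. Combining a rectangular-truncation smallest-singular-value bound with the local variation control, I would compute $\langle (T-z)^{-1}\psi_j,\psi_k\rangle$ for $\operatorname{Im}z>0$. This would not come with full error control, but it would converge in one limit. I expect that a posteriori residual estimates $\|(T-z)\phi-\psi_j\|$ are computable from the coefficient data, giving certified error bounds. I would then aim to show that the resolvent matrix elements, hence the smoothed measures
\[
\mu_{j}^{\epsilon}(x)=\pi^{-1}\operatorname{Im}\langle (T-x-i\epsilon)^{-1}\psi_j,\psi_j\rangle,
\]
are computable with error control in zero limits ($\Delta_1$).

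\textbf{Step 2: pp and ac in $\Sigma_2^A$.} With Step 1 as a $\Delta_1$ oracle, I would follow \cite{colbrook2019computing}.

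For the ac part: $\operatorname{Sp}_{\mathrm{ac}}(T)$ is the closure of the union over $j$ of the essential supports of the ac densities of $\mu_j$. I would use Poisson-kernel convolutions $\mu_j^{\epsilon}$ with $\epsilon\to0$ to estimate the ac density in $L^1$ on a grid. The inner limit is $\epsilon\to0$ with a quantized threshold, and the outer limit is $j,$ threshold$\to\infty$. The $\Sigma$-type error control (output contained in a computable neighbourhood of the true set) comes from monotonicity in $j$.

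For the pp part: I would detect atoms via $\epsilon\,\operatorname{Im}\langle (T-x-i\epsilon)^{-1}\psi_j,\psi_j\rangle\to\mu_j(\{x\})$, and take the union over $j$ of points where this quantity exceeds thresholds. The same two-limit structure applies.

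\textbf{Step 3: sc in $\Sigma_3^A$.} The singular continuous part of $\mu_j$ is $\mu_j-\mu_{j,\mathrm{pp}}-\mu_{j,\mathrm{ac}}$. Its support detection needs one more limit: the total mass of the sc part on an interval equals the mass of $\mu_j$ minus the sum of atoms minus the ac integral. The atom sum is itself only a limit of finite sums, which adds a third nested limit. As in the discrete case, I would take the closure of the union over $j$ and over grid intervals where this mass is positive.

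\textbf{Step 4: Markov version.} Every step uses only finitely many evaluations and bounds, so the source-code version follows by the same algorithms.

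\textbf{Main obstacle.} The main obstacle is Step 1. For unbounded differential operators with merely BV coefficients, I need certified resolvent bounds uniform enough to pass to $\Delta_1$. I would first try lower bounds on $\sigma_{\min}$ of rectangular truncations, combined with the core property.
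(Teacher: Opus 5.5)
\textbf{Verdict: genuine gap.} Your overall architecture matches the paper's: a compactly supported wavelet basis, Koksma--Hlawka quadrature controlled by Hardy--Krause norms, reduction to resolvent matrix elements with error control, and then the abstract algorithms of \cite{colbrook2019computing}, which the paper simply invokes as \cref{prop:resolv_comput}. But the step that actually carries the proof, certified computation of $(T-zI)^{-1}e_i$, is left as your ``main obstacle''.

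The route you suggest for that step (finite sections, strong resolvent convergence, lower bounds on $\sigma_{\min}$ of rectangular truncations) is a detour. A rectangular-truncation bound needs a certified estimate of $\|(I-P_n)TP_m\|$. In this setting that estimate would itself have to come from the Gram entries $\langle Te_j,Te_k\rangle$, which you never compute, and once those are available truncation is superfluous. You gesture at a posteriori residuals, which is the right idea, but you neither say how to compute them nor notice that, with self-adjointness, they already finish the job. Three ingredients are missing:
\begin{enumerate}
\item No resolvent bound has to be proved. Since $T$ is self-adjoint, $\|u-(T-zI)^{-1}e_i\|\le|\operatorname{Im}z|^{-1}\|(T-zI)u-e_i\|$ for every $u\in\operatorname{Dom}(T)$.
\item For $u=\sum_{j\le m}c_je_j$ with $c_j\in\mathbb Q+i\mathbb Q$, the squared residual is a finite combination of $\langle Te_j,Te_k\rangle$, $\langle Te_j,e_k\rangle$ and $\delta_{jk}$. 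So you need $\langle Te_j,Te_k\rangle=\sum_{\alpha,\beta}\int a_\alpha\overline{a_\beta}\,\partial^\alpha e_j\,\overline{\partial^\beta e_k}\,\mathrm dx$, not only $\langle Te_j,e_k\rangle$. Koksma--Hlawka applies to these integrands because $(\mathcal A_r,\|\cdot\|_r)$ is a Banach algebra. This gives $\mathsf{TV}_{Q_r}(F)\le\|F\|_r\le c_{\lceil r\rceil}(T)^2\|\partial^\alpha e_j\|_r\|\partial^\beta e_k\|_r$.
\item A $u$ with residual below $\rho$ exists, because the wavelet span is a core for $T-zI$ and $T-zI$ is onto. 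You therefore dovetail over rational-coefficient vectors and halt once a certified upper bound on the residual drops below $\rho$.
\end{enumerate}
Taking $\rho=\eta|\operatorname{Im}z|$ gives error at most $\eta$, uniformly in $z$. That is exactly the hypothesis of the abstract theorem.

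Two further points. First, your phrase ``because $C_c^\infty$ is a core'' does not give what you need. Compactly supported Daubechies wavelets are only $C^\gamma$, so you must prove that their span is itself a core. The paper does this in \cref{prop:core}:
\begin{itemize}
\item it takes $\gamma\ge N+1$;
\item it shows that compactly supported $C^N$ functions lie in $\operatorname{Dom}(T)$, by mollification and closedness;
\item it approximates each $g\in C_c^\infty$ by truncated wavelet expansions that converge in $H^N$ (via \cref{prop:sobolev_est}), while staying supported in a fixed compact set on which the coefficients are bounded.
\end{itemize}
Second, Steps 2--3 should be a direct appeal to the abstract result, not the heuristics you sketch, because as written those heuristics do not isolate the spectral types:
\begin{itemize}
\item $\pi^{-1}\operatorname{Im}\langle (T-x-i\epsilon)^{-1}\psi_j,\psi_j\rangle$ blows up at grid points that carry atoms;
\item Poisson smoothing does not converge in $L^1$ to the ac density when singular mass is present;
\item the one-sided $\Sigma$-type error control is asserted (``monotonicity in $j$'') rather than derived.
\end{itemize}
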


Here $\Sigma_k^A$ denotes the arithmetic-model class of problems solvable by a tower of $k$ successive limiting procedures with one-sided error control. Thus $\Sigma_2^A$ corresponds to two successive limits, while $\Sigma_3^A$ allows three. The superscript $A$ refers to the arithmetic model of the SCI hierarchy, in which algorithms are Blum--Shub--Smale machines \cite{BCSS} with access to the specified evaluations and local bounds. In the Markov formulation, the same data are supplied by finite source codes, and the same construction is implemented by ordinary Turing machines.

The comparison with the lower-bound class is useful. Let $H_V\in\mathcal O_M^\infty$, and set
$$
    H_V=(-1)\partial^2+0\partial+V.
$$
Then $H_V\in\mathcal D_{2,1}^{\mathrm{BV}}$: by definition, $C_c^\infty(\mathbb R)$ is a core, and in one dimension
$$
    \mathsf{TV}_{[-r,r]}(V)
    =\int_{-r}^{r}|V'(x)|\,\mathrm dx
    \le 2r\sup_{|x|\le r}|V'(x)|.
$$
If the smooth source code supplies certified bounds $C_{k,j}$ for $\sup_{|x|\le j}|V^{(k)}(x)|$, then
$$
    c_j(H_V):=\max\{1,C_{0,j}+8jC_{1,j}\}
$$
is a certified local $\mathcal A_r$-bound for every $r\le j$, because $3^1+1=4$. Point values of the coefficients are obtained from the same smooth source code. Hence there is a uniform computable translation from $\mathsf{Code}_M^\infty$ to $\mathsf{Eval}_{2,1}^{\mathrm{BV}}$. The lower-bound problems therefore reduce to the Markov versions of the upper problems. Together with \cref{thm:intro_smooth_lower_bounds,thm:main_pos}, this proves that two limits for pure point and absolutely continuous spectrum, and three limits for singular continuous spectrum, are both sufficient and in general necessary.

\subsubsection{Consequences for Formal Certification}

As a further consequence of the finite-description lower bounds, we obtain limitations on formal certification of spectral type. Since the lower-bound operators are finitely described, spectral inclusion and exclusion statements for them can be represented by arithmetical sentences.

Let $\mathfrak T$ be an effectively axiomatized, arithmetically sound theory, and let $I\subset\mathbb R$ be a non-empty open interval with rational endpoints. In \cref{thm:main_non_prov} we prove that, for each $\diamond\in\{\mathrm{pp},\mathrm{ac},\mathrm{sc}\}$, there exist self-adjoint one-dimensional Schr\"odinger operators $ H_\diamond^+,\ H_\diamond^-\in\mathcal O_M^\infty, $ with smooth real-valued potentials given by source codes in $\mathsf{Code}_M^\infty$, such that $ \spec_\diamond(H_\diamond^+)\cap I\ne\emptyset $ is true but not provable in $\mathfrak T$, while $ \spec_\diamond(H_\diamond^-)\cap\overline I=\emptyset $ is true but not provable in $\mathfrak T$.

Taking $\mathfrak T=\mathsf{ZFC}$, this gives true spectral-type statements for finitely described smooth Schr\"odinger operators which are not provable in $\mathsf{ZFC}$, conditional on the arithmetical soundness of $\mathsf{ZFC}$.

We also obtain a relative hierarchy of non-provability. For pure point and absolutely continuous spectrum, the obstruction persists after adjoining the relevant $\Pi_1^0$ truth predicates. For singular continuous spectrum, the corresponding obstruction persists one level higher, after adjoining the relevant $\Pi_2^0$ truth predicates. This mirrors the SCI hierarchy: singular continuous spectrum is strictly harder.

\subsection{Connections with Previous Work}
\label{sec:prev_work}

The paper connects with several strands of spectral theory, computational spectral theory, and mathematical physics.

Multiple-limit phenomena occur throughout mathematics. They can be traced back at least to the dynamical study of iterative rational maps for polynomial root-finding. Smale asked whether there exists a universally convergent iterative rational map for finding polynomial zeros \cite{smale_question}. McMullen showed that such an algorithm exists for cubic polynomials but not for higher-degree polynomials \cite{McMullen1,mcmullen1988braiding}. Doyle and McMullen later showed that degrees four and five can be handled using multiple successive limits, while degree six cannot \cite{Doyle_McMullen}. Another central example is the classical spectral problem, with roots in Szeg{\H o}'s work on finite-section approximations \cite{Szego} and Schwinger's finite-dimensional approximations to quantum systems \cite{Schwinger}. This problem asks for an algorithm that computes $\spec(A)$ for every $A\in\mathcal B(\ell^2(\mathbb N))$, given its matrix entries. It was shown in \cite{Hansen_JAMS} that this problem admits a three-limit algorithm, and in \cite{ben2015can} that this is optimal. The Solvability Complexity Index (SCI) hierarchy (\cref{def:scihierarchy}) was developed precisely to measure this kind of intrinsic limiting complexity and to distinguish algorithms with different forms of error control \cite{colbrook2019compute,colbrook4,colbrook3,Jonathan_res, benartzi2020computing,ben2022complexity}. Multiple-limit phenomena are discussed systematically in the book \cite{colbrook2026infinite}.

The finite-section method has long been a central tool in computational spectral theory. Even when it converges for the computation of spectra, however, it typically does not by itself provide error control or verification. Examples include work of B{\"o}ttcher, Brunner, Iserles, and N{\o}rsett \cite{Arieh2}; B{\"o}ttcher, Grudsky, and Iserles \cite{bottcher_grudsky_iserles_2011}; Marletta \cite{Marletta_pollution}; and Marletta and Scheichl \cite{marletta2012eigenvalues}. These works also discuss mechanisms by which finite-section methods can fail. For the more delicate spectral features studied in the present paper, such difficulties are amplified by phenomena such as spectral pollution. For the Maxwell curl--curl operator, Warburton and Embree show that the stabilization parameter in a local discontinuous Galerkin discretization separates the discrete spectrum into a part approximating the curl-conforming finite element spectrum and a spurious part whose eigenvalues can be made arbitrarily large \cite{warburton2006penalty}. See also work of B{\"o}ttcher \cite{Albrecht_Fields,Boettcher1994}, B{\"o}ttcher and Silbermann \cite{Bottcher,bottcher2006analysis}, Laptev and Safarov \cite{Laptev}, and Brown \cite{brown2007quasi,Brown_2006,Brown_Memoars}. B{\"o}ttcher and Silbermann \cite{MR721280} were among the pioneers in combining spectral computation with $C^*$-algebraic methods.

Olver, Townsend, and Webb have developed influential frameworks for infinite-dimensional numerical linear algebra and for computational methods with infinite data structures, contributing both theoretical foundations and practical algorithms \cite{Olver_Townsend_Proceedings,olver2013fast,webb_thesis}. For example, Webb and Olver use connection coefficients between families of orthogonal polynomials to compute spectral measures of Jacobi operators arising as compact perturbations of Toeplitz operators; in this setting, the computation of the eigenvalues and absolutely continuous spectrum lies in $\Delta_0^G$ \cite{webb2017spectra}.

The SCI perspective is also closely connected with limiting procedures in spectral approximation. For example, Zworski \cite{Zworski1} showed that scattering resonances of Schr\"odinger operators $-\Delta+V$, with $V$ bounded and compactly supported, arise as limits of eigenvalues of $-\Delta+V-i\varepsilon x^2$ as $\varepsilon\to0^+$. A complementary instability of spectral geometry and type occurs for continuum limit-periodic Schr\"odinger operators. Damanik, Fillman, and Lukic showed that uniformly small perturbations introduced on successively longer periodic scales can lead from periodic approximants with band spectrum and purely absolutely continuous spectral type to a uniform limit with zero-measure Cantor spectrum and purely singular continuous spectral type. The latter behavior is residual among continuous limit-periodic potentials in the uniform topology \cite{damanik2017limitperiodic}. Periodic approximation also provides a computational route for structured aperiodic models. Puelz, Embree, and Fillman develop an $O(K^2)$ method for computing the spectrum of a period-$K$ Jacobi operator and apply it to periodic approximations of the Fibonacci, period-doubling, and Thue--Morse models \cite{puelz2015spectral}. More generally, spectral pollution and spectral invisibility require precision about the convergence topology and about the number of limits used. The present paper shows that such issues are not restricted to the spectral set itself: they also occur, in a sharper form, for the spectral-type decomposition.

SCI methods are also closely connected to computer-assisted proof. Results in the hierarchy can be interpreted as statements about the type of certified limiting procedure required to establish a spectral claim \cite{AIM}. Fefferman and Seco's proof of the Dirac--Schwinger conjecture \cite{fefferman1990,fefferman1992,fefferman1996interval}, concerning the asymptotic behavior of the ground state of a family of Schr\"odinger operators, may be viewed in this light as a $\Sigma_1^A$ result. Rigorous computer-assisted methods have also appeared in spectral-theoretic work such as \cite{brown2010eigenvalue,bogli2014guaranteed}. The finite-description aspect of our constructions gives complementary consequences for spectral-type certification.

There is also a growing literature on spectral enclosures and certified spectral computation. Chandler--Wilde, Chonchaiya, and Lindner \cite{chandler2024spectral} proved that banded operators on $\ell^2(\mathbb Z)$ admit spectral covers under suitable hypotheses, yielding algorithms for computing spectra and rigorous spectral exclusions. We also refer to work of Ben-Artzi, Marletta, and R\"osler on computing scattering resonances via spectral covers \cite{Jonathan_res,benartzi2020computing}. The comparison with our results is instructive: spectral covers concern inclusion and exclusion for the spectral set, whereas the present paper shows that the measure-theoretic type carried by the spectrum can be strictly harder to resolve. Analogous certified enclosures for spectral type are subject to the sharper limiting-complexity obstructions proved here.

\subsection{Outline of the Paper}

In \cref{sec:prereq} we recall the spectral-type decomposition, the SCI hierarchy, the Markov model, and the reduction principles used in the lower-bound arguments. The three lower bounds are then proved separately. In \cref{sec:pp} we prove the pure point lower bound using Gordon-type repetitions. In \cref{sec:ac} we prove the absolutely continuous lower bound using high barriers. In \cref{sec:sc} we prove the singular continuous lower bound by constructing Riesz-product spectral measures, killing moments, and applying a computational version of inverse spectral theory through the Gelfand--Levitan equation. In \cref{sec:upper} we prove the matching upper bounds for broad classes of differential operators by computing resolvent matrix elements in compactly supported wavelet bases. Finally, \cref{sec:prov} provides the formal certification consequences of the Markov lower bounds.

\section{Preliminaries}
\label{sec:prereq}
\subsection{Spectral Type}
We recall the decomposition of a self-adjoint operator into spectral types. Let $\mu$ be a finite positive Borel measure on $\R$. We call $\mu$ \emph{continuous} if it has no atoms: $\mu(\{x\}) = 0$ for each $x \in \R$. We say that $\mu$ is \emph{concentrated} on a Borel set $C$ if $\mu(C^c) = 0$. A finite Borel measure is classified as follows:
\begin{itemize}
	\item \emph{pure point} or \emph{purely atomic} if it is concentrated on a countable set;
	\item \emph{singular continuous} if it is continuous and concentrated on a set of Lebesgue measure zero;
	\item \emph{absolutely continuous} if $\mu(C) = 0$ for every Borel set $C$ of Lebesgue measure zero.
\end{itemize}
By the Radon--Nikodym theorem, an absolutely continuous measure is determined by an $L^1$-density, whereas a pure point measure is a countable sum of weighted Dirac masses. The remaining component is singular continuous, as expressed by the following decomposition.

\begin{proposition}[e.g., {\cite[Theorem 4.3.2]{cohn2013measure}}]
\label{prop:lebesgue_decomp}
Let $\mu$ be a finite positive Borel measure on $\R$. Then there exist unique finite positive Borel measures $\mu_{\mathrm{pp}}$, $\mu_{\mathrm{sc}}$ and $\mu_{\mathrm{ac}}$ such that $\mu_{\mathrm{pp}}$ is pure point, $\mu_{\mathrm{sc}}$ is singular continuous, $\mu_{\mathrm{ac}}$ is absolutely continuous and $\mu = \mu_{\mathrm{pp}} + \mu_{\mathrm{sc}} + \mu_{\mathrm{ac}}$.
\end{proposition}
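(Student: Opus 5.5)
We prove existence and uniqueness separately, in two steps: first split off the atoms, then split the continuous remainder using the Lebesgue decomposition with respect to Lebesgue measure $\lambda$.

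\emph{Step 1 (atomic part).} Let $A=\{x\in\R:\mu(\{x\})>0\}$. For each $n$, the set $\{x:\mu(\{x\})>1/n\}$ has at most $n\mu(\R)<\infty$ elements, so $A$ is countable. Define $\mu_{\mathrm{pp}}(E)=\mu(E\cap A)=\sum_{x\in E\cap A}\mu(\{x\})$; this is a finite measure concentrated on $A$, hence pure point. Set $\nu=\mu-\mu_{\mathrm{pp}}$, i.e.\ $\nu(E)=\mu(E\setminus A)$. Then $\nu$ is a positive measure, and it is continuous because every atom of $\mu$ lies in $A$.

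\emph{Step 2 (continuous remainder).} Apply the Lebesgue decomposition of $\nu$ relative to $\lambda$. Concretely, a Radon--Nikodym/Hahn argument yields a Borel set $N$ with $\lambda(N)=0$ such that $\nu_{\mathrm{ac}}(E):=\nu(E\setminus N)$ is absolutely continuous. Put $\mu_{\mathrm{sc}}(E)=\nu(E\cap N)$ and $\mu_{\mathrm{ac}}=\nu_{\mathrm{ac}}$. Then $\mu_{\mathrm{sc}}$ is concentrated on the null set $N$, and it is continuous since $\mu_{\mathrm{sc}}\le\nu$. Hence $\mu_{\mathrm{sc}}$ is singular continuous, and $\mu=\mu_{\mathrm{pp}}+\mu_{\mathrm{sc}}+\mu_{\mathrm{ac}}$.

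\emph{Step 3 (uniqueness).} This is the step needing care. Suppose $\mu=\alpha_1+\beta_1+\gamma_1=\alpha_2+\beta_2+\gamma_2$ are two such decompositions, with $\alpha_i$ pure point, $\beta_i$ singular continuous and $\gamma_i$ absolutely continuous.
\begin{itemize}
\item Let $C_i$ be the countable sets carrying $\alpha_i$, and let $S_i$ be the Lebesgue-null sets carrying $\beta_i$.
\item Put $C=C_1\cup C_2$ and $S=(S_1\cup S_2)\setminus C$; both $C$ and $S$ are Lebesgue-null.
\item The continuous measures $\beta_j$ and $\gamma_j$ give zero mass to the countable set $C$. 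The $\gamma_j$ also vanish on $S$, and the $\alpha_j$ vanish off $C$.
\end{itemize}
Restricting the identity $\mu=\alpha_j+\beta_j+\gamma_j$ to $C$ therefore gives $\alpha_j(E)=\mu(E\cap C)$ for $j=1,2$. Restricting to $S$ gives $\beta_j(E)=\mu(E\cap S)$, and the complement $(C\cup S)^c$ gives $\gamma_j(E)=\mu(E\setminus(C\cup S))$. These expressions do not depend on $j$, so the two decompositions coincide.
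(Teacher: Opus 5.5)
Your proof is correct, and it is the standard textbook argument behind this result. The paper gives no proof of its own and only cites Cohn: split off the countably many atoms, then apply the Lebesgue decomposition with respect to Lebesgue measure to the continuous remainder. Your uniqueness step, which identifies each component as the restriction of $\mu$ to $C$, to $S$, or to $(C\cup S)^c$, is sound and needs no finiteness. The only external input is the Lebesgue-null set $N$ in Step 2. You can also get it elementarily by choosing $N$ to maximize $\nu$ among Lebesgue-null Borel sets, a supremum attained by a countable union since $\nu$ is finite.
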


We define the spectrum, or closed support, of a positive Borel measure $\mu$ on $\R$ by
$$
\spec(\mu) = \set {\lambda \in \R : \mu((\lambda - \epsilon, \lambda + \epsilon)) > 0 \text { for all } \epsilon > 0}.
$$
Let $\mathcal H$ be a separable Hilbert space and let $A$ be a self-adjoint operator with projection-valued spectral measure $E_A$. For $\psi \in \mathcal H$, define the finite measure
$$
\mu_\psi(S) = \innerprod {E_A(S) \psi} \psi, \quad S \in \mathcal B(\R).
$$
This is the unique finite Borel measure satisfying the resolvent identity
$$
\innerprod {(A - z I)^{-1} \psi} \psi = \int_\R \frac {\mathrm d \mu_\psi(\lambda)} {\lambda - z} \quad \forall z \in \C \setminus \R.
$$
For a more detailed discussion, see \cite[Section 3.1]{terschl}. Throughout, the Hilbert-space inner product is linear in the first argument. Applying \cref{prop:lebesgue_decomp} to the scalar spectral measures associated with $A$, we obtain the decomposition
$$
\mathcal H = \mathcal H_{\mathrm{ac}} \oplus \mathcal H_{\mathrm{sc}} \oplus \mathcal H_{\mathrm{pp}},
$$
where
$$
\mathcal H_{\mathrm{ac}} = \set {\psi \in \mathcal H : \mu_\psi \text { is ac}},\qquad
\mathcal H_{\mathrm{sc}} = \set {\psi \in \mathcal H : \mu_\psi \text { is sc}},\qquad
\mathcal H_{\mathrm{pp}} = \set {\psi \in \mathcal H : \mu_\psi \text { is pp}}.
$$
These sets are closed reducing subspaces for $A$, and the corresponding spectral projections give the orthogonal decomposition. For $\diamond \in \set {\mathrm{ac}, \mathrm{sc}, \mathrm{pp}}$, let $A_\diamond$ denote the restriction of $A$ to $\operatorname{Dom}(A)\cap \mathcal H_\diamond$, and define $\spec_\diamond(A) = \spec(A_\diamond)$. We use the convention that the spectrum of the operator on the zero Hilbert space is empty. So $\spec_\diamond(A) = \emptyset$ precisely when $\mathcal H_\diamond = \set 0$. An operator has \emph{purely absolutely continuous spectrum} if $\mathcal H = \mathcal H_{\mathrm{ac}}$. Purely singular continuous spectrum and pure point spectrum are defined analogously.

\subsection{SCI Hierarchy}\label{sec:SCI_hierarchy}

The Solvability Complexity Index (SCI) classifies a computational problem by the number of successive limits needed to solve it. It also provides the language for proving that this number is optimal. For a spectral-set problem, the target is a closed subset of $\C$, possibly unbounded and with non-trivial accumulation. We approximate it by finite subsets of $\mathbb Q+i\mathbb Q$, using the Attouch--Wets metric to measure convergence.

\begin{definition}[Attouch--Wets metric {\cite[Definition 3.1.2]{beer1993topologies}}]
Let $\mathrm{CL}(\C)$ be the set of non-empty closed subsets of $\C$. We define the Attouch--Wets metric by
$$
d_{\mathrm{AW}_0}(A, B) = \sum_{n = 1}^\infty 2^{-n} \min \set {1, \max_{|z| \le n} |\dist(z, A) - \dist(z, B)|} \qquad \forall A, B \in \mathrm{CL}(\C).
$$
\end{definition}

The following standard characterization is the local analogue of Hausdorff convergence.

\begin{proposition}
\label{prop:awconverge}
For each non-empty compact set $K \subseteq \C$, define
$$
d_K(C_1,C_2)=\sup_{z\in K}|\dist(z,C_1)-\dist(z,C_2)|.
$$
Let $\sequence {C_n}_{n \in \N}\subset\mathrm{CL}(\C)$ and $C\in \mathrm{CL}(\C)$. We have $d_{\mathrm{AW}_0}(C_n, C) \to 0$ if and only if for each non-empty compact set $K \subseteq \C$ we have $d_K(C_n, C) \to 0$ as $n \to \infty$.
\end{proposition}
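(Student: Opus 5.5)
We prove the two implications separately, and in both directions we compare the weighted sum defining $d_{\mathrm{AW}_0}$ with the quantities $d_{B_n}(C_k,C)$, where $B_n=\{z:|z|\le n\}$ is the closed disc of radius $n$. Observe first that the maximum in the $n$-th term of $d_{\mathrm{AW}_0}(C_k,C)$ is exactly $d_{B_n}(C_k,C)$. This maximum is attained, because $z\mapsto\dist(z,A)$ is $1$-Lipschitz for every non-empty closed $A$, so the difference of two such functions is continuous on the compact disc $B_n$. The finiteness of each term is not an issue, since every term is truncated at $1$.

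($\Leftarrow$) Assume $d_K(C_k,C)\to0$ for every non-empty compact $K$. Fix $\epsilon>0$ and choose $N$ with $\sum_{n>N}2^{-n}<\epsilon/2$. The terms with $n>N$ contribute less than $\epsilon/2$, because each is at most $2^{-n}$. For each of the finitely many $n\le N$, apply the hypothesis with $K=B_n$ to get $d_{B_n}(C_k,C)\to0$. Hence for $k$ large,
$$\sum_{n\le N}2^{-n}\min\{1,d_{B_n}(C_k,C)\}<\epsilon/2.$$
It follows that $d_{\mathrm{AW}_0}(C_k,C)<\epsilon$ for $k$ large.

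($\Rightarrow$) Assume $d_{\mathrm{AW}_0}(C_k,C)\to0$. Since every term of the series is non-negative, each fixed term tends to zero: $2^{-n}\min\{1,d_{B_n}(C_k,C)\}\to0$. So for each fixed $n$ we have $\min\{1,d_{B_n}(C_k,C)\}\to0$. Once this quantity is below $1$ it equals $d_{B_n}(C_k,C)$, so $d_{B_n}(C_k,C)\to0$. Now let $K$ be an arbitrary non-empty compact set. It is bounded, so $K\subseteq B_n$ for some $n$. Since $d_K\le d_{B_n}$ by monotonicity of the supremum, we conclude $d_K(C_k,C)\to0$.

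The argument is elementary. The only point needing care is the passage between the truncated terms $\min\{1,\cdot\}$ and the untruncated quantities $d_{B_n}$; it is handled as above, by noting that the truncation becomes inactive once the term is below $1$. No earlier results from the paper are needed beyond the definition of the Attouch--Wets metric.
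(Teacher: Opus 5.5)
Your proof is correct and complete. You identify the $n$-th term of $d_{\mathrm{AW}_0}(C_k,C)$ with $2^{-n}\min\{1,d_{B_n}(C_k,C)\}$. You then use the tail bound $2^{-N}$ for ($\Leftarrow$), and for ($\Rightarrow$) you use termwise domination together with $K\subseteq B_n$ and monotonicity of $d_K$ in $K$. That is exactly what is needed. The paper itself gives no proof and quotes the characterization as standard; your elementary argument is the standard one.
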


Thus $d_{\mathrm{AW}_0}(C_n,C)\to0$ precisely when the distance functions converge uniformly on compact subsets of $\C$. This excludes spectral invisibility and persistent spectral pollution on bounded sets, while allowing points of $C_n$ to escape to infinity.

To define the SCI hierarchy, we first specify its underlying notion of a computational problem.

\begin{definition}
\label{def:compproblem}
A computational problem is a quadruple $(\Omega,\Lambda,\mathcal M,\Xi)$, where:
\begin{itemize}
    \item $\Omega$ is the \emph{primary set}, namely the class of inputs; in this paper, it is typically a family of Schr\"odinger operators;
    \item $\Lambda$ is the \emph{evaluation set}, consisting of the maps $\Omega\to\C$ available to an algorithm; for example, if $-\Delta+V\in\Omega$, these may include the maps $-\Delta+V\mapsto V(q)$ for $q\in\mathbb Q$;
    \item $\mathcal M$ is the metric space in which the approximations converge;
    \item $\Xi:\Omega\to\mathcal M$ is the \emph{problem function}; for a spectral problem, one may have $\Xi(A)=\spec(A)$.
\end{itemize}
We require the evaluation set to separate any two inputs separated by $\Xi$: if $A,B\in\Omega$ and $\Xi(A)\ne\Xi(B)$, then there exists $f\in\Lambda$ such that $f(A)\ne f(B)$.
\end{definition}

Unlike the whole spectrum of a self-adjoint operator, the spectral sets $\spec_{\diamond}$ may be empty. For example, the free Laplacian has purely absolutely continuous spectrum on $[0, \infty)$, so its singular continuous and pure point spectra are empty. Therefore, we will take our $\mathcal M$ to be
$$
\mathrm{CL}_\ast(\C) = \mathrm{CL}(\C) \cup \set \emptyset.
$$
We will use the convention that $\dist(x, \emptyset) = \infty$ for each $x \in \C$. We define the homeomorphism $\theta : [0, \infty] \to [0, 1]$ by $\theta(t) = (1 + t)^{-1}$ for each $t \in [0, \infty]$. In particular, $\theta(\infty) = 0$. We set
$$
d_{\mathrm{AW}}(C, D) = \sum_{m = 1}^\infty 2^{-m} \sup_{|z| \le m} |\theta(\dist(z, C)) - \theta(\dist(z, D))|\qquad\forall C,D\in\mathrm{CL}_\ast(\C).
$$
This extension is compatible with the usual Attouch--Wets topology:
\begin{proposition}
The topology on $\mathrm{CL}(\C)$ induced by $d_{\mathrm{AW}}$ coincides with the topology induced by $d_{\mathrm{AW}_0}$.
\end{proposition}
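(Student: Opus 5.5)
The plan is to show that both topologies on $\mathrm{CL}(\C)$ coincide with the topology of uniform convergence of distance functions on compact sets. By \cref{prop:awconverge}, $d_{\mathrm{AW}_0}$-convergence is already characterised this way. Both are metric topologies, so it suffices to compare convergent sequences. I would therefore show that for $C_n, C\in\mathrm{CL}(\C)$, $d_{\mathrm{AW}}(C_n,C)\to0$ if and only if $d_K(C_n,C)\to0$ for every non-empty compact $K$.

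First, the easy direction. Since $\theta$ is $1$-Lipschitz on $[0,\infty)$ ($|\theta'(t)|=(1+t)^{-2}\le1$), we get
$$\sup_{|z|\le m}|\theta(\dist(z,C_n))-\theta(\dist(z,C))|\le d_{\overline{B(0,m)}}(C_n,C).$$
So uniform convergence on every closed ball makes each summand tend to zero. Each summand is at most $2^{-m}$, so dominated convergence for series gives $d_{\mathrm{AW}}(C_n,C)\to0$.

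Second, the converse, which is the main step. Suppose $d_{\mathrm{AW}}(C_n,C)\to0$ and fix a compact $K\subseteq\overline{B(0,m)}$. Then $\theta(\dist(\cdot,C_n))\to\theta(\dist(\cdot,C))$ uniformly on $\overline{B(0,m)}$.

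Because $C$ is non-empty and closed, $\dist(z,C)\le M:=\sup_{|z|\le m}\dist(z,C)<\infty$ on this ball. Hence $\theta(\dist(z,C))\ge(1+M)^{-1}>0$ there.

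Uniform convergence then gives, for large $n$, $\theta(\dist(z,C_n))\ge \tfrac12(1+M)^{-1}$. Equivalently $\dist(z,C_n)\le M':=2(1+M)-1$ on the ball. So all distances lie in the compact interval $[0,M']$.

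On $[0,M']$, the inverse $\theta^{-1}(s)=1/s-1$ is Lipschitz on $[(1+M')^{-1},1]$ with constant $(1+M')^2$. Therefore
$$d_K(C_n,C)\le(1+M')^2\sup_{|z|\le m}|\theta(\dist(z,C_n))-\theta(\dist(z,C))|\to0.$$
Invoking \cref{prop:awconverge} then yields $d_{\mathrm{AW}_0}(C_n,C)\to0$.

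The only delicate point is this uniform lower bound on $\theta$. It uses non-emptiness of the limit $C$, which is exactly why the statement is restricted to $\mathrm{CL}(\C)$. One should also note that $d_{\mathrm{AW}}$ is a genuine metric on $\mathrm{CL}(\C)$, so that sequential comparison of the two topologies suffices. This holds because distance functions determine closed sets, and $\theta$ is injective.
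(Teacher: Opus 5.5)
Your proof is correct and follows essentially the same route as the paper. The converse direction matches the paper's argument exactly: a uniform lower bound on $\theta(\dist(\cdot,C))$ from non-emptiness of $C$, then the Lipschitz bound for $\theta^{-1}$ away from $0$, then \cref{prop:awconverge}. For the easy direction, the paper just notes the termwise inequality $d_{\mathrm{AW}}\le d_{\mathrm{AW}_0}$, since $\theta$ is $1$-Lipschitz and the transformed differences are at most $1$; that is slightly more direct than your route through \cref{prop:awconverge} and dominated convergence, but both are valid.
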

Since $\theta$ is $1$-Lipschitz and $|\theta(\dist(z,C))-\theta(\dist(z,D))|\le1$, we have $d_{\mathrm{AW}}(C,D)\le d_{\mathrm{AW}_0}(C,D)$. Conversely, suppose that $C_n,C\in\mathrm{CL}(\mathbb C)$ and $d_{\mathrm{AW}}(C_n,C)\to0$. Fix a compact set $K$. Since $C\ne\emptyset$, the function $z\mapsto\dist(z,C)$ is bounded on $K$, and hence $\theta(\dist(z,C))\ge a$ on $K$ for some $a>0$. For all sufficiently large $n$, $\theta(\dist(z,C_n))\ge a/2$ on $K$. Since $\theta^{-1}$ is Lipschitz on $[a/2,1]$, uniform convergence of the transformed distance functions on $K$ implies uniform convergence of the distance functions on $K$. Finally, $C_n\to\emptyset$ if and only if, for every compact set $K\subseteq\C$, one has $C_n\cap K=\emptyset$ for all sufficiently large $n$. In particular, $C_n=\{n\}$ converges to $\emptyset$.

When $\mathcal M$ is fixed, we suppress it and denote the computational problem by $(\Omega, \Lambda, \Xi)$.

An algorithm for a computational problem is defined as follows.

\begin{definition}
Let $(\Omega, \Lambda, \mathcal M, \Xi)$ be a computational problem. A general algorithm $\Gamma:\Omega\rightarrow \mathcal{M}$ for $(\Omega, \Lambda, \mathcal M, \Xi)$ is a map that satisfies the following property. For each $A\in\Omega$, there exists a finite subset of evaluations $\Lambda_\Gamma(A) \subset\Lambda$ such that if $B\in\Omega$ with $f(A)=f(B)$ for every $f\in\Lambda_\Gamma(A)$, then $\Lambda_\Gamma(A)=\Lambda_\Gamma(B)$ and $\Gamma(A)=\Gamma(B)$.
\end{definition}

A general algorithm uses only finitely many evaluations on each input. For the infinite-dimensional problems considered here, one therefore asks first whether there is a sequence $\sequence{\Gamma_n}_{n\in\N}$ such that $\Gamma_n(A)\to\Xi(A)$ in $(\mathcal M,d)$ for every $A\in\Omega$. Even this need not suffice: \cite{ben2015can} gives problems for which no such sequence exists. Successive limits are then necessary.

This motivates the notion of a tower of algorithms. The terminology was introduced by Doyle and McMullen \cite{Doyle_McMullen} and is now standard in the SCI literature.

\begin{definition}[Tower of algorithms]
\label{def:tower}
Let $(\Omega, \Lambda, \mathcal M, \Xi)$ be a computational problem. Let $\mathcal A$ be a subclass of general algorithms. For each $(n_k, \ldots, n_1) \in \N^k$ let $\Gamma_{n_k, \ldots, n_1} : \Omega \to \mathcal M$ be an algorithm in $\mathcal A$. Suppose that, for every $A\in\Omega$, the successive limits
$$
\Gamma_{n_k,\ldots,n_{r+1}}(A)
:=
\lim_{n_r\to\infty}\Gamma_{n_k,\ldots,n_r}(A),
\qquad r=1,\ldots,k-1,
$$
and the final limit
$$
\Xi(A)=\lim_{n_k\to\infty}\Gamma_{n_k}(A)
$$
all exist in $(\mathcal M,d)$. Then we say that $\sequence {\Gamma_{n_k, \ldots, n_1}}_{(n_k, \ldots, n_1) \in \N^k}$ is a tower of $\mathcal A$-algorithms of height $k$ solving $(\Omega, \Lambda, \mathcal M, \Xi)$. We refer to an $\mathcal A$-algorithm by itself as a tower of height $0$. Height $1$ means a single limit $\lim_{n_1\to\infty}\Gamma_{n_1}$; height $2$ means $\lim_{n_2\to\infty}\lim_{n_1\to\infty}\Gamma_{n_2,n_1}$.
\end{definition}

Convergence $\Gamma_n(A)\to\Xi(A)$ need not come with a computable rate: in general, one cannot determine how large $n$ must be to ensure $d(\Gamma_n(A),\Xi(A))<\epsilon$. For spectral-set problems we therefore distinguish two forms of one-sided control in the Attouch--Wets metric. Convergence from above requires quantified coverage of the spectrum by the approximating set, whereas convergence from below requires every approximating point to lie within quantified distance of the spectrum. The following definition makes these requirements precise.

\begin{definition}[SCI Hierarchy]\label{def:scihierarchy}
Let $(\Omega, \Lambda, \mathcal M, \Xi)$ be a computational problem.
\begin{itemize}
    \item We say that $(\Omega, \Lambda, \mathcal M, \Xi)$ is in $\Delta_0^{\mathcal A}$ if $\Xi$ is an $\mathcal A$-algorithm;
    \item We say that $(\Omega, \Lambda, \mathcal M, \Xi)$ is in $\Delta_1^{\mathcal A}$ if there exists an $\mathcal A$-tower of algorithms $\sequence {\Gamma_n}_{n \in \N}$ solving $(\Omega, \Lambda, \mathcal M, \Xi)$ such that $d(\Gamma_n(A), \Xi(A)) \le 2^{-n}$;
    \item We say that $(\Omega, \Lambda, \mathcal M, \Xi)$ is in $\Delta_{k + 1}^{\mathcal A}$ for $k \ge 1$ if $(\Omega, \Lambda, \mathcal M, \Xi)$ can be solved by an $\mathcal A$-tower of algorithms of height $k$;
    \item For spectral-set problems with $\mathcal M=\mathrm{CL}_\ast(\C)$, we say that $(\Omega, \Lambda, \mathcal M, \Xi)$ is in $\Sigma_k^{\mathcal A}$ for $k \ge 1$ if it can be solved by an $\mathcal A$-tower of algorithms of height $k$, $\sequence {\Gamma_{n_k, \ldots, n_1}}_{(n_k, \ldots, n_1) \in \N^k}$, such that for each $n_k \in \N$ and $A \in \Omega$ there exists $X_{n_k}(A) \supseteq \Gamma_{n_k}(A)$ with $d_{\mathrm{AW}}(X_{n_k}(A), \Xi(A)) \le 2^{-n_k}$;
    \item For spectral-set problems with $\mathcal M=\mathrm{CL}_\ast(\C)$, we say that $(\Omega, \Lambda, \mathcal M, \Xi)$ is in $\Pi_k^{\mathcal A}$ for $k \ge 1$ if it can be solved by an $\mathcal A$-tower of algorithms of height $k$, $\sequence {\Gamma_{n_k, \ldots, n_1}}_{(n_k, \ldots, n_1) \in \N^k}$, such that for each $n_k\in \N$ and $A \in \Omega$ there exists $X_{n_k}(A) \supseteq\Xi(A)$ with $d_{\mathrm{AW}}(X_{n_k}(A), \Gamma_{n_k}(A)) \le 2^{-n_k}$.
\end{itemize}
\end{definition}

The auxiliary sets $X_{n_k}(A)$ encode this one-sided control in the Attouch--Wets metric; they are not required to be algorithmic outputs. We will not need the $\Sigma_k^{\mathcal A}$ and $\Pi_k^{\mathcal A}$ classifications in this paper until \cref{sec:upper}.

\subsection{Markov Model}

In the Markov model, each operator is presented by a finite source code. We formalize such codes using Turing machines and adopt the standard notation of computability theory \cite[Chapters 1 and 4]{soare2016}. By the Church--Turing thesis, this captures finite pen-and-paper arithmetic procedures.

A Turing machine need not halt on every input, so it naturally computes a \emph{partial} function $\N \rightharpoonup \N$: if the machine halts on input $n$, its output is the value of the function at $n$, and if it does not halt, the function is undefined at $n$. A partial function arising in this way is called \emph{partial computable}. Fix an effective enumeration $\mathcal T_1, \mathcal T_2, \ldots$ of the partial computable functions $\N\rightharpoonup\N$. There exists a \emph{universal Turing machine} which can accept an index $e$ and an input $n$, and simulate the action of $\mathcal T_e(n)$. We consider $e$ to be the \emph{source code}, also called a G\"odel number, of $\mathcal T_e$ (since $\mathcal T_e$ can be reconstructed from $e$). We use $\mathcal T_e(n) \downarrow$ to mean that the program with index $e$ halts on input $n$, and $\mathcal T_e(n) \uparrow$ otherwise; similarly, $\mathcal T_{e, s}(n) \downarrow$ means that this computation halts within $s$ steps. Set
$
W_e = \set {n \in \N : \mathcal T_e(n) \downarrow},
$
the set of inputs for which $\mathcal T_e$ halts. We approximate this set by
$
W_{e, s} = \set {n \le s : \mathcal T_{e, s}(n) \downarrow},
$
where $W_{e, 0} = \emptyset$. Then $W_{e, s} \subseteq W_{e, s + 1}$ and $W_e = \bigcup_{s \in \N} W_{e, s}$. For $s \ge 1$, we say that a new element enters $W_e$ at stage $s$ if $W_{e, s} \setminus W_{e, s - 1} \ne \emptyset$. A set $S \subseteq \N$ is called \emph{computably enumerable} if there exists $e \in \N$ with $S = W_e$. Equivalently, a pen-and-paper procedure lists exactly the elements of $S$, with every element appearing after finitely many stages. In general, no uniform procedure decides $n\notin S$ from an index for a computably enumerable set $S$.

Computational problems in the Markov model are formulated as follows. In this setting, the input is a finite source code describing the operator. For example, one may choose to represent the Schr\"odinger operator $-\Delta + V$ by a program that computes rational approximations to $V(x)$ for $x \in \Q$, or, when $V$ is analytic, by a program that computes its Taylor coefficients.

Fix a particular coding convention for the class of operators under consideration. Let $\Lambda\subseteq\mathbb N$ be the set of valid source codes in the chosen convention, and for each $e\in\Lambda$, let $A_e$ be the corresponding operator described by the program $\mathcal T_{e}$. These operators need not be distinct, and two different programs may describe the same operator. The algorithm is required to work for any valid source code.

\begin{definition}
Let $\mathcal H$ be a separable Hilbert space, let $\Lambda = \set {e_n}_{n \in \N}$ be an indexed family of valid source codes, and let $\Omega = \set {A_n}_{n \in \N}$ be the corresponding indexed family of closed operators on $\mathcal H$. Let $\Xi : \Omega \to \mathrm{CL}_\ast(\C)$ be a problem function. We call $(\Omega, \Lambda, \Xi)$ a Markov spectral problem.
\end{definition}

For a countable set $C$, a \emph{numbering} is a way of assigning natural numbers to the elements of $C$. Formally, it is a partial map $\nu:\mathbb N \rightharpoonup C$ whose range is all of $C$. Thus every element $x\in C$ has at least one natural number $n$ with $\nu(n)=x$; such an $n$ is called a $\nu$-index for $x$. Some natural numbers may not name any element of $C$. We call the numbering effective if the partial map $n\mapsto\nu(n)$ is computable on its domain. We use the standard computable encodings of finite tuples of natural numbers by single natural numbers. In particular, we fix effective numberings of $\mathbb Q$ and of $\mathbb Q+i\mathbb Q$. These induce an effective numbering of $\mathcal F(\mathbb Q+i\mathbb Q)$, the set of finite, possibly empty, subsets of $\mathbb Q+i\mathbb Q$, by coding finite lists of rational complex numbers. We denote this fixed numbering by $\pi_{\mathcal F(\mathbb Q+i\mathbb Q)}$.

At level $k$, the algorithm receives a tuple $(n_k,\ldots,n_1,e)$, where $(n_k,\ldots,n_1)$ are the tower indices and $e$ is a source code for the operator. It returns a $\pi_{\mathcal F(\Q+i\Q)}$-index for a finite set $\Gamma_{n_k,\ldots,n_1}(e)\in\mathcal F(\Q+i\Q)$. These finite sets must satisfy the successive convergence conditions in \cref{def:tower}.

\begin{definition}[Markov tower of algorithms]
Let $(\Omega, \Lambda, \Xi)$ be a Markov spectral problem. A Markov algorithm at level $k$ is a partial computable map $\Gamma:\N^{k+1}\rightharpoonup\N$ with the following property: for every valid source code $e=e_n\in\Lambda$ and every $(n_k,\ldots,n_1)\in\N^k$, the computation $\Gamma(n_k,\ldots,n_1,e)$ halts and returns a $\pi_{\mathcal F(\Q + i \Q)}$-index for a finite set in $\mathcal F(\Q + i \Q)$. We call the indexed family $\sequence {\Gamma_{n_k, \ldots, n_1}}$ a \emph{Markov tower of height $k$ solving $(\Omega, \Lambda, \Xi)$} if its successive limits exist in the sense of \cref{def:tower}, and the final limit is $\Xi(A_n)$ for every valid source code $e_n$.
\end{definition}

Equivalently, the algorithm accepts the tower indices together with the source code of the operator, and returns a finite subset of $\Q + i \Q$. With this identification, we define membership in $\Delta_k^M$, $\Sigma_k^M$ and $\Pi_k^M$ as in \cref{def:scihierarchy}, with Markov towers replacing arbitrary towers.

\subsection{The Arithmetical Hierarchy}

The arithmetical hierarchy classifies subsets of $\N$ according to the number and order of unbounded quantifiers ($\exists,\forall$) needed to define them by arithmetical formulas. A $\Delta_0$ formula is a first-order formula in the language of arithmetic, with symbols $0, 1, +, \times, =$ and the successor function, in which every quantifier is bounded. For $B \subseteq \N$ and $n \ge 1$, the classes $\Sigma_n^0$, $\Pi_n^0$ and $\Delta_n^0$ are defined as follows \cite[Definition 4.1.2]{soare2016}:
\begin{itemize}
	\item $B \in \Sigma_n^0$ if there is a $\Delta_0$ formula $\theta$ such that, for all $x \in \N$,
	$$
	x\in B \Longleftrightarrow
	(Q_1y_1)\cdots(Q_ny_n)\,\theta(x,y_1,\ldots,y_n),
	$$
	where $Q_i=\exists$ for odd $i$ and $Q_i=\forall$ for even $i$;
	\item $B\in\Pi_n^0$ if there is a $\Delta_0$ formula $\theta$ such that, for all $x\in\N$,
	$$
	x\in B \Longleftrightarrow
	(Q_1y_1)\cdots(Q_ny_n)\,\theta(x,y_1,\ldots,y_n),
	$$
	where $Q_i=\forall$ for odd $i$ and $Q_i=\exists$ for even $i$;
	\item $B\in\Delta_n^0$ if $B\in\Sigma_n^0\cap\Pi_n^0$.
\end{itemize}
We call such a formula a $\Sigma_n^0$ or $\Pi_n^0$ formula, respectively. A set is \emph{arithmetical} if it belongs to $\Sigma_n^0$ or $\Pi_n^0$ for some $n$.

A well-known example is the halting problem
$$
\mathbf K = \set {e \in \N : \mathcal T_e(e) \downarrow}.
$$
This set is $\Sigma_1^0$ since
$$
e \in \mathbf K \Longleftrightarrow (\exists s) \mathcal T_{e, s}(e) \downarrow,
$$
and the relation $\mathcal T_{e, s}(e)\downarrow$ is computable for fixed $e$ and $s$. A classic result of Turing says that $\mathbf K$ is not computable. If $e \in \mathbf K$, we can verify this inclusion in finite time by running the Turing machine $\mathcal T_e(e)$ for sufficiently many steps. On the other hand, if $e \notin \mathbf K$, since $\mathbf K$ is not computable, there is no uniform procedure to verify that $\mathcal T_e(e)$ does not halt. Dually, if a $\Pi_1^0$ statement $(\forall x) \theta(x)$ is false, then we can verify it in finite time by finding $x_0$ such that $\neg \theta(x_0)$, whereas if it is true, we cannot generally verify it in finite time.

For $X\in\{\Delta,\Sigma,\Pi\}$, a set $A$ is \emph{$X_n^0$-complete} if $A\in X_n^0$ and every set in $X_n^0$ admits a computable embedding into $A$. The arithmetical hierarchy is strict. Hence an $X_n^0$-complete set does not belong to $\Delta_n^0$ when $X\in\{\Sigma,\Pi\}$. In our lower bounds, we will use the following standard sets
$$
\Tot=\set{e\in\N:W_e=\N},\qquad
\Cof=\set{e\in\N:\N\setminus W_e\text{ is finite}}.
$$
$\Tot$ is the set of indices of programs that halt on every input, and $\Cof$ is the set of indices of programs that halt on all but finitely many inputs.

\begin{proposition}[e.g. {\cite[Theorems 4.3.2 and 4.3.3]{soare2016}}]
\label{prop:complete_class}
$\Tot$ is $\Pi_2^0$-complete, and hence is not in $\Sigma_2^0$. The set $\Cof$ is $\Sigma_3^0$-complete, and hence is not in $\Pi_3^0$.
\end{proposition}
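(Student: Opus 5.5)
The plan is to use the standard reductions from the arithmetical hierarchy, since the statement that Tot is $\Pi_2^0$-complete and Cof is $\Sigma_3^0$-complete is textbook material.

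\emph{Membership.} For $\Tot$ I write
$$
e\in\Tot\iff(\forall n)(\exists s)\,\mathcal T_{e,s}(n)\downarrow,
$$
which is $\Pi_2^0$ because the matrix is computable and therefore expressible by a $\Delta_0$ formula via Kleene's $T$-predicate. For $\Cof$ I write
$$
e\in\Cof\iff(\exists m)(\forall n\ge m)(\exists s)\,\mathcal T_{e,s}(n)\downarrow,
$$
which is $\Sigma_3^0$ after absorbing the bounded condition $n\ge m$.

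\emph{Hardness of $\Tot$.} Let $B=\{x:(\forall y)(\exists z)\theta(x,y,z)\}$. I build, uniformly and computably in $x$, a program $e(x)$ that on input $y$ searches for $z$ with $\theta(x,y,z)$ and halts when it finds one. Then $x\in B\iff W_{e(x)}=\N\iff e(x)\in\Tot$, and the index is computable by the s-m-n theorem.

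\emph{Hardness of $\Cof$.} This is the main obstacle. Let $x\in B\iff(\exists m)\,R(x,m)$ with $R\in\Pi_2^0$. Using the $\Pi_2^0$ hardness just shown, I get a computable sequence of c.e. sets $U_{x,m}$ with $R(x,m)$ true exactly when $U_{x,m}$ is infinite. I would first normalise this via the standard trick: $R(x,m)$ holds iff infinitely many stages $s$ are ``$m$-expansionary'', meaning a new element of some fixed enumeration appears at stage $s$.

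Next I construct a c.e. set $W_{g(x)}$ by a finite-injury, movable-marker argument. I maintain markers $a_0<a_1<\cdots$ on the complement $\N\setminus W$. At each expansionary stage for $m$, I enumerate into $W$ every element currently lying between marker $m$ and the next marker, then move the later markers to fresh large numbers.

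The verification has two directions. If some $R(x,m)$ holds, choose the least such $m$. For that $m$, every non-marker position above marker $m$ is eventually filled. Marker $m$ itself eventually settles, because only the finitely many earlier $R(x,m')$ with $m'<m$ can act on it, and each acts only finitely often beyond some stage. Hence the complement is finite and $g(x)\in\Cof$. If instead no $R(x,m)$ holds, every marker settles and the settled markers remain outside $W$, so the complement is infinite.

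Making this priority bookkeeping precise is the delicate step. The cleaner alternative I would try first is the classical argument of \cite[Theorem 4.3.3]{soare2016} verbatim. It uses the fact that $\{\langle x,m\rangle: W_{f(x,m)}\text{ finite}\}$ is $\Sigma_2^0$-complete, and builds $W_{g(x)}$ so that the element $m$ is kept out of it if and only if all $R(x,m')$ with $m'\le m$ fail.

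Finally, strictness follows from the hierarchy theorem. A $\Pi_2^0$-complete set lying in $\Sigma_2^0$ would put every $\Pi_2^0$ set in $\Sigma_2^0$, since $\Sigma_2^0$ is closed under computable preimages, so $\Pi_2^0=\Sigma_2^0$, contradicting strictness. The same argument one level up shows $\Cof\notin\Pi_3^0$.
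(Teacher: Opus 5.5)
Your proposal is correct and is the standard marker argument behind the textbook result, which the paper cites from \cite[Theorems 4.3.2 and 4.3.3]{soare2016} without giving a proof. Your variant, which enumerates the whole gap above marker $m$ and moves the later markers, is an inessential modification of the usual step of enumerating the $m$th element of the complement at each $m$-expansionary stage. Your verification of both cases goes through.

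Two points need tidying.

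First, the $\Pi_2^0$-hardness of $\Tot$ gives $R(x,m)\iff W_{f(x,m)}=\N$. It does not directly give a c.e.\ set that is infinite exactly when $R(x,m)$ holds. You need a prefix step such as $U_{x,m}=\{n:\{0,\ldots,n\}\subseteq W_{f(x,m)}\}$ before counting new elements. Counting new elements of $W_{f(x,m)}$ itself would also fire infinitely often when that set is infinite but not total.

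Second, your summary of the ``cleaner alternative'' is wrong as stated. Suppose a computable $g$ kept the number $m$ out of $W_{g(x)}$ exactly when $R(x,m')$ fails for all $m'\le m$. Then $x\in B\iff W_{g(x)}\neq\emptyset$, which would make every $\Sigma_3^0$ set c.e. The correct invariant is whether the $m$th marker (the $m$th element of the complement) has a limit, not which specific numbers stay out. Your own marker construction already delivers that invariant.
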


A computable embedding of $\Tot$ therefore gives a $\notin\Sigma_2^0$ result, while a computable embedding of $\Cof$ gives a $\notin\Pi_3^0$ result.

\subsection{The Mechanism for Lower Bounds}

Our spectral lower bounds are reductions from complete arithmetical sets. If a spectral problem admitted a Markov tower below the claimed level, the reduction would induce a tower for the embedded set at the same lower level, contradicting its completeness. We begin with the reduction used for the $\notin\Delta_2^M$ conclusions of \cref{thm:intro_smooth_lower_bounds}.

\begin{proposition}
\label{prop:dichot}
Let $(\Omega, \Lambda, \Xi)$ be a Markov spectral problem. Let $S \subseteq \N$ and suppose that there is a computable map $e \mapsto s(e) \in \Lambda$. Denote by $A_e$ the operator described by the source code $s(e)$. Suppose that for some open ball $B = D_r(x)$, with $x \in \Q + i \Q$ and $r \in\mathbb{Q}_{>0}$, we have
$$
e\in S \Longrightarrow \Xi(A_e)\cap B\neq\emptyset,\qquad
e\notin S \Longrightarrow \Xi(A_e)\cap \overline B=\emptyset.
$$
We call this a spectral dichotomy. If $(\Omega,\Lambda,\Xi)\in\Delta_2^M$, then $S\in\Delta_2^0$.
\end{proposition}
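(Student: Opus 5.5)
Suppose $\{\Gamma_n\}$ is a Markov tower of height one solving $(\Omega,\Lambda,\Xi)$, so $\Gamma_n(e')\to\Xi(A)$ in $d_{\mathrm{AW}}$ for every valid code $e'$. The plan is to build a computable function $g(e,n)\in\{0,1\}$ with $\lim_n g(e,n)=\1_S(e)$ for every $e$. By Shoenfield's limit lemma this gives $S\in\Delta_2^0$.

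First I choose a test criterion. Fix an intermediate rational radius $r<r'$; for definiteness take $r'=r+\delta$ with $\delta$ small and rational. Set
\[
g(e,n)=\begin{cases}1, & \Gamma_n(s(e))\cap D_{r'}(x)\neq\emptyset,\\ 0, & \text{otherwise}.\end{cases}
\]
This is computable. Indeed, $s$ is computable and $\Gamma$ halts on valid codes, so $\Gamma_n(s(e))$ can be computed as a finite list in $\mathbb Q+i\mathbb Q$. Membership of each point in $D_{r'}(x)$ is then a rational inequality $|q-x|^2<r'^2$, which is decidable.

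Next I check the limits, using \cref{prop:awconverge} and the behaviour of $d_{\mathrm{AW}}$ on $\mathrm{CL}_\ast(\C)$.
\begin{itemize}
\item If $e\in S$, pick $z\in\Xi(A_e)\cap B$. Then $\dist(z,\Xi(A_e))=0$. Uniform convergence of $\theta(\dist(\cdot,\Gamma_n))$ on a compact set containing $z$ forces $\dist(z,\Gamma_n(s(e)))\to0$. Since $|z-x|<r<r'$, eventually $\Gamma_n(s(e))$ meets $D_{r'}(x)$, so $g(e,n)\to1$.
\item If $e\notin S$, then $\Xi(A_e)\cap\overline B=\emptyset$. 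When $\Xi(A_e)$ is nonempty and closed, $\eta:=\dist(\overline B,\Xi(A_e))>0$, and I choose $r'$ below $r+\eta$. The trouble is that $\eta$ depends on $e$ while $r'$ must be fixed in advance; this is the main obstacle.
\end{itemize}

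To get around it, I will use the closed-ball test instead: let $g(e,n)=1$ iff $\Gamma_n(s(e))$ meets $D_{r+1/n}(x)$. Then in the first case $\dist(z,\Gamma_n)\to0$ while $1/n\to0$. Since $|z-x|<r$, a margin $r-|z-x|>0$ is available, so eventually some point of $\Gamma_n$ lies within distance $r$ of $x$, and $g\to1$.

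In the second case, $\theta(\dist(w,\Xi(A_e)))\le\theta(\eta)<1$ uniformly for $w$ in the compact set $\overline{D_{r+1}(x)}$. This also holds when $\Xi(A_e)=\emptyset$, where $\theta$ equals $0$. Uniform convergence on that compact set gives $\theta(\dist(w,\Gamma_n))<1$ for all such $w$ once $n$ is large. Hence no point of $\Gamma_n$ lies in $\overline{D_{r+1/n}(x)}$, since such a point $w$ would satisfy $\theta(\dist(w,\Gamma_n))=1$. So $g(e,n)\to0$.

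Thus $\1_S(e)=\lim_n g(e,n)$ with $g$ computable, and the limit lemma yields $S\in\Delta_2^0$.
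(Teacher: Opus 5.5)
Your strategy is essentially the paper's: a computable indicator of whether the $n$th output comes within about $r$ of $x$ is shown to converge to $\1_S(e)$, and then the limit lemma applies (the paper writes out the $\Sigma_2^0$ and $\Pi_2^0$ definitions by hand). Your exact rational test $|q-x|^2<(r+1/n)^2$ even avoids the paper's approximation of the irrational quantity $\dist(x,F_n(e))$.

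However, the case $e\notin S$ contains a false step. You assert that $\theta(\dist(w,\Xi(A_e)))\le\theta(\eta)<1$ uniformly for $w\in\overline{D_{r+1}(x)}$. But $\eta$ is the distance from $\overline{D_r(x)}=\overline B$ to $\Xi(A_e)$, and the hypothesis says nothing about the annulus $r<|w-x|\le r+1$. Suppose $\Xi(A_e)$ contains a point $w_0$ with $r<|w_0-x|<r+1$. Then $\theta(\dist(w_0,\Xi(A_e)))=1$. Moreover, $\dist(w_0,\Gamma_n(s(e)))\to0$ forces $\Gamma_n(s(e))$ to meet $D_{r+1}(x)$ for all large $n$. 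So your intermediate conclusion, that $\theta(\dist(w,\Gamma_n(s(e))))<1$ on all of $\overline{D_{r+1}(x)}$, is false in that situation. Your bound holds only in the subcase $\Xi(A_e)=\emptyset$.

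The repair is short, and there are two ways to do it.
\begin{enumerate}
\item Run the same uniform-convergence argument on $K=\overline{D_{r+\eta/2}(x)}$. There the triangle inequality gives $\dist(w,\Xi(A_e))\ge\eta/2$, hence $\theta(\dist(w,\Xi(A_e)))\le\theta(\eta/2)<1$, and so $\Gamma_n(s(e))\cap K=\emptyset$ for large $n$. Then note that $D_{r+1/n}(x)\subseteq K$ once $1/n<\eta/2$.
\item More simply, observe that $g(e,n)=1$ if and only if $\dist(x,\Gamma_n(s(e)))<r+1/n$. Then use only the convergence $\dist(x,\Gamma_n(s(e)))\to\dist(x,\Xi(A_e))$ in $[0,\infty]$ at the single point $x$, together with $\dist(x,\Xi(A_e))>r$ (the set is closed and disjoint from $\overline B$, or empty). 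This is exactly how the paper argues.
\end{enumerate}
In both versions, the $e$-dependence of $\eta$ only affects how large $n$ must be. Incidentally, the fixed radius $r'=r$ would also have worked, because the open/closed gap in the hypothesis already provides a margin on both sides. The obstacle you identified arose only from choosing $r'>r$.
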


In \cref{prop:dichot}, only a uniform procedure $e\mapsto s(e)$ producing valid source codes is required. The gap between inclusion in $B$ and exclusion from $\overline B$ removes the boundary case.

\begin{proof}[Proof of \cref{prop:dichot}]
Suppose that $(\Omega,\Lambda,\Xi)\in\Delta_2^M$. Then there exists a height-one Markov tower $\sequence{\Gamma_n}_{n\in\N}$ solving $(\Omega,\Lambda,\Xi)$. Thus, for every valid source code $s(e)$, $\Gamma_n(A_e)\rightarrow \Xi(A_e)$ in $(\mathrm{CL}_\ast(\C),d_{\mathrm{AW}})$.

For each $e\in\N$, the map $e\mapsto s(e)$ is computable and $s(e)$ is a valid source code. Hence the Markov algorithm computing $\Gamma_n$ halts on input $(n,s(e))$ and returns an index for a finite set
$$
F_n(e):=\Gamma_n(A_e)\in \mathcal F(\Q+i\Q).
$$
With the convention $\dist(x,\emptyset)=\infty$, define
$$
\beta_n(e):=\min\{r+1,\dist(x,F_n(e))\}.
$$
If $F_n(e)=\emptyset$, then $\beta_n(e)=r+1$. Otherwise, $\dist(x,F_n(e))$ is the minimum of finitely many computable real numbers of the form $|x-y|$, with $y\in\Q+i\Q$. Since $x\in\Q+i\Q$ and $r\in\mathbb{Q}$, we can compute $\beta_n(e)$ to arbitrary precision, uniformly in $n$ and $e$. In particular, setting $\varepsilon_n=\frac1{n+1}$, we can compute a rational number $\alpha_n(e)$ such that $|\alpha_n(e)-\beta_n(e)|<\varepsilon_n$. Define
$$
\Upsilon_n(e)=
\begin{cases}
1, & \alpha_n(e)<r,\\
0, & \alpha_n(e)\ge r.
\end{cases}
$$
Then the relation $\Upsilon_n(e)=1$ is decidable uniformly in $(n,e)$.

We claim that $\Upsilon_n(e)\rightarrow \1_S(e)$ as $n\to\infty$. First observe that, since $F_n(e)\to \Xi(A_e)$ in $d_{\mathrm{AW}}$, we have
$$
\theta(\dist(x,F_n(e)))
\longrightarrow
\theta(\dist(x,\Xi(A_e))),
$$
where $\theta(t)=(1+t)^{-1}$ on $[0,\infty]$. Choose $m\in\N$ with $|x|\le m$; the $m$th term in the definition of $d_{\mathrm{AW}}$ gives this convergence. Since $\theta$ is a homeomorphism $[0,\infty]\to[0,1]$, it follows that $\dist(x,F_n(e))\rightarrow\dist(x,\Xi(A_e))$ in $[0,\infty]$. Therefore $\beta_n(e)=\min\{r+1,\dist(x,F_n(e))\}\rightarrow\beta(e):=\min\{r+1,\dist(x,\Xi(A_e))\}$.

Suppose first that $e\in S$. Then $\Xi(A_e)\cap B\ne\emptyset$, so $\dist(x,\Xi(A_e))<r$. Thus $\beta(e)<r$. Choose $\delta>0$ such that $\beta(e)+2\delta<r$. For all sufficiently large $n$, we have $\beta_n(e)<\beta(e)+\delta$ and $\varepsilon_n<\delta$. Hence
$$
\alpha_n(e)
<
\beta_n(e)+\varepsilon_n
<
\beta(e)+2\delta
<
r.
$$
Thus $\Upsilon_n(e)=1$ for all sufficiently large $n$.

Now suppose that $e\notin S$. Then $\Xi(A_e)\cap \overline B=\emptyset$. If $\Xi(A_e)=\emptyset$, then $\dist(x,\Xi(A_e))=\infty$ and hence $\beta(e)=r+1>r$. If $\Xi(A_e)\ne\emptyset$, then $\Xi(A_e)$ is closed and disjoint from the closed ball $\overline B=\overline{D_r(x)}$, so $\dist(x,\Xi(A_e))>r$. Again $\beta(e)>r$. Choose $\delta>0$ such that $\beta(e)-2\delta>r$. For all sufficiently large $n$, we have $\beta_n(e)>\beta(e)-\delta$ and $\varepsilon_n<\delta$. Therefore
$$
\alpha_n(e)
>
\beta_n(e)-\varepsilon_n
>
\beta(e)-2\delta
>
r.
$$
Thus $\Upsilon_n(e)=0$ for all sufficiently large $n$.

Thus $\Upsilon_n(e)$ converges pointwise to $\1_S(e)$. Hence
$$
e\in S
\Longleftrightarrow
(\exists N)(\forall n\ge N)\,[\Upsilon_n(e)=1].
$$
Since $\Upsilon_n(e)=1$ is a computable relation, this is a $\Sigma_2^0$ definition of $S$. Moreover, because $\Upsilon_n(e)$ is eventually equal to $\1_S(e)$, we also have
$$
e\in S
\Longleftrightarrow
(\forall N)(\exists n\ge N)\,[\Upsilon_n(e)=1].
$$
This is a $\Pi_2^0$ definition of $S$. Hence $S\in \Sigma_2^0\cap \Pi_2^0=\Delta_2^0$.
\end{proof}

For height two, the inner limit need not be computable. The next reduction supplements \cref{prop:dichot} with a stabilization step before the outer limit.

\begin{proposition}
\label{prop:delta3bound}
Let $(\Omega, \Lambda, \Xi)$ be a Markov spectral problem and suppose that there is a computable map $e \mapsto s(e) \in \Lambda$. Denote by $A_e$ the operator described by the source code $s(e)$. Suppose that for some open ball $B = D_r(x)$, with $x \in \Q + i \Q$ and $r \in\mathbb{Q}_{>0}$, we have
$$
e\in \Cof \Longrightarrow \Xi(A_e)\cap B\neq\emptyset,\qquad
e\notin \Cof \Longrightarrow \Xi(A_e)\cap \overline B=\emptyset.
$$
Then $(\Omega, \Lambda, \Xi) \notin \Delta_3^M$.
\end{proposition}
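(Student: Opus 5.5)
We argue by contradiction. Assume $(\Omega,\Lambda,\Xi)\in\Delta_3^M$, so there is a height-two Markov tower $\{\Gamma_{n_2,n_1}\}$ with $\Gamma_{n_2}(A)=\lim_{n_1\to\infty}\Gamma_{n_2,n_1}(A)$ and $\lim_{n_2\to\infty}\Gamma_{n_2}(A)=\Xi(A)$ in $d_{\mathrm{AW}}$. From this tower we will produce a $\Pi_3^0$ definition of $\Cof$. Such a definition contradicts \cref{prop:complete_class}, which says $\Cof$ is $\Sigma_3^0$-complete and not in $\Pi_3^0$. The approach is to run the proof of \cref{prop:dichot} twice, once for each limit. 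Each limit adds one quantifier alternation. The one new difficulty is that the inner limits $\Gamma_{n_2}(A_e)$ need not avoid the boundary $\partial B$, so we cannot threshold them at $r$ directly.

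The plan is to stop thresholding the inner limit and instead treat the real number $\beta_{n_2}(e):=\min\{r+1,\dist(x,\Gamma_{n_2}(A_e))\}$ as a limit of computable reals. Put $\beta_{n_2,n_1}(e):=\min\{r+1,\dist(x,\Gamma_{n_2,n_1}(A_e))\}$. As in \cref{prop:dichot}, this is computable to any precision, uniformly in $(n_2,n_1,e)$. Also as there, convergence in $d_{\mathrm{AW}}$ together with the fact that $\theta$ is a homeomorphism gives $\beta_{n_2,n_1}(e)\to\beta_{n_2}(e)$ as $n_1\to\infty$, and $\beta_{n_2}(e)\to\beta(e):=\min\{r+1,\dist(x,\Xi(A_e))\}$ as $n_2\to\infty$. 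The dichotomy then reads: $e\in\Cof\Rightarrow\beta(e)<r$ and $e\notin\Cof\Rightarrow\beta(e)>r$.

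Rather than use a $\Delta_3^0$ limit lemma, we write out a quantifier form directly:
$$
e\in\Cof\iff(\exists N)(\forall n_2\ge N)\,\bigl[\beta_{n_2}(e)<r\bigr].
$$
The forward direction holds because $\beta(e)<r$ and $\beta_{n_2}(e)\to\beta(e)$. For the converse, if $e\notin\Cof$ then $\beta(e)>r$, so $\beta_{n_2}(e)>r$ eventually, and the right-hand side fails. The same two facts give the dual form
$$
e\in\Cof\iff(\forall N)(\exists n_2\ge N)\,\bigl[\beta_{n_2}(e)<r\bigr].
$$
Next we classify the inner relation. For fixed $(n_2,e)$,
$$
\beta_{n_2}(e)<r\iff(\exists q\in\Q_{>0})(\exists M)(\forall n_1\ge M)\,\bigl[\alpha_{n_2,n_1}(e)<r-q\bigr],
$$
where $\alpha_{n_2,n_1}(e)$ is a rational approximation to $\beta_{n_2,n_1}(e)$ with error below $\frac1{n_1+1}$. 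Once the rational margin $q$ is fixed, the bracketed relation is decidable. We still need to check that this equivalence is exact. If $\beta_{n_2}(e)<r$, take $q$ less than half the gap $r-\beta_{n_2}(e)$. Conversely, if $\alpha_{n_2,n_1}(e)<r-q$ for all large $n_1$, then in the limit $\beta_{n_2}(e)\le r-q<r$. So the relation $\beta_{n_2}(e)<r$ is $\Sigma_2^0$.

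Substituting this $\Sigma_2^0$ relation into the second displayed equivalence yields the prefix $\forall\exists\exists\exists\forall$. Merging adjacent like quantifiers collapses this to a $\Pi_3^0$ formula for $\Cof$, which is the desired contradiction. The main obstacle is the one flagged above: $\beta_{n_2}(e)$ may equal $r$ for individual $n_2$. The strict dichotomy handles this only in the limit in $n_2$, and the existential rational margin $q$ handles it at each fixed $n_2$. Some care is needed to make every use of $d_{\mathrm{AW}}$-convergence go through $\theta$ at the single point $x$, exactly as in \cref{prop:dichot}, including the case where the sets involved are empty.
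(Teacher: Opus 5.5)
Your proof is correct, and it takes a different route from the paper's. The paper does not classify the relation $\beta_m(e)<r$ directly. Instead it builds a computable $\{0,1\}$-valued array $\Upsilon_{m,\ell}(e)$ with hysteresis. It uses two closed threshold sets $J_{1,m}=(-\infty,r+2\varepsilon_m]$ and $J_{2,m}=[r+4\varepsilon_m,\infty)$ separated by a positive gap, an $m$-dependent truncation level $R_m=r+5\varepsilon_m$, and the rule ``retain the label of the last visit''. This guarantees that $\lim_\ell\Upsilon_{m,\ell}(e)$ exists for every $m$, even when the inner limit sits between the thresholds, and that the outer limit is $\1_{\Cof}(e)$. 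The $\Pi_3^0$ definition is then read off as $(\forall M)(\exists m\ge M)(\exists L)(\forall\ell\ge L)[\Upsilon_{m,\ell}(e)=1]$.

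You bypass the stabilization step entirely. Since $\beta_{n_2}(e)$ is the limit of a uniformly computable sequence of rationals, the strict inequality $\beta_{n_2}(e)<r$ becomes $\Sigma_2^0$ once a rational margin $q$ is quantified existentially. The boundary case $\beta_{n_2}(e)=r$ at individual $n_2$ is harmless, because the strict outer dichotomy makes membership in $\Cof$ equivalent to $\beta_{n_2}(e)<r$ holding for infinitely many $n_2$.

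Your argument is shorter and avoids the $m$-dependent thresholds and truncation. The paper's construction gives slightly more: an explicit computable function whose iterated double limit is $\1_{\Cof}$, which directly mirrors a height-two tower for the characteristic function. For the stated contradiction with $\Cof\notin\Pi_3^0$, your direct $\Pi_3^0$ definition suffices.
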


\begin{proof}
Suppose, towards a contradiction, that the hypotheses hold and that $(\Omega,\Lambda,\Xi)\in\Delta_3^M$. Let $\sequence{\Gamma_{m,\ell}}_{m,\ell\in\mathbb N}$ be a height-two Markov tower for the problem. For $m\in\mathbb N$, set $\varepsilon_m=(m+1)^{-1}$, $ R_m=r+5\varepsilon_m$, and define the two closed threshold sets
$$
    J_{1,m}=(-\infty,r+2\varepsilon_m],
    \qquad
    J_{2,m}=[r+4\varepsilon_m,\infty).
$$
Their distance is $2\varepsilon_m>0$. Define
$$
    d_{m,\ell}(e)
    :=
    \min\{R_m,\dist(x,\Gamma_{m,\ell}(A_e))\},
    \qquad
    \delta_{m,\ell}:=2^{-\ell-2}\varepsilon_m.
$$
As in \cref{prop:dichot}, from $m,\ell,e$ we can compute a rational number $\alpha_{m,\ell}(e)$ such that
$$
    |\alpha_{m,\ell}(e)-d_{m,\ell}(e)|<\delta_{m,\ell}.
$$
The approximation tolerance tends to zero with $\ell$ for every fixed $m$, independently of the outer tolerance $\varepsilon_m$.

Define the computable value $\Upsilon_{m,\ell}(e)$ as follows. If no $k\le\ell$ satisfies $\alpha_{m,k}(e)\in J_{1,m}\cup J_{2,m}$, set $\Upsilon_{m,\ell}(e)=1$. Otherwise, let $k_*$ be the largest such $k$ and set
$$
    \Upsilon_{m,\ell}(e)
    =
    \begin{cases}
        1, & \alpha_{m,k_*}(e)\in J_{1,m},\\
        0, & \alpha_{m,k_*}(e)\in J_{2,m}.
    \end{cases}
$$

Fix $m$ and $e$. Since $\Gamma_{m,\ell}(A_e)\to\Gamma_m(A_e)$ in $\mathrm{CL}_*(\mathbb C)$, continuity of truncated distance gives
$$
    d_{m,\ell}(e)
    \longrightarrow
    d_m(e):=\min\{R_m,\dist(x,\Gamma_m(A_e))\}.
$$
Because $\delta_{m,\ell}\to0$, it follows that $\alpha_{m,\ell}(e)\to d_m(e)$. If $J_{1,m}\cup J_{2,m}$ is visited only finitely often, then $\Upsilon_{m,\ell}(e)$ is constant after the last visit. If the union is visited infinitely often, convergence of $\alpha_{m,\ell}(e)$ and the positive gap between $J_{1,m}$ and $J_{2,m}$ imply that the two sets cannot both be visited infinitely often. The other set is therefore visited only finitely often; after its last visit, every subsequent visit has the same label, and that label is retained between visits. Thus $\Upsilon_{m,\ell}(e)$ again stabilizes. Hence $ \Upsilon_m(e):=\lim_{\ell\to\infty}\Upsilon_{m,\ell}(e) $ exists for every $m,e$.

We next determine the outer limit. The convergence $\Gamma_m(A_e)\to\Xi(A_e)$ implies
$$
    \min\{r+1,\dist(x,\Gamma_m(A_e))\}
    \longrightarrow
    \min\{r+1,\dist(x,\Xi(A_e))\}.
$$
If $e\in\Cof$, then $d_\infty(e):=\dist(x,\Xi(A_e))<r$. Choose $\eta>0$ such that $d_\infty(e)<r-3\eta$. For all sufficiently large $m$,
$$
    \dist(x,\Gamma_m(A_e))<r-2\eta
    \quad\text{and}\quad
    \varepsilon_m<\eta.
$$
For each such $m$, we have $d_m(e)<r-2\eta$, and hence, for all sufficiently large $\ell$, $\alpha_{m,\ell}(e)<r-\eta<r+2\varepsilon_m$. Thus the sequence is eventually in $J_{1,m}$, so $\Upsilon_m(e)=1$.

If $e\notin\Cof$, then the closed set $\Xi(A_e)$ is disjoint from $\overline{D_r(x)}$. With the convention $\dist(x,\emptyset)=\infty$, this gives
$$
    \overline d(e)
    :=
    \min\{r+1,\dist(x,\Xi(A_e))\}>r.
$$
Choose $\eta>0$ so that $\overline d(e)>r+3\eta$. For all sufficiently large $m$,
$$
    \dist(x,\Gamma_m(A_e))>r+2\eta
    \quad\text{and}\quad
    5\varepsilon_m<2\eta.
$$
Thus,
$
    R_m=r+5\varepsilon_m<r+2\eta
    <\dist(x,\Gamma_m(A_e)),
$
so $d_m(e)=R_m$. Since $\alpha_{m,\ell}(e)\to R_m$, for all sufficiently large $\ell$,
$$
    \alpha_{m,\ell}(e)
    >R_m-\frac{\varepsilon_m}{2}
    >r+4\varepsilon_m.
$$
Thus the sequence is eventually in $J_{2,m}$, so $\Upsilon_m(e)=0$. We have proved $ \lim_{m\to\infty}\Upsilon_m(e)=\1_{\Cof}(e). $

It follows that
$$
\begin{aligned}
    e\in\Cof
    &\Longleftrightarrow
    (\forall M)(\exists m\ge M)\,[\Upsilon_m(e)=1]\\
    &\Longleftrightarrow
    (\forall M)(\exists m\ge M)(\exists L)(\forall\ell\ge L)\,
    [\Upsilon_{m,\ell}(e)=1].
\end{aligned}
$$
The two existential quantifiers can be merged by computable pairing, and $\Upsilon_{m,\ell}$ is computable. This is a $\Pi_3^0$ definition of $\Cof$, contradicting \cref{prop:complete_class}.
\end{proof}

\section{Lower Bound for Pure Point Spectrum}
\label{sec:pp}

We prove the pure point lower bound by a reduction from $\Tot$. For each program index $e$, we construct a smooth, bounded, real-valued potential $V_e$. If $e\in\Tot$, this is a generalized Gordon potential and the corresponding Schr\"odinger operator has no eigenvalues. If $e\notin\Tot$, the potential is compactly supported and contains a fixed well that forces an eigenvalue in $(-7/8,-5/8)$.

Let $L^1_{\mathrm{loc},\mathrm{unif}}(\mathbb R)$ denote the space of real-valued locally integrable functions $V$ for which
$$
    \|V\|_{1,\mathrm{unif}}
    =
    \sup_{x\in\mathbb R}
    \int_x^{x+1}|V(t)|\,\mathrm dt
    <\infty.
$$
Equivalently, the $L^1$-norm of $V$ is uniformly bounded across all unit intervals. Following \cite{damanik2000generalization}, we say that $V$ is a \emph{generalized Gordon potential} if $V\in L^1_{\mathrm{loc},\mathrm{unif}}(\mathbb R)$ and there exist locally integrable $T_m$-periodic potentials $V_m$, with $T_m\to\infty$, such that
$$
    \lim_{m\to\infty}
    \mathrm e^{C T_m}
    \int_{-T_m}^{2T_m}|V(x)-V_m(x)|\,\mathrm dx
    =
    0
    \qquad\text{for every } C>0.
$$
We use the following theorem of Damanik and Stolz. In the statement below, $-\Delta+V$ is understood in the standard self-adjoint realization for the one-dimensional Schr\"odinger expression with real $L^1_{\mathrm{loc},\mathrm{unif}}$ potential; in our application, the potentials are smooth and bounded.

\begin{proposition}[{\cite[Theorem 1.1]{damanik2000generalization}}]
\label{prop:gordon_pp}
If $V$ is a generalized Gordon potential, then $\specpp(-\Delta+V)=\emptyset$.
\end{proposition}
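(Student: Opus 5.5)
The plan is Gordon's classical argument, which combines the Cayley--Hamilton theorem for unimodular transfer matrices with a Gronwall comparison between $V$ and its periodic approximants $V_m$. Suppose, for contradiction, that $E\in\mathbb R$ is an eigenvalue of $-\Delta+V$ with eigenfunction $u\in L^2(\mathbb R)$, $u\ne0$. Set $U(x)=(u(x),u'(x))^{\mathsf T}$.

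The first step is the standard regularity input for $V\in L^1_{\mathrm{loc},\mathrm{unif}}(\mathbb R)$: any $L^2$ solution of $-u''+Vu=Eu$ satisfies $\|U(x)\|\to0$ as $|x|\to\infty$. To see this, note that the equation gives a uniform local bound $\sup_{|t-x|\le1}\|U(t)\|\le C\|u\|_{L^2([x-2,x+2])}$, with $C$ depending only on $\|V\|_{1,\mathrm{unif}}$ and $|E|$; the right-hand side tends to $0$ because $u\in L^2$. Moreover $U(0)\ne0$, since otherwise uniqueness of solutions of the ODE would force $u\equiv0$.

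Next, let $\Phi(x)$ and $\Phi_m(x)$ denote the transfer matrices from $0$ to $x$ at energy $E$ for $V$ and for $V_m$ respectively. Both have determinant $1$. By periodicity, $\Phi_m(kT_m)=M_m^k$ for $k\in\mathbb Z$, where $M_m=\Phi_m(T_m)$. Cayley--Hamilton gives $M^2-(\operatorname{tr}M)M+I=0$, equivalently $M-(\operatorname{tr}M)I+M^{-1}=0$. Applying these to $U(0)$ yields the Gordon dichotomy. If $|\operatorname{tr}M_m|\le1$, then $\max\{\|M_mU(0)\|,\|M_m^2U(0)\|\}\ge\tfrac12\|U(0)\|$. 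If $|\operatorname{tr}M_m|>1$, then $\max\{\|M_mU(0)\|,\|M_m^{-1}U(0)\|\}\ge\tfrac12\|U(0)\|$. In either case, one of the solutions of the periodic problem at $x\in\{-T_m,T_m,2T_m\}$ has norm at least $\tfrac12\|U(0)\|$. This is exactly why the hypothesis involves the window $[-T_m,2T_m]$.

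The main step, which I expect to be the real obstacle, is the comparison estimate
$$
\sup_{x\in[-T_m,2T_m]}\|\Phi(x)-\Phi_m(x)\|\le \mathrm e^{C T_m}\int_{-T_m}^{2T_m}|V-V_m|\,\mathrm dx ,
$$
with $C$ independent of $m$. Writing both systems as $Y'=\begin{pmatrix}0&1\\V-E&0\end{pmatrix}Y$, the variation-of-constants formula gives
$$
\Phi(x)-\Phi_m(x)=\int_0^x \Phi(x)\Phi(t)^{-1}\begin{pmatrix}0&0\\V-V_m&0\end{pmatrix}(t)\,\Phi_m(t)\,\mathrm dt .
$$
It then suffices to have exponential bounds $\|\Phi(x)\|,\|\Phi_m(x)\|\le \mathrm e^{C|x|}$ with $C=C(E,\|V\|_{1,\mathrm{unif}})$; the bounds on the inverses follow from $\det=1$. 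For $\Phi$ this bound is Gronwall applied on unit intervals. For $\Phi_m$ one needs a uniform bound on $\|V_m\|_{1,\mathrm{unif}}$. I would obtain it from the hypothesis itself: on $[-T_m,2T_m]$, which contains a full period, $\int|V_m|$ over a unit interval is at most $\|V\|_{1,\mathrm{unif}}$ plus $\int_{-T_m}^{2T_m}|V-V_m|$, and the latter tends to $0$; periodicity then extends this bound to every unit interval. Choosing the hypothesis with constant $2C$ makes the right-hand side of the comparison estimate $o(1)$.

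Finally, $U(x)=\Phi(x)U(0)$, so $\|U(x)\|\ge\|\Phi_m(x)U(0)\|-o(1)\|U(0)\|$. By the Gordon dichotomy, for each large $m$ there is a point $x_m\in\{-T_m,T_m,2T_m\}$ with $\|U(x_m)\|\ge\tfrac14\|U(0)\|$. Since $|x_m|\ge T_m\to\infty$, this contradicts the decay established in the first step. Hence $-\Delta+V$ has no eigenvalues, and therefore $\specpp(-\Delta+V)=\emptyset$.

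In our application $V$ is smooth and bounded, so the self-adjoint realization is the unique one (limit point at both ends) and coincides with $H_V\in\mathcal O_M^\infty$.
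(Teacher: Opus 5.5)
The paper gives no proof of its own here and simply cites Damanik--Stolz. Your argument is essentially the proof in that reference, and it is correct: the Cayley--Hamilton dichotomy at $x\in\{-T_m,T_m,2T_m\}$, the variation-of-constants/Gronwall comparison of $\Phi$ and $\Phi_m$ using the uniform bound on $\|V_m\|_{1,\mathrm{unif}}$ extracted from the hypothesis, and the decay of $(u,u')$ for $L^2$ solutions. The one step stated too quickly is the local estimate $\sup_{|t-x|\le1}\|U(t)\|\le C\|u\|_{L^2([x-2,x+2])}$: an $L^1_{\mathrm{loc},\mathrm{unif}}$ bound does not make $\int|V|$ uniformly small on short intervals, so this needs a short extra argument (e.g.\ pigeonholing a subinterval on which $\int|V-E|$ is small and $u$ is nearly linear), although it is immediate in the bounded smooth case used in the paper.
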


The other ingredient is a compactly supported wave packet which will force a negative spectral point. In the compactly supported case considered below, this negative spectral point is necessarily an eigenvalue.

\begin{lemma}
\label{lem:psi_eig}
There are explicit choices of $L \in \N$ and a computable function $\psi \in C_c^\infty((L/3, 2L/3); \C)$ with $\norm \psi_{L^2} = 1$ such that $\|(-\Delta - 1 /4) \psi\|_{L^2} < 1/8$.
\end{lemma}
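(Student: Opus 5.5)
We will take a plane wave at frequency $k=1/2$ (so that $k^2=1/4$) and multiply it by a wide, smooth, compactly supported envelope. Widening the envelope makes the commutator error small. Fix once and for all an explicit bump $\chi\in C_c^\infty((0,1);\mathbb R)$, for instance
\[
\chi(t)=\exp\!\bigl(-1/(t(1-t))\bigr)
\]
suitably normalized, or any polynomial-in-exponentials bump that is computable together with all its derivatives. For $L\in\N$ define
$$
\phi_L(x)=\chi\!\Bigl(\frac{3x-L}{L}\Bigr)\mathrm e^{ix/2},
\qquad
\psi=\phi_L/\norm{\phi_L}_{L^2}.
$$
The rescaled argument $(3x-L)/L$ lies in $(0,1)$ exactly when $x\in(L/3,2L/3)$, so $\psi\in C_c^\infty((L/3,2L/3);\C)$. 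Computability of $\psi$ follows from computability of $\chi$, the exponential, and the normalizing constant, which is an explicit integral computable to any precision.

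Next we compute the action of the operator. Writing $\chi_L(x)=\chi((3x-L)/L)$, the product rule gives
$$
(-\Delta-\tfrac14)\bigl(\chi_L\mathrm e^{ix/2}\bigr)
=-\bigl(\chi_L''+i\chi_L'\bigr)\mathrm e^{ix/2}.
$$
The $\tfrac14\chi_L$ terms cancel exactly; this is the reason for choosing the frequency $1/2$. Substituting $y=(3x-L)/L$ gives the scaling identities
$$
\norm{\chi_L}_{L^2}^2=\tfrac{L}{3}\norm{\chi}_{L^2}^2,\quad
\norm{\chi_L'}_{L^2}^2=\tfrac{3}{L}\norm{\chi'}_{L^2}^2,\quad
\norm{\chi_L''}_{L^2}^2=\tfrac{27}{L^3}\norm{\chi''}_{L^2}^2.
$$
Hence
$$
\norm{(-\Delta-\tfrac14)\psi}_{L^2}\le \frac{3}{L}\frac{\norm{\chi'}_{L^2}}{\norm{\chi}_{L^2}}+\frac{9}{L^2}\frac{\norm{\chi''}_{L^2}}{\norm{\chi}_{L^2}},
$$
which is $O(1/L)$.

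It remains to make $L$ explicit. We bound the three norms of $\chi$ by explicit rational numbers: an upper bound $C$ on the ratio $\norm{\chi'}_{L^2}/\norm{\chi}_{L^2}$ and on $\norm{\chi''}_{L^2}/\norm{\chi}_{L^2}$. One route is certified numerical quadrature. Another, which avoids having to bound derivatives of $\exp(-1/(t(1-t)))$ by hand, is to replace $\chi$ by a $C^2$ piecewise-polynomial bump such as $t^3(1-t)^3$, mollified if necessary. Any $L$ with $3C/L+9C/L^2<1/8$ then works, for example $L\ge 48C$. The only step requiring care is this certified bound on $C$; the rest is a routine scaling computation. Smoothness of the chosen bump is needed for $\psi\in C_c^\infty$, but only the $L^2$ norms of its first two derivatives enter the estimate.
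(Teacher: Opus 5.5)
Your proposal is correct and is essentially the paper's proof: you modulate a dilated bump by $\mathrm e^{ix/2}$ so that the $\tfrac14$ terms cancel, bound the remainder by $O(1/L)$ via scaling, and choose $L$ from certified rational bounds on $\|\chi\|_{L^2}$, $\|\chi'\|_{L^2}$, $\|\chi''\|_{L^2}$. The paper dilates to $(0,L/3)$ and then translates by $L/3$, which is the same affine change of variables as yours. One small fix is needed: your sample bump $\exp(-1/(t(1-t)))$ has support $[0,1]$, which is not compact in $(0,1)$, so choose a bump supported strictly inside, e.g.\ on $[1/4,3/4]$ as the paper does.
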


\begin{proof}
Fix a non-zero computable function $\chi\in C_c^\infty((0,1))$ for which one can compute rational constants $A_0,A_1,A_2>0$ satisfying
$$
    0<A_0\le \|\chi\|_{L^2},
    \qquad
    \|\chi'\|_{L^2}\le A_1,
    \qquad
    \|\chi''\|_{L^2}\le A_2.
$$
For example, one may take a translated and dilated standard bump, such as
$$
    \chi(x)=
    \begin{cases}
        \exp\!\left(-\dfrac{1}{(x-1/4)(3/4-x)}\right),
        & 1/4<x<3/4,\\[2mm]
        0, & \text{otherwise}.
    \end{cases}
$$
The required $L^2$-bounds are computable. For $R>0$, define
$$
    \psi_R(x)
    =
    c_\chi R^{-1/2}\chi(x/R)\mathrm e^{ix/2},
    \qquad
    c_\chi=\|\chi\|_{L^2}^{-1}.
$$
The positive number $\|\chi\|_{L^2}$ is computable, so $c_\chi$ is computable. A change of variables gives $\|\psi_R\|_{L^2}=1$, and $\psi_R$ is supported in $(0,R)$. Differentiating gives
$$
    \left(-\Delta-\frac14\right)\psi_R
    =
    c_\chi R^{-1/2}\mathrm e^{ix/2}
    \left(
        -R^{-2}\chi''(x/R)-iR^{-1}\chi'(x/R)
    \right).
$$
Hence, again by the substitution $u=x/R$,
$$
    \left\|
        \left(-\Delta-\frac14\right)\psi_R
    \right\|_{L^2}
    \le
    \|\chi\|_{L^2}^{-1}
    \left(
        R^{-2}\|\chi''\|_{L^2}
        +
        R^{-1}\|\chi'\|_{L^2}
    \right)  \le
    A_0^{-1}
    \left(
        R^{-2}A_2+R^{-1}A_1
    \right).
$$
The final expression is a computable upper bound and tends to $0$ as $R\to\infty$. Choose $L\in\mathbb N$ large so that, with $R=L/3$,
$$
    A_0^{-1}
    \left(
        R^{-2}A_2+R^{-1}A_1
    \right)
    <
    \frac18.
$$
Finally, set $\psi(x)=\psi_{L/3}(x-L/3)$. Translation preserves the $L^2$-norm and the quantity $\|(-\Delta-\tfrac14)\psi\|_{L^2}$. Since $\psi_{L/3}$ is supported in $(0,L/3)$, the translated function $\psi$ is supported in $(L/3,2L/3)$.
\end{proof}

Using the bump construction from \cref{lem:psi_eig}, choose a computable function $\eta\in C_c^\infty((0,1))$ such that $0\le\eta\le1$ and $\eta(t)=1$ for $t\in[1/3,2/3]$. For every $\ell\in\mathbb N\cup\{0\}$, choose, computably in $\ell$, a rational number $c_\ell$ satisfying
$
    \sup_{x\in\mathbb R}|\eta^{(\ell)}(x)|\le c_\ell.
$
Define $\eta_L(x)=\eta(x/L)$. Then
$$
    \eta_L\in C_c^\infty((0,L)),\qquad
    0\le \eta_L\le 1,\qquad
    \eta_L=1\text{ on }[L/3,2L/3],\qquad
    \eta_L^{(\ell)}(x)=L^{-\ell}\eta^{(\ell)}(x/L).
$$
The potential is formed from disjoint translates of $\eta_L$.

For a program index $e$, set $M_0(e) = 0$ and, for $s \ge 1$, define
$$
M_s(e) = \max \left( \{0\}\cup\set {m \le s : \mathcal T_e(1), \ldots, \mathcal T_e(m) \text { all halt within } s \text { steps}}\right).
$$
We call a stage $s$ active if $M_s(e) > M_{s - 1}(e)$. We can decide whether $s$ is active in finite time by running $\mathcal T_e(1), \ldots, \mathcal T_e(s)$ for $s$ steps. Note that $e \in \Tot$ if and only if infinitely many stages are active. If $e \notin \Tot$ and $n_0$ is the least input on which $\mathcal T_e$ does not halt, then $M_s(e) \le n_0-1$ for all $s$, and eventually $M_s(e) = n_0-1$.

We construct $b_j(e)\in\{-1,0\}$, $j\in\mathbb Z$, from nested finite blocks. Define
$$
    N_m=3^m,
    \qquad
    I_m=\{j\in\mathbb Z:-N_m\le j<2N_m\}.
$$
Thus $I_m$ has length $3N_m=N_{m+1}$, and the intervals $I_m$ exhaust $\mathbb Z$. At level $m=0$, define
$$
    b_{-1}^{(0)}(e)=b_0^{(0)}(e)=b_1^{(0)}(e)=-1.
$$
Suppose that $b_j^{(m)}(e)$ has been defined for all $j\in I_m$. We define $b_j^{(m+1)}(e)$ for $j\in I_{m+1}$ as follows. Let $\pi_m(j)\in I_m$ be the unique integer satisfying
$
    \pi_m(j)\equiv j \pmod{N_{m+1}}.
$
Such a representative exists and is unique because $I_m$ has length $N_{m+1}$. If stage $m+1$ is active, set
$$
    b_j^{(m+1)}(e)
    =
    b_{\pi_m(j)}^{(m)}(e),
    \qquad j\in I_{m+1}.
$$
Thus the old block on $I_m$ is repeated periodically, with period $N_{m+1}$, across the larger interval $I_{m+1}$. If stage $m+1$ is not active, set
$$
    b_j^{(m+1)}(e)
    =
    \begin{cases}
        b_j^{(m)}(e), & j\in I_m,\\
        0, & j\in I_{m+1}\setminus I_m.
    \end{cases}
$$
In either case, the definitions are compatible on $I_m$. Hence, for each $j\in\mathbb Z$, once $m$ is large enough that $j\in I_m$, the value $b_j^{(m)}(e)$ is independent of all later levels. We therefore define
$$
    b_j(e)=b_j^{(m)}(e),
    \qquad j\in I_m.
$$
The value $b_j(e)$ is computable uniformly in $e$ and $j$. Given $j$, choose $m$ with $j\in I_m$. Determining $b_j^{(m)}(e)$ requires only the finite simulations that decide whether the stages $1,\ldots,m$ are active. Two consequences will be used below. If stage $m+1$ is active, then $b^{(m+1)}$ is $N_{m+1}$-periodic on $I_{m+1}=[-N_{m+1},2N_{m+1})$. If only finitely many stages are active, then every sufficiently distant newly created site receives the value zero, and hence $b_j(e)=0$ for all sufficiently large $|j|$.

Define the potential
$$
    V_e(x)
    =
    \sum_{j\in\mathbb Z} b_j(e)\eta_L(x-Lj).
$$
The supports of the translates $\eta_L(\cdot-Lj)$ are contained in the pairwise disjoint intervals
$$
    (Lj,L(j+1)),\qquad j\in\mathbb Z.
$$
Hence, the sum is locally finite; in fact, for each $x$, at most one summand is non-zero. More precisely,
$$
    V_e(x)
    =
    b_j(e)\eta_L(x-Lj),
    \qquad x\in(Lj,L(j+1)).
$$
At the lattice points $x=Lj$, all summands and all their derivatives vanish. Thus $V_e\in C^\infty(\mathbb R)$. Since $b_j(e)\in\{-1,0\}$ and $0\le\eta_L\le1$, we have $ -1\le V_e(x)\le0. $ Define
$$
    H_{e,0}^G f=-f''+V_e f,
    \qquad
    \operatorname{Dom}(H_{e,0}^G)=C_c^\infty(\mathbb R).
$$
Because $V_e$ is a bounded real-valued multiplication operator, the bounded perturbation theorem implies that $H_{e,0}^G$ is essentially self-adjoint. We denote its self-adjoint closure by
$$
    H_e^G:=\overline{H_{e,0}^G},
    \qquad
    \operatorname{Dom}(H_e^G)=H^2(\mathbb R).
$$
Thus $H_e^G\in\mathcal O_M^\infty$. Also, $\|V_e\|_{1,\mathrm{unif}}\le 1$, so $V_e\in L^1_{\mathrm{loc},\mathrm{unif}}(\mathbb R)$.

\begin{lemma}\label{source_comp_pp}
There exists a computable function $s_G : \N \to \mathsf{Code}_M^\infty$ such that $s_G(e)$ is a valid smooth source code for $H_e^G$, equivalently for its potential $V_e$.
\end{lemma}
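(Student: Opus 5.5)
The plan is to describe explicitly the program $s_G(e)$ and check the two requirements of a smooth source code: approximate evaluation of every derivative $V_e^{(k)}(x)$ at computable points, and certified bounds for $\sup_{|x|\le R}|V_e^{(k)}(x)|$. All the ingredients are already uniform in $e$. The values $b_j(e)$ are computable uniformly in $(e,j)$, since deciding whether stages $1,\dots,m$ are active only requires the bounded simulations defining $M_s(e)$. The bump $\eta$ is computable together with its derivatives, and $L$ is an explicit natural number from \cref{lem:psi_eig}. By the s-m-n theorem, it then suffices to give a single algorithm taking $(e,k,x,n)$ or $(e,k,R)$ as input; currying in $e$ yields a computable $s_G$.

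\emph{Bounds.} These are the easy part. For every $x$, at most one summand in $V_e$ is non-zero, and $|b_j(e)|\le1$. Hence
$$
\sup_{x\in\mathbb R}|V_e^{(k)}(x)|\le \sup|\eta_L^{(k)}|= L^{-k}\sup|\eta^{(k)}|\le L^{-k}c_k .
$$
So $s_G(e)$ outputs the rational number $L^{-k}c_k$ for every $R$. This bound is independent of $e$ and requires no simulation at all.

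\emph{Evaluation.} Given a computable point $x$, derivative order $k$ and accuracy $2^{-n}$, the obstacle is that $x$ is only given by approximations, so one cannot decide which cell $(Lj,L(j+1))$ contains $x$ when $x$ is near a lattice point. I avoid this case distinction as follows.
\begin{enumerate}
\item Compute a rational approximation $q$ of $x$ with $|q-x|<1$, and let $J$ be the finite set of integers $j$ with $|q/L-j|\le 2$. Then every $j$ whose summand can be non-zero at $x$ lies in $J$.
\item For each $j\in J$, compute $b_j(e)$ exactly.
\item Output an approximation of
$$
\sum_{j\in J} b_j(e)\,L^{-k}\eta^{(k)}\bigl((x-Lj)/L\bigr)
$$
to within $2^{-n}$. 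This is a finite sum of computable reals, evaluated at computable points, with computable coefficients, so it is computable.
\end{enumerate}
The sum equals $V_e^{(k)}(x)$ exactly, since all other summands vanish identically near $x$. Here I use that $\eta^{(k)}$ is a computable function on $\mathbb R$: it is built from $\exp$ and rational operations, it vanishes outside $(0,1)$, and it is continuous, so evaluation at points near the boundary of the support converges with a computable modulus. This is the step needing the most care, and for it I would cite standard computable analysis for compositions of computable smooth functions.

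\emph{Validity.} Finally, $s_G(e)\in\mathsf{Code}_M^\infty$ because $V_e$ is smooth, bounded and real-valued, so $H_{e,0}^G$ is essentially self-adjoint, as shown above. The decoded operator is then $H_e^G$. Uniformity in $e$ holds because every step above takes $e$ only as a parameter for computing $b_j(e)$.
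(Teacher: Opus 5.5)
Your proposal is correct and follows essentially the same route as the paper: evaluate $V_e^{(k)}(x)$ as a finite sum of computable bump derivatives with exactly computed coefficients $b_j(e)$, and output the uniform bound $L^{-k}c_k$ because at most one translate is non-zero at each point. The differences are cosmetic. You localize to a window of indices around a rational approximation of $x$ rather than using all $|j|\le R_0/L+1$, and you make the currying in $e$ explicit via the s-m-n theorem.
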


\begin{proof}
We describe the source code uniformly in $e$. The code contains the fixed computable function $\eta$, the integer $L$, and the program index $e$. Given $j\in\mathbb Z$, the coefficient $b_j(e)$ is computable by the finite bounded simulation described above.

Let $\ell\in\mathbb N\cup\{0\}$. For $x\in\mathbb R$, we have
$$
    V_e^{(\ell)}(x)
    =
    \sum_{j\in\mathbb Z}
    b_j(e)L^{-\ell}\eta^{(\ell)}(x/L-j).
$$
Fix a computable real $x$, and obtain from its certified approximation a rational number $R_0$ such that $|x|\le R_0$. If $\eta^{(\ell)}(x/L-j)\ne0$, then $|j|\le R_0/L+1$. Hence the displayed sum contains only finitely many terms. The corresponding coefficients $b_j(e)$ and the values $\eta^{(\ell)}(x/L-j)$ are computable, uniformly in $e$, $x$, and $\ell$, so $V_e^{(\ell)}(x)$ can be approximated to any prescribed accuracy. The formula remains valid at the lattice points $x\in L\mathbb Z$, since the zero extension of $\eta$ and all its derivatives vanish there.

The certified local derivative bounds follow directly. Since at every point at most one translate contributes and $|b_j(e)|\le1$, we have, for every $R\in\mathbb N$,
$$
    \sup_{|x|\le R}|V_e^{(\ell)}(x)|
    \le
    L^{-\ell}\sup_{u\in\mathbb R}|\eta^{(\ell)}(u)|.
$$
The support of $\eta$ lies in $[0,1]$, so the right-hand side is bounded by $L^{-\ell}c_{\ell}$, where $c_{\ell}$ is one of the computable bounds fixed above. Therefore the source code can output, uniformly in $R$ and $\ell$, the certified bound $C_{R,\ell}:=L^{-\ell}c_{\ell}$. For $\ell=0$ one may also use the sharper bound $C_{R,0}=1$. This provides the required smooth source code for $V_e$, uniformly in $e$.
\end{proof}

\begin{lemma}
\label{lem:gordon_compact_supp}
If $e\in\Tot$, then $V_e$ is a generalized Gordon potential. If $e\notin\Tot$, then $V_e$ is compactly supported.
\end{lemma}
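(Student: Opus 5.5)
The plan treats the two cases separately; both follow from the nested-block construction of $b_j(e)$.

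\emph{Case $e\notin\Tot$.} Only finitely many stages are active, so there is $m_0$ such that no stage $m+1$ with $m\ge m_0$ is active. For every $m\ge m_0$ the inactive rule sets $b_j^{(m+1)}(e)=0$ on $I_{m+1}\setminus I_m$. By the compatibility of the levels, $b_j(e)=0$ for all $j\notin I_{m_0}$. Since $\eta_L(\cdot-Lj)$ is supported in $(Lj,L(j+1))$, the potential $V_e$ is supported in $[-LN_{m_0},2LN_{m_0}]$, which is compact.

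\emph{Case $e\in\Tot$.} Infinitely many stages are active. Enumerate the active stages as $m_1+1<m_2+1<\cdots$. For each $k$ I would define $V_k$ as the $LN_{m_k+1}$-periodic extension of the restriction of $V_e$ to the window $[-LN_{m_k},\,2LN_{m_k})$, i.e.\ $V_k(x)=\sum_{j\in\mathbb Z}b^{(m_k)}_{\pi_{m_k}(j)}(e)\,\eta_L(x-Lj)$. This is smooth, bounded and locally integrable with period $T_k=LN_{m_k+1}=3LN_{m_k}\to\infty$.

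The key observation is as follows. Because stage $m_k+1$ is active, $b_j^{(m_k+1)}(e)=b^{(m_k)}_{\pi_{m_k}(j)}(e)$ for all $j\in I_{m_k+1}=[-N_{m_k+1},2N_{m_k+1})$. Since $b_j(e)=b_j^{(m_k+1)}(e)$ on $I_{m_k+1}$, we get $V_e=V_k$ exactly on $[-LN_{m_k+1},\,2LN_{m_k+1})=[-T_k,2T_k)$. Hence $\int_{-T_k}^{2T_k}|V_e-V_k|\,\mathrm dx=0$ for every $k$. The Gordon condition $\mathrm e^{CT_k}\cdot 0\to0$ then holds trivially for every $C>0$. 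Finally, $V_e\in L^1_{\mathrm{loc},\mathrm{unif}}$ was already noted, so $V_e$ is a generalized Gordon potential.

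The only delicate step is bookkeeping. First, the period window must match the length $3T_k$ required by the definition of a generalized Gordon potential. Second, the endpoint $2T_k$ is excluded, but it is a single point and does not affect the integral. Third, one must check that later levels never overwrite $I_{m_k+1}$. This holds by the compatibility of the levels established in the construction.
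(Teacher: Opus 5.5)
Your proposal is correct and follows the same route as the paper's proof. An active stage forces exact $N$-periodicity of the coefficients on the window of length $3N$ around the origin, so the Gordon integral over $[-T,2T]$ vanishes identically; finitely many active stages leave only zero-extensions beyond a fixed $I_{m_0}$. The one difference is cosmetic: the paper indexes the active stage directly as $p_r=m_k+1$ and periodizes the block $b_0,\ldots,b_{P_r-1}$, while you periodize the block on $I_{m_k}$, and these give the same periodic potential.
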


\begin{proof}
If stage $m$ is active, then the block $\{b_j(e):j\in I_m\}$ is $N_m$-periodic on $I_m$ with $b_{j+N_m}(e)=b_j(e)$ whenever both $j$ and $j+N_m$ belong to $I_m$. Equivalently, the three blocks
$$
    \{b_j(e):-N_m\le j<0\},\qquad
    \{b_j(e):0\le j<N_m\},\qquad
    \{b_j(e):N_m\le j<2N_m\}
$$
are identical.

Suppose first that $e\in\Tot$. Then there are infinitely many active stages $p_1<p_2<\cdots$. For $r\ge1$, set
$$
    P_r=N_{p_r}=3^{p_r},
    \qquad
    T_r=P_rL.
$$
Then $T_r\to\infty$. We define a $T_r$-periodic potential $V_{e,r}$ as follows. Let $\widetilde b_j^{(r)}$ be the $P_r$-periodic extension of the finite block $b_0(e),b_1(e),\ldots,b_{P_r-1}(e)$. Set
$$
    V_{e,r}(x)
    =
    \sum_{j\in\mathbb Z}
    \widetilde b_j^{(r)}\eta_L(x-Lj).
$$
Since the coefficient sequence $\widetilde b^{(r)}$ is $P_r$-periodic and the bumps are translates by multiples of $L$, the potential $V_{e,r}$ is $T_r=P_rL$-periodic. Because $p_r$ is active, the original coefficients $b_j(e)$ are $P_r$-periodic throughout
$$
    I_{p_r}=\{-P_r,-P_r+1,\ldots,2P_r-1\}.
$$
Therefore
$$
    \widetilde b_j^{(r)}=b_j(e),
    \qquad
    -P_r\le j<2P_r.
$$
It follows that
$$
    V_{e,r}(x)=V_e(x),
    \qquad
    -T_r\le x\le 2T_r.
$$
On each open cell $(Lj,L(j+1))$ with $-P_r\le j<2P_r$, both potentials are the same multiple of the same bump $\eta_L(x-Lj)$; at the endpoints $x\in L\mathbb Z$, all translated bumps and all their derivatives vanish. Thus, for every $C>0$ and every $r$,
$$
    \mathrm e^{C T_r}
    \int_{-T_r}^{2T_r}
    |V_e(x)-V_{e,r}(x)|\,\mathrm dx
    =
    0.
$$
Since $V_e$ is bounded, it belongs to $L^1_{\mathrm{loc},\mathrm{unif}}(\mathbb R)$. Together with $T_r\to\infty$, this shows that $V_e$ is a generalized Gordon potential.

Now suppose that $e\notin\Tot$. Then only finitely many stages are active. Let $m_\ast$ be larger than every active stage. After level $m_\ast$, every newly created site is assigned the value $0$, while the previously constructed block on $I_{m_\ast}$ is left unchanged. Hence $b_j(e)=0$ for all $j\notin I_{m_\ast}$. Thus,
$$
    V_e(x)
    =
    \sum_{j\in I_{m_\ast}}
    b_j(e)\eta_L(x-Lj),
$$
a finite sum of compactly supported smooth functions. Therefore $V_e\in C_c^\infty(\mathbb R)$, and in particular $V_e$ is compactly supported.
\end{proof}

\begin{proposition}
\label{prop:pp_dichotomy}
If $e \in \Tot$, then $\specpp(H_e^G) = \emptyset$. If $e \notin \Tot$, then $\specpp(H_e^G) \cap (-7/8, -5/8) \ne \emptyset$.
\end{proposition}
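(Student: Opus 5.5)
The case $e\in\Tot$ is immediate. By \cref{lem:gordon_compact_supp}, $V_e$ is a generalized Gordon potential; it is bounded, so it lies in $L^1_{\mathrm{loc},\mathrm{unif}}(\mathbb R)$. Moreover, $H_e^G$ is the standard self-adjoint realization with domain $H^2(\mathbb R)$. Hence \cref{prop:gordon_pp} gives $\specpp(H_e^G)=\emptyset$.

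For $e\notin\Tot$, I will combine three facts: the potential is compactly supported, there is a fixed well, and the essential spectrum is $[0,\infty)$. By \cref{lem:gordon_compact_supp}, $V_e\in C_c^\infty(\mathbb R)$. It is therefore a relatively compact perturbation of $-\Delta$ (for instance, $V_e(-\Delta+1)^{-1}$ is compact), so Weyl's theorem gives $\spec_{\mathrm{ess}}(H_e^G)=[0,\infty)$. Consequently every point of $\spec(H_e^G)\cap(-\infty,0)$ is an isolated eigenvalue of finite multiplicity, and so lies in $\specpp(H_e^G)$.

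To produce a spectral point in $(-7/8,-5/8)$, I will use the well at the sites $j=-1,0,1$. At level $0$ we set $b_{-1}=b_0=b_1=-1$, and these values persist since $I_0\subseteq I_m$ for all $m$. So on $[0,L]$ we have $V_e=-\eta_L$, which equals $-1$ on $[L/3,2L/3]$. Take $\psi$ from \cref{lem:psi_eig}, supported in $(L/3,2L/3)$. There $V_e\psi=-\psi$, and so
$$
\bigl(H_e^G+\tfrac34\bigr)\psi=\bigl(-\Delta-\tfrac14\bigr)\psi,
\qquad
\bigl\|\bigl(H_e^G+\tfrac34\bigr)\psi\bigr\|<\tfrac18 .
$$
Since $\|\psi\|=1$ and $H_e^G$ is self-adjoint, the spectral theorem gives $\dist(-3/4,\spec(H_e^G))\le\|(H_e^G+3/4)\psi\|<1/8$. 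Hence some $\lambda\in\spec(H_e^G)$ satisfies $|\lambda+3/4|<1/8$, that is, $\lambda\in(-7/8,-5/8)\subset(-\infty,0)$. By the previous paragraph $\lambda$ is an eigenvalue, so $\specpp(H_e^G)\cap(-7/8,-5/8)\ne\emptyset$.

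The main point needing care is identifying negative spectrum with eigenvalues. I will justify $\spec_{\mathrm{ess}}=[0,\infty)$ through relative compactness of the multiplication by a bounded, compactly supported $V_e$; a standard Rellich-type argument suffices. I also need $\psi\in C_c^\infty\subset\operatorname{Dom}(H_e^G)$, which holds since $H_e^G$ is the closure of $H_{e,0}^G$ defined on $C_c^\infty(\mathbb R)$. Finally, an isolated eigenvalue $\lambda$ lies in $\specpp$ because its eigenvector has spectral measure $\delta_\lambda$, so $\lambda\in\spec(A_{\mathrm{pp}})$.
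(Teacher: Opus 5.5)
Your proof is correct and follows the paper's argument: the Damanik--Stolz Gordon criterion when $e\in\Tot$, and, when $e\notin\Tot$, the approximate eigenvector $\psi$ placed in the persistent well ($b_0(e)=-1$), which gives a spectral point in $(-7/8,-5/8)$. The only difference is how that negative spectral point is shown to be pure point. You use Weyl's theorem, with $\spec_{\mathrm{ess}}(H_e^G)=[0,\infty)$ by relative compactness of $V_e$. The paper instead cites \cite[Theorem 9.38]{terschl} to get $\specac(H_e^G)=[0,\infty)$ and $\specsc(H_e^G)=\emptyset$. Either suffices; yours needs only information on the negative half-line and is not specific to one dimension.
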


\begin{proof}
If $e \in \Tot$, then by \cref{lem:gordon_compact_supp}, $V_e$ is a generalized Gordon potential. By \cref{prop:gordon_pp}, we then have $\specpp(H_e^G) = \emptyset$.

Now suppose that $e \notin \Tot$. Recall that we have constructed $\psi \in C_c^\infty((L/3, 2L/3))$ such that $\norm \psi_{L^2} = 1$ and $\norm {(-\Delta - 1/4) \psi}_{L^2} < 1/8$. Since $\psi\in C_c^\infty(\mathbb R)\subseteq\operatorname{Dom}(H_e^G)$ is supported in $(L/3, 2L/3)$, $V_e(x) \psi(x) = -\eta_L(x) \psi(x)$ for each $x \in (L/3, 2L/3)$. Since $\eta_L=1$ on $(L/3, 2L/3)$, we have
$$
    \left(H_e^G+\frac34\right)\psi
    =
    \left(-\Delta-\frac14\right)\psi
$$
and
$$
\norm {\left(H_e^G + \frac 3 4\right) \psi}_{L^2} = \norm {\left(-\Delta - \frac 1 4\right) \psi}_{L^2} < \frac 1 8.
$$
Hence $\dist(- 3/4, \spec(H_e^G)) < 1/8$ so that $\spec(H_e^G) \cap (-7/8, -5/8) \ne \emptyset$. Since $V_e$ is compactly supported, by \cite[Theorem 9.38]{terschl}, $\specac(H_e^G) = [0, \infty)$ and $\specsc(H_e^G) = \emptyset$. Therefore $\spec(H_e^G)\cap(-7/8,-5/8)\subseteq\specpp(H_e^G)$. Since the set on the left is non-empty, $\specpp(H_e^G)\cap(-7/8,-5/8)\ne\emptyset$.
\end{proof}

Define the subfamily $(\mathcal O_{M, G}^\infty, \mathsf{Code}_{M, G}^\infty) = (\sequence {H_e^G}_{e \in \N}, \sequence {s_G(e)}_{e \in \N})$. The pure-point lower bound follows by applying \cref{prop:dichot} to this family.

\begin{theorem}
\label{thm:pp}
We have $(\mathcal O_{M, G}^\infty, \mathsf{Code}_{M, G}^\infty, \specpp) \notin \Delta_2^M$ and hence $(\mathcal O_M^\infty, \mathsf{Code}_M^\infty, \specpp) \notin \Delta_2^M$.
\end{theorem}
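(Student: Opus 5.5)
We argue by contradiction and use \cref{prop:dichot} with $S=\N\setminus\Tot$. Suppose $(\mathcal O_{M, G}^\infty, \mathsf{Code}_{M, G}^\infty, \specpp)\in\Delta_2^M$. The map $e\mapsto s_G(e)$ is computable by \cref{source_comp_pp}, and each $s_G(e)$ is a valid code for $H_e^G$. Take the ball $B=D_{1/8}(-3/4)$, which has rational centre and radius.

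\Cref{prop:pp_dichotomy} gives the dichotomy. If $e\notin\Tot$, then $\specpp(H_e^G)\cap(-7/8,-5/8)\neq\emptyset$. Since $(-7/8,-5/8)$ is exactly $B\cap\R$, this gives $\specpp(H_e^G)\cap B\ne\emptyset$. If $e\in\Tot$, then $\specpp(H_e^G)=\emptyset$, so certainly $\specpp(H_e^G)\cap\overline B=\emptyset$. These are exactly the hypotheses of \cref{prop:dichot} for $S=\N\setminus\Tot$, so $\N\setminus\Tot\in\Delta_2^0$.

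Since $\Delta_2^0$ is closed under complementation, this gives $\Tot\in\Delta_2^0\subseteq\Sigma_2^0$. That contradicts \cref{prop:complete_class}, which says $\Tot$ is $\Pi_2^0$-complete and hence not in $\Sigma_2^0$.

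For the second claim, any Markov height-one tower for the full class $(\mathcal O_M^\infty,\mathsf{Code}_M^\infty,\specpp)$ must work on every valid code. In particular it works on the codes $s_G(e)\in\mathsf{Code}_M^\infty$, whose operators $H_e^G$ lie in $\mathcal O_M^\infty$. Restricting the tower to these codes therefore yields a tower for the subfamily, and the first claim rules this out.

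There is no real obstacle, since the analytic work is already in \cref{prop:pp_dichotomy} and the computability work is in \cref{source_comp_pp}. The only points to check are that $B$ is an open ball with rational data, that open versus closed containment is handled correctly, and that the subfamily's codes are genuinely valid codes in $\mathsf{Code}_M^\infty$. For the last point, $V_e$ is bounded, so $H_{e,0}^G$ is essentially self-adjoint, as shown earlier.
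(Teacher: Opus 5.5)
Your proposal is correct and follows the paper's proof essentially verbatim: you use the same ball $B=D_{1/8}(-3/4)$, the dichotomy of \cref{prop:pp_dichotomy}, computability of $e\mapsto s_G(e)$, and \cref{prop:dichot} with $S=\N\setminus\Tot$ to contradict \cref{prop:complete_class}. Your restriction argument for the full class $(\mathcal O_M^\infty,\mathsf{Code}_M^\infty)$ spells out the ``hence'' that the paper leaves implicit, and it is valid.
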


\begin{proof}
Let $B = D_{1/8}(-3/4)$ so that $B \cap \R = (-7/8, -5/8)$. If $e \in \Tot$, then by \cref{prop:pp_dichotomy} we have $\specpp(H_e^G) = \emptyset$ and hence $\specpp(H_e^G) \cap \overline B = \emptyset$. If $e \notin \Tot$, then by \cref{prop:pp_dichotomy}, we have $\specpp(H_e^G) \cap (-7/8, -5/8) \ne \emptyset$ and hence $\specpp(H_e^G) \cap B \ne \emptyset$. Recall from \cref{source_comp_pp} that the map $e \mapsto s_G(e)$ is computable. Suppose for contradiction that $(\mathcal O_{M, G}^\infty, \mathsf{Code}_{M, G}^\infty, \specpp) \in \Delta_2^M$. Then, by \cref{prop:dichot}, $\N \setminus \Tot$ is in $\Delta_2^0$. This would imply that $\Tot$ is in $\Delta_2^0$, contradicting \cref{prop:complete_class}.
\end{proof}

\section{Lower Bound for Absolutely Continuous Spectrum}
\label{sec:ac}

We prove the absolutely continuous lower bound by a second reduction from $\Tot$. The construction uses a theorem of Spencer and Simon: high-barrier Schr\"odinger potentials have empty absolutely continuous spectrum. We construct $U_e$ to have high barriers when $e\in\Tot$ and compact support otherwise. In the latter case the absolutely continuous spectrum is $[0,\infty)$.

\begin{proposition}[{\cite[Theorem 3.1]{traceclass}}]
\label{prop:spencer_simon}
Let $V : \R \to \R$ be measurable. Let $\sequence {x_n}_{n \in \Z}$ be a sequence of real numbers with $x_n \to \pm\infty$ as $n \to \pm\infty$. Let $\sequence {\ell_n}_{n \in \Z}$ and $\sequence {h_n}_{n \in \Z}$ be positive real numbers such that
\begin{itemize}
	\item $V$ is bounded below;
	\item $V(x) \ge h_n$ whenever $|x - x_n| \le \ell_n$;
	\item $h_n \to \infty$ as $n \to \pm \infty$;
	\item $h_n \ell_n^2 \to \infty$ as $n \to \pm \infty$.
\end{itemize}
Then $\specac(-\Delta + V) = \emptyset$.
\end{proposition}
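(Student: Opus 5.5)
The plan is to prove the statement by a \emph{decoupling plus trace-class stability} argument in the spirit of Simon and Spencer. The key fact is the Kato--Birman theorem: if $A$ and $B$ are self-adjoint and bounded below, and $\mathrm e^{-A}-\mathrm e^{-B}$ is trace class, then the absolutely continuous parts of $A$ and $B$ are unitarily equivalent. I will compare $H=-\Delta+V$ with an operator $\widetilde H$ obtained by inserting Dirichlet boundary conditions at the centres of a sparse subsequence of the barriers. If both directions of the sequence $\{x_n\}$ are used, $\widetilde H$ is a direct sum of Schr\"odinger operators on bounded intervals. Since $V$ is bounded below, each summand has compact resolvent, so $\widetilde H$ has pure point spectrum and $\specac(\widetilde H)=\emptyset$. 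Thus everything reduces to showing $\mathrm e^{-H}-\mathrm e^{-\widetilde H}\in\mathcal S_1$; by a shift of $V$ we may assume $V\ge 1$, so both semigroups are contractions.

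The main analytic input is a local tunnelling estimate. Let $H_n$ denote $H$ with a single Dirichlet condition inserted at $x_n$. I want a bound of the form
$$
\norm{\mathrm e^{-H}-\mathrm e^{-H_n}}_{\mathcal S_1}\le C\,\mathrm e^{-c\sqrt{h_n}\,\ell_n}+C\,\mathrm e^{-c\,h_n}
\qquad(h_n\ge1),
$$
with $C,c$ independent of $n$ once $h_n\ell_n^2$ is large. To prove it I would use the Feynman--Kac representation. The kernel difference $\mathrm e^{-H}(x,y)-\mathrm e^{-H_n}(x,y)$ is an integral over Brownian paths from $x$ to $y$ that hit $x_n$ before time $1$. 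Such a path starts outside $[x_n-\ell_n,x_n+\ell_n]$, or if it starts inside it feels the weight $\mathrm e^{-\int V}$ with $V\ge h_n$. In either case it either spends time of order $\min\{1,\ell_n^2\}$ inside the barrier, paying a factor at most $\mathrm e^{-h_n\min\{1,\ell_n^2\}}$, or travels distance $\ell_n$ in a short time, which has Gaussian cost. Optimising over the time spent gives the $\mathrm e^{-c\sqrt{h_n}\ell_n}$ Agmon-type factor. To pass from a Hilbert--Schmidt kernel estimate to the trace norm, I would write the difference via the semigroup property as $\mathrm e^{-H/2}(\cdot)\mathrm e^{-H/2}$-type products of Hilbert--Schmidt operators. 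Each factor is localised near $x_n$ by Gaussian off-diagonal decay, and $V$ bounded below keeps the localisation weights integrable. This step is the main obstacle, in particular making the constants uniform in $n$ using only the lower bound on $V$, $h_n\to\infty$, and $h_n\ell_n^2\to\infty$.

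Granting the estimate, the hypotheses $h_n\to\infty$ and $h_n\ell_n^2\to\infty$ (the latter meaning $\sqrt{h_n}\,\ell_n\to\infty$) make the right-hand side tend to $0$ as $n\to\pm\infty$. I would then choose subsequences $n_k\to+\infty$ and $n_{-k}\to-\infty$ along which these bounds are summable. Inserting the Dirichlet conditions one at a time, the differences telescope. Since each step only changes behaviour near one barrier, and the localisation above is uniform, the same bound holds for inserting a condition at $x_{n_k}$ into an operator already decoupled elsewhere. Hence $\mathrm e^{-H}-\mathrm e^{-\widetilde H}$ is a norm-convergent sum in $\mathcal S_1$. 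It only remains to record, using $x_n\to\pm\infty$, that the Dirichlet points split $\R$ into bounded intervals. Kato--Birman then gives $\specac(H)=\specac(\widetilde H)=\emptyset$.
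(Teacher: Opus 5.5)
Your outline is correct and is essentially the trace-class decoupling argument of the cited Simon--Spencer theorem; the paper itself gives no proof and simply imports that result. You insert Dirichlet conditions at a sparse two-sided subsequence of barriers, use Feynman--Kac tunnelling bounds to show $\mathrm e^{-H}-\mathrm e^{-\widetilde H}\in\mathcal S_1$, and apply Kato--Birman against the pure point direct sum of compact-resolvent interval operators. The points you leave implicit are routine: the uniform localized Hilbert--Schmidt factorization you already flag, and the identification of the $\mathcal S_1$-limit of your telescoping sum with $\mathrm e^{-H}-\mathrm e^{-\widetilde H}$, which follows from strong convergence of the partially decoupled semigroups (e.g.\ by monotone convergence of the killed path integrals).
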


Define a sequence of tuples $\sequence {(n_j, s_j)}_{j \in \N}$ by $(n_1, s_1) = (1, 1)$ and
\begin{equation}
\label{eqn:tuple_seq}
(n_{j + 1}, s_{j + 1}) = \begin{cases}(n_j + 1, 1) & \mathcal T_{e, s_j}(n_j) \downarrow, \\ (n_j, s_j + 1) & \mathcal T_{e, s_j}(n_j) \uparrow.\end{cases}
\end{equation}
The first component $n_j$ identifies the current input, while $s_j$ is the simulation bound. When $\mathcal T_{e,s_j}(n_j)\downarrow$, the recursion advances to the next input and resets the simulation bound to $1$; otherwise, it increases the simulation bound by one. Hence the pair $(n_j,s_j)$ is computable uniformly from $e$ and $j$ by a finite simulation. Define $a_e:\N\to\N$ by
$$
a_e(j) = \begin{cases}
n_j + 1 & \text { if } {\mathcal T}_{e, s_j}(n_j) \downarrow, \\
0 & \text{otherwise}.
\end{cases}
$$

\begin{lemma}
\label{lem:ae_dichot}
If $e \notin \Tot$, then $a_e(j) = 0$ for sufficiently large $j$. If $e \in \Tot$, then $a_e(j)$ is unbounded in $j$.
\end{lemma}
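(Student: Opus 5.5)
The plan is to analyse the dynamics of the recursion \eqref{eqn:tuple_seq} directly and split into the two cases of the statement. The basic observation is that the sequence $(n_j,s_j)$ is a dovetail-free, sequential simulation. It stays on input $n_j$, increasing $s_j$ by one at each step, until $\mathcal T_{e,s_j}(n_j)\downarrow$. At that point (and only then) $a_e(j)=n_j+1\ne 0$, and the recursion moves on to $n_j+1$. In particular $a_e(j)\ne 0$ exactly at the indices $j$ where the current input is seen to halt, and $n_j$ is non-decreasing in $j$.

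For $e\notin\Tot$, let $n_0\ge 1$ be the least input with $\mathcal T_e(n_0)\uparrow$. First I show by induction that the recursion reaches $n_j=n_0$ at some finite $j_0$. Each input $n<n_0$ halts, say within $t_n$ steps. Once $n_j=n$, the simulation bound $s_j$ increases from $1$ and reaches $t_n$ after finitely many steps; here I use monotonicity of halting within $s$ steps in $s$. The recursion then advances. Once $n_j=n_0$, the predicate $\mathcal T_{e,s_j}(n_0)\downarrow$ is false for every $s_j$, so the recursion is stuck: $n_j=n_0$ for all $j\ge j_0$ and $a_e(j)=0$ for all $j\ge j_0$. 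If $n_0$ is the very first input $1$, then $j_0=1$ and the argument is immediate. The only point needing care is the indexing convention of $\mathbb N$: if $0\in\mathbb N$, an input $0$ is never simulated. That does not matter for the lemma as stated, since it is purely about the recursion; I would simply note that \Tot\ is read with respect to the inputs $n\ge1$ actually simulated, or adjust the start accordingly.

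For $e\in\Tot$, the same inductive argument shows that every input $n$ is eventually reached and eventually halts within some finite $s_j$. Hence for each $n$ there is $j$ with $n_j=n$ and $\mathcal T_{e,s_j}(n)\downarrow$, giving $a_e(j)=n+1$. Therefore $\sup_j a_e(j)=\infty$, so $a_e$ is unbounded, and in fact every value $n+1$ is attained.

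I do not expect any real obstacle. The only step needing care is the finiteness of each "waiting phase": if $\mathcal T_e(n)$ halts within $t$ steps, then $\mathcal T_{e,s}(n)\downarrow$ for all $s\ge t$. This follows from the standard definition of step-bounded computation. With that in hand, both cases are straightforward inductions on the input index $n$.
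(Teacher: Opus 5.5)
Your proof is correct and follows essentially the same route as the paper. In the non-total case you locate the first non-halting input (reached after finitely many waiting phases), where the recursion stalls with $a_e(j)=0$; in the total case you show by induction that every input $k$ is reached and its halting detected, giving $a_e(j)=k+1$. Your indexing caveat is well taken: the paper implicitly uses $\mathbb N=\{1,2,\ldots\}$ (see its treatment of the case $n_\ast=1$), and under that convention no adjustment is needed.
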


\begin{proof}
If $e \notin \Tot$, then there exists a smallest $n_\ast\in\N$ such that $\mathcal T_e(n_\ast) \uparrow$. If $n_\ast=1$, the condition that $\mathcal T_e(k)$ halts for $k<n_\ast$ is vacuous. Since $\mathcal T_e(k)$ halts for every $k<n_\ast$, there exists $j_0$ such that $n_{j_0}=n_\ast$. For $j>j_0$, the value $s_j$ increases by one at each subsequent stage because $\mathcal T_{e,s}(n_\ast)$ fails to halt for every $s\ge1$. In particular, $s_{j+1}\ne1$ for $j>j_0$, and hence $a_e(j)=0$ for all sufficiently large $j$.

If $e\in\Tot$, then for each $k\in\N$, let $m_k$ be the least stage such that $\mathcal T_{e,m_k}(k)\downarrow$. By induction on $k$, the pair $(k,m_k)$ occurs in the sequence $(n_j,s_j)$. Hence there is an index $j_k$ such that $a_e(j_k)=k+1$. Moreover, the recursion processes the inputs in increasing order, so the indices may be chosen with $j_k<j_{k+1}$. In particular, $j_k\to\infty$ and $a_e(j_k)\to\infty$, so $a_e$ is unbounded.
\end{proof}

By the same argument as in \cref{sec:pp}, we fix a computable $\beta \in C_c^\infty((-1/2, 1/2); [0, 1])$ such that
$$
\beta(x) = 1, \quad x \in [-1/3, 1/3].
$$
For each $k\in\N\cup\{0\}$, choose, computably in $k$, a rational number $B_k$ such that $\|\beta^{(k)}\|_\infty\le B_k$. For $j \ge 0$, set
$$
c_j = j + 1, \quad \beta_j^+(x) = \beta(x - c_j), \quad \beta_j^-(x) = \beta(x + c_j).
$$
Then $\beta_j^\pm$ is supported in $(\pm (j + 1) - 1/2, \pm (j + 1) + 1/2)$ and equals $1$ on $[\pm (j + 1) - 1/3, \pm (j + 1) + 1/3]$. For $e \in \N$, define
$$
U_e(x) = \sum_{j = 1}^\infty a_e(j) (\beta_j^+(x) + \beta_j^-(x)).
$$
The functions $\beta_j^+$ and $\beta_j^-$ also take values in $[0,1]$. Since at each point at most one translated bump is non-zero, $U_e\ge0$ and $U_e$ is locally bounded. Define
$$
    H_{e,0}^B f=-f''+U_e f,
    \qquad
    \operatorname{Dom}(H_{e,0}^B)=C_c^\infty(\mathbb R).
$$
Since $U_e\in L^2_{\mathrm{loc}}(\mathbb R)$ and $U_e(x)\ge0$, the Faris--Lavine theorem \cite[Theorem X.38]{reedsimon1975} implies that $H_{e,0}^B$ is essentially self-adjoint. We denote its self-adjoint closure by
$$
    H_e^B:=\overline{H_{e,0}^B}.
$$
This is the self-adjoint realization of $-\Delta+U_e$ used below.

\begin{lemma}
\label{lem:smooth_source_we}
For each $e \in \N$, $U_e$ is smooth. Further, there exists a computable function $s_B : \N \to \mathsf{Code}_M^\infty$ such that $s_B(e)$ is a valid smooth source code for $H_e^B$, equivalently for its potential $U_e$.
\end{lemma}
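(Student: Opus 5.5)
The plan follows the proof of \cref{source_comp_pp}, with the structure of $U_e$ doing most of the work. First, smoothness. The translated bumps $\beta_j^\pm$ are supported in the intervals $(\pm(j+1)-1/2,\pm(j+1)+1/2)$. For distinct indices and signs these intervals are pairwise disjoint: their centres are the integers $\pm2,\pm3,\ldots$ and each has length $1$, so consecutive supports meet at most at a half-integer endpoint, where every bump vanishes to infinite order. Hence the series defining $U_e$ is locally finite. On $(k-1/2,k+1/2)$ with $|k|\ge2$ it equals $a_e(|k|-1)\beta(x\mp(|k|))$, and on $(-3/2,3/2)$ it vanishes. A locally finite sum of $C^\infty$ functions is $C^\infty$, and termwise differentiation gives
$$
U_e^{(\ell)}(x)=\sum_{j\ge1}a_e(j)\bigl(\beta^{(\ell)}(x-c_j)+\beta^{(\ell)}(x+c_j)\bigr),
$$
where for any given $x$ at most one term is nonzero.

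Next, the source code, built uniformly in $e$. It stores $e$, the fixed computable $\beta$, and the computable bounds $B_\ell$. Given $\ell$, a computable point $x$ and a precision $2^{-n}$, it does the following. It obtains a rational $R_0\ge|x|$. It notes that only indices $j\le R_0+1$ can contribute. For each such $j$ it computes $a_e(j)$ by simulating the recursion \eqref{eqn:tuple_seq} for $j$ steps, which is a finite bounded computation of the pairs $(n_i,s_i)$ and one halting check $\mathcal T_{e,s_j}(n_j)\downarrow$. It then approximates the finitely many values $\beta^{(\ell)}(x\mp c_j)$ to precision $2^{-n}/(1+\sum_{j\le R_0+1}a_e(j))$, so that the total error is below $2^{-n}$.

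For the certified derivative bounds, one needs the elementary bound $a_e(j)\le n_j+1\le j+1$. This holds because $n_1=1$ and $n$ increases by at most one per step, so $n_j\le j$. If $|x|\le R$ and a bump indexed by $j$ is nonzero at $x$, then $j+1-1/2<R$, so $j<R$. Therefore
$$
\sup_{|x|\le R}|U_e^{(\ell)}(x)|\le (R+1)B_\ell,
$$
and the code outputs this rational number uniformly in $R$ and $\ell$. Computing the exact value $\max_{j<R}a_e(j)B_\ell$ by finite simulation would also work and would be sharper.

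Finally, validity. The code belongs to $\mathsf{Code}_M^\infty$ because $U_e$ is smooth, nonnegative and locally bounded. The Faris--Lavine argument recorded above then gives essential self-adjointness of $H_{e,0}^B$ on $C_c^\infty(\mathbb R)$, so the code names $H_e^B$. Every step above is uniform in $e$, so $e\mapsto s_B(e)$ is computable. There is no real obstacle here. The only point needing care is that the derivative bounds must be certified without knowing whether $e\in\Tot$, and the a priori inequality $a_e(j)\le j+1$ provides exactly that.
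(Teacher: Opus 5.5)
Your argument is correct and is essentially the paper's: disjoint supports give local finiteness and termwise differentiation, the finitely many relevant $a_e(j)$ come from bounded simulation, and the certified bounds come from the $B_\ell$. The only difference is that you use the a priori estimate $a_e(j)\le j+1$, whereas the paper computes $B_\ell\max_{1\le j\le R+1}a_e(j)$ directly by finite simulation, which, as you note, also needs no knowledge of whether $e\in\Tot$. One small slip: halve your per-term precision, because each $j$ contributes two bumps.
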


\begin{proof}
The supports of the functions $\beta_j^\pm$ are pairwise disjoint. For every $R\in\mathbb N$, only the indices $1\le j\le R+1$ can contribute at points of $[-R,R]$. Hence the defining sum is locally finite, $U_e\in C^\infty(\mathbb R)$, and, for $|x|\le R$, termwise differentiation gives $U_e^{(k)}(x)=\sum_{j=1}^{R+1}a_e(j)\bigl((\beta_j^+)^{(k)}(x)+(\beta_j^-)^{(k)}(x)\bigr)$. Given a computable real $x$, a suitable integer $R$ is computed from its certified approximation. The finitely many coefficients $a_e(j)$ and values $(\beta_j^\pm)^{(k)}(x)$ are computable. At the support endpoints the same formula applies because the smooth zero extension of $\beta$ and all its derivatives vanish there. Thus $U_e^{(k)}(x)$ is computable uniformly in $e$, $k$, and $x$.
The same finite range gives certified local bounds. Since at most one translated bump contributes at each point of $[-R,R]$, the $k$th derivative is bounded there by $B_k\max_{1\le j\le R+1}a_e(j)$. This quantity is computable from $e$, $k$, and $R$. Hence $U_e$ is smooth and $s_B(e)$ is a valid smooth source code.
\end{proof}

Define the subfamilies $\mathcal O_{M, B}^\infty = \sequence {H_e^B}_{e \in \N}$ and $\mathsf{Code}_{M, B}^\infty = \sequence {s_B(e)}_{e \in \N} \subseteq \mathsf{Code}_M^\infty$. The spectral dichotomy for $\mathcal O_{M, B}^\infty$ is the following.

\begin{proposition}
\label{prop:ae_dichot}
If $e \in \Tot$, then $\specac(H_e^B) = \emptyset$. If $e \notin \Tot$, then $\specac(H_e^B) = [0, \infty)$.

\end{proposition}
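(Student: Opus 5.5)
The proof splits into the two cases of \cref{lem:ae_dichot}, invoking \cref{prop:spencer_simon} in one and a compact-support argument in the other.

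\emph{Case $e\in\Tot$.} By \cref{lem:ae_dichot} there are indices $j_1<j_2<\cdots$ with $a_e(j_k)=k+1\to\infty$. We apply \cref{prop:spencer_simon} with barrier centres $x_k=c_{j_k}=j_k+1$ for $k\ge1$, $x_{-k}=-c_{j_k}$ for $k\ge1$, and $x_0=0$. Half-widths are $\ell_k=1/3$, and heights are $h_{\pm k}=k+1$ for $k\ge1$, with $h_0$ handled below. Since $\beta_{j_k}^\pm=1$ on $[\pm c_{j_k}-1/3,\pm c_{j_k}+1/3]$ and supports are disjoint, $U_e\ge a_e(j_k)=h_{\pm k}$ there. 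Also $U_e\ge0$ is bounded below, $h_n\to\infty$, $h_n\ell_n^2=h_n/9\to\infty$, and $x_n\to\pm\infty$ because $j_k\to\infty$.

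The hypotheses require positive $h_n$ with $V\ge h_n$ on every interval, including $n=0$. Since $U_e$ may vanish near $0$, we handle the index $0$ by one of two routes:
\begin{itemize}
\item re-index the sequence over $\Z\setminus\{0\}$ (equivalently shift indices), which the theorem clearly permits; or
\item pick $x_0$ equal to one of the barrier points already used, say $x_0=x_1$ with $h_0=h_1$.
\end{itemize}
Repeating a point is harmless because the hypotheses impose no monotonicity on $\{x_n\}$ and only require $x_n\to\pm\infty$. The realization $H_e^B$ is the closure from $C_c^\infty$, which coincides with the standard realization of $-\Delta+U_e$ for $U_e\ge0$ locally bounded (limit point at both ends). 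Hence $\specac(H_e^B)=\emptyset$.

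\emph{Case $e\notin\Tot$.} By \cref{lem:ae_dichot}, $a_e(j)=0$ for $j>j_0$. So $U_e$ is a finite sum of bounded compactly supported smooth bumps, and $U_e\in C_c^\infty(\R)$. Then $H_e^B$ has domain $H^2(\R)$ and is the standard operator $-\Delta+U_e$. By \cite[Theorem 9.38]{terschl}, as already used in \cref{prop:pp_dichotomy}, a compactly supported (in particular $L^1$, short-range) potential gives $\specac(H_e^B)=[0,\infty)$ and $\specsc=\emptyset$. One could also argue via Kato--Rosenblum: $U_e$ is trace-class relative to the resolvent in one dimension, so the ac part is unitarily equivalent to that of $-\Delta$, whose spectrum is $[0,\infty)$.

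The main obstacle is the bookkeeping in the first case, namely matching the index conventions of \cref{prop:spencer_simon}, including the $n=0$ term and positivity of all $h_n$. A secondary point is confirming that the self-adjoint realization used there is the same as $H_e^B$, which limit-point considerations settle. Both are routine.
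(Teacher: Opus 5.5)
Your proof is correct and is essentially the paper's argument. The paper also applies \cref{prop:spencer_simon} with $\ell_n=1/3$, $h_n=|n|+1$ and $x_{\pm n}=\pm c_{j_n}$, and it handles the index $n=0$ exactly as in your second option, taking $x_0=c_{j_1}$ with $h_0=1$ where you take $h_0=2$; both work since $U_e\ge 2$ there. For $e\notin\Tot$ it uses the same compact-support result, \cite[Theorem 9.38]{terschl}.
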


\begin{proof}
If $e \in \Tot$, then by \cref{lem:ae_dichot}, for each integer $n\ge1$ there exists $j_n \in \N$ such that $a_e(j_n) = n + 1$, with $j_n \to \infty$. Hence $U_e \ge n + 1$ on the intervals
$$
[c_{j_n} - 1/3, c_{j_n} + 1/3], \quad [-c_{j_n} - 1/3, -c_{j_n} + 1/3].
$$
Define the sequences in \cref{prop:spencer_simon} for every $n\in\mathbb Z$ by
$$
    x_n
    =
    \begin{cases}
        c_{j_n}, & n\ge1,\\
        c_{j_1}, & n=0,\\
        -c_{j_{-n}}, & n\le-1,
    \end{cases}
    \qquad
    \ell_n=\frac13,
    \qquad
    h_n=|n|+1.
$$
For $n=0$, the identity $a_e(j_1)=2$ gives $U_e(x)\ge2\ge h_0$ whenever $|x-x_0|\le\ell_0$. For $n\ne0$, the required barrier inequalities are precisely those displayed above. Moreover,
$$
    x_n\longrightarrow\pm\infty
    \quad\text{as }n\longrightarrow\pm\infty,
    \qquad
    h_n\longrightarrow\infty,
    \qquad
    h_n\ell_n^2=\frac{|n|+1}{9}\longrightarrow\infty.
$$
Thus all the hypotheses of \cref{prop:spencer_simon}, including the one indexed by $n=0$, hold, and hence $\specac(H_e^B) = \emptyset$.

If $e\notin\Tot$, then \cref{lem:ae_dichot} shows that $a_e$ is finitely supported. Thus $U_e\in C_c^\infty(\mathbb R)$, and \cite[Theorem 9.38]{terschl} gives $\specac(H_e^B)=[0,\infty)$.
\end{proof}

\begin{theorem}
\label{thm:ac}
We have $(\mathcal O_{M, B}^\infty, \mathsf{Code}_{M, B}^\infty, \specac) \notin \Delta_2^M$ and hence $(\mathcal O_M^\infty, \mathsf{Code}_M^\infty, \specac) \notin \Delta_2^M$.
\end{theorem}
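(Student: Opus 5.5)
The plan is to mirror the proof of \cref{thm:pp}: we combine the spectral dichotomy of \cref{prop:ae_dichot} with the reduction principle of \cref{prop:dichot}, using the computable code map $e\mapsto s_B(e)$ from \cref{lem:smooth_source_we}. The target set will be $S=\N\setminus\Tot$, and we need a rational open ball $B$ that separates the two cases.

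Take $B=D_{1/2}(1)$, so that $B\cap\R=(1/2,3/2)$. If $e\notin\Tot$, then \cref{prop:ae_dichot} gives $\specac(H_e^B)=[0,\infty)$, which meets $B$; in particular $1\in\specac(H_e^B)\cap B$. If $e\in\Tot$, then $\specac(H_e^B)=\emptyset$, so trivially $\specac(H_e^B)\cap\overline B=\emptyset$. These are exactly the hypotheses of \cref{prop:dichot} with $S=\N\setminus\Tot$, $x=1\in\Q+i\Q$ and $r=1/2$. The remaining requirement is that $s_B$ is computable with values in the valid codes $\mathsf{Code}_{M,B}^\infty$, and \cref{lem:smooth_source_we} supplies this.

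Now suppose $(\mathcal O_{M,B}^\infty,\mathsf{Code}_{M,B}^\infty,\specac)\in\Delta_2^M$. By \cref{prop:dichot}, $\N\setminus\Tot\in\Delta_2^0$. Since $\Delta_2^0$ is closed under complement, this gives $\Tot\in\Delta_2^0\subseteq\Sigma_2^0$, contradicting \cref{prop:complete_class}.

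For the statement about the full class, note that $\mathcal O_{M,B}^\infty$ is a subfamily of $\mathcal O_M^\infty$ with codes in $\mathsf{Code}_M^\infty$. Any Markov tower for the full class must work on every valid code, so in particular it restricts to a tower for the subfamily; hence the full problem is not in $\Delta_2^M$ either. I see no real obstacle here: the substantive work has already been done in \cref{prop:ae_dichot}. The only points to check are that the empty-set convention is handled correctly in \cref{prop:dichot}, which it is, and that the ball has a rational center and rational radius.
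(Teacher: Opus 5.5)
Your proposal is correct and follows essentially the same route as the paper: you combine \cref{prop:ae_dichot}, the computable code map from \cref{lem:smooth_source_we}, and \cref{prop:dichot} with $S=\N\setminus\Tot$, which yields $\Tot\in\Delta_2^0$ and so contradicts \cref{prop:complete_class}. The only difference is that you use the ball $D_{1/2}(1)$ where the paper uses $D_{1/4}(1)$, which changes nothing. Your explicit subfamily-restriction argument for the full class spells out the paper's ``and hence.''
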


\begin{proof}
Let $B=D_{1/4}(1)$, so that $B\cap\mathbb R=(3/4,5/4)$. If $e\in\Tot$, then by \cref{prop:ae_dichot} we have $\specac(H_e^B)=\emptyset$, and hence $\specac(H_e^B)\cap\overline B=\emptyset$. If $e\notin\Tot$, then by \cref{prop:ae_dichot} we have $\specac(H_e^B)=[0,\infty)$, and hence $\specac(H_e^B)\cap B\ne\emptyset$. Recall from \cref{lem:smooth_source_we} that the map $e\mapsto s_B(e)$ is computable. Suppose for contradiction that $(\mathcal O_{M, B}^\infty, \mathsf{Code}_{M, B}^\infty, \specac) \in \Delta_2^M$. Then \cref{prop:dichot}, applied with $S=\N\setminus\Tot$ and the ball $B$ above, implies that $\N \setminus \Tot$ is $\Delta_2^0$. This implies that $\Tot$ is $\Delta_2^0$, contradicting \cref{prop:complete_class}.
\end{proof}

\section{Lower Bound for Singular Continuous Spectrum}
\label{sec:sc}

The singular continuous lower bound uses a more involved inverse spectral construction. We encode the desired dichotomy in a measure, incorporate it into the spectral measure of a half-line Schr\"odinger operator, and extend the reconstructed potential smoothly to the whole line by reflection.

\subsection{Roadmap of the Construction}
The construction proceeds through the chain
$$
        A \longmapsto \nu_A \longmapsto \sigma_A
        \longmapsto \rho_A \longmapsto q_A
        \longmapsto Q_A(x)=q_A(|x|)
        \longmapsto H_A=-\Delta+Q_A.
$$
Here $A=(A_{k,m})$ is a computable binary matrix. Its columns are first encoded by Riesz-product measures and assembled into a measure $\nu_A$ on $\mathcal{I}=[5/4,7/4]$, so that $\nu_A$ has singular continuous mass on $\mathcal{I}$ if and only if some column of $A$ has infinitely many ones. We then add a computable absolutely continuous signed measure $\kappa_A$, supported away from $\mathcal{I}$, so that $\sigma_A=\nu_A+\kappa_A$ has all polynomial moments equal to zero. For a fixed small $\epsilon>0$, the positive measure $\rho_A=\rho_0+\epsilon\sigma_A$ is realized, via the Gelfand--Levitan equation, as the Dirichlet spectral measure of a half-line Schr\"odinger operator $-\frac{\mathrm d^2}{\mathrm dx^2}+q_A$. The moment cancellation makes the reconstructed potential flat at the origin, allowing the smooth reflection $Q_A(x)=q_A(|x|)$. The resulting whole-line operator decomposes into Dirichlet and Neumann half-line channels: the Dirichlet channel contains the encoded singular continuous dichotomy, while a Weyl $m$-function estimate excludes spurious singular continuous spectrum from the Neumann channel on the test interval. Embedding $\Cof$ into the column condition for $A$ then gives the desired $\Delta^M_3$ lower bound.

\subsection{Spectral Measures and Infinite Binary Sequences}

We begin with a Riesz-product measure parametrized by a binary sequence.

\begin{proposition}[{\cite[Theorems 1 and 2]{keogh_riesz_products}}]
\label{prop:zygmund}
Let $\{n_k\}_{k\in\mathbb{N}}$ be a sequence of positive integers such that $n_{k + 1}/n_k \ge 3$ for each $k$, and let $\{\alpha_k\}_{k\in\mathbb{N}}$ be a sequence of real numbers satisfying $0 \leq |\alpha_k| \le 1$. Define the trigonometric polynomial
$$
p_n(x) = \prod_{k = 1}^n (1 + \alpha_k \cos(2\pi n_k x)).
$$
Then there exists a continuous $F:\R\to\R$ such that
$$
F(x) - F(0) = \lim_{n \to \infty} \int_0^x p_n(t)\, \mathrm dt.
$$
If $\sum_k \alpha^2_k = \infty$, then $F' = 0$ almost everywhere.
\end{proposition}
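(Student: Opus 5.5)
The plan is the classical Riesz-product argument. We realize $F$ as the distribution function of a weak-$*$ limit measure $\mu$ of $p_n(t)\,\mathrm dt$ on the circle $\mathbb T=\mathbb R/\mathbb Z$, and then prove that $\mu$ is singular when $\sum_k\alpha_k^2=\infty$. Everything rests on one combinatorial fact about lacunary frequencies.

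\emph{Uniqueness of representations.} The key fact is that the integers $\sum_{k\le n}\varepsilon_k n_k$ with $\varepsilon_k\in\{-1,0,1\}$ are pairwise distinct. Indeed, if two such sums agree, let $K$ be the largest index where the digits differ. Then $n_K\le 2\sum_{k<K}n_k<n_K$, because $\sum_{k<K}n_k\le n_K(1/3+1/9+\cdots)<n_K/2$; this is a contradiction. I also need a variant allowing one digit to equal $\pm2$, which is used below for products of two cosines. I expect this Fourier bookkeeping to be the main technical point.

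\emph{Construction of the measure.} Expanding the product gives
$$
p_n(x)=\sum_{\varepsilon\in\{-1,0,1\}^n}\prod_{k:\varepsilon_k\ne0}\frac{\alpha_k}{2}\;\mathrm e^{2\pi i(\sum_k\varepsilon_kn_k)x}.
$$
By uniqueness, the Fourier coefficient $\widehat{p_n}(m)$ is constant for all $n$ large enough (depending on $m$), and $\widehat{p_n}(0)=1$. Since $|\alpha_k|\le1$, we have $p_n\ge0$, so each $p_n\,\mathrm dt$ is a probability measure on $\mathbb T$. Their Fourier coefficients converge, hence $p_n\,\mathrm dt\to\mu$ weak-$*$ for a probability measure $\mu$ whose coefficients are given by these products. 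Define $F(x)-F(0)=\mu([0,x])$ and extend periodically with increments of $1$ per period.

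\emph{No atoms.} By Wiener's lemma, $\sum_x\mu(\{x\})^2=\lim_N\frac1{2N+1}\sum_{|m|\le N}|\hat\mu(m)|^2$. Let $n_K\le N<n_{K+1}$. Each nonzero digit contributes a factor of modulus at most $1/2$, so
$$
\sum_{|m|\le N}|\hat\mu(m)|^2\le\prod_{k\le K+1}\Bigl(1+2\cdot\tfrac14\Bigr)=(3/2)^{K+1}.
$$
Since $N\ge n_K\ge3^{K-1}$, the average tends to $0$. Hence $\mu$ is continuous, so $F$ is continuous. Weak-$*$ convergence then gives $\int_0^x p_n\to\mu([0,x])$ for every $x$, because every point is a continuity point.

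\emph{Singularity when $\sum\alpha_k^2=\infty$.} Set $A_K=\sum_{k\le K}\alpha_k^2$ and $S_K(x)=A_K^{-1}\sum_{k\le K}\alpha_k\cos(2\pi n_kx)$.
\begin{itemize}
\item Under Lebesgue measure, the cosines are orthogonal. So $S_K$ has mean $0$ and variance $1/(2A_K)\to0$.
\item Under $\mu$, uniqueness gives $\int\cos(2\pi n_kx)\,\mathrm d\mu=\alpha_k/2$. For $j\ne k$ it also gives $\int\cos(2\pi n_jx)\cos(2\pi n_kx)\,\mathrm d\mu=\alpha_j\alpha_k/4$, since $n_j\pm n_k$ has only the obvious representation. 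Hence the centered variables $\cos(2\pi n_kx)-\alpha_k/2$ are orthogonal in $L^2(\mu)$ with variance at most $1$. It follows that $S_K$ has $\mu$-mean $1/2$ and $\mu$-variance at most $1/A_K\to0$.
\end{itemize}
Passing to a subsequence, $S_K\to0$ Lebesgue-a.e. and $S_K\to1/2$ $\mu$-a.e. These two convergence sets are disjoint, so $\mu\perp\mathrm{Leb}$. A monotone function whose Lebesgue--Stieltjes measure is singular has derivative $0$ Lebesgue-a.e., by the Lebesgue differentiation theorem. Therefore $F'=0$ almost everywhere.
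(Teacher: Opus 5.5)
Your proof is correct. The paper gives no proof to compare against: it imports the proposition from Keogh and uses only its conclusion, in \cref{lem:dichotomy1}. There it separately identifies the weak limit $\eta_\tau$ of \cref{prop:eta_tau} with the Stieltjes measure of $F$, via distribution functions and uniqueness of weak limits. In \cref{lem:integral_com_nu} it separately computes the Fourier coefficients by balanced-ternary uniqueness.

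Your route derives all of this from one fact, the uniqueness of $\{-1,0,1\}$-representations:
\begin{itemize}
\item The weak-$*$ limit $\mu$ is built directly from the eventually constant coefficients $\widehat{p_n}(m)$.
\item Continuity follows from Wiener's lemma together with the count $\sum_{|m|\le N}|\widehat\mu(m)|^2\le\prod_{k\le K+1}(1+\alpha_k^2/2)$ for $n_K\le N<n_{K+1}$.
\item Singularity follows from an $L^2$ law of large numbers (Peyri\`ere's method): $S_K$ concentrates at $0$ under Lebesgue measure and at $1/2$ under $\mu$.
\end{itemize}
This never requires relating the a.e.\ behaviour of the partial products $p_n$ to $F'$. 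It uses only the uniqueness of representations of $\pm n_k$ and $\pm n_j\pm n_k$, so the borderline ratio $3$ needs no special treatment, and that is exactly the case $n_k=3^k$ the paper uses. The citation buys brevity. Your argument would make the section self-contained and would also shorten the proof of \cref{lem:dichotomy1}.

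Three small points to tidy:
\begin{itemize}
\item The announced variant with a digit $\pm2$ is never used. The cross terms $j\ne k$ involve only $n_j\pm n_k$, whose $\{-1,0,1\}$-representation is the obvious one. You rightly bound the diagonal terms trivially by $1$: $\widehat\mu(2n_k)$ can be non-zero (e.g.\ $2\cdot3^k=3^{k+1}-3^k$), so $\int\cos^2(2\pi n_kx)\,\mathrm d\mu$ need not equal $1/2$.
\item In the Wiener estimate, state why only indices $\le K+1$ occur. A sum with top non-zero index $L$ has modulus $>n_L/2$, so $|m|\le N<n_{K+1}$ forces $n_L<2n_{K+1}<n_{K+2}$.
\item The a.e.\ convergent subsequence must be common to both measures; taking a sub-subsequence provides this.
\end{itemize}
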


The required measure is the weak limit in the following proposition.

\begin{proposition}[{\cite[Proposition 2.1.11]{simon2005orthogonal}}]
\label{prop:eta_tau}
Let $\mathrm d \theta$ be the normalized Haar measure on the torus $\mathbb T = \mathbb R / \mathbb Z$. For a binary sequence $\tau \in \set {0, 1}^\N$, define the measure
\begin{equation}
\label{eqn:etatau}
\mathrm d \eta_{\tau, N}(\theta) = \prod_{\substack{1 \le k \le N \\ \tau(k) = 1}} \left(1 + \frac{1}{2} \cos(2 \pi 3^k \theta)\right) \mathrm d \theta.
\end{equation}
Then the measures $\eta_{\tau, N}$ are probability measures and have a weak limit $\eta_\tau$ as $N\rightarrow\infty$.
\end{proposition}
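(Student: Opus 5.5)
We need to prove that each $\eta_{\tau,N}$ is a probability measure and that the sequence has a weak limit $\eta_\tau$. The plan is to work entirely with Fourier coefficients on $\mathbb T$. The key structural input is the lacunarity of the frequencies $3^k$: every integer admits at most one representation of the form $\sum_{k\in S}\epsilon_k 3^k$ with $\epsilon_k\in\{-1,0,1\}$, because this is the balanced ternary expansion.

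First, positivity. Each factor $1+\tfrac12\cos(2\pi 3^k\theta)$ lies in $[\tfrac12,\tfrac32]$, so the density is a positive trigonometric polynomial. Hence $\eta_{\tau,N}$ is a positive measure.

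Second, the Fourier coefficients. Expand the product by writing $\cos(2\pi 3^k\theta)=\tfrac12(\mathrm e^{2\pi i3^k\theta}+\mathrm e^{-2\pi i3^k\theta})$. This gives
$$
\prod_{\substack{k\le N\\ \tau(k)=1}}\left(1+\tfrac14\mathrm e^{2\pi i3^k\theta}+\tfrac14\mathrm e^{-2\pi i3^k\theta}\right)=\sum_{\epsilon}\Bigl(\prod_{k:\epsilon_k\ne0}\tfrac14\Bigr)\mathrm e^{2\pi i\sum_k\epsilon_k3^k\theta}.
$$
The sum runs over $\epsilon\in\{-1,0,1\}$ supported on $\{k\le N:\tau(k)=1\}$. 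By uniqueness of balanced ternary, the frequencies $\sum_k\epsilon_k3^k$ are pairwise distinct. They vanish only for $\epsilon=0$, since $\bigl|\sum_{k<K}\epsilon_k3^k\bigr|\le(3^K-1)/2<3^K$. Therefore $\widehat{\eta_{\tau,N}}(0)=1$, so $\eta_{\tau,N}$ is a probability measure.

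More generally, for each $n\in\mathbb Z$ the coefficient $\widehat{\eta_{\tau,N}}(n)$ takes one of two values. If $n=\sum_k\epsilon_k3^k$ with $\epsilon$ supported in $\{k:\tau(k)=1\}$, it equals $4^{-\#\{k:\epsilon_k\ne0\}}$. Otherwise it equals $0$. This holds as soon as $N$ is at least the largest index appearing in the balanced ternary expansion of $n$, and it is then independent of $N$. So $\widehat{\eta_{\tau,N}}(n)$ converges, indeed stabilizes, for every $n$.

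Third, the weak limit. The measures $\eta_{\tau,N}$ are probability measures on the compact space $\mathbb T$. Trigonometric polynomials are uniformly dense in $C(\mathbb T)$, and the total masses are uniformly bounded by $1$. Stabilization of the Fourier coefficients therefore gives convergence of $\int f\,\mathrm d\eta_{\tau,N}$ for every $f\in C(\mathbb T)$, by an $\varepsilon/3$ argument. The limit functional is positive and linear with value $1$ at $f=1$. By the Riesz representation theorem it is a probability measure $\eta_\tau$, and $\eta_{\tau,N}\to\eta_\tau$ weakly. One could alternatively invoke Prokhorov compactness together with uniqueness of limit points, since those are determined by their Fourier coefficients.

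The main obstacle is the combinatorial step: showing that distinct sign patterns yield distinct nonzero frequencies. This is exactly what the ratio $3$, matching the coefficient set $\{-1,0,1\}$, is designed to guarantee. Everything else is routine.
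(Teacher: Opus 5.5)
Your proof is correct and complete. You get positivity from the factor bounds, total mass one and eventual stabilization of every Fourier coefficient from uniqueness of balanced ternary expansions, and the weak limit from density of trigonometric polynomials together with the Riesz representation theorem. The paper does not prove this statement (it cites Simon), but your argument is the standard one, and it is the same balanced-ternary Fourier computation the paper carries out in the proof of \cref{lem:integral_com_nu}.
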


For later use, let
$$
p_N(\theta)
:=
\frac{\mathrm d\eta_{\tau,N}}{\mathrm d\theta}(\theta)
=
\prod_{\substack{1\le k\le N\\ \tau(k)=1}}
\left(1+\frac12\cos(2\pi 3^k\theta)\right).
$$
These are the polynomials in \cref{prop:zygmund} obtained from the \emph{fixed} sequences
$$
n_k=3^k,
\qquad
\alpha_k=\frac12\tau(k),
\qquad k\in\mathbb N.
$$
Thus the binary sequence determines which Riesz factors are present. Finitely many factors give an absolutely continuous trigonometric density, whereas infinitely many factors will give a continuous singular measure.

Define the function
$$
\Phi(\theta) = \frac 3 2 + \frac 1 4 \cos(2 \pi \theta).
$$
The range of $\Phi$ is $\mathcal I=[5/4,7/4]$, so $\nu_\tau=\Phi_\#\eta_\tau$ is supported in $\mathcal I$. The following adaptation of \cite[Theorem 2.11.3]{simon2005orthogonal} relates its spectral type to the number of ones in $\tau$.

\begin{lemma}
\label{lem:dichotomy1}
Let $\tau \in \set {0, 1}^\N$. The following statements hold:
\begin{enumerate}
	\item if $\tau$ has finitely many ones, then $\nu_\tau$ is absolutely continuous with respect to the Lebesgue measure;
	\item if $\tau$ has infinitely many ones, then $\nu_\tau$ is singular continuous with respect to the Lebesgue measure and $\spec(\nu_\tau) = \mathcal{I}$.
\end{enumerate}
\end{lemma}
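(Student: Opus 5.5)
We treat the two cases separately, working first on the torus with $\eta_\tau$ and then transporting through $\Phi$. Throughout we use that $\Phi:\mathbb T\to\mathcal I$ is real-analytic and two-to-one away from $\theta\in\{0,1/2\}$. On each of the arcs $(0,1/2)$ and $(1/2,1)$ it is a diffeomorphism onto $(5/4,7/4)$, with $\Phi'(\theta)=-\tfrac{\pi}{2}\sin(2\pi\theta)$ vanishing only at $\theta=0,1/2$. Consequently $\Phi_\#$ maps absolutely continuous measures to absolutely continuous measures. It also preserves Lebesgue-null sets: the image of a null set is null since $\Phi$ is $C^1$, and the preimage of a null set is null since $\Phi'\neq0$ off a finite set.

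\emph{Case (i).} If $\tau$ has finitely many ones, then $p_N=p_{N_0}$ for all $N\ge N_0$, where $N_0$ bounds the positions of the ones. Hence $\eta_\tau=p_{N_0}\,\mathrm d\theta$ with $p_{N_0}$ a nonnegative trigonometric polynomial. So $\eta_\tau$ is absolutely continuous, and by the remark above so is $\nu_\tau$. Explicitly, one can write the density as $\sum_{\theta\in\Phi^{-1}(\lambda)}p_{N_0}(\theta)/|\Phi'(\theta)|$, which is integrable with integral one.

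\emph{Case (ii).} Take $n_k=3^k$ and $\alpha_k=\tfrac12\tau(k)$ in \cref{prop:zygmund}. Then $\sum\alpha_k^2=\infty$, so the distribution function $F$ of $\eta_\tau$ is continuous with $F'=0$ a.e. We must identify the limit $F$ there with the distribution function of the weak limit $\eta_\tau$ from \cref{prop:eta_tau}. This follows because $\int_0^x p_N\to F(x)-F(0)$ for every $x$, and $F$ is continuous, so no atoms interfere with weak convergence. Continuity of $F$ gives that $\eta_\tau$ is continuous, and $F'=0$ a.e. gives that $\eta_\tau$ is singular, by the Lebesgue decomposition of monotone functions. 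Pushing forward under $\Phi$ preserves both properties. Continuity is preserved because each fibre $\Phi^{-1}(\lambda)$ has at most two points. Singularity is preserved because if $\eta_\tau$ is concentrated on a null set $Z$, then $\nu_\tau$ is concentrated on the null set $\Phi(Z)$.

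For the support, it suffices to show $\spec(\eta_\tau)=\mathbb T$, since $\Phi$ is continuous and surjective onto $\mathcal I$. To prove full support, compute the Fourier coefficients. Because $3^{k+1}>2\sum_{j\le k}3^j$, every frequency has a unique representation $\sum \epsilon_k3^k$ with $\epsilon_k\in\{-1,0,1\}$. This gives $\hat\eta_\tau(n)=\prod_k(1/4)^{|\epsilon_k|}$ whenever $n$ is supported on the positions where $\tau=1$, and $\hat\eta_\tau(n)=0$ otherwise.

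\emph{Main obstacle.} Showing full support from this Fourier description is the main difficulty. The plan is a self-similarity argument. Let $k$ be a position with $\tau(k)=1$. Then $\eta_\tau=\bigl(1+\tfrac12\cos(2\pi3^k\theta)\bigr)\cdot\eta'$, where $\eta'$ is the Riesz product with the $k$th factor removed; this is legitimate because all products converge weakly. The prefactor is at least $1/2$, so $\eta_\tau\ge\tfrac12\eta'$ on every arc. Iterating, it suffices to show that the tail product $\eta^{(K)}$ over positions $k>K$ gives positive mass to every arc of length greater than $2\cdot3^{-K}$. This should hold because $\eta^{(K)}$ is invariant under translation by $3^{-K-1}$: all of its frequencies are multiples of $3^{K+1}$. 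Hence every arc of length at least $3^{-K-1}$ receives mass at least $3^{-K-2}$. Since $K$ can be taken arbitrarily large, every arc of $\mathbb T$ has positive $\eta_\tau$-mass.
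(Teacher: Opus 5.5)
Your proposal is correct and follows the paper's proof. It uses the same stabilization argument for (i) and the same Riesz-product theorem, identification of the limit distribution function and pushforward under $\Phi$ for singular continuity. For full support it uses the same mechanism: split off the finitely many low factors (each at least $1/2$) and exploit that the remaining frequencies are all divisible by a high power of $3$. The only difference is in execution: you factor the limit measure as $\eta_\tau=L_K\,\eta^{(K)}$ and use translation invariance of $\eta^{(K)}$ under $3^{-K-1}$, whereas the paper computes $\int_J H_{M,N}\,\mathrm d\theta=|J|$ on triadic intervals at finite $N$ and passes to the limit via the Portmanteau theorem.
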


\begin{proof}
If $\tau$ has finitely many ones, choose $N_0$ larger than every $k$ for which $\tau(k)=1$. For every $N\ge N_0$, the product in \eqref{eqn:etatau} then contains exactly the finitely many nontrivial factors selected by $\tau$. Hence $\eta_{\tau,N}$ is independent of $N\ge N_0$ and equals its weak limit $\eta_\tau$. In particular, $\eta_\tau$ has a trigonometric-polynomial density with respect to $\mathrm d\theta$ and is absolutely continuous. We can partition $\mathbb T = (0, 1/2) \cup (1/2, 1) \cup \set {0, 1/2,1}$. On the open sets $(0, 1/2)$ and $(1/2, 1)$, $\Phi$ is a diffeomorphism, while $\set {0, 1/2,1}$ has measure zero. Hence $\Phi^{-1}(E)$ is Haar-null for each Lebesgue-null set $E$, and $\Phi_\# \eta_\tau$ is absolutely continuous.

Suppose that $\tau$ has infinitely many ones. Set
$$
\alpha_k=\frac12\tau(k),
\qquad
n_k=3^k.
$$
Then $n_{k+1}/n_k=3$ and
$$
\sum_{k=1}^{\infty}\alpha_k^2
=
\frac14\#\{k:\tau(k)=1\}
=
\infty.
$$
Moreover, $p_N$ is precisely the density of $\eta_{\tau,N}$. Hence \cref{prop:zygmund} yields a continuous function $F:\mathbb R\to\mathbb R$ such that
\begin{equation}
\label{eqn:weak_conv}
F(x)-F(0)
=
\lim_{N\to\infty}\int_0^x p_N(t)\,\mathrm dt
\end{equation}
for every $x\in\mathbb R$, and $F'=0$ almost everywhere.

For $0\le x\le1$, let
$$
G_N(x)=\int_0^x p_N(t)\,\mathrm dt,
\qquad
G(x)=F(x)-F(0).
$$
Each $G_N$ is non-decreasing, $G_N(0)=0$, and $G_N(1)=1$. Consequently, $G$ is a continuous non-decreasing function satisfying $G(0)=0$ and $G(1)=1$, and hence is the distribution function of a Lebesgue--Stieltjes probability measure $\mu_G$ on $[0,1]$, viewed also as a measure on $\mathbb T$. Since $G_N\to G$ pointwise and $G$ is continuous, the standard convergence theorem for distribution functions gives
$
\eta_{\tau,N}\rightarrow\mu_G.
$
On the other hand, \cref{prop:eta_tau} gives $\eta_{\tau,N}\rightarrow\eta_\tau$. Uniqueness of weak limits therefore implies
$
\eta_\tau=\mu_G.
$
The continuity of $G$ shows that $\eta_\tau$ has no atoms. Moreover, $G'=0$ almost everywhere, so the absolutely continuous part of $\mu_G$ is zero. Thus $\eta_\tau$ is singular continuous.

Choose a Haar-null Borel set $E\subseteq\mathbb T$ with $\eta_\tau(E)=1$. Since $\Phi$ is Lipschitz, $\Phi(E)$ has Lebesgue measure zero, and $E\subseteq\Phi^{-1}(\Phi(E))$. Thus $1\ge\nu_\tau(\Phi(E))=\eta_\tau(\Phi^{-1}(\Phi(E)))\ge\eta_\tau(E)=1$. Hence $\nu_\tau(\Phi(E))=1$, so $\nu_\tau$ is singular. For $y\in\mathcal I$, the fiber $\Phi^{-1}(\{y\})$ contains at most two points. Since $\eta_\tau$ has no atoms, $\nu_\tau(\{y\})=0$. Therefore $\nu_\tau$ is singular continuous.

To determine the support, first observe that $ \operatorname{supp}\eta_\tau=\mathbb T. $ Define the triadic interval
$$
J = \left[\frac m {3^M}, \frac {m + 1} {3^M}\right] \subseteq \mathbb T.
$$
Take $N \ge M$ and factor
$$
\prod_{\substack{1 \le k \le N \\ \tau(k) = 1}} \left(1 + \frac{1}{2} \cos(2 \pi 3^k \theta)\right) = L_M(\theta) H_{M, N}(\theta),
$$
where
$$
L_M(\theta) = \prod_{\substack{1 \le k < M \\ \tau(k) = 1}} \left(1 + \frac{1}{2} \cos(2 \pi 3^k \theta)\right), \quad H_{M, N}(\theta) = \prod_{\substack{M \le k \le N \\ \tau(k) = 1}} \left(1 + \frac{1}{2} \cos(2 \pi 3^k \theta)\right).
$$
Every factor in both products is contained in $[1/2, 3/2]$, so in particular we have $L_M(\theta) \ge 1/2^M$. Rewrite
$$
H_{M, N}(\theta) = \prod_{\substack{0 \le k \le N - M \\ \tau(M + k) = 1}} \left(1 + \frac{1}{2} \cos(2 \pi 3^{M+k} \theta)\right).
$$
With $S_{M,N}=\set {0\le k\le N-M:\tau(M+k)=1}$, the expansion is
$$
H_{M, N}(\theta) = \sum_{\epsilon \in \set {-1, 0, 1}^{S_{M, N}}} \left(\prod_{k \in S_{M, N}} c_{\epsilon_k}\right) \exp \left(2 \pi i \theta \sum_{k \in S_{M, N}} \epsilon_k 3^{M + k}\right).
$$
Since the endpoints of $J$ are triadics of level $M$, every nonconstant exponential has a nonzero frequency that is an integer multiple of $3^M$ and therefore integrates to zero on $J$. Thus, the only term that contributes to the integral has $\epsilon_k = 0$ for all $k$. This gives $\int_J H_{M, N}(\theta) \mathrm d \theta = |J|$. It follows that
$$
\eta_{\tau, N}(J) \ge \frac{|J|}{2^M}  \quad \forall N \ge M.
$$
Since $\eta_{\tau, N}$ converges weakly to $\eta_\tau$ and $J$ is closed, the Portmanteau theorem gives
$$
\limsup_{N \to \infty} \eta_{\tau, N}(J) \le \eta_\tau(J).
$$
We therefore have $\eta_\tau(J)>0$. Every non-empty open subset of $\mathbb T$ contains a triadic interval, and hence has positive $\eta_\tau$-measure. Thus $ \operatorname{supp}\eta_\tau=\mathbb T. $ Now let $U$ be a non-empty relatively open subset of $\mathcal I$. Since $\Phi:\mathbb T\to\mathcal I$ is continuous and surjective, $\Phi^{-1}(U)$ is a non-empty open subset of $\mathbb T$. Therefore
$$
\nu_\tau(U)
=
\eta_\tau\bigl(\Phi^{-1}(U)\bigr)
>
0.
$$
Since $\nu_\tau$ is supported in $\mathcal I$, it follows that $ \spec(\nu_\tau)=\mathcal I. $
\end{proof}

\subsection{Weighted Sums of Measures}

Consider a computable binary matrix $A = \sequence {A_{k, m}}_{k, m \in \N} \in \set {0, 1}^{\N \times \N}$. Define the $m$th column $\tau_m(k) = A_{k, m}$ and $\Lambda_\Phi = \Phi_\# \mathrm d \theta$. This is an absolutely continuous measure since $\Phi$ is $C^1$ with only two critical points. We combine the measures $\Lambda_\Phi$ and $\nu_{\tau_m}$ in the convergent sum
$$
\nu_A = \frac 1 2 \Lambda_\Phi + \sum_{m = 1}^\infty 2^{-m - 1} \nu_{\tau_m}.
$$
Both $\Lambda_\Phi$ and every $\nu_{\tau_m}$ are probability measures supported in $\mathcal I$. Moreover,
$$
\frac12+\sum_{m=1}^{\infty}2^{-m-1}
=
\frac12+\frac12
=
1.
$$
Hence the series converges in total variation and defines a positive probability measure $\nu_A$ supported in $\mathcal I$. The $m$th column contributes a spectral measure of weight $2^{-(m+1)}$. Thus, a single infinite column is enough to force singular continuous mass, while if all columns are finite, the entire sum remains absolutely continuous.

\begin{lemma}\label{extra_needed_lemma}
The measure $\nu_A$ satisfies the following dichotomy:
\begin{enumerate}
	\item if some column of $A$ has infinitely many ones, then $(\nu_A)_{\mathrm{sc}}$ gives positive measure to every non-empty open subinterval of $\mathcal{I}$;
	\item if every column of $A$ has only finitely many ones, then $\nu_A$ is absolutely continuous.
\end{enumerate}
\end{lemma}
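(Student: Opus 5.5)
The plan is to decompose $\nu_A$ term by term using \cref{prop:lebesgue_decomp}, and to use that the Lebesgue decomposition is linear and compatible with countable sums of positive measures converging in total variation. First I record that $\Lambda_\Phi$ is absolutely continuous. Then I show that for a convergent series $\mu=\sum_j c_j\mu_j$ of positive measures with $c_j\ge0$ and $\sum_j c_j\mu_j(\mathbb R)<\infty$, one has $\mu_\diamond=\sum_j c_j(\mu_j)_\diamond$ for each $\diamond\in\{\mathrm{pp},\mathrm{sc},\mathrm{ac}\}$.

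For this compatibility I would argue as follows. The series $\sum_j c_j(\mu_j)_{\mathrm{ac}}$ is absolutely continuous, since a countable sum of measures vanishing on a Lebesgue-null set still vanishes there. The series $\sum_j c_j(\mu_j)_{\mathrm{pp}}$ is concentrated on a countable union of countable sets. The series $\sum_j c_j(\mu_j)_{\mathrm{sc}}$ has no atoms, and it is concentrated on a countable union of Lebesgue-null sets, which is again null. All three series converge in total variation because they are dominated by the convergent series for $\mu$. Since the three pieces add up to $\mu$, uniqueness in \cref{prop:lebesgue_decomp} gives the claim.

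For part (ii), every column $\tau_m$ has finitely many ones, so \cref{lem:dichotomy1}(i) makes each $\nu_{\tau_m}$ absolutely continuous. The measure $\Lambda_\Phi$ is absolutely continuous as well. By the compatibility statement, $(\nu_A)_{\mathrm{sc}}=(\nu_A)_{\mathrm{pp}}=0$, so $\nu_A$ is absolutely continuous.

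For part (i), suppose the column $\tau_{m_0}$ has infinitely many ones. By \cref{lem:dichotomy1}(ii), $\nu_{\tau_{m_0}}$ is singular continuous with $\spec(\nu_{\tau_{m_0}})=\mathcal I$. The compatibility statement gives
$$(\nu_A)_{\mathrm{sc}}=\sum_m 2^{-m-1}(\nu_{\tau_m})_{\mathrm{sc}}\ge 2^{-m_0-1}\nu_{\tau_{m_0}}.$$
Now take a non-empty open subinterval $U\subseteq\mathcal I$ and pick a point $\lambda$ in its interior. Because $\lambda\in\spec(\nu_{\tau_{m_0}})$, every small interval around $\lambda$ has positive $\nu_{\tau_{m_0}}$-measure. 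In particular $\nu_{\tau_{m_0}}(U)>0$, and therefore $(\nu_A)_{\mathrm{sc}}(U)>0$.

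The only step needing care is the compatibility of the Lebesgue decomposition with infinite sums, specifically checking that the singular continuous parts still sum to a singular continuous measure. This comes down to the facts that countable unions of null sets are null and that atomlessness is preserved under countable sums. Both are routine.
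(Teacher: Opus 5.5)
Your proposal is correct and follows essentially the same route as the paper. Both decompose $\nu_A$ term by term, using that the Lebesgue decomposition commutes with countable sums of positive measures, and then apply \cref{lem:dichotomy1} column by column. The only difference is that you justify this commutation explicitly via the uniqueness in \cref{prop:lebesgue_decomp}, whereas the paper simply asserts it.
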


\begin{proof}
Suppose first that some column of $A$ has infinitely many ones. Lebesgue decomposition commutes with positive countable sums. Since $\tfrac 1 2 \Lambda_\Phi$ is absolutely continuous, we have
$$
(\nu_A)_{\mathrm{sc}} = \sum_{m = 1}^\infty 2^{-m - 1} (\nu_{\tau_m})_{\mathrm{sc}}.
$$
If the $m$th column has infinitely many ones, then by \cref{lem:dichotomy1}, $\nu_{\tau_m}$ is singular continuous with support $\mathcal{I}$. Hence $(\nu_A)_{\mathrm{sc}}$ gives positive measure to every non-empty open subinterval of $\mathcal{I}$.

If every column of $A$ is finitely supported, then by \cref{lem:dichotomy1}, each $\nu_{\tau_m}$ is absolutely continuous. Then $\nu_A$ is absolutely continuous as the sum of absolutely continuous measures.
\end{proof}

A $C^2$ source code for the test function, together with its derivative bounds, makes integration against $\nu_A$ computable.

\begin{lemma}[Uniform computability of the integral]
\label{lem:integral_com_nu}
Assume that the data defining $\nu_A$, namely the sequence $\{\tau_m\}_{m\ge 1}$, are given effectively, uniformly in $m$. Then there is an algorithm which, given (i) a $C^2$ source code for a computable function $\phi\in C^2(\mathcal I)$, (ii) a rational number $B\ge0$ satisfying $\|\partial^j\phi\|_\infty\le B$ for $j=0,1,2$, and (iii) a requested precision $p\in\mathbb N$, outputs a rational number $q$ satisfying
$$
    \left|q-\int_{\mathcal{I}} \phi\,\mathrm{d}\nu_A\right|<2^{-p}.
$$
\end{lemma}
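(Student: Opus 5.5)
The plan is to split the integral according to the definition of $\nu_A$ and handle each piece computably with explicit tail bounds. Write
$$
\int_{\mathcal I}\phi\,\mathrm d\nu_A=\frac12\int_{\mathbb T}\phi(\Phi(\theta))\,\mathrm d\theta+\sum_{m=1}^\infty 2^{-m-1}\int_{\mathbb T}\phi(\Phi(\theta))\,\mathrm d\eta_{\tau_m}(\theta).
$$
Each integral is bounded by $B$, because the measures are probability measures and $\|\phi\|_\infty\le B$. Truncating the $m$-sum at $m\le M$ therefore costs at most $2^{-M-1}B$. We choose $M$ computably so that this error is below $2^{-p-2}$. This reduces the problem to computing finitely many integrals against $\eta_{\tau_m}$, each to accuracy $2^{-p-3}$, and one integral against Haar measure.

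Set $f=\phi\circ\Phi$ on $\mathbb T$. The chain rule gives $f\in C^2(\mathbb T)$ with $\|f''\|_\infty\le C B$ for an explicit rational constant $C$, since $\Phi'$ and $\Phi''$ are bounded by $\pi/2$ and $\pi^2$. Consequently the Fourier coefficients satisfy $|\hat f(n)|\le CB/(4\pi^2n^2)$ for $n\ne0$. The key step is to express $\int f\,\mathrm d\eta_{\tau}$ through Fourier data. Expanding the Riesz product, $\eta_{\tau,N}$ has Fourier coefficients $\hat\eta_{\tau,N}(n)$ equal to $\prod c_{\epsilon_k}$ when $n=\sum\epsilon_k3^k$, where the sum runs over $k\le N$ with $\tau(k)=1$, each $\epsilon_k\in\{-1,0,1\}$, $c_0=1$ and $c_{\pm1}=1/4$. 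The coefficient is $0$ otherwise. Balanced ternary representations are unique, so this is well defined. The coefficient stabilizes once $3^N>2|n|$, and weak convergence then identifies $\hat\eta_\tau(n)$ with this stable value, with $|\hat\eta_\tau(n)|\le1$.

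By Parseval, using the absolute convergence of $\sum\hat f(n)$,
$$
\int f\,\mathrm d\eta_\tau=\sum_{n\in\mathbb Z}\hat f(n)\overline{\hat\eta_\tau(n)}.
$$
Truncating to $|n|\le K$ leaves a tail of at most $2\sum_{n>K}CB/(4\pi^2n^2)\le CB/(2\pi^2K)$. The truncated coefficients $\hat\eta_{\tau_m}(n)$ are computed exactly from finitely many entries $\tau_m(k)$ with $3^k\le 2K$, and these entries are effectively given.

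The remaining task is to compute $\hat f(n)$ for $|n|\le K$ to a prescribed accuracy. We do this by a composite quadrature rule, such as the trapezoid or midpoint rule on $[0,1]$ with $R$ nodes, applied to $f(\theta)e^{-2\pi in\theta}$. The second derivative of this integrand is bounded by $CB(1+2\pi K)^2$ up to explicit constants, so the error is at most $\mathrm{const}\cdot B(1+K)^2/R^2$. Point values of $f$ at rational $\theta$ come from the source code of $\phi$ evaluated at the computable reals $\Phi(\theta)$, and the exponentials are computable. Choosing $R$ computably from $K$, $B$ and $p$ keeps each coefficient error small enough that the sum over at most $2K+1$ terms contributes below the budget. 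The Haar term is just $\hat f(0)$, which is handled the same way.

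Summing all the errors gives $2^{-p}$, and every choice depends computably on $(B,p)$ and the code. The main obstacle is the rigorous justification of the Parseval identity against the singular limit $\eta_\tau$ together with the stabilization of $\hat\eta_\tau(n)$. We would handle both by passing to the limit from $\eta_{\tau,N}$, whose coefficients agree with $\hat\eta_\tau$ for $3^N>2|n|$, using weak convergence on the continuous function $f$.
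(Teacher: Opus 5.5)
Your proposal is correct and follows the paper's proof essentially step by step: you pass to $\phi\circ\Phi$ on $\mathbb T$, truncate the $m$-series using $\|\phi\|_\infty\le B$, compute $\widehat\eta_\tau(n)$ exactly via uniqueness of balanced ternary expansions, and pair it with the $O(n^{-2})$-decaying coefficients of $\phi\circ\Phi$, with tail bound $CB/(2\pi^2K)$. The only difference is that you compute the coefficients of $\phi\circ\Phi$ by an explicit composite quadrature rule with a second-derivative error bound, whereas the paper simply appeals to computability of the Fourier integral from the modulus of continuity supplied by the $C^2$ source code, so your version is slightly more explicit.
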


\begin{proof}
Let $g=\phi\circ\Phi$. Then $g$ is a computable $C^2$ function on $\mathbb T$, uniformly in the input $\phi$. Moreover, $\|g\|_\infty\le B$, and, by the chain rule,
$$
    g''(x)
    =
    \phi''(\Phi(x))(\Phi'(x))^2
    +
    \phi'(\Phi(x))\Phi''(x),
$$
which we may bound explicitly by some $D$ using $B$.

For a given sequence $\tau$, the integral $\int_{\mathbb T}g\,\mathrm d\eta_\tau$ is computed from the Fourier coefficients of $\eta_\tau$. For the finite Riesz products,
$$
    \widehat \eta_{\tau,N}(n)
    =
    \sum_{\substack{\epsilon\in\{-1,0,1\}^{S_N}\\
    \sum_{k\in S_N}\epsilon_k3^k=n}}
    \prod_{k\in S_N} c_{\epsilon_k},
$$
where $S_N=\{1\le k\le N:\tau(k)=1\}$, $c_0=1$, $c_{1}=c_{-1}=1/4$. The representation
$$
    n=\sum_k \epsilon_k3^k,
    \qquad
    \epsilon_k\in\{-1,0,1\},
$$
is unique when it exists. Indeed, if a non-trivial relation existed, then the largest power $3^K$ appearing would dominate the contribution of all smaller powers $3^K>2\sum_{j<K}3^j=3^K-1$, which is impossible.

Thus $\widehat\eta_\tau(n)$ can be computed from finitely many values of $\tau$. More explicitly, for $n\neq 0$, compute the balanced ternary expansion
$$
    n=\sum_{k=0}^K \epsilon_k3^k,
    \qquad
    \epsilon_k\in\{-1,0,1\}.
$$
If $\epsilon_0\neq 0$, or if $\epsilon_k\neq 0$ for some $k\ge 1$ with $\tau(k)=0$, then $\widehat\eta_\tau(n)=0$. Otherwise, if $r=\#\{k\ge 1:\epsilon_k\neq 0\}$, then $\widehat\eta_\tau(n)=\left(\frac 14\right)^r$. For $n=0$, of course, $\widehat\eta_\tau(0)=1$. Since every non-zero digit in the balanced ternary expansion satisfies $3^k\le 2|n|$, only the values $\tau(k)$ with $k\le \lceil \log_3(2|n|)\rceil$ have to be queried. Hence $\widehat\eta_\tau(n)$ is computable, uniformly in $n$ and in $\tau$.

Next, for $n\neq 0$, integration by parts twice gives $\widehat g(n)=\tfrac{\widehat{g''}(n)}{(2\pi i n)^2}, $ and therefore
$$
    |\widehat g(n)|
    \le
    \frac{\|g''\|_\infty}{4\pi^2 n^2}
    \le
    \frac{D}{4\pi^2 n^2}.
$$
Thus $\sum_{n\in\mathbb Z} |\widehat g(n)|<\infty$. Since $|\widehat\eta_\tau(n)|\le 1$, the series $\sum_{n\in\mathbb Z}\widehat g(n)\widehat\eta_\tau(-n)$ is absolutely convergent. Moreover,
$$
    \sum_{|n|>R}
    |\widehat g(n)\widehat\eta_\tau(-n)|
    \le
    2\sum_{n>R}\frac{D}{4\pi^2 n^2}
    \le
		\frac{D}{2\pi^2}\int_{R}^\infty \frac{\mathrm{d}x}{x^2}
    =\frac{D}{2\pi^2 R}.
$$
Therefore, the tail of the series is bounded in terms of $B$ and $R$.

The absolute convergence of the Fourier series of $g$ justifies termwise integration:
$$
    \int_{\mathbb T} g\,\mathrm{d}\eta_\tau=
    \int_{\mathbb T}
    \left(\sum_{n\in\mathbb Z}\widehat g(n)\mathrm e^{2\pi i n x}\right)
    \,\mathrm{d}\eta_\tau(x) =
    \sum_{n\in\mathbb Z}
    \widehat g(n)
    \int_{\mathbb T} \mathrm e^{2\pi i n x}\,\mathrm{d}\eta_\tau(x) =
    \sum_{n\in\mathbb Z}\widehat g(n)\widehat\eta_\tau(-n).
$$
The preceding tail estimate gives a uniform algorithm for $\int_{\mathbb T}g\,\mathrm d\eta_\tau$. Given $\delta>0$, choose $R$ so that $D/(2\pi^2R)<\delta/2$. For each $|n|\le R$, compute $\widehat\eta_\tau(-n)$ by the finite procedure above. Each $\widehat g(n)$ is computable uniformly in $\phi$: the $C^2$ source code provides a computable modulus of continuity for the integrand in its defining Fourier integral. Compute these coefficients accurately enough that the finite-sum error is below $\delta/2$. The omitted tail is also smaller than $\delta/2$. Hence $\int_{\mathbb T}g\,\mathrm d\eta_\tau$, and therefore $\int_{\mathcal I}\phi\,\mathrm d\nu_{\tau_m}$, is computable uniformly in $m$ and $\phi$.

The definition of $\nu_A$ gives
$$
    \int_{\mathcal{I}} \phi\,\mathrm{d}\nu_A
    =
    \frac12\int_{\mathcal{I}}\phi\,\mathrm{d}\Lambda_\Phi
    +
    \sum_{m=1}^\infty 2^{-m-1}
    \int_{\mathcal{I}}\phi\,\mathrm{d}\nu_{\tau_m}
		=\frac12\widehat g(0)
    +
    \sum_{m=1}^\infty 2^{-m-1}
    \int_{\mathcal{I}}\phi\,\mathrm{d}\nu_{\tau_m}.
$$
Given $p\in\mathbb N$, choose $J$ so large that $B\,2^{-J-1}<2^{-p-1}$. Since each $\nu_{\tau_m}$ is a probability measure,
$$
    \left|
    \sum_{m=J+1}^\infty 2^{-m-1}
    \int_{\mathcal{I}}\phi\,\mathrm{d}\nu_{\tau_m}
    \right|
    \le
    \sum_{m=J+1}^\infty 2^{-m-1}\|\phi\|_\infty
    \le
    B\,2^{-J-1}
    <
    2^{-p-1}.
$$
Thus, it suffices to compute the finite expression
$$
    \frac12\widehat g(0)
    +
    \sum_{m=1}^{J}2^{-m-1}
    \int_{\mathcal{I}}\phi\,\mathrm{d}\nu_{\tau_m}
$$
to within $2^{-p-1}$. Each term in this finite sum is computable uniformly in $\phi$, as shown above. Since the total weight of these finitely many terms is at most $1$, it is enough to compute each integral to accuracy $2^{-p-1}$. Combining this finite approximation error with the tail estimate gives an approximation to $\int_{\mathcal{I}}\phi\,\mathrm{d}\nu_A$ within $2^{-p}$. All choices made above are computable from the input $C^2$ source code of $\phi$, the bound $B$, the requested precision $p$, and the data defining $\nu_A$.
\end{proof}

\subsection{The Moment-Killing Measure}

The measure $\nu_A$ contains the required spectral information, but the inverse spectral argument also requires a perturbation with vanishing moments. The following Schwartz function supplies the correction.

\begin{lemma}
\label{lem:h_moments}
There exists a real-valued computable Schwartz function $h$ with $\operatorname{supp} h\subset [4,\infty)$ such that
$$
    \int_4^\infty u^n h(u)\,\mathrm{d}u = -1,
    \qquad n=0,1,2,\ldots.
$$
Moreover, $h$ may be chosen with computable bounds on all Schwartz seminorms: for every $M,\ell\in \mathbb{N}\cup\{0\}$ one can compute a rational number $C_{M,\ell}$ such that
$$
    \sup_{u\in\mathbb R}
    (1+|u|)^M |\partial_u^\ell h(u)|
    \le C_{M,\ell}.
$$
\end{lemma}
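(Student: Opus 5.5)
The plan is to reduce \cref{lem:h_moments} to a Stieltjes moment problem for rapidly decreasing functions on a half-line, and to solve that problem by a convergent, effective series.

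\textbf{Step 1: reduction to a half-line moment problem.} The condition $\int u^n h\,\mathrm du=-1$ for all $n$ says that $h$ has the same moments as $-\delta_1$. Substitute $u=4+v$ and set $g(v)=h(4+v)$. Expanding $(4+v)^n$ binomially shows that the required moments of $g$ are those of $-\delta_{-3}$, namely
$$
\int_0^\infty v^n g(v)\,\mathrm dv=-(-3)^n=:m_n .
$$
The map between the two moment sequences is lower triangular with computable entries. So it suffices to build a computable $g\in\mathcal S(\mathbb R)$ with $\operatorname{supp}g\subset[0,\infty)$, vanishing to infinite order at $0$, and with moments $m_n$. A Schwartz seminorm bound for $g$ transfers to $h$ with the constant changed by a computable factor $5^M$, since $(1+|u|)\le 5(1+|v|)$ for $v\ge0$.

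\textbf{Step 2: a basis of functions with prescribed low moments.} First I would fix a computable Schwartz function $\omega\ge0$ supported in $[1,2]$, a flat bump as in \cref{lem:psi_eig}. Its moments $\mu_j=\int v^j\omega$ are computable and positive. Finite linear combinations of the dilates and translates $\omega(\cdot-c)$ yield, by solving a Vandermonde-type linear system with computable coefficients, functions $\varphi_k$ supported in a fixed compact set $K_k\subset(0,\infty)$ with
$$
\int v^n\varphi_k=\delta_{nk},\qquad 0\le n\le k .
$$
The higher moments of $\varphi_k$ are nonzero but explicitly computable.

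\textbf{Step 3: a triangular (Borel-type) series.} Next I would define $g=\sum_k a_k\psi_k$, where $\psi_k$ is a rescaled copy of $\varphi_k$, namely $\psi_k(v)=\lambda_k^{-k-1}\varphi_k(v/\lambda_k)$ with $\lambda_k\to0$ rapidly. Rescaling preserves the normalization $\int v^k\psi_k=1$ and the vanishing of the lower moments. It also makes every higher moment $\int v^n\psi_k=\lambda_k^{\,n-k}\int v^n\varphi_k$ tiny. The coefficients are then defined recursively by
$$
a_k=m_k-\sum_{j<k}a_j\int v^k\psi_j .
$$
Then, formally, $\int v^n g=m_n$ for every $n$, because only the terms with $j\le n$ contribute to the $n$th moment.

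\textbf{Step 4: convergence and computable seminorm bounds (the main obstacle).} Shrinking $\lambda_k$ makes the higher moments of $\psi_k$ tiny, but it inflates the derivatives of $\psi_k$ by powers of $\lambda_k^{-1}$. These two requirements pull in opposite directions, so $\lambda_k$ cannot simply be taken small. I would first try the opposite regime, the one used in Durán's theorem on Stieltjes moments of rapidly decreasing functions: push the supports of the $\psi_k$ out to infinity, placing them on $[R_k,R_k+1]$ with $R_k$ growing fast. The $k$th basis function would then be obtained by subtracting the finitely many lower moments using earlier, fixed functions. The coefficients are to be chosen so that the $k$th term has all seminorms of order at most $k$ bounded by $2^{-k}$. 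This is possible because the extra decay factor $(1+R_k)^{-M}$ can be traded against the growth of the correction coefficients.

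Whichever regime is used, the $M$-th weighted sup norm of the tail beyond index $M+\ell$ must be summable with a computable bound. That tail bound is what yields the rational constants $C_{M,\ell}$. Smoothness of $h$ at the point $4$ is automatic, since every term is supported away from $v=0$. Finally, computability of $h$ follows because the series converges with a computable modulus in each seminorm and each term is a finite combination of computable bumps.
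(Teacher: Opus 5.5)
Your Steps 1 and 2 and the triangular recursion for $a_k$ are fine. The gap is in Step 4: the decisive estimate is never established, and the regime you actually write down fails. With $\psi_k(v)=\lambda_k^{-k-1}\varphi_k(v/\lambda_k)$ and $\lambda_k\to0$, one gets $\|\partial^\ell\psi_k\|_\infty=\lambda_k^{-k-1-\ell}\|\varphi_k^{(\ell)}\|_\infty\to\infty$. The coefficient $a_k$ is fixed by $\psi_0,\dots,\psi_{k-1}$ alone, so it cannot compensate.

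The motivation for that regime is also a misdiagnosis. The higher moments of $\psi_k$ never need to be small: the recursion for $a_k$ absorbs them exactly. The termwise moment computation only needs $\sum_k\sup_v(1+|v|)^{n+2}|a_k\psi_k(v)|<\infty$. So the only real requirement is this: once $a_k$ is known, the $k$th term can be made at most $2^{-k}$ in all seminorms with $M,\ell$ up to a growing function of $k$.

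Your alternative does not deliver this as described. The weight $(1+|u|)^M$ is a growth factor of size about $R_k^M$ on a support near $R_k$, not a decay factor. Moreover, suppose $\psi_k$ is supported in $[R_k,R_k+1]$ with $\int v^n\psi_k=\delta_{nk}$ for $n\le k$. This normalization says that $p\mapsto\int p\,\psi_k$ returns the coefficient of $v^k$, which is translation invariant, so $\psi_k(\cdot+R_k)$ has the same moments on $[0,1]$. Then $\int_0^1 w^k\psi_k(w+R_k)\,\mathrm dw=1$ forces $\|\psi_k\|_\infty\ge1$, and hence a weighted norm of at least $(1+R_k)^M$.

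That regime can only be rescued in one way. Take a fixed-shape bump at $R_k$ normalized to have $k$th moment one; its amplitude is of order $R_k^{-k}$, which beats $R_k^M$ only for $M<k$. Then remove its lower moments, which are $O(R_k^{-1})$, using fixed dual functions. None of this is in the proposal, and your phrase about trading a decay factor $(1+R_k)^{-M}$ against growing corrections points the wrong way.

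The missing step is obtained by reversing your scaling: take $\lambda_k\to\infty$. Then $\partial^\ell\psi_k=\lambda_k^{-k-1-\ell}\varphi_k^{(\ell)}(\cdot/\lambda_k)$ is supported in $\lambda_kK_k$. Hence $\sup_v(1+|v|)^M|\partial^\ell\psi_k(v)|\lesssim\lambda_k^{M-k-1-\ell}\to0$ for every $M\le k$, and the supports stay away from the origin.

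Choose $\lambda_k$ computably, after $a_k$ has been computed, so that $|a_k|$ times these explicit bounds is at most $2^{-k}$ for all $M,\ell\le\lfloor k/2\rfloor$. This gives uniform convergence of every derivative series. It also gives rational constants $C_{M,\ell}$ from a finite head plus the tail $2^{1-2\max(M,\ell)}$, and the moment identities by absolute convergence.

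This is exactly the paper's proof. There, $\psi_j=\frac{(-1)^j}{j!}\partial^j$ of a unit-mass bump of width $L_j\to\infty$. By integration by parts this satisfies $\int u^k\psi_j=\delta_{kj}$ for $k\le j$, so neither your Vandermonde system nor the shift to the moments of $-\delta_{-3}$ is needed. The bumps are placed on disjoint intervals $[S_j,S_j+L_j]$. The width $L_j$ is chosen so that $\frac{\varsigma_jA_{j+\ell}}{j!}L_j^{-j-\ell-1}(1+S_j+L_j)^M\le2^{-j}$ for $M,\ell\le\lfloor j/2\rfloor$, where $\varsigma_j>|c_j|$ and $A_r$ bounds the $r$th derivative of the bump.
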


\begin{proof}
Choose a non-negative computable function $\alpha\in C_c^\infty((0,1))$ such that $\int_{\mathbb R} \alpha(u)\,\mathrm{d}u = 1$, and such that there is a computable sequence of rational numbers $A_r$ satisfying
$$
    \|\alpha^{(r)}\|_\infty \le A_r,
    \qquad r=0,1,2,\ldots.
$$
For example, one may take a translated and dilated standard $C^\infty$ bump function supported in $(0,1)$, and then normalize it by its integral. For rational numbers $S>4$ and $L>0$, define
$$
    \psi_{S,L}(u)
    =
    L^{-1}\alpha\left(\frac{u-S}{L}\right).
$$
Then $\psi_{S,L}$ is supported in $[S,S+L]$ and satisfies $\int_{\mathbb R}\psi_{S,L}(u)\,\mathrm{d}u=1$. For $j\ge 0$, define
$$
    \psi_{j,S,L}(u)
    =
    \frac{(-1)^j}{j!}\partial_u^j\psi_{S,L}(u).
$$
If $k\ge j$, then integration by parts gives
$$
    \int_{\mathbb R} u^k \psi_{j,S,L}(u)\,\mathrm{d}u
    =
    \frac{(-1)^j}{j!}
    \int_{\mathbb R} u^k \partial_u^j\psi_{S,L}(u)\,\mathrm{d}u
    =
    \frac{1}{j!}
    \int_{\mathbb R} \partial_u^j(u^k)\psi_{S,L}(u)\,\mathrm{d}u
    =
    \binom{k}{j}
    \int_{\mathbb R} u^{k-j}\psi_{S,L}(u)\,\mathrm{d}u.
$$
If $k<j$, then $\partial_u^j(u^k)=0$, and therefore $\int_{\mathbb R} u^k \psi_{j,S,L}(u)\,\mathrm{d}u=0$. Thus
\begin{equation}
\label{eqn:moments_psi}
    \int_{\mathbb R} u^k \psi_{j,S,L}(u)\,\mathrm{d}u
    =
    \begin{cases}
        0, & k<j, \\[2mm]
        1, & k=j, \\[2mm]
        \displaystyle
        \binom{k}{j}
        \int_{\mathbb R} u^{k-j}\psi_{S,L}(u)\,\mathrm{d}u,
        & k>j.
    \end{cases}
\end{equation}

We now construct pairwise disjoint rational intervals
$$
    [S_j,S_j+L_j],
    \qquad j=0,1,2,\ldots,
$$
with $S_j\to\infty$, and computable coefficients $c_j$. Set $\psi_j=\psi_{j,S_j,L_j}$, $c_0=-1$ and $S_0=5$. More generally, suppose that $c_i,\ S_i,\ L_i$ have already been constructed for $i<j$. Define
$$
    c_j
    =
    -1
    -
    \sum_{i=0}^{j-1}
    c_i
    \int_{\mathbb R} u^j\psi_i(u)\,\mathrm{d}u.
$$
For $j=0$, this is interpreted as $c_0=-1$. The number $c_j$ is computable from the previously constructed data, since each integral $\int_{\mathbb R} u^j\psi_i(u)\,\mathrm{d}u$ is computable from $\alpha$, $S_i$, and $L_i$. Choose a rational number $\varsigma_j>|c_j|$. For $j\ge 1$, choose a rational number $S_j$ such that
$
    S_j>S_{j-1}+L_{j-1}+1.
$
This guarantees that the intervals are pairwise disjoint. Having chosen $S_j$, choose a rational number $L_j>0$ so large that, for every $M,\ell\le \lfloor j/2\rfloor$,
\begin{equation}
\label{eqn:choice_L_j}
    \frac{\varsigma_j A_{j+\ell}}{j!}
    L_j^{-j-\ell-1}
    (1+S_j+L_j)^M
    \le 2^{-j}.
\end{equation}
For fixed $j$, $S=S_j$, and $M,\ell\le \lfloor j/2\rfloor$, the left-hand side is $O(L^{M-j-\ell-1})$ as $L\to\infty$. Since
$$
    M-j-\ell-1
    \le
    -j/2-\ell-1
    <0,
$$
it tends to $0$. One can find a rational $L_j$ satisfying \eqref{eqn:choice_L_j}. For $j=0$, choose $L_0>0$ so that the same condition holds for $M=\ell=0$.

This choice is computable. Indeed,
$$
    \partial_u^\ell\psi_{j,S,L}(u)
    =
    \frac{(-1)^j}{j!}
    L^{-j-\ell-1}
    \alpha^{(j+\ell)}
    \left(\frac{u-S}{L}\right),
$$
and on the support of $\psi_{j,S,L}$ we have $|u|\le S+L$. Hence
$$
    \sup_{u\in\mathbb R}
    (1+|u|)^M
    |c_j \partial_u^\ell\psi_{j,S,L}(u)|
    \le
    \frac{\varsigma_j A_{j+\ell}}{j!}
    L^{-j-\ell-1}
    (1+S+L)^M.
$$
Consequently,
\begin{equation}
\label{eqn:unif_schwartz}
    \sup_{u\in\mathbb R}
    (1+|u|)^M
    |c_j\partial_u^\ell\psi_j(u)|
    \le 2^{-j}
\end{equation}
whenever $M,\ell\le \lfloor j/2\rfloor$. Define
$
    h(u)
    =
    \sum_{j=0}^\infty c_j\psi_j(u).
$
The intervals $[S_j,S_j+L_j]$ are pairwise disjoint, and each $\psi_j$ is supported in $[S_j,S_j+L_j]$. Hence at each point $u$ at most one term in the sum is non-zero. In particular, $h$ is supported in $[S_0,\infty)\subset [4,\infty)$.

The same construction gives computable bounds for every Schwartz seminorm. Taking $M=\ell=0$ in \eqref{eqn:unif_schwartz}, we obtain $\|c_j\psi_j\|_\infty\le 2^{-j}$. Thus the defining series for $h$ converges uniformly with a computable rate, and since the partial sums are computable functions, $h$ is computable. Similarly, for fixed $\ell$, the series $\sum_{j=0}^\infty c_j\partial_u^\ell\psi_j(u)$ converges uniformly, because \eqref{eqn:unif_schwartz} with $M=0$ applies to all sufficiently large $j$. Therefore $h\in C^\infty(\mathbb R)$, and $\partial_u^\ell h(u)=\sum_{j=0}^\infty c_j\partial_u^\ell\psi_j(u)$. Now fix $M,\ell\in\mathbb{N}\cup\{0\}$, and set $J=2\max(M,\ell)$. For $j\ge J$, estimate \eqref{eqn:unif_schwartz} gives
$$
    \sup_{u\in\mathbb R}
    (1+|u|)^M
    |c_j\partial_u^\ell\psi_j(u)|
    \le 2^{-j}.
$$
For $j<J$, the explicit bound
$$
    \sup_{u\in\mathbb R}
    (1+|u|)^M
    |c_j\partial_u^\ell\psi_j(u)|
    \le
    \frac{\varsigma_j A_{j+\ell}}{j!}
    L_j^{-j-\ell-1}
    (1+S_j+L_j)^M
$$
is computable. Hence
$$
    \sup_{u\in\mathbb R}
    (1+|u|)^M
    |\partial_u^\ell h(u)|
    \le
    \sum_{j=0}^{J-1}
    \frac{\varsigma_j A_{j+\ell}}{j!}
    L_j^{-j-\ell-1}
    (1+S_j+L_j)^M
    +
    \sum_{j=J}^\infty 2^{-j}.
$$
The right-hand side is a computable finite expression plus the explicit tail $\sum_{j=J}^\infty 2^{-j}=2^{1-J}$. Thus we can compute a rational constant $C_{M,\ell}$ such that
$$
    \sup_{u\in\mathbb R}
    (1+|u|)^M
    |\partial_u^\ell h(u)|
    \le C_{M,\ell}.
$$
This proves the claimed computable bounds on all Schwartz seminorms of $h$.

For the moments, fix $n\ge0$. For all sufficiently large $j$, namely for $j\ge 2(n+2)$, estimate \eqref{eqn:unif_schwartz} with $M=n+2$ and $\ell=0$ gives $|c_j\psi_j(u)|\le2^{-j}(1+|u|)^{-n-2}$. Therefore
$$
    |u|^n |c_j\psi_j(u)|
    \le
    2^{-j}
    \frac{|u|^n}{(1+|u|)^{n+2}},
$$
and the function ${|u|^n}/{(1+|u|)^{n+2}}$ is integrable on $\mathbb R$. Hence
$$
    \sum_{j=0}^\infty
    \int_{\mathbb R}
    |u|^n |c_j\psi_j(u)|\,\mathrm{d}u
    <\infty.
$$
We may therefore interchange the sum and the integral:
$$
    \int_{\mathbb R} u^n h(u)\,\mathrm{d}u
    =
    \sum_{i=0}^\infty
    c_i
    \int_{\mathbb R} u^n\psi_i(u)\,\mathrm{d}u.
$$
By \eqref{eqn:moments_psi}, the terms with $i>n$ vanish, and the term with $i=n$ is $c_n$. Hence
$$
    \int_{\mathbb R} u^n h(u)\,\mathrm{d}u
    =
    \sum_{i=0}^{n}
    c_i
    \int_{\mathbb R} u^n\psi_i(u)\,\mathrm{d}u
    =
    \sum_{i=0}^{n-1}
    c_i
    \int_{\mathbb R} u^n\psi_i(u)\,\mathrm{d}u
    +
    c_n.
$$
The recursive definition of $c_n$ makes the last expression equal to $-1$. Since $\operatorname{supp}h\subset[4,\infty)$, this is precisely $\int_4^\infty u^nh(u)\,\mathrm du=-1$.
\end{proof}

Fix $h$ as in \cref{lem:h_moments}. For $t>0$, let $h_t(E)=t^{-1}h(E/t)$. Then $\operatorname{supp}h_t\subset[4t,\infty)$. Since $\nu_A$ is supported in $\mathcal I$, define the absolutely continuous signed measure $\kappa_A$ by
$$
    \,\mathrm{d}\kappa_A(E)=g_A(E)\,\mathrm{d}E,
\quad\text{where}\quad
    g_A(E)
    =
    \int_{5/4}^{7/4} h_t(E)\,\mathrm{d}\nu_A(t).
$$

\begin{proposition}
\label{lem:kappa_a_fv}
The function $g_A$ is computable and Schwartz, with computable Schwartz bounds. In particular, $\kappa_A$ has finite variation.
\end{proposition}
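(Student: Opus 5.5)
The plan is to differentiate under the integral sign and bound each Schwartz seminorm of $g_A$ by the corresponding seminorm of $h$, using that $\nu_A$ is a probability measure supported in $\mathcal I=[5/4,7/4]$, so the dilation parameter $t$ ranges over a compact set bounded away from $0$. For fixed $E$, the integrand $t\mapsto h_t(E)=t^{-1}h(E/t)$ is smooth on a neighbourhood of $\mathcal I$. Its $E$-derivatives are
$$
    \partial_E^\ell h_t(E)=t^{-1-\ell}h^{(\ell)}(E/t),
$$
and they are bounded uniformly in $(t,E)\in\mathcal I\times\mathbb R$ by $(4/5)^{1+\ell}C_{0,\ell}$. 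Dominated convergence therefore justifies
$$
    g_A^{(\ell)}(E)=\int_{\mathcal I}t^{-1-\ell}h^{(\ell)}(E/t)\,\mathrm d\nu_A(t).
$$
In particular $g_A\in C^\infty$, and $g_A$ is real-valued with support in $[5,\infty)$.

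For the seminorm bounds, take $t\in\mathcal I$. Since $t\ge 5/4$, we have $1+|E|\le \tfrac74(1+|E|/t)$. Hence
$$
    (1+|E|)^M|\partial_E^\ell h_t(E)|
    \le (7/4)^M(4/5)^{1+\ell}(1+|E/t|)^M|h^{(\ell)}(E/t)|
    \le (7/4)^M(4/5)^{1+\ell}C_{M,\ell}.
$$
Integrating against the probability measure $\nu_A$ gives the computable bound
$$
    \sup_E(1+|E|)^M|g_A^{(\ell)}(E)|\le (7/4)^M C_{M,\ell},
$$
which proves that $g_A$ is Schwartz with computable Schwartz bounds. Taking $M=2$ and $\ell=0$ shows $\|g_A\|_{L^1}\le C\int(1+|E|)^{-2}\,\mathrm dE<\infty$, so $\kappa_A$ has finite total variation $\|g_A\|_{L^1}$.

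Computability of $g_A$ at a computable point $E$ is the step that needs care. The plan is to apply \cref{lem:integral_com_nu} with $\phi_E(t)=t^{-1}h(E/t)$ on $\mathcal I$. We must supply a $C^2$ source code for $\phi_E$, which is uniform in $E$ since $h$ and its derivatives are computable by \cref{lem:h_moments}. We must also supply a rational bound $B$ on $\phi_E,\phi_E',\phi_E''$. Differentiating in $t$ produces terms of the form $t^{-1-j-k}E^k h^{(k)}(E/t)$ for $k\le 2$. Writing $|E|^k=t^k|E/t|^k\le (7/4)^k|E/t|^k$ and using the seminorm bounds $C_{k,k}$, we get a bound $B$ independent of $E$, built explicitly from $C_{0,0},C_{1,1},C_{2,2}$. \cref{lem:integral_com_nu} then computes $g_A(E)$ to any precision. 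The same argument with $h^{(\ell)}$ in place of $h$ computes the derivatives $g_A^{(\ell)}(E)$, which is what a smooth source code requires.

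The main obstacle is this uniform $E$-independent $C^2$ bound in $t$. The factor of $E$ produced by differentiating $h(E/t)$ in $t$ could grow, and it is controlled only because $t$ is bounded away from $0$ and $\infty$ and $h$ has polynomial decay with computable constants.
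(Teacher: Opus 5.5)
Your proposal is correct and follows essentially the paper's own route: dominated convergence to differentiate under the integral, the inequality $1+|E|\le\frac74(1+|E/t|)$ to transfer the Schwartz seminorms of $h$ to $g_A$, and \cref{lem:integral_com_nu} applied to $t\mapsto t^{-1}h(E/t)$ for computability. One cosmetic slip: that inequality follows from $t\le 7/4$, not from $t\ge 5/4$ (the lower bound is what gives the factor $(4/5)^{1+\ell}$); otherwise, your explicit $E$-independent $C^2$ bound in $t$ is a slightly sharper form of the paper's ``uniformly in $E$ on compact sets'' claim.
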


\begin{proof}
First note that, since $h_t$ is supported in $[4t,\infty)$ and $t\in\mathcal{I}$, we have $\operatorname{supp} g_A \subset [5,\infty)$. For $\ell\ge 0$, differentiation gives $\partial_E^\ell h_t(E)=t^{-\ell-1}h^{(\ell)}(E/t)$. To differentiate under the integral sign, let $s\ne0$. By the mean value theorem, for each fixed $t\in\mathcal{I}$, there is a point $E_\ast$ between $E$ and $E+s$ such that
$$
    \left|
    \frac{h_t(E+s)-h_t(E)}{s}
    \right|
    =
    |h_t'(E_\ast)|
    \le
    t^{-2}\|h'\|_\infty.
$$
Because $t^{-2}\le(4/5)^2$ on $\mathcal I$, this bound is $\nu_A$-integrable. The dominated convergence theorem therefore gives
$$
    g_A'(E)
    =
    \int_{5/4}^{7/4} h_t'(E)\,\mathrm{d}\nu_A(t).
$$
Repeating the same argument, using the uniform bounds
$$
    |\partial_E^{\ell+1}h_t(E)|
    \le
    \left(\frac45\right)^{\ell+2}
    \|h^{(\ell+1)}\|_\infty,
$$
we obtain, for every $\ell\ge 0$,
$$
    g_A^{(\ell)}(E)
    =
    \int_{5/4}^{7/4}
    t^{-\ell-1}h^{(\ell)}(E/t)\,\mathrm{d}\nu_A(t).
$$
Thus $g_A\in C^\infty(\mathbb R)$.

For the Schwartz estimates, fix $M,\ell\in\mathbb N\cup\{0\}$. Since $t\in\mathcal{I}$, we have
$$
    1+|E|
    \le
    \frac74(1+|E/t|).
$$
Therefore
$$
\begin{aligned}
    (1+|E|)^M |g_A^{(\ell)}(E)|
    &\le
    \int_{5/4}^{7/4}
    (1+|E|)^M t^{-\ell-1}
    |h^{(\ell)}(E/t)|\,\mathrm{d}\nu_A(t) \\
    &\le
    \left(\frac74\right)^M
    \left(\frac45\right)^{\ell+1}
    \int_{5/4}^{7/4}
    (1+|E/t|)^M
    |h^{(\ell)}(E/t)|\,\mathrm{d}\nu_A(t).
\end{aligned}
$$
Since $\nu_A$ is a probability measure and computable bounds are available for all Schwartz seminorms of $h$, there is a computable constant $C_{M,\ell}^{(h)}$ such that $\sup_{u\in\mathbb R}(1+|u|)^M |h^{(\ell)}(u)|\le C_{M,\ell}^{(h)}$. Consequently,
$$
    \sup_{E\in\mathbb R}
    (1+|E|)^M |g_A^{(\ell)}(E)|
    \le
    \left(\frac74\right)^M
    \left(\frac45\right)^{\ell+1}
    C_{M,\ell}^{(h)}.
$$
The right-hand side is computable from the computable Schwartz bounds for $h$. Hence $g_A$ is Schwartz with computable Schwartz bounds.

To compute $g_A$, fix $E$. The function $t\mapsto t^{-1}h(E/t)$ is a computable $C^2$ function on $\mathcal{I}$, uniformly in $E$ on compact sets, with computable $C^2$ bounds obtained from the computable Schwartz bounds for $h$. Therefore, by the uniform computability of integration against $\nu_A$ (\cref{lem:integral_com_nu}), the value
$$
    g_A(E)
    =
    \int_{5/4}^{7/4} t^{-1}h(E/t)\,\mathrm{d}\nu_A(t)
$$
is computable uniformly in $E$. The same argument applies to $g_A^{(\ell)}$, using the formula above.

Since $g_A\in\mathcal S(\mathbb R)\subset L^1(\mathbb R)$, the signed measure $\mathrm d\kappa_A(E)=g_A(E)\,\mathrm dE$ has finite variation.
\end{proof}

Let $\sigma_A=\nu_A+\kappa_A$, with $\nu_A$ viewed as a measure on $[0,\infty)$. This signed measure is finite because $|\sigma_A|\le\nu_A+|\kappa_A|$. The next lemma gives the moment cancellation.

\begin{lemma}
\label{lem:sigma_moments}
For every $n\in\mathbb N\cup\{0\}$, we have
$
    \int_0^\infty E^n\,\mathrm{d}\sigma_A(E)=0.
$
\end{lemma}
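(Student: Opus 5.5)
The proof will be a direct computation: split the moment into the $\nu_A$ part and the $\kappa_A$ part, and use Fubini to rewrite the $\kappa_A$ part as an average of rescaled moments of $h$. Write
$$
\int_0^\infty E^n\,\mathrm d\sigma_A(E)=\int_{\mathcal I}E^n\,\mathrm d\nu_A(E)+\int_0^\infty E^n g_A(E)\,\mathrm dE .
$$
Both terms are finite. The first holds because $\nu_A$ is a probability measure on the compact set $\mathcal I$. The second holds because $g_A$ is Schwartz by \cref{lem:kappa_a_fv}, with support in $[5,\infty)\subset[0,\infty)$.

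For the second term, substitute the definition of $g_A$:
$$
\int_0^\infty E^n g_A(E)\,\mathrm dE=\int_0^\infty E^n\int_{\mathcal I}t^{-1}h(E/t)\,\mathrm d\nu_A(t)\,\mathrm dE .
$$
The key step is to justify exchanging the two integrals by Tonelli/Fubini. This is the only place where care is needed. We need
$$
\int_{\mathcal I}\int_0^\infty E^n t^{-1}|h(E/t)|\,\mathrm dE\,\mathrm d\nu_A(t)<\infty .
$$
Substituting $E=tu$ turns the inner integral into $t^{n}\int_0^\infty u^n|h(u)|\,\mathrm du$. This is finite by the Schwartz bound $|h(u)|\le C_{n+2,0}(1+|u|)^{-n-2}$ from \cref{lem:h_moments}, and it is uniformly bounded for $t\in\mathcal I$ since $t\le 7/4$. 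Since $\nu_A$ is finite, the iterated absolute integral is finite.

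After the exchange, the same substitution $E=tu$ gives, for each fixed $t\in\mathcal I$,
$$
\int_0^\infty E^n t^{-1}h(E/t)\,\mathrm dE=t^n\int_4^\infty u^n h(u)\,\mathrm du=-t^n ,
$$
using the moment identity of \cref{lem:h_moments} and $\operatorname{supp}h\subset[4,\infty)$. Integrating against $\nu_A$ then gives
$$
\int_0^\infty E^n g_A(E)\,\mathrm dE=-\int_{\mathcal I}t^n\,\mathrm d\nu_A(t),
$$
which cancels the first term exactly. So the $n$th moment of $\sigma_A$ vanishes for every $n\ge0$. The case $n=0$ also confirms that $\sigma_A$ has total mass zero.
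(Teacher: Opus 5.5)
Your proposal is correct and follows the paper's proof essentially step for step. You split off the $\nu_A$ moment, justify Fubini via the substitution $E=tu$ and the Schwartz decay of $h$, and then use $\int_4^\infty u^n h(u)\,\mathrm du=-1$ to obtain $-\int_{\mathcal I}t^n\,\mathrm d\nu_A(t)$, which cancels the first term.
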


\begin{proof}
Since $g_A$ is Schwartz and supported in $[5,\infty)$, all polynomial moments of $\kappa_A$ are finite. Moreover, since $h$ is Schwartz and $t\in\mathcal{I}$, we have
$$
    \int_{5/4}^{7/4}
    \int_0^\infty
    E^n t^{-1}|h(E/t)|\,\mathrm{d}E\,\mathrm{d}\nu_A(t)
    =
    \int_{5/4}^{7/4}
    t^n
    \int_0^\infty
    u^n |h(u)|\,\mathrm{d}u\,\mathrm{d}\nu_A(t) <\infty.
$$
Fubini's theorem therefore yields
$$
    \int_0^\infty E^n\,\mathrm{d}\kappa_A(E)
    =
    \int_0^\infty
    E^n
    \left(
        \int_{5/4}^{7/4} t^{-1}h(E/t)\,\mathrm{d}\nu_A(t)
    \right)
    \,\mathrm{d}E =
    \int_{5/4}^{7/4}
    \left(
        \int_0^\infty E^n t^{-1}h(E/t)\,\mathrm{d}E
    \right)
    \,\mathrm{d}\nu_A(t).
$$
Changing variables $E=tu$ in the inner integral gives
$$
\begin{aligned}
    \int_0^\infty E^n t^{-1}h(E/t)\,\mathrm{d}E
    &=
    t^n\int_0^\infty u^n h(u)\,\mathrm{d}u.
\end{aligned}
$$
By \Cref{lem:h_moments}, $\int_0^\infty u^n h(u)\,\mathrm{d}u=-1$. Therefore
$$
    \int_0^\infty E^n\,\mathrm{d}\kappa_A(E)
    =
    -\int_{5/4}^{7/4} t^n\,\mathrm{d}\nu_A(t).
$$
Since $\nu_A$ is supported in $\mathcal{I}$, we also have
$$
    \int_0^\infty E^n\,\mathrm{d}\nu_A(E)
    =
    \int_{5/4}^{7/4} t^n\,\mathrm{d}\nu_A(t).
$$
Adding the last two identities gives $\int_0^\infty E^n\,\mathrm{d}\sigma_A(E)=0$.
\end{proof}

We perturb the free measure
$$
    \,\mathrm{d}\rho_0(E)
    =
    \1_{(0,\infty)}(E)\frac{\sqrt E}{\pi}\,\mathrm{d}E
$$
by a sufficiently small multiple of $\sigma_A$.

\begin{proposition}
\label{prop:rhoA}
There exists a rational number $\epsilon>0$, independent of $A$, such that $\rho_A=\rho_0+\epsilon\sigma_A$ is a positive measure and $2\rho_A\ge \rho_0$.
\end{proposition}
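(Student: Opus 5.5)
We split $\sigma_A=\nu_A+\kappa_A$ into its two pieces, which live on disjoint regions: $\nu_A$ is a positive measure supported in $\mathcal I=[5/4,7/4]$, and $\kappa_A$ has density $g_A$ supported in $[5,\infty)$, by the proof of \cref{lem:kappa_a_fv}. Since both $\rho_0$ and $\nu_A$ are positive, the only possible source of negativity in $\rho_A=\rho_0+\epsilon\sigma_A$ is $\epsilon g_A$ on $[5,\infty)$. The claim $2\rho_A\ge\rho_0$ is the same as $\rho_0+2\epsilon\sigma_A\ge0$. So it suffices to choose $\epsilon$ with $2\epsilon|g_A(E)|\le \sqrt E/\pi$ for all $E\ge5$; this also gives $\rho_A\ge \tfrac12\rho_0\ge0$.

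The key step is a bound on $|g_A|$ that is uniform in $A$. This follows from the estimate in the proof of \cref{lem:kappa_a_fv} with $\ell=0$ and $M=0$:
\[
|g_A(E)|\le \int_{5/4}^{7/4}t^{-1}|h(E/t)|\,\mathrm d\nu_A(t)\le \tfrac45\,C^{(h)}_{0,0},
\]
because $\nu_A$ is a probability measure. The constant $C^{(h)}_{0,0}$ is the computable rational bound on $\|h\|_\infty$ from \cref{lem:h_moments}. It depends only on the fixed $h$, not on $A$. (One could also use the polynomial decay from larger $M$, but this is unnecessary: on $[5,\infty)$ we already have $\sqrt E/\pi\ge\sqrt5/\pi>1/2$.)

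Now fix a rational $\epsilon>0$ with $2\epsilon\cdot\tfrac45 C^{(h)}_{0,0}\le \tfrac12$, for example $\epsilon\le 5/(16\,C^{(h)}_{0,0})$. Then for every Borel set $S$,
\[
\rho_0(S)+2\epsilon\sigma_A(S)=\int_S\Bigl(\1_{(0,\infty)}\tfrac{\sqrt E}{\pi}+2\epsilon g_A\Bigr)\mathrm dE+2\epsilon\nu_A(S)\ge0.
\]
The integrand is nonnegative: off $[5,\infty)$ we have $g_A=0$, and on $[5,\infty)$ the choice of $\epsilon$ gives $2\epsilon|g_A|\le\tfrac12<\sqrt E/\pi$. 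This yields $2\rho_A\ge\rho_0$, and in particular $\rho_A\ge0$.

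I do not expect a real obstacle. The only point needing care is that $\epsilon$ is independent of $A$. This holds because the sup bound on $g_A$ uses only that $\nu_A$ is a probability measure and the $A$-independent seminorm bound on $h$.
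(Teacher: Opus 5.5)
Your proof is correct and is essentially the paper's argument. Both use three inputs to dominate $\epsilon|g_A|$ by a fixed fraction of the free density $\sqrt E/\pi$, uniformly in $A$: that $\nu_A$ is a probability measure, the $A$-independent sup bound on $h$, and the support condition $\operatorname{supp} h\subset[4,\infty)$. The only difference is cosmetic. The paper proves $|g_A(E)|\le C_h\sqrt E$ for all $E>0$, using $E\ge 4t$ on the support. You instead combine the flat bound $|g_A|\le\tfrac45\|h\|_\infty$ with $\sqrt E/\pi\ge\sqrt5/\pi>\tfrac12$ on $\operatorname{supp} g_A\subset[5,\infty)$.
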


\begin{proof}
The density $g_A$ admits a bound independent of $A$. Since $h$ is supported in $[4,\infty)$, the quantity $h(E/t)$ can be non-zero only if $E/t\ge 4$, or equivalently $E\ge 4t$. Hence, whenever $h(E/t)\neq 0$, $E^{-1/2}\le\tfrac{1}{2}t^{-1/2}$. Therefore, for $E>0$,
$$
    E^{-1/2}\sup_{t\in\mathcal{I}} t^{-1}|h(E/t)|
    \le
    \frac12
    \sup_{t\in\mathcal{I}} t^{-3/2}|h(E/t)|
    \le
    \frac12
    \left(\frac45\right)^{3/2}
    \|h\|_\infty.
$$
Using the computable Schwartz bounds for $h$, choose a rational constant $C_h>0$, independent of $A$, such that
$$
    E^{-1/2}\sup_{t\in\mathcal{I}} t^{-1}|h(E/t)|
    \le
    C_h\quad \forall E>0.
$$
Since $\nu_A$ is a probability measure, it follows that
$$
    |g_A(E)|
    =
    \left|
    \int_{5/4}^{7/4} t^{-1}h(E/t)\,\mathrm{d}\nu_A(t)
    \right|
    \le
    \int_{5/4}^{7/4} t^{-1}|h(E/t)|\,\mathrm{d}\nu_A(t)
    \le
    C_h\sqrt E\quad \forall E>0.
$$
Choose a rational number $\epsilon>0$ such that $\epsilon C_h\le \frac{1}{2\pi}$. For instance, since $\pi<4$, it is enough to take $\epsilon=\frac{1}{8C_h}$. Then, for every $E>0$,
$$
    \frac{\sqrt E}{\pi}+\epsilon g_A(E)
    \ge
    \frac{\sqrt E}{\pi}-\epsilon |g_A(E)|
    \ge
    \frac{\sqrt E}{\pi}-\epsilon C_h\sqrt E
    \ge
    \frac12\frac{\sqrt E}{\pi}.
$$
Hence the absolutely continuous measure $\rho_0+\epsilon\kappa_A$ satisfies $\rho_0+\epsilon\kappa_A\ge \tfrac12\rho_0$. Since $\nu_A$ is a positive measure, we obtain $\rho_A=\rho_0+\epsilon\kappa_A+\epsilon\nu_A\ge\rho_0/2$.
\end{proof}

\subsection{Identifying the Potential}

We consider boundary conditions of the form
$$
    y(0)\cos\alpha+y'(0)\sin\alpha=0.
$$
We call this the \emph{Dirichlet} boundary condition if $\sin\alpha=0$, since then $y(0)=0$. We call it the \emph{Neumann} boundary condition if $\cos\alpha=0$, since then $y'(0)=0$. Let
$$
    s_E(x)
    =
    \begin{cases}
        \displaystyle \frac{\sin(\sqrt E x)}{\sqrt E}, & E>0, \\[2mm]
        x, & E=0, \\[2mm]
        \displaystyle \frac{\sinh(\sqrt{-E}x)}{\sqrt{-E}}, & E<0.
    \end{cases}
$$
Thus $s_E$ is the solution of the free equation
$$
    -s_E''=E s_E,
    \qquad
    s_E(0)=0,
    \qquad
    s_E'(0)=1.
$$
Let $\rho_0$ denote the free Dirichlet spectral measure,
$$
    \,\mathrm d\rho_0(E)
    =
    \1_{(0,\infty)}(E)\frac{\sqrt E}{\pi}\,\mathrm dE.
$$
To realize $\rho_A$ as the Dirichlet spectral measure of the half-line Schr\"odinger operator
$$
    -\frac{\mathrm d^2}{\mathrm dx^2}+q_A
$$
with Dirichlet boundary condition at the origin, we use Remling's reconstruction theorem \cite[Theorem 19.1(a)--(b)]{remling2002}. In the Dirichlet normalization, part~(a) reconstructs a unique locally integrable potential from each $\rho\in\mathrm{GL}$, while parts~(a)--(b) characterize the spectral measures that arise in this way. The relevant definition is as follows.

\begin{definition}[Remling]
\label{def:gl}
A positive Borel measure $\rho$ on $\mathbb R$ belongs to $\mathrm{GL}$ if the following conditions hold.
\begin{enumerate}[label=\textup{(\roman*)}]
    \item For every $N>0$ and every $f\in L^2(0,N)$, the identity
    $$
        \int_{\mathbb R}
        \left|
            \int_0^N f(x)s_E(x)\,\mathrm dx
        \right|^2
        \,\mathrm d\rho(E)
        =
        0
    $$
    implies that $f=0$ almost everywhere.
    \item For every $g\in C_c^\infty(\mathbb R)$, the integral
    $$
        \int_{\mathbb R}
        \left|
            \int_{\mathbb R} g(x)s_E(x)\,\mathrm dx
        \right|
        \,\mathrm d|\rho-\rho_0|(E)
    $$
    is finite.
    \item There exists an odd, real-valued function $\phi\in L^1_{\mathrm{loc}}(\mathbb R)$ such that, for every $g\in C_c^\infty(\mathbb R)$,
    $$
        \int_{\mathbb R}
        \left(
            \int_{\mathbb R} g(x)s_E(x)\,\mathrm dx
        \right)
        \,\mathrm d(\rho-\rho_0)(E)
        =
        -\int_{\mathbb R} g(x)\phi(x)\,\mathrm dx.
    $$
\end{enumerate}
\end{definition}

Remling's theorem gives the required potential.

\begin{proposition}[Remling {\cite[Theorem 19.1(a)]{remling2002}}]
\label{prop:meas_to_pot}
For every $\rho\in\mathrm{GL}$, there exists a unique potential $V_\rho\in L^1_{\mathrm{loc}}([0,\infty))$ such that $\rho$ is a spectral measure of $-\frac{\mathrm d^2}{\mathrm dx^2}+V_\rho$ on $[0,\infty)$ with Dirichlet boundary condition at the origin.
\end{proposition}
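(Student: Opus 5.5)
Since \cref{prop:meas_to_pot} is quoted from \cite{remling2002}, we only sketch the Gelfand--Levitan argument behind it, adapted to the class $\mathrm{GL}$.

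The first step builds the Gelfand--Levitan kernel. Conditions (ii) and (iii) of \cref{def:gl} show that the bilinear form
$$
    (f,g)\longmapsto\int_{\mathbb R}\Bigl(\int f(x)s_E(x)\,\mathrm dx\Bigr)\Bigl(\int g(y)s_E(y)\,\mathrm dy\Bigr)\,\mathrm d(\rho-\rho_0)(E)
$$
is given on $C_c^\infty$ by an integral kernel. Using the product formula $s_E(x)s_E(y)=\tfrac12\bigl(c_E(x-y)-c_E(x+y)\bigr)/E$, the kernel is
$$
    F(x,y)=\tfrac12\bigl(\Phi(x-y)-\Phi(x+y)\bigr),
$$
where $\Phi$ is an antiderivative of the odd function $\phi$. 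In particular $F$ is continuous. For each $N$, the operator $\mathcal F_N$ with kernel $F$ on $L^2(0,N)$ satisfies
$$
    \langle (I+\mathcal F_N)f,f\rangle=\int\Bigl|\int_0^N f\,s_E\Bigr|^2\,\mathrm d\rho\ge 0,
$$
because $\int|\int_0^N f s_E|^2\,\mathrm d\rho_0=\|f\|_2^2$ is the free Plancherel identity. Condition (i) makes this form strictly positive. Since $\mathcal F_N$ is compact, the Fredholm alternative then shows that $I+\mathcal F_N$ is invertible.

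The second step solves the Gelfand--Levitan equation
$$
    K(x,y)+F(x,y)+\int_0^x K(x,t)F(t,y)\,\mathrm dt=0,\qquad 0\le y\le x,
$$
uniquely for each $x$, with $K$ continuous. We then set $V_\rho(x)=2\frac{\mathrm d}{\mathrm dx}K(x,x)$. Because $\phi$ is only $L^1_{\mathrm{loc}}$, $K(x,x)$ is merely absolutely continuous, which yields $V_\rho\in L^1_{\mathrm{loc}}$. Define
$$
    u_E(x)=s_E(x)+\int_0^x K(x,t)s_E(t)\,\mathrm dt.
$$
The standard computation, differentiating the Gelfand--Levitan equation, shows that $u_E$ solves $-u''+V_\rho u=Eu$ with $u_E(0)=0$ and $u_E'(0)=1$. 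In the weak setting this has to be checked in integrated form, first on mollified $\phi$ and then by passing to the limit.

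The third step identifies $\rho$ as a spectral measure. The map $U:f\mapsto\int_0^\infty f(x)u_E(x)\,\mathrm dx$ is a Volterra perturbation $I+K$ composed with the free sine transform. The Gelfand--Levitan equation is exactly the identity $(I+K)(I+\mathcal F_N)(I+K)^*=I$. Combined with the positivity formula above, this makes $U$ an isometry from $L^2(0,N)$ into $L^2(\rho)$ for every $N$. We then show that $U$ extends to a unitary onto $L^2(\rho)$ that intertwines the Dirichlet operator $-\frac{\mathrm d^2}{\mathrm dx^2}+V_\rho$ with multiplication by $E$. Surjectivity, i.e.\ density of $\{\int f u_E\}$ in $L^2(\rho)$, is the subtle point. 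We expect it to follow from the growth control in (ii) together with the limit-point or limit-circle analysis at infinity, choosing the boundary condition at infinity in the limit-circle case so that $\rho$ is the measure obtained.

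Uniqueness follows from the local Borg--Marchenko theorem. A Dirichlet spectral measure determines the Weyl $m$-function up to an additive real constant, since $m(z)=c+\int\bigl((E-z)^{-1}-E/(1+E^2)\bigr)\,\mathrm d\rho(E)$. The asymptotics of $m$ then fix the constant, and $m$ determines $V$ almost everywhere. We expect the main obstacle to be the low-regularity analysis of the second and third steps. Remling handles this through de Branges spaces; the fallback is to approximate $\rho$ by measures with smooth $\phi$ and use continuity of the solution map in $L^1_{\mathrm{loc}}$.
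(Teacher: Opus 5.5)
Your first step, and the isometry you extract in the third step from $(I+K)(I+\mathcal F_N)(I+K)^*=I$, are sound. This is the classical transformation-operator route; the paper gives no proof here and simply cites Remling, whose argument goes through de Branges spaces.

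The step that would fail is the planned extension of $U$ to a unitary onto $L^2(\rho)$, with a boundary condition at infinity chosen in the limit-circle case so that $\rho$ is the resulting measure. This is neither what the proposition asserts nor true for general $\rho\in\mathrm{GL}$. In Remling's usage, which the paper follows, $\rho$ is a spectral measure as soon as $f\mapsto\int f(x)u_E(x)\,\mathrm dx$ is isometric from $L^2(0,N)$ \emph{into} $L^2(\rho)$ for every $N$. The paper itself remarks that this notion admits additional moment-problem measures in the limit-circle case.

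If $V_\rho$ is limit circle at $+\infty$, the measures with this isometry property are parametrized by Nevanlinna functions via the limiting Weyl disk. Those with a non-constant parameter are spectral measures in Remling's sense, so they lie in $\mathrm{GL}$ by the converse half of his theorem. For them, however, $\{\int f u_E\}$ is not dense in $L^2(\rho)$, and $\rho$ is the Weyl--Titchmarsh measure of no self-adjoint boundary condition at infinity. This is exactly the situation of non-N-extremal solutions of an indeterminate moment problem, and nothing in condition (ii) can exclude it.

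The fix is simply to drop the surjectivity claim. The isometry already is the conclusion, so existence is complete once your second step is. The paper keeps the two issues separate: it identifies $\rho_A$ as the Weyl--Titchmarsh measure of $H_{A,D}^+$ only after proving $|q_A(x)|\le C_A(1+x^2)$. That bound gives limit point at infinity and hence uniqueness of the spectral measure.

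The same point affects your uniqueness argument. For limit-circle potentials there is no single Weyl $m$-function; the Herglotz function of $\rho$ is only some point of the limiting Weyl disk, so the Borg--Marchenko step needs rephrasing. A shorter argument stays inside your own framework:
\begin{enumerate}
\item If $\rho$ is a spectral measure for some $V\in L^1_{\mathrm{loc}}$, write $u_E=(I+K_V)s_E$ with the transformation kernel $K_V$ of $V$.
\item The isometry gives $(I+K_V)(I+\mathcal F_N)(I+K_V)^*=I$. Hence $(I+K_V)(I+\mathcal F_N)$ is the identity plus an operator whose kernel vanishes for $y<x$, which is the Gelfand--Levitan equation for $K_V$.
\item Invertibility of $I+\mathcal F_x$ then forces $K_V=K$, so $V=2\frac{\mathrm d}{\mathrm dx}K(x,x)$ almost everywhere.
\end{enumerate}

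Finally, you assert but do not prove that $K(x,x)$ is absolutely continuous and that $u_E$ solves the equation when $\phi$ is only $L^1_{\mathrm{loc}}$. This is the real analytic content of the $L^1_{\mathrm{loc}}$ theorem. Your mollification fallback would need positive approximating measures in $\mathrm{GL}$ and an $L^1_{\mathrm{loc}}$ bound for $\frac{\mathrm d}{\mathrm dx}K(x,x)$ in terms of $\phi$, which is exactly what Remling's de Branges approach sidesteps. In the paper's application $F_A$ is smooth, so only the classical smooth case of this step is ever used.
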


For the measures considered below, the reconstruction takes the classical Dirichlet Gelfand--Levitan form; see \cite[Chapter~2, Section~2.3]{levitan1987}. Define
\begin{equation}
\label{eqn:f_first}
    F_\rho(x,y)
    =
    \int_{\mathbb R}
    s_E(x)s_E(y)\,\mathrm d(\rho-\rho_0)(E).
\end{equation}
In our application, $\rho-\rho_0$ has finite variation and is supported in $[0,\infty)$, so the integral in \eqref{eqn:f_first} is an ordinary absolutely convergent integral for each fixed $x,y\ge0$. For $x>0$, the Dirichlet Gelfand--Levitan equation is
\begin{equation}
\label{eqn:first_integral}
    K_\rho(x,y)
    +
    F_\rho(x,y)
    +
    \int_0^x K_\rho(x,t)F_\rho(t,y)\,\mathrm dt
    =
    0,
    \qquad
    0\le y\le x.
\end{equation}
Once $\rho\in\mathrm{GL}$ has been verified, this equation has a unique solution $K_\rho(x,\cdot)\in L^2(0,x)$. Indeed, the quadratic form of its homogeneous Fredholm operator is
$$
    \int_{\mathbb R}
    \left|
        \int_0^x h(t)s_E(t)\,\mathrm dt
    \right|^2
    \,\mathrm d\rho(E),
$$
which vanishes only when $h=0$ by condition~(i) in \cref{def:gl}; the Fredholm alternative therefore gives uniqueness and existence. The classical reconstruction formula is
$$
    V_\rho(x)
    =
    2\frac{\mathrm d}{\mathrm dx}K_\rho(x,x)
    \quad\text{for almost every $x>0$},
$$
as in \cite[Chapter~2, Section~2.3]{levitan1987}. It is therefore enough to verify that $\rho_A$ satisfies the conditions in \cref{def:gl}. The resulting potential is $q_A=V_{\rho_A}$, for which $\rho_A$ is the Dirichlet spectral measure of $-{\mathrm d^2}/{\mathrm dx^2}+q_A$. Here $\rho_A-\rho_0=\epsilon\sigma_A$.

\begin{lemma}
\label{lem:fourier_sine}
For each $f\in C_c^\infty(0,\infty)$, we have
$$
    \|f\|_{L^2(0,\infty)}^2
    =
    \frac{2}{\pi}
    \int_0^\infty
    \left|
        \int_0^\infty f(x)\sin(kx)\,\mathrm{d}x
    \right|^2
    \,\mathrm{d}k.
$$
In particular, if the right-hand side is zero, then $f=0$ almost everywhere.
\end{lemma}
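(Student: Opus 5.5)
The plan is to reduce the identity to the Plancherel theorem for the Fourier transform on $\mathbb R$ by odd extension. Given $f\in C_c^\infty(0,\infty)$, let $\tilde f$ be its odd extension, $\tilde f(x)=f(x)$ for $x>0$ and $\tilde f(x)=-f(-x)$ for $x<0$. Since $f$ vanishes near $0$, $\tilde f\in C_c^\infty(\mathbb R)$, and $\|\tilde f\|_{L^2(\mathbb R)}^2=2\|f\|_{L^2(0,\infty)}^2$.

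Next, compute the Fourier transform in the normalization $\widehat{\tilde f}(k)=\int_{\mathbb R}\tilde f(x)\mathrm e^{-ikx}\,\mathrm dx$. By oddness the cosine part cancels, and
$$
\widehat{\tilde f}(k)=-2i\int_0^\infty f(x)\sin(kx)\,\mathrm dx=: -2i\,S(k).
$$
Plancherel in this normalization reads $\|\tilde f\|_{L^2}^2=\frac1{2\pi}\|\widehat{\tilde f}\|_{L^2}^2$. Since $|S(k)|^2$ is even in $k$, we get
$$
2\|f\|^2=\frac{1}{2\pi}\int_{\mathbb R}4|S(k)|^2\,\mathrm dk=\frac{4}{\pi}\int_0^\infty|S(k)|^2\,\mathrm dk,
$$
and dividing by $2$ gives exactly the claimed constant $2/\pi$.

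The ``in particular'' clause is then immediate: if the right-hand side vanishes, then $\|f\|_{L^2}=0$, so $f=0$ almost everywhere (indeed everywhere, by continuity).

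There is no genuine obstacle. The only points needing care are that the odd extension is smooth, which uses the compact support away from $0$, and keeping the Fourier normalization consistent so that the constant comes out as $2/\pi$. If the identity is later needed for general $f\in L^2(0,N)$, as in condition (i) of \cref{def:gl}, it extends by density, since both sides are continuous in $L^2$: the sine transform is, up to a constant, an isometry.
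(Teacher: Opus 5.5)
Your proposal is correct and follows the paper's proof essentially step for step: the odd extension, the convention $\widehat F(k)=\int_{\mathbb R}F(x)\mathrm e^{-ikx}\,\mathrm dx$, the identity $\widehat{f_o}(k)=-2i\int_0^\infty f(x)\sin(kx)\,\mathrm dx$, and Plancherel together with evenness in $k$ give the constant $2/\pi$. The paper only uses $f_o\in L^1\cap L^2$ rather than smoothness of the extension, but that difference does not matter.
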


\begin{proof}
We use the Fourier transform convention
$$
    \widehat F(k)
    =
    \int_{\mathbb R} F(x)\mathrm e^{-ikx}\,\mathrm{d}x,
$$
so that Plancherel's theorem takes the form $2\pi\|F\|_{L^2(\mathbb R)}^2=\|\widehat F\|_{L^2(\mathbb R)}^2$. Extend $f$ oddly to the real line by
$$
    f_o(x)
    =
    \begin{cases}
        f(x), & x>0, \\
        0, & x=0, \\
        -f(-x), & x<0.
    \end{cases}
$$
Since $f\in C_c^\infty(0,\infty)$, we have $f_o\in L^1(\mathbb R)\cap L^2(\mathbb R)$. Moreover,
$$
    \|f_o\|_{L^2(\mathbb R)}^2
    =
    2\int_0^\infty |f(x)|^2\,\mathrm{d}x
    =
    2\|f\|_{L^2(0,\infty)}^2.
$$
Since $f_o$ is odd, the function $x\mapsto f_o(x)\cos(kx)$ is odd, while $x\mapsto f_o(x)\sin(kx)$ is even. Therefore
$$
    \widehat{f_o}(k)
    =
    \int_{\mathbb R} f_o(x)\cos(kx)\,\mathrm{d}x
    -
    i\int_{\mathbb R} f_o(x)\sin(kx)\,\mathrm{d}x
    =
    -2i\int_0^\infty f(x)\sin(kx)\,\mathrm{d}x.
$$
Applying Plancherel gives
$$
    2\pi\|f_o\|_{L^2(\mathbb R)}^2
    =
    \int_{-\infty}^{\infty} |\widehat{f_o}(k)|^2\,\mathrm{d}k
    =
    4\int_{-\infty}^{\infty}
    \left|
        \int_0^\infty f(x)\sin(kx)\,\mathrm{d}x
    \right|^2
    \,\mathrm{d}k.
$$
The integrand is even in $k$, since $\sin(-kx)=-\sin(kx)$. Hence
$$
    4\pi\|f\|_{L^2(0,\infty)}^2
    =
    8\int_0^\infty
    \left|
        \int_0^\infty f(x)\sin(kx)\,\mathrm{d}x
    \right|^2
    \,\mathrm{d}k.
$$
Dividing by $4\pi$ proves the identity.
\end{proof}

\begin{corollary}
\label{cor:isom}
The map initially defined on $C_c^\infty(0,\infty)$ by
$$
    f
    \mapsto
    \left(
        E\mapsto
        \int_0^\infty f(x)s_E(x)\,\mathrm{d}x
    \right)
$$
extends uniquely to an isometry $L^2(0,\infty)\rightarrow L^2((0,\infty),\rho_0)$. In other words, for every $f\in L^2(0,\infty)$, we have
$$
    \int_0^\infty
    \left|
        \mathcal S f(E)
    \right|^2
    \,\mathrm{d}\rho_0(E)
    =
    \|f\|_{L^2(0,\infty)}^2,
$$
where $\mathcal S f$ denotes the $L^2(\rho_0)$-extension of $\int_0^\infty f(x)s_E(x)\,\mathrm{d}x$. Moreover, if $N>0$, $f\in L^2(0,N)$, and
$$
    \int_0^\infty
    \left|
        \int_0^N f(x)s_E(x)\,\mathrm{d}x
    \right|^2
    \,\mathrm{d}\rho_A(E)
    =
    0,
$$
then $f=0$ almost everywhere on $(0,N)$.
\end{corollary}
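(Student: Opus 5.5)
The plan has two parts: first transfer the Fourier-sine identity of \cref{lem:fourier_sine} to the measure $\rho_0$ by the substitution $E=k^2$, then extend by density, and finally use the comparison $2\rho_A\ge\rho_0$ from \cref{prop:rhoA}.

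\textbf{Step 1: isometry on $C_c^\infty(0,\infty)$.} For $f\in C_c^\infty(0,\infty)$ and $E=k^2$ with $k>0$, we have $s_E(x)=\sin(kx)/k$. Moreover $\mathrm d\rho_0(E)=\frac{\sqrt E}{\pi}\,\mathrm dE=\frac{2k^2}{\pi}\,\mathrm dk$. Substituting,
$$
\int_0^\infty\Bigl|\int_0^\infty f(x)s_E(x)\,\mathrm dx\Bigr|^2\,\mathrm d\rho_0(E)
=\frac{2}{\pi}\int_0^\infty\Bigl|\int_0^\infty f(x)\sin(kx)\,\mathrm dx\Bigr|^2\,\mathrm dk,
$$
and \cref{lem:fourier_sine} identifies the right-hand side with $\|f\|_{L^2(0,\infty)}^2$. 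Hence the map is an isometry on the dense subspace $C_c^\infty(0,\infty)$, so it extends uniquely to an isometry $\mathcal S:L^2(0,\infty)\to L^2((0,\infty),\rho_0)$.

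\textbf{Step 2: the extension agrees with the pointwise integral.} For $f\in L^2(0,N)$, extended by zero, the integral $\int_0^N f(x)s_E(x)\,\mathrm dx$ is defined pointwise for every $E$, because $s_E$ is bounded on $[0,N]$. We need this pointwise function to be $\rho_0$-a.e.\ equal to $\mathcal Sf$. Take $f_n\in C_c^\infty(0,N)$ with $f_n\to f$ in $L^2$. By Cauchy--Schwarz, $\int f_n s_E\to\int f s_E$ for each fixed $E$. Along a subsequence, $\mathcal Sf_n\to\mathcal Sf$ $\rho_0$-a.e. The two limits therefore coincide $\rho_0$-a.e.

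\textbf{Step 3: the vanishing statement.} Suppose the $\rho_A$-integral in the statement vanishes. The integrand is non-negative, and $\rho_0\le 2\rho_A$ by \cref{prop:rhoA}, so the same integral against $\rho_0$ also vanishes. Restricting to $(0,\infty)$ is harmless, since $\rho_0$ lives there. By Steps 1 and 2,
$$
\|f\|_{L^2(0,N)}^2=\int_0^\infty|\mathcal Sf|^2\,\mathrm d\rho_0=0,
$$
so $f=0$ almost everywhere on $(0,N)$.

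The only delicate point is Step 2, where the abstract $L^2(\rho_0)$-extension must be matched with the pointwise integral. The pointwise convergence argument along a subsequence handles it.
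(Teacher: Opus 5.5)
Your proposal is correct and follows the paper's proof essentially step for step: the substitution $E=k^2$ combined with \cref{lem:fourier_sine} and a density extension, then the comparison $2\rho_A\ge\rho_0$ from \cref{prop:rhoA}. The only difference is that your subsequence argument in Step 2 actually proves that the pointwise transform of $f\in L^2(0,N)$ agrees $\rho_0$-a.e.\ with $\mathcal S f$, whereas the paper states this identification without proof.
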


\begin{proof}
First take $f\in C_c^\infty(0,\infty)$. For $E=k^2$ with $k>0$,
$$
    s_E(x)
    =
    \frac{\sin(kx)}{k},
    \qquad
    \,\mathrm{d}\rho_0(E)
    =
    \frac{\sqrt E}{\pi}\,\mathrm{d}E
    =
    \frac{2k^2}{\pi}\,\mathrm{d}k.
$$
Therefore
$$
    \int_0^\infty
    \left|
        \int_0^\infty f(x)s_E(x)\,\mathrm{d}x
    \right|^2
    \,\mathrm{d}\rho_0(E)
    =
    \int_0^\infty
    \left|
        \frac{1}{k}
        \int_0^\infty f(x)\sin(kx)\,\mathrm{d}x
    \right|^2
    \frac{2k^2}{\pi}\,\mathrm{d}k
    =
    \frac{2}{\pi}
    \int_0^\infty
    \left|
        \int_0^\infty f(x)\sin(kx)\,\mathrm{d}x
    \right|^2
    \,\mathrm{d}k.
$$
By \cref{lem:fourier_sine}, this is equal to $\|f\|_{L^2(0,\infty)}^2$. Thus, the transform is an isometry on $C_c^\infty(0,\infty)$, and since $C_c^\infty(0,\infty)$ is dense in $L^2(0,\infty)$, it extends uniquely to an isometry on all of $L^2(0,\infty)$.

Now let $N>0$ and $f\in L^2(0,N)$. The pointwise integral $\int_0^N f(x)s_E(x)\,\mathrm{d}x$ is well-defined for every $E\ge 0$, since $s_E\in L^2(0,N)$. Moreover, this pointwise transform agrees $\rho_0$-almost everywhere with the isometric extension above, after extending $f$ by zero outside $(0,N)$. Suppose that
$$
    \int_0^\infty
    \left|
        \int_0^N f(x)s_E(x)\,\mathrm{d}x
    \right|^2
    \,\mathrm{d}\rho_A(E)
    =
    0.
$$
Since $2\rho_A\ge \rho_0$, we have
$$
    \int_0^\infty
    \left|
        \int_0^N f(x)s_E(x)\,\mathrm{d}x
    \right|^2
    \,\mathrm{d}\rho_0(E)
    \le
    2
    \int_0^\infty
    \left|
        \int_0^N f(x)s_E(x)\,\mathrm{d}x
    \right|^2
    \,\mathrm{d}\rho_A(E)
    =
    0.
$$
By the isometry just proved, $\|f\|_{L^2(0,N)}^2=0$. Hence $f=0$ almost everywhere on $(0,N)$.
\end{proof}

Applying \cref{cor:isom} on $L^2(0,N)$ for arbitrary $N>0$ verifies condition~(i) in \cref{def:gl}. Conditions~(ii) and~(iii) are treated next.

\begin{proposition}
\label{prop:verify_2}
Let $f\in C_c^\infty(\mathbb R)$, and define
$$
    f_s(E)
    =
    \int_{-\infty}^{\infty} f(x)s_E(x)\,\mathrm{d}x.
$$
Then $f_s\in L^1(|\rho_A-\rho_0|)$. Equivalently, since $\rho_A-\rho_0=\epsilon\sigma_A$, we have $f_s\in L^1(\epsilon|\sigma_A|)$.
\end{proposition}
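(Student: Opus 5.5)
The plan is to bound $|f_s(E)|$ uniformly in $E\ge0$ and then use that $|\sigma_A|$ is a finite measure. The measure $\sigma_A=\nu_A+\kappa_A$ is supported in $[0,\infty)$; in fact $\nu_A$ is supported in $\mathcal I$ and $\kappa_A$ in $[5,\infty)$. Its total variation is therefore at most $\nu_A(\mathcal I)+\|g_A\|_{L^1}=1+\|g_A\|_{L^1}<\infty$, the $L^1$-norm being finite by \cref{lem:kappa_a_fv}. So it suffices to show $\sup_{E\ge0}|f_s(E)|<\infty$, which gives
$$
\int|f_s|\,\mathrm d(\epsilon|\sigma_A|)\le\epsilon\,\sup_{E\ge0}|f_s(E)|\,\bigl(1+\|g_A\|_{L^1}\bigr)<\infty.
$$

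To get the uniform bound, choose $R$ with $\operatorname{supp}f\subseteq[-R,R]$. For $E>0$ and $|x|\le R$ we have $|s_E(x)|=|\sin(\sqrt Ex)|/\sqrt E\le|x|\le R$, using $|\sin u|\le|u|$. For $E=0$, $s_0(x)=x$ gives the same bound. Hence $|f_s(E)|\le R\|f\|_{L^1}$ for all $E\ge0$.

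Negative energies, where $s_E$ grows exponentially, never enter: both $\rho_A$ and $\rho_0$ are concentrated on $[0,\infty)$, so $|\rho_A-\rho_0|$ gives no mass to $(-\infty,0)$. Measurability of $f_s$ follows from continuity in $E$, by dominated convergence. I expect no real obstacle. The only point needing care is confirming the support of $\sigma_A$ lies in $[0,\infty)$, which holds since $\operatorname{supp}g_A\subset[5,\infty)$.
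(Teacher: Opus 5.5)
Your proof is correct and follows essentially the same route as the paper: a uniform bound on $f_s$ over the support of $\sigma_A$, combined with the finite total variation of $\sigma_A$. The only difference is the pointwise estimate. You use $|s_E(x)|\le|x|\le R$ on $\operatorname{supp}f$ for all $E\ge0$, whereas the paper uses $|s_E(x)|\le E^{-1/2}\le\sqrt{4/5}$ for $E\ge5/4$, which does not depend on the support of $f$ but needs $\operatorname{supp}\sigma_A\subset[5/4,\infty)$.
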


\begin{proof}
Recall that $\sigma_A=\nu_A+\kappa_A$. The measure $\nu_A$ is supported in $\mathcal{I}$, while $\kappa_A$ is supported in $[5,\infty)$. Hence $\operatorname{supp}\sigma_A\subset [5/4,\infty)$. For $E\ge 5/4$, we have
$$
    |s_E(x)|
    =
    \left|\frac{\sin(\sqrt E x)}{\sqrt E}\right|
    \le
    E^{-1/2}
    \le
    \sqrt{\frac45}
$$
for all $x\in\mathbb R$. Therefore
$$
    |f_s(E)|
    \le
    \sqrt{\frac45}\int_{-\infty}^{\infty}|f(x)|\,\mathrm{d}x
    =
    \sqrt{\frac45}\,\|f\|_{L^1(\mathbb R)}
$$
for every $E\in\operatorname{supp}\sigma_A$.

By \cref{lem:kappa_a_fv}, $\kappa_A$ has finite variation; since $\nu_A$ is finite, so does $\sigma_A$. The preceding bound therefore gives $f_s\in L^1(|\sigma_A|)$. As $|\rho_A-\rho_0|=\epsilon|\sigma_A|$, the claim follows.
\end{proof}

Condition~(iii) follows from the next proposition.

\begin{proposition}
\label{prop:verify_3}
Define
$$
    \phi(x)
    =
    -\epsilon
    \int_{5/4}^{\infty} s_E(x)\,\mathrm{d}\sigma_A(E).
$$
Then $\phi$ is odd, real-valued, and bounded. In particular, $\phi\in L^1_{\mathrm{loc}}(\mathbb R)$. Moreover, for every $f\in C_c^\infty(\mathbb R)$,
$$
    \epsilon
    \int_{-\infty}^{\infty}
    \left(
        \int_{-\infty}^{\infty} f(x)s_E(x)\,\mathrm{d}x
    \right)
    \,\mathrm{d}\sigma_A(E)
    =
    -\int_{-\infty}^{\infty} f(x)\phi(x)\,\mathrm{d}x.
$$
Equivalently,
$$
    \int_{-\infty}^{\infty}
    \left(
        \int_{-\infty}^{\infty} f(x)s_E(x)\,\mathrm{d}x
    \right)
    \,\mathrm{d}(\rho_A-\rho_0)(E)
    =
    -\int_{-\infty}^{\infty} f(x)\phi(x)\,\mathrm{d}x.
$$
\end{proposition}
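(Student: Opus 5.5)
The argument rests on the support information already recorded in the proof of \cref{prop:verify_2}: $\operatorname{supp}\sigma_A\subset[5/4,\infty)$, $\sigma_A$ has finite total variation, and $|s_E(x)|\le E^{-1/2}\le\sqrt{4/5}$ for all $x\in\mathbb R$ and $E\ge 5/4$.

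First, well-definedness and boundedness. For each fixed $x$, the integrand $E\mapsto s_E(x)$ is continuous and bounded by $\sqrt{4/5}$ on $[5/4,\infty)$. Hence the integral defining $\phi(x)$ converges absolutely and
$$
|\phi(x)|\le \epsilon\sqrt{4/5}\,|\sigma_A|([5/4,\infty)).
$$
This bound is finite by \cref{lem:kappa_a_fv} and because $\nu_A$ is a probability measure. So $\phi$ is bounded, and in particular locally integrable.

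Next, the symmetry properties. Realness holds because $s_E(x)$ is real and $\sigma_A=\nu_A+\kappa_A$ is a real signed measure ($g_A$ is real, since $h$ is real and $\nu_A$ is positive). Oddness follows from $s_E(-x)=-s_E(x)$ for $E>0$, integrated against $\sigma_A$. Measurability of $\phi$, and in fact continuity, comes from dominated convergence in $x$: $s_E(x)$ is continuous in $x$ with the uniform bound above.

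The identity is then Fubini applied to the product measure $\mathrm dx\otimes\mathrm d|\sigma_A|$ on $\mathbb R\times[5/4,\infty)$. The integrand satisfies
$$
|f(x)s_E(x)|\le\sqrt{4/5}\,|f(x)|,
$$
which is integrable with total bounded by $\sqrt{4/5}\,\|f\|_{L^1}\,|\sigma_A|(\mathbb R)<\infty$. Swapping the order of integration gives
$$
\epsilon\int\!\left(\int f(x)s_E(x)\,\mathrm dx\right)\mathrm d\sigma_A(E)=\int f(x)\left(\epsilon\int s_E(x)\,\mathrm d\sigma_A(E)\right)\mathrm dx=-\int f\phi\,\mathrm dx.
$$
To apply Fubini to a signed measure, use the Jordan decomposition $\sigma_A=\sigma_A^+-\sigma_A^-$ and apply the theorem to each part separately. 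The equivalent formulation follows from $\rho_A-\rho_0=\epsilon\sigma_A$ together with the fact that $\sigma_A$ vanishes on $(-\infty,5/4)$.

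The only point needing care is joint measurability of $(x,E)\mapsto f(x)s_E(x)$. This is immediate because the map is continuous on $\mathbb R\times(0,\infty)$. No step is a genuine obstacle; the main thing to keep track of is that the bound on $s_E$ is valid only because $\sigma_A$ lives away from $E=0$.
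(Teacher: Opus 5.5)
Your proof is correct and follows essentially the same route as the paper: the uniform bound $|s_E(x)|\le E^{-1/2}\le\sqrt{4/5}$ on $\operatorname{supp}\sigma_A\subset[5/4,\infty)$ gives boundedness of $\phi$, realness and oddness come directly from $s_E$ and $\sigma_A$, and the identity is Fubini with respect to $\mathrm dx\otimes\mathrm d|\sigma_A|$. Your remarks on joint measurability, continuity of $\phi$, and the Jordan decomposition spell out details that the paper leaves implicit.
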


\begin{proof}
The defining integral for $\phi$ is absolutely convergent for every $x$. Indeed, using again that $\operatorname{supp}\sigma_A\subset[5/4,\infty)$, we have
$$
    |s_E(x)|
    \le
    E^{-1/2}
    \le
    \sqrt{\frac45}
$$
for $E\in\operatorname{supp}\sigma_A$. Therefore
$$
    |\phi(x)|
    \le
    \epsilon\sqrt{\frac45}\,|\sigma_A|(\mathbb R).
$$
The displayed bound proves that $\phi$ is bounded. It is real-valued because $s_E$ and $\sigma_A$ are real, and odd because $s_E$ is odd in $x$.

For the identity, let $f\in C_c^\infty(\mathbb R)$. The following estimate proves absolute integrability with respect to $\mathrm dx\,\mathrm d|\sigma_A|(E)$:
$$
    \int_{5/4}^{\infty}
    \int_{-\infty}^{\infty}
    |f(x)|\,|s_E(x)|
    \,\mathrm{d}x\,\mathrm{d}|\sigma_A|(E)
    \le
    \sqrt{\frac45}\,\|f\|_{L^1(\mathbb R)}|\sigma_A|(\mathbb R)
    <\infty.
$$
Fubini's theorem therefore gives
$$
    -\int_{-\infty}^{\infty} f(x)\phi(x)\,\mathrm{d}x
    =
    \epsilon
    \int_{-\infty}^{\infty}
    f(x)
    \left(
        \int_{5/4}^{\infty} s_E(x)\,\mathrm{d}\sigma_A(E)
    \right)
    \,\mathrm{d}x
    =
    \epsilon
    \int_{5/4}^{\infty}
    \left(
        \int_{-\infty}^{\infty} f(x)s_E(x)\,\mathrm{d}x
    \right)
    \,\mathrm{d}\sigma_A(E).
$$
Since $\sigma_A$ is supported in $[5/4,\infty)$, the result follows.
\end{proof}

The preceding propositions show that $\rho_A\in\mathrm{GL}$. Hence \cref{prop:meas_to_pot} gives a unique $q_A\in L^1_{\mathrm{loc}}([0,\infty))$ for which $\rho_A$ is a Dirichlet spectral measure of
$$
\tau_A=-\frac{\mathrm d^2}{\mathrm dx^2}+q_A
$$
on $[0,\infty)$. It remains to establish the regularity and growth needed for the corresponding self-adjoint half-line realizations. Define
$$
    F_A(x,y)
    =
    \int_0^\infty s_E(x)s_E(y)\,\mathrm{d}(\rho_A-\rho_0)(E)
    =
    \epsilon
    \int_0^\infty s_E(x)s_E(y)\,\mathrm{d}\sigma_A(E).
$$
The associated Gelfand--Levitan kernel $K_A$ satisfies
\begin{equation}
\label{eqn:integral_restate}
    K_A(x,y)
    +
    F_A(x,y)
    +
    \int_0^x K_A(x,t)F_A(t,y)\,\mathrm{d}t
    =
    0,
    \qquad
    0\le y\le x.
\end{equation}

\begin{lemma}
\label{lem:fa_vanish}
The function $F_A$ is smooth, and all of its mixed derivatives are bounded. Moreover,
$$
    \partial_x^a\partial_y^b F_A(0,0)=0
$$
for all non-negative integers $a,b$.
\end{lemma}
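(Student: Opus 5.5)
The plan is to split $F_A$ according to $\sigma_A=\nu_A+\kappa_A$ and treat the two pieces separately:
$$
F_A=\epsilon\int_{\mathcal I}s_E(x)s_E(y)\,\mathrm d\nu_A(E)+\epsilon\int_5^\infty s_E(x)s_E(y)g_A(E)\,\mathrm dE .
$$

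The $\nu_A$ piece is straightforward. For $E\in\mathcal I$ the map $(x,y)\mapsto s_E(x)s_E(y)$ is entire in $(x,y)$. Each mixed derivative $\partial_x^a\partial_y^b$ is bounded by $E^{(a+b)/2-1}\le C^{a+b}$, uniformly for $E\in\mathcal I$ and $x,y\in\mathbb R$. Since $\nu_A$ is a probability measure, dominated convergence lets us differentiate under the integral sign to all orders, and every derivative is bounded.

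The $\kappa_A$ piece needs decay of $g_A$. Here $\partial_x^a\partial_y^b[s_E(x)s_E(y)]$ is bounded by $E^{(a+b)/2-1}$, which grows polynomially in $E$. By \cref{lem:kappa_a_fv}, $g_A$ is Schwartz and supported in $[5,\infty)$, so $E^{(a+b)/2}|g_A(E)|$ is integrable. Dominated convergence again justifies differentiating under the integral, and every mixed derivative is bounded uniformly in $(x,y)$. This proves smoothness and boundedness of all mixed derivatives of $F_A$.

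For the vanishing at the origin, use the power series
$$
s_E(x)=\sum_{j\ge0}\frac{(-E)^jx^{2j+1}}{(2j+1)!}.
$$
Then $\partial_x^a s_E(0)$ vanishes for even $a$, and for $a=2j+1$ it equals $(-E)^j$. Hence
$$
\partial_x^a\partial_y^bF_A(0,0)=\epsilon\int(-E)^{j+k}\,\mathrm d\sigma_A(E)
$$
when $a=2j+1$ and $b=2k+1$, and it vanishes otherwise. The remaining integral is zero by \cref{lem:sigma_moments}.

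The only delicate point is making sure the differentiated integrand is dominated on $\operatorname{supp}\sigma_A$, so that the derivative can be evaluated at $0$ by passing derivatives inside the integral. The bound $|\partial_x^a s_E(x)|\le E^{(a-1)/2}$ for $E>0$ and all real $x$ handles this. It follows from $s_E(x)=\sin(\sqrt E x)/\sqrt E$, and combined with the Schwartz decay of $g_A$ it gives an integrable dominating function.
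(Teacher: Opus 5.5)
Your proposal is correct and is essentially the paper's proof. You split $\sigma_A=\nu_A+\kappa_A$, dominate $\partial_x^a s_E(x)\,\partial_y^b s_E(y)$ by $E^{(a+b)/2-1}$ uniformly in $(x,y)$ so you can differentiate under the integral (using the compact support of $\nu_A$ and the Schwartz decay of $g_A$), and then note that at the origin only odd--odd derivatives survive, each equal to $\pm\epsilon\int E^{(a+b)/2-1}\,\mathrm d\sigma_A(E)=0$ by \cref{lem:sigma_moments}; computing $\partial_x^a s_E(0)$ from the power series rather than from the sine/cosine derivatives is only a cosmetic difference.
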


\begin{proof}
Since
$
    \rho_A-\rho_0=\epsilon\sigma_A
    =
    \epsilon\nu_A+\epsilon\kappa_A.
$
Since $\nu_A$ is supported in $\mathcal I$ and $\kappa_A$ has density $g_A$ supported in $[5,\infty)$, the preceding identity becomes
$$
    F_A(x,y)
    =
    \epsilon
    \int_{5/4}^{7/4}
    s_E(x)s_E(y)\,\mathrm{d}\nu_A(E)
    +
    \epsilon
    \int_5^\infty
    s_E(x)s_E(y)g_A(E)\,\mathrm{d}E.
$$
For every $a\ge 0$, $\partial_x^a s_E(x)$ is a sine or cosine factor multiplied by a power of $E$. More precisely,
$$
    |\partial_x^a s_E(x)|
    \le
    \begin{cases}
        E^{-1/2}, & a=0, \\[1mm]
        E^{(a-1)/2}, & a\ge 1.
    \end{cases}
$$
So for fixed $a,b$, the product $\partial_x^a s_E(x)\,\partial_y^b s_E(y)$ is bounded by a fixed power of $1+E$, uniformly in $x,y$.

On the compact interval $\mathcal{I}$, this gives a uniform bound, so the $\nu_A$-integral may be differentiated under the integral sign. On $[5,\infty)$, the same polynomial bounds are integrable against $|g_A(E)|\,\mathrm{d}E$, since $g_A$ is Schwartz by \cref{lem:kappa_a_fv}. Thus, by the dominated convergence theorem, $F_A$ is smooth and
$$
    \partial_x^a\partial_y^b F_A(x,y)
    =
    \epsilon
    \int_0^\infty
    \partial_x^a s_E(x)\,\partial_y^b s_E(y)
    \,\mathrm{d}\sigma_A(E).
$$
The same estimates also show that every mixed derivative of $F_A$ is bounded.

At $x=0$, the derivatives satisfy
$$
    \partial_x^a s_E(0)
    =
    \begin{cases}
        0, & a \text{ is even}, \\[1mm]
        (-1)^{(a-1)/2}E^{(a-1)/2}, & a \text{ is odd}.
    \end{cases}
$$
Thus $\partial_x^a\partial_y^b\bigl(s_E(x)s_E(y)\bigr)\big|_{(x,y)=(0,0)}$ is identically zero unless both $a$ and $b$ are odd. If both $a$ and $b$ are odd, then it is equal to $\pm E^{(a+b)/2-1}$. Here $(a+b)/2-1\in\mathbb{N}\cup\{0\}$. By \cref{lem:sigma_moments},
$$
    \int_0^\infty E^n\,\mathrm{d}\sigma_A(E)=0
$$
for every $n\in\mathbb{N}\cup\{0\}$. Hence, in the remaining case, we also obtain
$$
    \partial_x^a\partial_y^b F_A(0,0)
    =
    \pm\epsilon
    \int_0^\infty E^{(a+b)/2-1}\,\mathrm{d}\sigma_A(E)
    =
    0.
$$
Therefore all mixed derivatives of $F_A$ vanish at $(0,0)$.
\end{proof}

\begin{corollary}
\label{cor:ka_smooth}
The kernel $K_A$ is smooth on the closed triangle $\{(x,y):0\le y\le x\}$. Consequently, the function $x\mapsto K_A(x,x)$ is smooth on $[0,\infty)$.
\end{corollary}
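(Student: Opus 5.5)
The plan is to treat the Gelfand--Levitan equation \eqref{eqn:integral_restate} as a family of Fredholm equations of the second kind on the interval $[0,x]$, with a smooth kernel $F_A$ whose derivatives are all bounded (\cref{lem:fa_vanish}). We then show that the solution inherits smoothness jointly in $(x,y)$. The substitution $t=xs$, $y=xr$ with $s,r\in[0,1]$ converts the problem to a fixed interval. For each fixed $x\ge0$ we have
$$
k_x(r)+F_A(x,xr)+x\int_0^1 k_x(s)F_A(xs,xr)\,\mathrm ds=0,
\qquad
k_x(r)=K_A(x,xr).
$$
This is an equation $(I+\mathcal F_x)k_x=-f_x$ on $C([0,1])$, or on $C^m([0,1])$. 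The operator $\mathcal F_x$ has a smooth kernel $xF_A(xs,xr)$ that depends smoothly on the parameter $x$. It is compact, and its dependence on $x$ is $C^\infty$ in operator norm on each space $C^m([0,1])$, because every derivative of $F_A$ is bounded.

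\emph{Step 1: invertibility.} We show that $I+\mathcal F_x$ is injective for every $x\ge0$. For $x>0$, a solution $k$ of the homogeneous equation gives $h(t)=k(t/x)$ on $(0,x)$. Pairing the homogeneous equation with $\bar h$ gives
$$
\int_{\mathbb R}\left|\int_0^x h(t)s_E(t)\,\mathrm dt\right|^2\,\mathrm d\rho_A(E)=0,
$$
since $\rho_0$ acts as the identity on $L^2$ through the isometry of \cref{cor:isom}. Then \cref{cor:isom}, which is condition~(i) of \cref{def:gl}, forces $h=0$. For $x=0$ the operator is $I$ itself. The Fredholm alternative then gives a bounded inverse for each $x$. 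Inversion is smooth on the open set of invertible operators, so $x\mapsto(I+\mathcal F_x)^{-1}$ is $C^\infty$ into $\mathcal B(C^m([0,1]))$ for every $m$, including up to the endpoint $x=0$. The forcing term $f_x(r)=F_A(x,xr)$ is likewise $C^\infty$ in $x$ with values in $C^m([0,1])$.

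\emph{Step 2: regularity.} It follows that $(x,r)\mapsto k_x(r)$ is $C^\infty$ on $[0,\infty)\times[0,1]$. The map $(x,y)\mapsto(x,y/x)$ is singular at the corner $x=0$, so undoing the scaling requires care there. We avoid working through the change of variables directly and use the equation itself instead. On any region where $x>0$, the smoothness of $K_A(x,y)=k_x(y/x)$ follows immediately. Near the origin we differentiate the original equation directly. This gives
$$
\partial_y^b K_A(x,y)=-\partial_y^bF_A(x,y)-\int_0^xK_A(x,t)\,\partial_y^bF_A(t,y)\,\mathrm dt,
$$
so all $y$-derivatives of $K_A$ exist, are bounded, and are continuous up to $x=0$. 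For $x$-derivatives, the Leibniz rule produces boundary terms of the form $K_A(x,x)F_A(x,y)$ together with derivatives of those terms. An induction shows that each such derivative is expressed through lower-order derivatives of $K_A$ on the diagonal and through derivatives of $F_A$. The same induction applies to the diagonal function $x\mapsto K_A(x,x)$.

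The main obstacle is uniformity at the corner $(0,0)$ of the triangle. Here the flatness from \cref{lem:fa_vanish}, namely $\partial_x^a\partial_y^bF_A(0,0)=0$, guarantees that the Taylor expansions match and that $K_A$ extends smoothly to the closed triangle. In particular all derivatives of $K_A$ vanish at the origin. Smoothness of $x\mapsto K_A(x,x)$ on $[0,\infty)$ then follows by the chain rule.
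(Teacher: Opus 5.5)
Your Step~1 is correct, and it is a genuinely self-contained alternative to the paper's one-line appeal to Levitan's Lemma~2.3.1. With $k(x,r)=K_A(x,xr)$ you obtain $k\in C^\infty([0,\infty)\times[0,1])$. This alone already proves the second assertion, since $K_A(x,x)=k(x,1)$.

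The gap is the first assertion at the corner $(0,0)$. You correctly flag this as the main obstacle, but you do not resolve it. Smoothness of $k$ does not pass through $(x,y)\mapsto(x,y/x)$: the function $y/x$ on the triangle has pullback $r$, yet it is discontinuous at the origin. Your Step~2 does not supply the missing argument. Differentiating the integral term in $x$ gives
\[
\partial_x\int_0^xK_A(x,t)F_A(t,y)\,\mathrm dt=K_A(x,x)F_A(x,y)+\int_0^x\partial_xK_A(x,t)F_A(t,y)\,\mathrm dt .
\]
So $\partial_x^jK_A$ reappears inside the integral at the same order. It is not expressed through lower-order diagonal data; instead it solves a new Fredholm equation on $[0,x]$, whose regularity up to the corner is the original question. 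Even boundedness of $\partial_xK_A$ near the origin needs an argument you have not given, because $\partial_xK_A(x,t)=(\partial_xk)(x,t/x)-(t/x^2)(\partial_rk)(x,t/x)$.

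The appeal to flatness is also not an argument. Agreement of Taylor data at one point does not make a function smooth there. Moreover, flatness of $F_A$ plays no role in the smoothness of $K_A$; it is needed only later, in \cref{prop:qa_flat}, to make the derivatives of $K_A(x,x)$ vanish at $0$.

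The repair is short: use your Step~1 together with the equation once more, rescaling only the integration variable $t=xs$. For $0\le y\le x$,
\[
K_A(x,y)=-F_A(x,y)-x\int_0^1k(x,s)\,F_A(xs,y)\,\mathrm ds .
\]
The right-hand side is defined for all $(x,y)\in[0,\infty)^2$ and is $C^\infty$ there. Indeed, $k$ and $F_A$ are smooth and the integral is over the fixed interval $[0,1]$, so differentiation under the integral sign is justified on compact sets. Thus $K_A$ is the restriction of a smooth function to the closed triangle, corner included. Neither your induction on $x$-derivatives nor the flatness in \cref{lem:fa_vanish} is needed.
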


\begin{proof}
By \cite[Lemma 2.3.1 and the discussion following it]{levitan1987}, the solution of the Gelfand--Levitan equation inherits the smoothness of $F_A$. Since $F_A$ is smooth by \cref{lem:fa_vanish}, the kernel $K_A$ is smooth on the closed triangle.
\end{proof}

The value at the degenerate endpoint follows by setting $y=0$ in \eqref{eqn:integral_restate} and letting $x\downarrow0$. By \cref{cor:ka_smooth} and \cref{lem:fa_vanish}, $K_A$ and $F_A$ are continuous on the closed triangle, and
$$
\left|
\int_0^x K_A(x,t)F_A(t,0)\,\mathrm dt
\right|
\le
x
\sup_{0\le t\le x}|K_A(x,t)|
\sup_{0\le t\le x}|F_A(t,0)|
\longrightarrow0.
$$
Hence $K_A(0,0)+F_A(0,0)=0$, and \cref{lem:fa_vanish} gives $K_A(0,0)=0$.

The flatness of $F_A$ transfers to $q_A$.

\begin{proposition}
\label{prop:qa_flat}
All derivatives of $K_A(x,x)$ vanish at $x=0$. Consequently, after taking the smooth representative
$$
    q_A(x)=2\frac{\mathrm d}{\mathrm dx}K_A(x,x),
$$
all derivatives of $q_A$ vanish at $x=0$.
\end{proposition}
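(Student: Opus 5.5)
The plan is to show that $K_A$ vanishes to infinite order at the corner $(0,0)$ of the closed triangle $\{0\le y\le x\}$. Once this holds, $k(x):=K_A(x,x)$ is flat at $0$, since every derivative $k^{(n)}(0)$ is a finite sum of mixed partials $\partial_x^a\partial_y^bK_A(0,0)$ with $a+b=n$. Then $q_A=2k'$ is flat as well.

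By \cref{cor:ka_smooth}, $K_A$ is smooth on the closed triangle, and $F_A$ is smooth with bounded derivatives by \cref{lem:fa_vanish}. I would substitute $t=xs$ to put \eqref{eqn:integral_restate} in the form
$$
K_A(x,y)=-F_A(x,y)-x\int_0^1 K_A(x,xs)F_A(xs,y)\,\mathrm ds ,
$$
which is valid on the closed triangle and is smooth in $(x,y)$. Taylor expansion of smooth functions on the triangle (a convex set with nonempty interior, so Taylor polynomials at the corner are determined by one-sided derivatives) then lets me argue by induction on the total order $n=a+b$. The inductive claim is that $\partial_x^a\partial_y^bK_A(0,0)=0$ for all $a+b\le n$.

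The base case $K_A(0,0)=0$ was shown just before the proposition. For the inductive step, apply $\partial_x^a\partial_y^b$ with $a+b=n$ to the identity above and evaluate at $(0,0)$. The term $-\partial_x^a\partial_y^bF_A(0,0)$ vanishes by \cref{lem:fa_vanish}. The integral term carries a factor $x$, so by Leibniz its derivatives at $x=0$ involve at most $n-1$ derivatives falling on the integral $G(x,y):=\int_0^1K_A(x,xs)F_A(xs,y)\,\mathrm ds$.

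To see that these vanish, differentiate under the integral sign, which is justified by uniform bounds on compact sets. The derivatives of $G$ at $(0,0)$ are then combinations of products of derivatives of $K_A$ and of $F_A$ at $(0,0)$ (times powers of $s$). Every such product contains a derivative of $F_A$ at $(0,0)$, which is zero. In fact this gives the stronger statement that $G$ is flat at $(0,0)$ directly, so no induction on $K_A$ is strictly needed for this term: every term is multiplied by a derivative of $F_A$ at the origin. The induction then closes.

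The main obstacle is rigour at the corner. Derivatives on the closed triangle must be interpreted as one-sided limits, which is legitimate because \cref{cor:ka_smooth} gives smoothness up to the boundary. Differentiating the reparametrized integral also requires the composite arguments $(x,xs)$ and $(xs,y)$ to stay inside the triangle. For $K_A(x,xs)$ this holds, since $0\le xs\le x$. For $F_A(xs,y)$ it is harmless because $F_A$ is smooth on all of $\mathbb R^2$.

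Finally, $q_A=2\frac{\mathrm d}{\mathrm dx}K_A(x,x)$ is the smooth representative of Remling's potential. It agrees almost everywhere with the classical reconstruction formula, so its derivatives at $0$ are $2k^{(n+1)}(0)=0$.
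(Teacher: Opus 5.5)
Your argument is correct but takes a different route from the paper.

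The paper never differentiates the Gelfand--Levitan equation. It sets $y=x$ and uses flatness of $F_A$ at the origin to get the Taylor bound $|F_A(u,v)|\le C_N(u+v)^N$ near $(0,0)$. Using only boundedness of $K_A$ on a small triangle, it concludes $|K_A(x,x)|\le C_N2^Nx^N(1+B_Nx)$. Smoothness of $R_A(x)=K_A(x,x)$ then turns $R_A(x)=O(x^N)$ for every $N$ into vanishing of all derivatives at $0$.

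You instead rescale $t=xs$ and differentiate the equation at the corner. Every Leibniz term in $\partial_x^a\partial_y^bG(0,0)$ contains a factor $s^{i}\partial_1^{i}\partial_2^{b}F_A(0,0)=0$, so the entire Taylor jet of $K_A$ at $(0,0)$ vanishes. This is stronger than what the paper proves, which controls only the diagonal, and it makes your induction superfluous, as you note.

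The price is regularity. You need all derivatives of $K_A$ to extend continuously to the boundary of the triangle, both to differentiate under the integral and to interpret one-sided derivatives at the corner. The paper's estimate needs only $\sup|K_A|$ on a compact triangle plus smoothness of the single function $R_A$. Both arguments ultimately rest on \cref{cor:ka_smooth}.
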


\begin{proof}
Let $R_A(x)=K_A(x,x)$. Evaluating \eqref{eqn:integral_restate} at $y=x$ gives
$$
    R_A(x)
    =
    -F_A(x,x)
    -
    \int_0^x K_A(x,t)F_A(t,x)\,\mathrm{d}t.
$$
By \cref{cor:ka_smooth}, $K_A$ is bounded on every compact triangle $0\le t\le x\le \delta$. Fix $N\in\mathbb N$. Since all mixed derivatives of $F_A$ vanish at $(0,0)$, Taylor's theorem gives a constant $C_N$ and $\delta_N>0$ such that
$$
    |F_A(u,v)|
    \le
    C_N (u+v)^N
$$
whenever $0\le u,v\le \delta_N$. In particular, for $0\le t\le x\le \delta_N$,
$$
    |F_A(x,x)|
    \le
    C_N (2x)^N,
    \qquad
    |F_A(t,x)|
    \le
    C_N (t+x)^N
    \le
    C_N (2x)^N.
$$
Let $B_N=\sup_{0\le t\le x\le \delta_N}|K_A(x,t)|$. Then, for $0\le x\le \delta_N$,
$$
    |R_A(x)|
    \le
    |F_A(x,x)|
    +
    \int_0^x |K_A(x,t)|\,|F_A(t,x)|\,\mathrm{d}t
    \le
    C_N 2^N x^N
    +
    B_N C_N 2^N x^{N+1}.
$$
The estimate gives $R_A(x)=O(x^N)$ as $x\downarrow0$. Since $N$ is arbitrary and $R_A$ is smooth, every derivative of $R_A$ vanishes at zero. The reconstruction formula $q_A=2R_A'$ then yields $q_A^{(m)}(0)=0$ for every $m\ge0$.
\end{proof}

\subsection{Computing the Potential}
\label{subsec:comput_potential}

We construct a smooth source code for $q_A$. Given a compact interval and a derivative order, the algorithm computes certified uniform approximations and derivative bounds. The construction approximates $F_A$ in \eqref{eqn:f_first}, solves \eqref{eqn:first_integral}, and differentiates the resulting kernel, with explicit error control at each stage.

Recall that
$$
    F_A(x,y)
    =
    \int_0^\infty s_E(x)s_E(y)\,\mathrm{d}(\rho_A-\rho_0)(E)
    =
    \epsilon
    \int_0^\infty s_E(x)s_E(y)\,\mathrm{d}\sigma_A(E),
    \qquad x,y\ge 0,
$$
where $\epsilon$ is the rational number chosen above.

\begin{lemma}
\label{lem:FA}
Let $X\in\mathbb Q_{>0}$, $N\in\mathbb{N}\cup\{0\}$, and $\delta\in\mathbb Q_{>0}$. We can compute a tensor-product $C^N$ spline $F_{A,N,\delta,X}$ on $[0,X]^2$, together with a computable bound $C_{N,X}(\delta)$, such that
$$
    \max_{0\le i,j\le N}
    \left\|
        \partial_x^i\partial_y^j
        \bigl(F_A-F_{A,N,\delta,X}\bigr)
    \right\|_{L^\infty([0,X]^2)}
    \le
    C_{N,X}(\delta),
$$
where $C_{N,X}(\delta)\to 0$ as $\delta\to 0^+$. In particular, given $p\in\mathbb N$, one can compute $\delta$ so that the right-hand side is at most $2^{-p}$.
\end{lemma}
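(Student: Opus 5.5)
The plan is to compute the derivatives of $F_A$ exactly as integrals against $\sigma_A$, approximate these integrals with certified error, sample them on a grid, and then interpolate. Write $F_A=F^{\nu}+F^{\kappa}$, where
$$
F^{\nu}(x,y)=\epsilon\int_{\mathcal I}s_E(x)s_E(y)\,\mathrm d\nu_A(E),
\qquad
F^{\kappa}(x,y)=\epsilon\int_5^\infty s_E(x)s_E(y)g_A(E)\,\mathrm dE .
$$
By the proof of \cref{lem:fa_vanish}, every mixed derivative $\partial_x^a\partial_y^bF_A$ equals the corresponding integral of $\partial_x^as_E(x)\,\partial_y^bs_E(y)$. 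Fix the order $M:=N+2$; we need derivatives up to this order in each variable. We first obtain computable bounds
$$
\Lambda_{M,X}\ge\sup_{[0,X]^2}|\partial_x^a\partial_y^bF_A|,
\qquad a,b\le M+1 .
$$
These follow from $|\partial_x^as_E|\le(1+E)^{a/2}$, from $\nu_A(\mathcal I)=1$, and from the computable Schwartz bounds on $g_A$ in \cref{lem:kappa_a_fv}.

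To evaluate at a rational point $(x,y)\in[0,X]^2$ with $a,b\le M$, we treat the two parts separately.

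For the $\nu_A$ part, the integrand $\phi(E)=\partial_x^as_E(x)\,\partial_y^bs_E(y)$ is a computable $C^2$ function of $E$ on $\mathcal I$. Its first two $E$-derivatives have explicit bounds in terms of $X$, since $\mathcal I$ is bounded away from $0$ and $s_E$ is analytic in $E$. We then apply \cref{lem:integral_com_nu}.

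For the $\kappa_A$ part, we truncate at $E\le T$. The tail is controlled by the Schwartz bound
$$
|g_A(E)|\le C(1+E)^{-M-3},
$$
which makes the tail at most a computable multiple of $T^{-1}$. On $[5,T]$ we use a quadrature rule with explicit error bound, using the computable values and derivative bounds of $g_A$.

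Together these give $\partial_x^a\partial_y^bF_A(x,y)$ to any prescribed accuracy $\delta$, uniformly in the rational grid point.

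Next we sample all derivatives of order at most $M$ on the grid $h\mathbb Z^2\cap[0,X]^2$ with $h=X/K$. We build $F_{A,N,\delta,X}$ as the tensor-product Hermite interpolant of odd degree $2M+1$ in each variable, which uses the sampled values $\partial_x^a\partial_y^b$ for $a,b\le M$. Matching derivatives up to order $M\ge N$ at the nodes makes the piecewise polynomial globally $C^N$; indeed it is $C^M$ across cell boundaries. Its error splits into two contributions.

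First, the exact Hermite interpolant of $F_A$ satisfies the standard estimate
$$
\|\partial^i_x\partial^j_y(F_A-\mathcal H_hF_A)\|_\infty\le c_M\,h^{\,M+1-\max(i,j)}\Lambda_{M,X}
$$
for $i,j\le N$. This holds because the interpolation is exact on polynomials of degree at most $M$ in each variable, applied by a tensor Bramble--Hilbert or Peano-kernel argument in each variable.

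Second, the data errors enter as $\delta$ times the derivatives of the Hermite basis functions. On a cell of size $h$ these scale like $h^{a-i}$, so this contribution is bounded by $c_M\,\delta\,h^{-N}$ after summing over the fixed number of local basis functions.

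Now let the grid depend on $\delta$ by taking $h=\delta^{1/(2N+2)}$, or rather the nearest $X/K$ to it. Then
$$
C_{N,X}(\delta)=c_M\Lambda_{M,X}h^{2}+c_M\delta h^{-N}\to0,
$$
and every quantity involved is computable. Given $p$, we search for a rational $\delta$ with $C_{N,X}(\delta)\le2^{-p}$.

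The main obstacle is the certified evaluation of the derivative integrals, in particular checking that the $C^2$ bounds required by \cref{lem:integral_com_nu} and the quadrature bounds on $[5,T]$ are genuinely computable, with polynomial growth in $E$ dominated by the Schwartz decay of $g_A$. A secondary point is pinning down explicit constants $c_M$ in the Hermite error estimate. For this I would first try reducing to one-dimensional Hermite interpolation via the tensor-product identity
$$
F-\mathcal H_x\mathcal H_yF=(F-\mathcal H_xF)+\mathcal H_x(F-\mathcal H_yF),
$$
bounding the stability of $\mathcal H_x$ in $C^N$ explicitly.
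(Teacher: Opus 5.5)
Your proposal is correct and follows essentially the paper's route: certified pointwise evaluation of $\partial_x^a\partial_y^bF_A$ at the grid nodes, followed by tensor-product Hermite interpolation, with the error split into an interpolation term and a propagated nodal-data term. For the $\nu_A$-part you use \cref{lem:integral_com_nu}, as the paper does. For the $\kappa_A$-part your route differs slightly: you integrate the computable density $g_A$ directly by truncation and quadrature, whereas the paper first applies Fubini and integrates $t\mapsto\int_4^\infty \partial_x^as_{tu}(x)\,\partial_y^bs_{tu}(y)\,h(u)\,\mathrm du$ against $\nu_A$.

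One correction: for the mixed derivative $\partial_x^N\partial_y^N$, the nodal-data term scales like $\delta h^{-2N}$, not $\delta h^{-N}$, because the tensor basis functions contribute factors $h^{a-i}h^{b-j}$. With $h\approx\delta^{1/(2N+2)}$ this term is still $O(\delta^{1/(N+1)})\to0$, so your conclusion is unaffected.
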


\begin{proof}
The derivatives of $F_A$ are computable pointwise, uniformly on compact sets, with computable uniform bounds. Fix $a,b\in\mathbb{N}\cup\{0\}$, and set
$
    \Psi_{a,b,x,y}(E)
    =
    \partial_x^a s_E(x)\,\partial_y^b s_E(y).
$
Since $\rho_A-\rho_0=\epsilon\sigma_A=\epsilon\nu_A+\epsilon\kappa_A$, and since $\,\mathrm{d}\kappa_A(E)=g_A(E)\,\mathrm{d}E$, we have
$$
    \partial_x^a\partial_y^b F_A(x,y)
    =
    \epsilon
    \int_{5/4}^{7/4}
    \Psi_{a,b,x,y}(E)\,\mathrm{d}\nu_A(E)
    +
    \epsilon
    \int_0^\infty
    \Psi_{a,b,x,y}(E)g_A(E)\,\mathrm{d}E.
$$
Every derivative $\partial_x^a s_E(x)$ is a sine or cosine factor multiplied by a power of $E$. More precisely, for $E\ge 5/4$,
$$
    |\partial_x^a s_E(x)|
    \le
    C_a(1+E)^{a/2},
$$
with a computable constant $C_a$. Hence, for each fixed $a,b$, the function $\Psi_{a,b,x,y}(E)$ and the finitely many $E$-derivatives needed below are bounded by computable constants times fixed powers of $1+E$, uniformly for $(x,y)\in[0,X]^2$.

Consider first the integral against $\nu_A$. For fixed rational $x,y$, the function $ E\mapsto \Psi_{a,b,x,y}(E) $ is a computable $C^2$ function on $\mathcal{I}$, with computable $C^2$ bounds, uniformly for $x,y\in[0,X]$. Therefore, by \cref{lem:integral_com_nu},
$$
    \int_{5/4}^{7/4}
    \Psi_{a,b,x,y}(E)\,\mathrm{d}\nu_A(E)
$$
can be computed to arbitrary accuracy, uniformly in $x,y$.

For the absolutely continuous term, the definition of $g_A$ gives
$$
\begin{aligned}
    \int_0^\infty \Psi_{a,b,x,y}(E)g_A(E)\,\mathrm{d}E
    &=
    \int_0^\infty
    \Psi_{a,b,x,y}(E)
    \left(
        \int_{5/4}^{7/4} t^{-1}h(E/t)\,\mathrm{d}\nu_A(t)
    \right)
    \,\mathrm{d}E.
\end{aligned}
$$
The polynomial growth of $\Psi_{a,b,x,y}$ and the uniform rapid decay of $h(E/t)$ make the integrand absolutely integrable. Fubini's theorem gives
$$
\begin{aligned}
    \int_0^\infty \Psi_{a,b,x,y}(E)g_A(E)\,\mathrm{d}E
    &=
    \int_{5/4}^{7/4}
    \left(
        \int_0^\infty
        \Psi_{a,b,x,y}(E)t^{-1}h(E/t)\,\mathrm{d}E
    \right)
    \,\mathrm{d}\nu_A(t).
\end{aligned}
$$
Changing variables $E=tu$ gives
$$
    \int_0^\infty \Psi_{a,b,x,y}(E)g_A(E)\,\mathrm{d}E
    =
    \int_{5/4}^{7/4}
    \Theta_{a,b,x,y}(t)\,\mathrm{d}\nu_A(t),\qquad\text{where}\qquad
    \Theta_{a,b,x,y}(t)
    =
    \int_0^\infty
    \Psi_{a,b,x,y}(tu)h(u)\,\mathrm{d}u.
$$
Since $h$ is supported in $[4,\infty)$, this is equivalently $\Theta_{a,b,x,y}(t)=\int_4^\infty\Psi_{a,b,x,y}(tu)h(u)\,\mathrm{d}u$.

The function $\Theta_{a,b,x,y}$ is computable in $C^2(\mathcal I)$, uniformly for $x,y\in[0,X]$, with computable $C^2$-bounds. For $r=0,1,2$, the derivative $\partial_t^r\Psi_{a,b,x,y}(tu)$ is bounded by a computable polynomial in $u$, uniformly for $t\in\mathcal{I}$ and $x,y\in[0,X]$. Thus there are computable constants $C$ and $m$ such that
$$
    \left|
        \partial_t^r\Psi_{a,b,x,y}(tu)
    \right|
    \le
    C(1+u)^m,
    \qquad r=0,1,2.
$$
The computable Schwartz-seminorm bounds for $h$ allow us to compute $H_m$ such that
$$
    |h(u)|
    \le
    H_m(1+u)^{-m-2}.
$$
Consequently the tails
$$
    \int_U^\infty
    \partial_t^r\Psi_{a,b,x,y}(tu)h(u)\,\mathrm{d}u,
    \qquad r=0,1,2,
$$
are bounded uniformly by a computable quantity tending to $0$ as $U\to\infty$. On $[4,U]$, certified quadrature applies using the computable derivative bounds for $\Psi_{a,b,x,y}$ and $h$. Together with the tail estimate, this computes $\Theta_{a,b,x,y}$ and its first two derivatives uniformly on $\mathcal I$. Applying \cref{lem:integral_com_nu} then computes
$$
    \int_{5/4}^{7/4}
    \Theta_{a,b,x,y}(t)\,\mathrm{d}\nu_A(t)
$$
to arbitrary accuracy. Combined with the $\nu_A$-term, this computes $\partial_x^a\partial_y^bF_A$ uniformly on $[0,X]^2$; the same estimates give the bounds below. More precisely, for every $R\in\mathbb{N}\cup\{0\}$ one can compute a constant $B_{R,X}$ such that
$
    \max_{0\le a,b\le R}
    \left\|
        \partial_x^a\partial_y^b F_A
    \right\|_{L^\infty([0,X]^2)}
    \le
    B_{R,X}.
$

For the spline approximation, define
$$
M_\delta
=
\max\left\{1,\left\lceil\frac{X}{\delta}\right\rceil\right\},
\qquad
h_\delta=\frac{X}{M_\delta},
\qquad
x_r=rh_\delta,
\quad
0\le r\le M_\delta.
$$
Then $h_\delta\le\delta$ and $h_\delta\to0$ as $\delta\downarrow0$. On each cell $ [x_r,x_{r+1}]\times[x_s,x_{s+1}], $ take the tensor-product Hermite polynomial of degree at most $2N+1$ in each variable that matches all data
$$
\partial_x^i\partial_y^jF_A(x_u,x_v),
\qquad
0\le i,j\le N,
\qquad
u\in\{r,r+1\},
\quad
v\in\{s,s+1\}.
$$
Thus data are imposed at all four vertices of every cell. The pieces match with all derivatives through order $N$, and therefore define a global $C^N$ tensor-product spline, denoted by $H_{h_\delta}F_A$.

With exact nodal data, the standard Hermite interpolation estimate \cite{birkhoff1968piecewise} gives
$$
\max_{0\le i,j\le N}
\left\|
\partial_x^i\partial_y^j
\bigl(F_A-H_{h_\delta}F_A\bigr)
\right\|_{L^\infty([0,X]^2)}
\le
P_{N,X}(h_\delta),
$$
where $P_{N,X}$ is computable from the derivative bounds through order $2N+2$, and $P_{N,X}(h)\to0$ as $h\downarrow0$.

Compute every required nodal value with error at most $\eta$. Since the interpolation operator is finite-dimensional and linear in the nodal data, one can compute a rational bound $D_{N,X,h_\delta}$ such that the induced $C^N$-error is at most $D_{N,X,h_\delta}\eta$. Choose
$$
\eta
\le
\frac{\delta}{1+D_{N,X,h_\delta}}.
$$
For the resulting computable spline $F_{A,N,\delta,X}$, we obtain
$$
\max_{0\le i,j\le N}
\left\|
\partial_x^i\partial_y^j
\bigl(F_A-F_{A,N,\delta,X}\bigr)
\right\|_{L^\infty([0,X]^2)}
\le
P_{N,X}(h_\delta)+\delta.
$$
Define
$
C_{N,X}(\delta)
=
P_{N,X}(h_\delta)+\delta.
$
Since $h_\delta\to0$, the required bound tends to zero as $\delta\downarrow0$.
\end{proof}

The fixed interpolation nodes in \cref{lem:FA} turn the Gelfand--Levitan equation into a finite-dimensional system after $F_A$ is replaced by its spline approximation. We solve
\begin{equation}
\label{eqn:second_integral}
    K_A(x,y)
    +
    F_A(x,y)
    +
    \int_0^x K_A(x,t)F_A(t,y)\,\mathrm{d}t
    =
    0,
    \qquad
    0\le y\le x,
\end{equation}
with $F_A$ replaced by a finite-rank approximation. The stability and derivative estimates below give certified approximations to $K_A$ and $q_A$.

For $x\ge 0$, introduce the operator
$$
    (\mathcal F_{A,x}f)(y)
    =
    \int_0^x F_A(y,t)f(t)\,\mathrm{d}t,
    \qquad 0\le y\le x.
$$
By symmetry of $F_A$, \eqref{eqn:second_integral} takes the operator form
$$
    (I+\mathcal F_{A,x})K_A(x,\cdot)
    =
    -F_A(x,\cdot)
$$
on $L^2(0,x)$.

\begin{lemma}
\label{lem:ifax_invert}
For every $x\ge 0$, the operator $I+\mathcal F_{A,x}$ is invertible on $L^2(0,x)$, and
$$
    \|(I+\mathcal F_{A,x})^{-1}\|_{L^2(0,x)\to L^2(0,x)}
    \le 2.
$$
\end{lemma}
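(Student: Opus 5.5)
The natural approach is to compute the quadratic form of $I+\mathcal F_{A,x}$ and compare it with the free one via the isometry in \cref{cor:isom} and the lower bound $2\rho_A\ge\rho_0$ from \cref{prop:rhoA}. The case $x=0$ is trivial, since $L^2(0,0)=\{0\}$. So fix $x>0$ and $f\in L^2(0,x)$, which we may take real-valued; the complex case follows by splitting into real and imaginary parts, since the operator has a real symmetric kernel. Write $\widehat f(E)=\int_0^x f(t)s_E(t)\,\mathrm dt$. By the definition of $F_A$ and Fubini, we have
$$
\langle \mathcal F_{A,x}f,f\rangle=\int_0^\infty |\widehat f(E)|^2\,\mathrm d(\rho_A-\rho_0)(E).
$$
Fubini is justified because $\operatorname{supp}(\rho_A-\rho_0)\subset[5/4,\infty)$ and $|s_E|\le E^{-1/2}$ there, so the integrand is bounded by $\tfrac45\|f\|_{L^1}^2$ against the finite measure $\epsilon|\sigma_A|$.

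By \cref{cor:isom}, $\|f\|^2=\int|\widehat f|^2\,\mathrm d\rho_0$, after extending $f$ by zero to $(0,\infty)$; the pointwise transform agrees with the isometric extension $\rho_0$-a.e. Hence
$$
\langle (I+\mathcal F_{A,x})f,f\rangle=\int_0^\infty|\widehat f(E)|^2\,\mathrm d\rho_A(E)\ge\tfrac12\int|\widehat f|^2\,\mathrm d\rho_0=\tfrac12\|f\|^2_{L^2(0,x)}.
$$
Here the identity requires $\int|\widehat f|^2\,\mathrm d\rho_0<\infty$, which we just obtained, and finiteness of the $\rho_A-\rho_0$ term, shown above. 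Both $\rho_A$ and $\rho_0$ are supported in $[0,\infty)$, so no negative energies enter.

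Next, $\mathcal F_{A,x}$ is self-adjoint and bounded, in fact Hilbert--Schmidt, because $F_A$ is bounded, smooth and symmetric by \cref{lem:fa_vanish}. The operator $T=I+\mathcal F_{A,x}$ is therefore self-adjoint with $T\ge\tfrac12 I$. A self-adjoint operator bounded below by $\tfrac12$ is invertible: we have $\|Tf\|\,\|f\|\ge\langle Tf,f\rangle\ge\tfrac12\|f\|^2$, so $T$ is injective with closed range, and its range is dense since $\ker T^*=\ker T=0$. This gives $\|T^{-1}\|\le2$.

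The main obstacle is the justification of the first identity. It needs the interchange of integrals together with the isometry applied to $f$ supported in $(0,x)$ rather than in $C_c^\infty$. Both points are covered by the bounds above and by \cref{cor:isom}, which already addresses $L^2(0,N)$ functions.
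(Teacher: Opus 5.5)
Your proof is correct and follows the paper's argument. You establish the quadratic form identity $\langle (I+\mathcal F_{A,x})f,f\rangle=\int_0^\infty|\widehat f(E)|^2\,\mathrm d\rho_A(E)$ via Fubini and \cref{cor:isom}, apply the bound $2\rho_A\ge\rho_0$, and use self-adjointness to pass from $I+\mathcal F_{A,x}\ge\frac12 I$ to invertibility with $\|(I+\mathcal F_{A,x})^{-1}\|\le 2$.

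You also supply details the paper leaves implicit, namely the Fubini domination and the closed-range/dense-range step. The reduction to real-valued $f$ is harmless but unnecessary: $s_E$ is real, so the identity holds directly for complex $f$.
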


\begin{proof}
The case $x=0$ is trivial, so assume $x>0$. For $f\in L^2(0,x)$, set
$$
    f_s(E)
    =
    \int_0^x s_E(t)f(t)\,\mathrm{d}t.
$$
By \cref{cor:isom}, after extending $f$ by zero outside $(0,x)$,
$$
    \|f\|_{L^2(0,x)}^2
    =
    \int_0^\infty |f_s(E)|^2\,\mathrm{d}\rho_0(E).
$$
Moreover, using
$$
    F_A(y,t)
    =
    \epsilon
    \int_0^\infty s_E(y)s_E(t)\,\mathrm{d}\sigma_A(E),
$$
we obtain
$$
    \langle \mathcal F_{A,x}f,f\rangle_{L^2(0,x)}
    =
    \int_0^x\int_0^x
    F_A(y,t)f(t)\overline{f(y)}
    \,\mathrm{d}t\,\mathrm{d}y
    =
    \epsilon
    \int_0^\infty |f_s(E)|^2\,\mathrm{d}\sigma_A(E).
$$
Therefore
$$
\begin{aligned}
    \langle (I+\mathcal F_{A,x})f,f\rangle
    &=
    \int_0^\infty |f_s(E)|^2\,\mathrm{d}\rho_A(E).
\end{aligned}
$$
Since $2\rho_A\ge \rho_0$, we have
$$
    \langle (I+\mathcal F_{A,x})f,f\rangle
    \ge
    \frac12
    \int_0^\infty |f_s(E)|^2\,\mathrm{d}\rho_0(E)
    =
    \frac12\|f\|_{L^2(0,x)}^2.
$$
Since $I+\mathcal F_{A,x}$ is bounded and self-adjoint,
$$
    \|(I+\mathcal F_{A,x})f\|_{L^2(0,x)}
    \ge
    \frac12\|f\|_{L^2(0,x)}
$$
and the result follows.
\end{proof}

The substitutions $y=xr$ and $t=xs$, with $0\le r,s\le1$, remove the $x$-dependence from the integration interval. Define $L_A(x,r)=K_A(x,xr)$. Then \eqref{eqn:second_integral} becomes
\begin{equation}
\label{eqn:transformed_integral}
    L_A(x,r)
    +
    F_A(x,xr)
    +
    x\int_0^1 L_A(x,s)F_A(xs,xr)\,\mathrm{d}s
    =
    0,
    \qquad
    x\ge 0,\quad 0\le r\le 1.
\end{equation}
For $x\ge 0$, let $\mathcal K_{A,x}$ be the integral operator on $L^2(0,1)$ with kernel $B_x(r,s)=xF_A(xs,xr)$:
$$
    (\mathcal K_{A,x}u)(r)
    =
    \int_0^1 B_x(r,s)u(s)\,\mathrm{d}s.
$$
For $x>0$, this operator is unitarily equivalent to $\mathcal F_{A,x}$ under the unitary map
$$
    U_x:L^2(0,x)\to L^2(0,1),
    \qquad
    (U_x f)(r)=x^{1/2}f(xr).
$$
Thus, for every $x\ge 0$,
$$
\|(I+\mathcal K_{A,x})^{-1}\|_{L^2(0,1)\to L^2(0,1)}
\le 2.
$$
Define $b(x,r)=-F_A(x,xr)$. Then
$$
(I+\mathcal K_{A,x})L_A(x,\cdot)=b(x,\cdot).
$$
We approximate $\mathcal K_{A,x}$ and $b(x,\cdot)$ in finite-dimensional spline spaces; the next lemma supplies the uniform stability estimate.

\begin{lemma}
\label{lem:finite_rank}
Let $T$ be a bounded operator on a Hilbert space, and suppose that $I+T$ is invertible with $\|(I+T)^{-1}\|\le M$. Let $u$ solve $(I+T)u=b$. Let $T_n$ be a finite-rank operator and let $b_n$ belong to a finite-dimensional subspace, with
$$
    \|T-T_n\|\le \eta,
    \qquad
    \|b-b_n\|\le \delta,
    \qquad
    M\eta<1.
$$
Let $u_n$ be the exact solution of $(I+T_n)u_n=b_n$. Then $I+T_n$ is invertible and
$$
    \|u-u_n\|
    \le
    \frac{M}{1-M\eta}
    \left(
        \delta+\eta M\|b\|
    \right).
$$
\end{lemma}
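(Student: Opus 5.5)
The plan is a standard Neumann-series perturbation argument. First I will show that $I+T_n$ is invertible. Writing
$$
    I+T_n=(I+T)\bigl(I-(I+T)^{-1}(T-T_n)\bigr),
$$
we have $\|(I+T)^{-1}(T-T_n)\|\le M\eta<1$. The second factor is therefore invertible by the Neumann series, with inverse of norm at most $(1-M\eta)^{-1}$. Hence $I+T_n$ is invertible and
$$
    \|(I+T_n)^{-1}\|\le \frac{M}{1-M\eta}.
$$
Finite rank of $T_n$ is not needed for invertibility. It only matters for the application, because it makes $u_n$ computable by solving a finite linear system: $u_n=b_n-T_nu_n$ lies in the finite-dimensional space spanned by $b_n$ and the range of $T_n$.

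Next, I will derive an equation for the error $u-u_n$. Subtracting the two equations gives
$$
    (I+T_n)(u-u_n)
    =(I+T_n)u-b_n
    =b-(T-T_n)u-b_n.
$$
Applying $(I+T_n)^{-1}$ then gives
$$
    \|u-u_n\|\le \frac{M}{1-M\eta}\bigl(\delta+\eta\|u\|\bigr).
$$

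Finally, I will use $\|u\|=\|(I+T)^{-1}b\|\le M\|b\|$, which gives exactly the stated bound. I do not expect any step to be a real obstacle. The only point requiring care is to place the inverse of $I+T_n$, rather than that of $I+T$, on the error. That is why the factor $M/(1-M\eta)$ appears, and why the residual is bounded through $\|u\|\le M\|b\|$.
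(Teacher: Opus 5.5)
Your proposal is correct and follows the paper's proof essentially verbatim. The paper uses the same factorization $I+T_n=(I+T)\bigl[I+(I+T)^{-1}(T_n-T)\bigr]$ with a Neumann series to get $\|(I+T_n)^{-1}\|\le M/(1-M\eta)$, the same residual identity $(I+T_n)(u-u_n)=b+(T_n-T)u-b_n$, and the bound $\|u\|\le M\|b\|$. You are also right that finite rank plays no role in the estimate itself.
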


\begin{proof}
We factor
$$
    I+T_n
    =
    (I+T)
    \left[
        I+(I+T)^{-1}(T_n-T)
    \right].
$$
Since
$$
    \|(I+T)^{-1}(T_n-T)\|
    \le
    M\eta
    <
    1,
$$
the second factor is invertible by the Neumann series. Hence $I+T_n$ is invertible and
$$
    \|(I+T_n)^{-1}\|
    \le
    \frac{M}{1-M\eta}.
$$
Furthermore,
$$
    (I+T_n)(u-u_n)
    =
    (I+T_n)u-b_n
    =
    b+(T_n-T)u-b_n.
$$
Thus
$$
    \|u-u_n\|
    \le
    \|(I+T_n)^{-1}\|
    \left(
        \|b-b_n\|+\|T_n-T\|\,\|u\|
    \right).
$$
Since $\|u\|\le M\|b\|$, the claimed estimate follows.
\end{proof}

\begin{lemma}
\label{lem:compute_K_diag}
Given $X\in\mathbb Q_{>0}$, $N\in\mathbb N\cup\{0\}$, and $\delta\in\mathbb Q_{>0}$, we can compute, uniformly on $[0,X]$, approximations to
$$
R_A^{(j)}(x),
\qquad
R_A(x)=K_A(x,x),
\qquad
0\le j\le N+1,
$$
and to
$$
q_A^{(j)}(x),
\qquad
0\le j\le N,
$$
with certified error smaller than $\delta$.
\end{lemma}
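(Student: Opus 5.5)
The approach is to avoid differentiating the computed solution numerically. Instead, I would differentiate the transformed Gelfand--Levitan equation \eqref{eqn:transformed_integral} in $x$. This gives a triangular hierarchy of equations for the derivatives $\partial_x^k L_A(x,\cdot)$, each with the same operator $I+\mathcal K_{A,x}$. Differentiating $k$ times yields
$$
(I+\mathcal K_{A,x})\,\partial_x^kL_A(x,\cdot)
=
\partial_x^k b(x,\cdot)
-\sum_{j=0}^{k-1}\binom{k}{j}\bigl(\partial_x^{k-j}\mathcal K_{A,x}\bigr)\partial_x^jL_A(x,\cdot),
$$
where $\partial_x^{m}\mathcal K_{A,x}$ is the integral operator with kernel $\partial_x^m[xF_A(xs,xr)]$. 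These kernels and right-hand sides are finite combinations of mixed derivatives of $F_A$ of order at most $k+1$, multiplied by polynomials in $x,r,s$. By \cref{lem:FA} they can be approximated in sup norm, hence in operator norm on $L^2(0,1)$, with computable error. By \cref{lem:ifax_invert} and the unitary equivalence, $M=2$ uniformly in $x$. I would therefore apply \cref{lem:finite_rank} inductively in $k=0,\dots,N+1$. The approximate kernel is the spline $F_{A,N',\delta',X'}$, with $N'=N+3$ and $X'=X$, which is finite rank, and the approximate right-hand sides are assembled from the previously computed approximations of $\partial_x^jL_A$. The $L^2$ error at level $k$ is then bounded by an explicit computable function of $\delta'$ and of a priori bounds $\|\partial_x^jL_A(x,\cdot)\|\le 2\cdot(\text{computable data})$, which themselves come from the same recursion using the bounds $B_{R,X}$ from the proof of \cref{lem:FA}.

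Next I would convert $L^2(0,1)$ errors into pointwise values at $r=1$. Here $R_A(x)=L_A(x,1)$, and $R_A^{(k)}(x)$ is $\partial_x^kL_A(x,1)$, since $r=1$ is fixed. Rearranging the differentiated equation expresses $\partial_x^kL_A(x,r)$ as the right-hand side minus an integral against a bounded kernel, for every $r$. Evaluating at $r=1$ therefore turns the $L^2$ error into a sup error via Cauchy--Schwarz, with constant $\sup|B|$. The same identity applied to the finite-dimensional solution gives a computable approximant at $r=1$. Finally, $q_A^{(j)}=2R_A^{(j+1)}$ for $j\le N$, which is why derivatives of $R_A$ up to order $N+1$ are needed.

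To make the estimates uniform on $[0,X]$ rather than pointwise, I would pick a grid in $x$. At the grid points I would compute these values together with a Lipschitz bound in $x$ for each $R_A^{(k)}$, namely $|R_A^{(k+1)}|$, which is obtained from the a priori recursion at level $k+1$ (so I would carry one extra order). A piecewise-constant or Hermite interpolant then has certified uniform error below $\delta$ once the grid and $\delta'$ are chosen accordingly. Smoothness of $K_A$ from \cref{cor:ka_smooth} justifies differentiation under the integral, since all kernels are smooth with bounded derivatives on compact sets.

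The main obstacle I expect is the bookkeeping of the error propagation through the recursion. The a priori bounds on $\|\partial_x^jL_A\|$ must be computable, and the choice of $\delta'$ must make $M\eta<1$ and the final error at each level small, which requires an explicit inductive formula. I would first derive the crude bound $\|\partial_x^kL_A\|\le 2(\|\partial_x^kb\|+\sum_j\binom kj\|\partial_x^{k-j}\mathcal K\|\,\|\partial_x^jL_A\|)$ and then choose $\delta'$ backwards from the target $\delta$.
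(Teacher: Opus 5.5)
Your proposal is correct and follows essentially the same route as the paper. In both, you differentiate the rescaled Gelfand--Levitan equation in $x$, solve the resulting triangular hierarchy with the common operator $I+\mathcal K_{A,x}$ inductively via \cref{lem:finite_rank} with $M=2$, pass from $L^2$ errors to point values at $r=1$ by Cauchy--Schwarz, and recover $q_A^{(j)}=2R_A^{(j+1)}$. The one difference is your grid-and-Lipschitz step: the paper works in a fixed spline space, so its approximants are explicit $C^{N+1}$ functions of $x$ whose error bounds already hold uniformly on $[0,X]$, which makes that step unnecessary (though harmless).
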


\begin{proof}
Set
$
J=N+1.
$
We work with
$$
B(x,r,s)=xF_A(xs,xr),
\qquad
b(x,r)=-F_A(x,xr).
$$
Thus $B(x,\cdot,\cdot)$ is the kernel of $\mathcal K_{A,x}$, and
$$
(I+\mathcal K_{A,x})L_A(x,\cdot)=b(x,\cdot).
$$
By \cref{lem:FA} and the chain rule, for every $m\in\mathbb N\cup\{0\}$, the functions $B$ and $b$ are computable in $C^m$ on $[0,X]\times[0,1]^2$ and $[0,X]\times[0,1]$, respectively, with computable $C^m$-bounds.

Choose positive rational tolerances
$
\eta_{B,0},\ldots,\eta_{B,J}$ and $\eta_{b,0},\ldots,\eta_{b,J},
$
with $\eta_{B,0}<1/2$. Using \cref{lem:FA} with sufficiently high order, compute tensor-product spline approximations $B_n$ and $b_n$ such that
$$
\left\|
\partial_x^\ell(B-B_n)
\right\|_{L^\infty([0,X]\times[0,1]^2)}
\le \eta_{B,\ell},
\qquad
0\le\ell\le J,
$$
and
$$
\left\|
\partial_x^\ell(b-b_n)
\right\|_{L^\infty([0,X]\times[0,1])}
\le \eta_{b,\ell},
\qquad
0\le\ell\le J.
$$
Let $\mathcal K_{A,x,n}$ be the integral operator with kernel $B_n(x,\cdot,\cdot)$. Since $[0,1]^2$ has measure one, the Hilbert--Schmidt estimate gives
$$
\left\|
\partial_x^\ell\mathcal K_{A,x}
-
\partial_x^\ell\mathcal K_{A,x,n}
\right\|_{L^2\to L^2}
\le
\eta_{B,\ell},
\qquad
0\le\ell\le J,
$$
uniformly in $x$.

For each $x\in[0,X]$, define $L_{A,n}(x,\cdot)$ by
$$
(I+\mathcal K_{A,x,n})L_{A,n}(x,\cdot)=b_n(x,\cdot).
$$
Let $V_n\subset L^2(0,1)$ be a fixed computable finite-dimensional spline space containing $b_n(x,\cdot)$ and the range of $\mathcal K_{A,x,n}$ for every $x$. Projecting the equation onto $V_n^\perp$ shows that $L_{A,n}(x,\cdot)\in V_n$. In a fixed computable orthonormal basis of $V_n$, the equation becomes
$$
M_n(x)c_n(x)=d_n(x),
$$
where the entries of $M_n$ and $d_n$ are computable $C^J$ functions of $x$.

By \cref{lem:finite_rank}, the finite-dimensional systems are uniformly invertible on $[0,X]$, with
$$
\sup_{0\le x\le X}\|M_n(x)^{-1}\|
\le
\frac{2}{1-2\eta_{B,0}}.
$$
Moreover, $c_n$ is computable in $C^J([0,X])$. Its derivatives are obtained recursively from
$$
c_n^{(j)}
=
M_n^{-1}
\left(
d_n^{(j)}
-
\sum_{\ell=1}^j
\binom{j}{\ell}
M_n^{(\ell)}c_n^{(j-\ell)}
\right),
\qquad
1\le j\le J.
$$
The uniform inverse bound and the computable derivative bounds for $M_n$ and $d_n$ give certified uniform bounds at every stage.

For the zeroth derivative, let
$
C_{b,X}
=
\sup_{0\le x\le X}\|b(x,\cdot)\|_{L^2(0,1)}.
$
The computable bounds for $F_A$ provide a computable upper bound for $C_{b,X}$. Applying \cref{lem:finite_rank} with $M=2$ gives the estimate
$$
\sup_{0\le x\le X}
\|L_A(x,\cdot)-L_{A,n}(x,\cdot)\|_{L^2(0,1)}
\le
\frac{2}{1-2\eta_{B,0}}
\left(
\eta_{b,0}+2\eta_{B,0}C_{b,X}
\right).
$$
Also,
$$
\|L_{A,n}(x,\cdot)\|_{L^2(0,1)}
\le
\frac{2}{1-2\eta_{B,0}}
\bigl(C_{b,X}+\eta_{b,0}\bigr).
$$
Comparing the two integral equations and applying Cauchy--Schwarz yields
$$
\begin{aligned}
|L_A(x,r)-L_{A,n}(x,r)|
&\le
|b(x,r)-b_n(x,r)|\\
&\quad+
\|B(x,r,\cdot)\|_{L^2}
\|L_A(x,\cdot)-L_{A,n}(x,\cdot)\|_{L^2}\\
&\quad+
\|B(x,r,\cdot)-B_n(x,r,\cdot)\|_{L^2}
\|L_{A,n}(x,\cdot)\|_{L^2}.
\end{aligned}
$$
Hence the zeroth-order error can be made uniformly and computably small on $[0,X]\times[0,1]$.

For higher derivatives, $x$-differentiation is taken with $r$ and $s$ fixed. For $0\le j\le J$, define
$$
L_j(x,r)=\partial_x^jL_A(x,r),
\qquad
L_{j,n}(x,r)=\partial_x^jL_{A,n}(x,r),
$$
and
$$
B_j=\partial_x^jB,
\qquad
B_{j,n}=\partial_x^jB_n,
\qquad
b_j=\partial_x^jb,
\qquad
b_{j,n}=\partial_x^jb_n.
$$
For example,
$
b_j(x,r)
=
-(\partial_1+r\partial_2)^jF_A(x,xr);
$
it is not merely $-\partial_1^jF_A(x,xr)$.

Differentiating the exact and approximate equations $j$ times gives
$$
(I+\mathcal K_{A,x})L_j=G_j,
\qquad
(I+\mathcal K_{A,x,n})L_{j,n}=G_{j,n},
$$
where
$$
G_j
=
b_j
-
\sum_{\ell=1}^j
\binom{j}{\ell}
\mathcal K_{A,x}^{(\ell)}L_{j-\ell},
$$
and
$$
G_{j,n}
=
b_{j,n}
-
\sum_{\ell=1}^j
\binom{j}{\ell}
\mathcal K_{A,x,n}^{(\ell)}L_{j-\ell,n}.
$$
Here $\mathcal K_{A,x}^{(\ell)}$ and $\mathcal K_{A,x,n}^{(\ell)}$ have kernels $B_\ell$ and $B_{\ell,n}$, respectively. In particular, only $1\le\ell\le j$ occurs on the right-hand side, so all solution derivatives there have order strictly smaller than $j$.

We argue by induction on $j\le J$. The case $j=0$ was established above. Suppose that certified uniform approximations and bounds are known through order $j-1$. Set
$$
C_{B,\ell}
=
\sup_{\substack{0\le x\le X\\0\le r\le1}}
\|B_\ell(x,r,\cdot)\|_{L^2(0,1)},
\qquad
C_k
=
\sup_{0\le x\le X}\|L_k(x,\cdot)\|_{L^2(0,1)},
$$
and let
$$
e_k
=
\sup_{0\le x\le X}
\|L_k(x,\cdot)-L_{k,n}(x,\cdot)\|_{L^2(0,1)}.
$$
All the $C_{B,\ell}$ are computably bounded, and the $C_k$ are computably bounded inductively from $\|L_k\|_{L^2}\le2\|G_k\|_{L^2}$. The identity
$$
\begin{aligned}
\mathcal K_{A,x}^{(\ell)}L_{j-\ell}
-
\mathcal K_{A,x,n}^{(\ell)}L_{j-\ell,n}
&=
\bigl(
\mathcal K_{A,x}^{(\ell)}
-
\mathcal K_{A,x,n}^{(\ell)}
\bigr)L_{j-\ell}\\
&\quad+
\mathcal K_{A,x,n}^{(\ell)}
\bigl(L_{j-\ell}-L_{j-\ell,n}\bigr)
\end{aligned}
$$
therefore gives the certified bound
$$
\sup_{0\le x\le X}
\|G_j-G_{j,n}\|_{L^2(0,1)}
\le
\eta_{b,j}
+
\sum_{\ell=1}^j
\binom{j}{\ell}
\left[
\eta_{B,\ell}C_{j-\ell}
+
\bigl(C_{B,\ell}+\eta_{B,\ell}\bigr)e_{j-\ell}
\right].
$$
The same expression bounds $\sup_{x,r}|G_j(x,r)-G_{j,n}(x,r)|$, because the kernel-row $L^2$-bounds above are uniform in $r$.

Another application of \cref{lem:finite_rank} yields
$$
\|L_j-L_{j,n}\|_{L^2(0,1)}
\le
\frac{2}{1-2\eta_{B,0}}
\left(
\|G_j-G_{j,n}\|_{L^2(0,1)}
+
2\eta_{B,0}\|G_j\|_{L^2(0,1)}
\right),
$$
uniformly in $x$. The induction hypothesis gives a computable bound for $\|G_j\|_{L^2(0,1)}$, uniformly in $x$. Comparing
$$
L_j=G_j-\mathcal K_{A,x}L_j,
\qquad
L_{j,n}=G_{j,n}-\mathcal K_{A,x,n}L_{j,n},
$$
and applying the same Cauchy--Schwarz estimate converts the $L^2$-error into a certified pointwise error. Since the spline tolerances are arbitrary, the induction gives certified uniform approximations to $L_j$ on $[0,X]\times[0,1]$ for $0\le j\le J=N+1$.

Since
$
R_A(x)=K_A(x,x)=L_A(x,1),
$
we have
$$
R_A^{(j)}(x)=\partial_x^jL_A(x,1)=L_j(x,1),
\qquad
0\le j\le N+1.
$$
Choose the spline tolerances so that the pointwise errors in these quantities are smaller than $\delta/3$, and define
$
\widetilde R_j(x)=L_{j,n}(x,1).
$
Then
$$
\widetilde q_j(x)=2\widetilde R_{j+1}(x),
\qquad
0\le j\le N,
$$
satisfies
$$
\|\widetilde q_j-q_A^{(j)}\|_{L^\infty([0,X])}
<
\frac{2\delta}{3}
<
\delta.
$$
The approximations to $R_A^{(j)}$ themselves also have error smaller than $\delta$, as required.
\end{proof}

\subsection{Establishing the Dichotomy}
\label{subsec:dichot}

Define
$
    Q_A(x)=q_A(|x|).
$
The next proposition gives the growth bound needed for self-adjointness.

\begin{proposition}
\label{prop:qa_growth}
There exists a constant $C_A>0$ such that
$$
    |q_A(x)|\le C_A(1+x^2),
    \qquad x\ge 0.
$$
Consequently, $Q_A(x)\ge -C_A(1+x^2)$.
\end{proposition}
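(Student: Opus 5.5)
We need the bound $|q_A(x)|\le C_A(1+x^2)$. The plan is to control $K_A(x,x)$ and its derivative directly from the Gelfand--Levitan equation in the rescaled form \eqref{eqn:transformed_integral}, using the uniform inverse bound $\|(I+\mathcal K_{A,x})^{-1}\|\le 2$ from \cref{lem:ifax_invert}. Since $q_A=2R_A'$ with $R_A(x)=L_A(x,1)$, it suffices to bound $\partial_xL_A(x,1)$ by a multiple of $1+x^2$.

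\textbf{Step 1: polynomial bounds on $F_A$.} Write $F_A(u,v)=\epsilon\int s_E(u)s_E(v)\,\mathrm d\sigma_A(E)$ with $\operatorname{supp}\sigma_A\subset[5/4,\infty)$. There $|s_E|\le E^{-1/2}$ and $|\partial_u s_E(u)|\le 1$, so $F_A$ and its first partial derivatives are bounded on $[0,\infty)^2$ by constants depending only on $|\sigma_A|(\mathbb R)$ and on $\int(1+E)|g_A|\,\mathrm dE$. These quantities are finite by \cref{lem:kappa_a_fv}.

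\textbf{Step 2: bound on $L_A$.} With $B_x(r,s)=xF_A(xs,xr)$ and $b(x,r)=-F_A(x,xr)$, we have $\|b(x,\cdot)\|_{L^2}\le C$. Hence $\|L_A(x,\cdot)\|_{L^2(0,1)}\le 2C$. Going back through the equation
\[
L_A=b-\mathcal K_{A,x}L_A
\]
and applying Cauchy--Schwarz pointwise gives $\sup_r|L_A(x,r)|\le C(1+x)$.

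\textbf{Step 3: bound on $\partial_xL_A$.} Differentiating in $x$ gives
\[
(I+\mathcal K_{A,x})\partial_xL_A=\partial_xb-(\partial_x\mathcal K_{A,x})L_A .
\]
Here $\partial_x b(x,r)=-(\partial_1+r\partial_2)F_A(x,xr)$ is bounded. The kernel $\partial_xB_x(r,s)=F_A(xs,xr)+x\bigl(s\partial_1F_A+r\partial_2F_A\bigr)(xs,xr)$ is bounded by $C(1+x)$. Combining this with Step 2 gives $\|\partial_xL_A(x,\cdot)\|_{L^2}\le C(1+x)$. Re-inserting this into the equation pointwise at $r=1$ contributes a further factor $x$ from $B_x$, which yields $|\partial_xL_A(x,1)|\le C(1+x)^2\le C'(1+x^2)$.

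The smoothness needed to differentiate under the integral comes from \cref{cor:ka_smooth}. The claim $Q_A\ge -C_A(1+x^2)$ then follows immediately from $Q_A(x)=q_A(|x|)$.

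\textbf{Main obstacle.} The delicate point is the first-derivative bound in Step 1. The $g_A$ part is harmless because $g_A$ is Schwartz, and the $\nu_A$ part has compact support, so the derivative bounds are uniform in $u,v$. The remaining care is to track the powers of $x$ correctly, so that only $x^2$ growth, and not worse, appears.
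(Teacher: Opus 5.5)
Your proof is correct and is essentially the paper's argument: the same uniform bounds on $F_A$ and its first partial derivatives (from $\operatorname{supp}\sigma_A\subset[5/4,\infty)$), the inverse bound $\|(I+\mathcal F_{A,x})^{-1}\|\le 2$, a single $x$-differentiation of the Gelfand--Levitan equation, and Cauchy--Schwarz at the diagonal. The only difference is bookkeeping. You work in the rescaled variables $L_A(x,r)=K_A(x,xr)$ on the fixed space $L^2(0,1)$, whereas the paper stays on $L^2(0,x)$. This trades the paper's boundary term $K_A(x,x)F_A(x,\cdot)$, which comes from the moving upper limit, for the extra terms in $\partial_x B_x$, and it yields the same $O(1+x^2)$ bound.
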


\begin{proof}
Recall the Gelfand--Levitan equation
$$
    K_A(x,y)
    +
    F_A(x,y)
    +
    \int_0^x K_A(x,t)F_A(t,y)\,\mathrm{d}t
    =
    0,
    \qquad 0\le y\le x.
$$
Equivalently,
$$
    K_A(x,\cdot)
    =
    -(I+\mathcal F_{A,x})^{-1}F_A(x,\cdot),
$$
where
$$
    (\mathcal F_{A,x}f)(y)
    =
    \int_0^x F_A(y,t)f(t)\,\mathrm{d}t,
$$
and, by \cref{lem:ifax_invert}, $\|(I+\mathcal F_{A,x})^{-1}\|_{L^2(0,x)\to L^2(0,x)}\le 2$.

The function $F_A$ and its first mixed derivatives are bounded on $[0,\infty)^2$. Indeed,
$$
    \partial_x^a\partial_y^b F_A(x,y)
    =
    \epsilon
    \int_0^\infty
    \partial_x^a s_E(x)\partial_y^b s_E(y)
    \,\mathrm{d}\sigma_A(E),
$$
and $\sigma_A$ is supported in $[5/4,\infty)$. For $E\ge 5/4$, the factors $\partial_x^a s_E(x)$ and $\partial_y^b s_E(y)$ are bounded, uniformly in $x,y$, by powers of $E$. Since $\nu_A$ is compactly supported and $g_A$ is Schwartz, all polynomial moments of $|\sigma_A|$ are finite. Hence, the required derivatives of $F_A$ are bounded.

Let $C$ be a bound for $F_A$, $\partial_x F_A$, and $\partial_y F_A$. From the diagonal form of the Gelfand--Levitan equation,
$$
    K_A(x,x)
    =
    -F_A(x,x)
    -
    \int_0^x K_A(x,t)F_A(t,x)\,\mathrm{d}t,
$$
we obtain
$$
    |K_A(x,x)|
    \le
    C
    +
    C\int_0^x |K_A(x,t)|\,\mathrm{d}t
    \le
    C
    +
    Cx^{1/2}\|K_A(x,\cdot)\|_{L^2(0,x)}.
$$
Moreover, $\|F_A(x,\cdot)\|_{L^2(0,x)}\le Cx^{1/2}$. Therefore $\|K_A(x,\cdot)\|_{L^2(0,x)}\le2Cx^{1/2}$, and so $|K_A(x,x)|\le C(1+x)$.

Next differentiate the Gelfand--Levitan equation with respect to the first variable $x$, keeping $y$ fixed:
$$
    \partial_x K_A(x,y)
    +
    \partial_x F_A(x,y)
    +
    K_A(x,x)F_A(x,y)
    +
    \int_0^x
    \partial_x K_A(x,t)F_A(t,y)\,\mathrm{d}t
    =
    0.
$$
Equivalently,
$$
    (I+\mathcal F_{A,x})\partial_x K_A(x,\cdot)
    =
    -\partial_x F_A(x,\cdot)
    -
    K_A(x,x)F_A(x,\cdot).
$$
Using $\|(I+\mathcal F_{A,x})^{-1}\|\le 2$, we get
$$
\begin{aligned}
    \|\partial_x K_A(x,\cdot)\|_{L^2(0,x)}
    &\le
    2\|\partial_x F_A(x,\cdot)\|_{L^2(0,x)}
    +
    2|K_A(x,x)|\|F_A(x,\cdot)\|_{L^2(0,x)} \\
    &\le
    Cx^{1/2}
    +
    C(1+x)x^{1/2}
    \le
    C(x^{1/2}+x^{3/2}).
\end{aligned}
$$

Recall that $R_A(x)=K_A(x,x)$. Differentiating
$$
    R_A(x)
    =
    -F_A(x,x)
    -
    \int_0^x K_A(x,t)F_A(t,x)\,\mathrm{d}t
$$
gives
$$
\begin{aligned}
    R_A'(x)
    &=
    -(\partial_x F_A+\partial_y F_A)(x,x)
    -
    K_A(x,x)F_A(x,x) \\
    &\quad
    -
    \int_0^x
    \partial_x K_A(x,t)F_A(t,x)\,\mathrm{d}t
    -
    \int_0^x
    K_A(x,t)\partial_y F_A(t,x)\,\mathrm{d}t.
\end{aligned}
$$
The first term is bounded, and the second term is $O(1+x)$. For the third term, Cauchy--Schwarz gives
$$
    \left|
    \int_0^x
    \partial_x K_A(x,t)F_A(t,x)\,\mathrm{d}t
    \right|
    \le
    \|\partial_x K_A(x,\cdot)\|_{L^2(0,x)}
    \|F_A(\cdot,x)\|_{L^2(0,x)}
    \le
    C(x^{1/2}+x^{3/2})x^{1/2}
    \le
    C(x+x^2).
$$
Similarly,
$$
    \left|
    \int_0^x
    K_A(x,t)\partial_y F_A(t,x)\,\mathrm{d}t
    \right|
    \le
    \|K_A(x,\cdot)\|_{L^2(0,x)}
    \|\partial_y F_A(\cdot,x)\|_{L^2(0,x)}
    \le
    Cx.
$$
Hence $|R_A'(x)|\le C(1+x^2)$. Since $q_A(x)=2R_A'(x)$, we obtain $|q_A(x)|\le C_A(1+x^2)$. The corresponding bound for $Q_A(x)=q_A(|x|)$ is immediate.
\end{proof}

By \cref{prop:qa_flat}, $q_A$ is smooth on $[0,\infty)$ and flat at the origin. Hence $Q_A(x)=q_A(|x|)$ is smooth on $\mathbb R$. By \cref{prop:qa_growth}, $Q_A$ satisfies the quadratic lower bound $Q_A(x)\ge -C_A(1+x^2)$. Therefore, by the Faris--Lavine theorem \cite[Theorem X.38]{reedsimon1975}, the operator $-\Delta+Q_A$ with initial domain $C_c^\infty(\mathbb R)$ is essentially self-adjoint. We denote its self-adjoint closure by $H_A$.

The essential self-adjointness of the whole-line minimal operator implies, by Weyl's limit-point/limit-circle alternative \cite[Section 9.2]{terschl}, that the differential expression $\tau_A$ is limit point at $+\infty$. Since $0$ is a regular endpoint, the conditions $f(0)=0$ and $f'(0)=0$ therefore define unique self-adjoint half-line realizations
$$
\begin{aligned}
\operatorname{Dom}(H_{A,D}^+)
&=
\left\{
f\in L^2(0,\infty):
f,f'\in\operatorname{AC}_{\mathrm{loc}}([0,\infty)),
\ \tau_Af\in L^2(0,\infty),
\ f(0)=0
\right\},\\
\operatorname{Dom}(H_{A,N}^+)
&=
\left\{
f\in L^2(0,\infty):
f,f'\in\operatorname{AC}_{\mathrm{loc}}([0,\infty)),
\ \tau_Af\in L^2(0,\infty),
\ f'(0)=0
\right\}.
\end{aligned}
$$
On these domains, $H_{A,D}^+f=\tau_Af$ and $H_{A,N}^+f=\tau_Af$, respectively.

Remling's notion of spectral measure allows additional moment-problem measures in the limit-circle case, but the spectral measure is unique in the limit-point case \cite[Section 17]{remling2002}. Consequently, $\rho_A$ is the Dirichlet Weyl--Titchmarsh measure of $H_{A,D}^+$. Let $\varphi_A(E,\cdot)$ be the solution of
$$
\tau_A\varphi_A(E,\cdot)=E\varphi_A(E,\cdot),
\qquad
\varphi_A(E,0)=0,
\qquad
\partial_x\varphi_A(E,0)=1.
$$
The generalized Fourier transform
$$
(\mathcal U_A f)(E)
=
\int_0^\infty \varphi_A(E,x)f(x)\,\mathrm dx,
\qquad
f\in C_c^\infty(0,\infty),
$$
extends to a unitary map
$$
\mathcal U_A:
L^2(0,\infty)
\longrightarrow
L^2(\mathbb R,\mathrm d\rho_A)
$$
satisfying
$$
\mathcal U_A H_{A,D}^+\mathcal U_A^{-1}=M_E;
$$
see \cite[Lemma 9.13 and Equations (9.46)--(9.47)]{terschl}. Thus $\rho_A$ is a maximal spectral measure and the Dirichlet half-line problem has spectral multiplicity one.

\begin{proposition}
\label{prop:spec_type_encode}
For every $\diamond\in\{\mathrm{ac},\mathrm{sc},\mathrm{pp}\}$,
$
\spec_\diamond(H_{A,D}^+)
=
\spec\bigl((\rho_A)_\diamond\bigr).
$
\end{proposition}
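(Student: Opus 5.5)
The plan is to transport the whole question through the unitary map $\mathcal U_A$ and reduce it to a statement about multiplication by $E$ on $L^2(\mathbb R,\mathrm d\rho_A)$. Since $\mathcal U_A H_{A,D}^+\mathcal U_A^{-1}=M_E$, and unitary equivalence preserves the spectral projections, the subspaces $\mathcal H_\diamond$ and the restricted operators, we have $\spec_\diamond(H_{A,D}^+)=\spec_\diamond(M_E)$. It therefore suffices to prove $\spec_\diamond(M_E)=\spec((\rho_A)_\diamond)$ for the multiplication operator on $L^2(\rho_A)$.

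First, apply \cref{prop:lebesgue_decomp} to $\rho_A$. It is positive, though not finite, but it is locally finite, so the decomposition still applies after working on bounded intervals, or after replacing $\rho_A$ by the equivalent finite measure $(1+E^2)^{-1}\rho_A$. This gives disjoint Borel supports $S_{\mathrm{ac}},S_{\mathrm{sc}},S_{\mathrm{pp}}$ with $(\rho_A)_\diamond=\rho_A|_{S_\diamond}$. Here $S_{\mathrm{pp}}$ is the countable set of atoms, $S_{\mathrm{sc}}$ is Lebesgue-null with no atoms, and $S_{\mathrm{ac}}$ is the rest. We then obtain
$$
L^2(\rho_A)=L^2(\rho_A|_{S_{\mathrm{ac}}})\oplus L^2(\rho_A|_{S_{\mathrm{sc}}})\oplus L^2(\rho_A|_{S_{\mathrm{pp}}}),
$$
and each summand reduces $M_E$.

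Second, identify these summands with $\mathcal H_\diamond$. For $\psi\in L^2(\rho_A)$, the scalar spectral measure is $\mathrm d\mu_\psi=|\psi|^2\,\mathrm d\rho_A$, which is absolutely continuous with respect to $\rho_A$. If $\psi$ is supported in $S_\diamond$, then $\mu_\psi$ is of type $\diamond$. Conversely, splitting $\psi$ along the three sets and using uniqueness of the Lebesgue decomposition of $\mu_\psi$ shows that $\mu_\psi$ is of type $\diamond$ only if $\psi$ vanishes $\rho_A$-almost everywhere off $S_\diamond$. Hence $\mathcal H_\diamond=L^2(\rho_A|_{S_\diamond})$.

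Third, use the standard fact that multiplication by $E$ on $L^2(\mu)$ has spectrum equal to the closed support $\spec(\mu)$. If $\lambda\notin\spec(\mu)$, then $(E-\lambda)^{-1}$ is $\mu$-essentially bounded, so $\lambda$ lies in the resolvent set. If $\lambda\in\spec(\mu)$, the normalized indicators of $(\lambda-\epsilon,\lambda+\epsilon)$ are Weyl sequences. Applying this with $\mu=(\rho_A)_\diamond$ gives the claim. The zero-space convention is consistent: $\mathcal H_\diamond=\{0\}$ exactly when $(\rho_A)_\diamond=0$, in which case both sides are empty.

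The only delicate point is that $\rho_A$ is infinite, so the Lebesgue decomposition and the support arguments have to be run locally. This causes no real difficulty, because all the relevant properties (type of a measure, closed support, essential boundedness of $(E-\lambda)^{-1}$) are local in $E$.
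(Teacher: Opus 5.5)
Your proposal is correct and takes the same route as the paper: transport by $\mathcal U_A$ to multiplication by $E$ on $L^2(\mathbb R,\mathrm d\rho_A)$, identify the reducing subspaces with $L^2(\mathbb R,\mathrm d(\rho_A)_\diamond)$, and use that the spectrum of a multiplication operator is the closed support of its measure. You also fill in details the paper leaves implicit: the identification $\mathcal H_\diamond=L^2(\rho_A|_{S_\diamond})$ via uniqueness of the Lebesgue decomposition of $\mu_\psi$, and the local-finiteness remark needed because $\rho_A$ is an infinite measure.
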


\begin{proof}
Under $\mathcal U_A$, the absolutely continuous, singular continuous, and pure point reducing subspaces are respectively
$$
L^2(\mathbb R,\mathrm d(\rho_A)_{\mathrm{ac}}),
\qquad
L^2(\mathbb R,\mathrm d(\rho_A)_{\mathrm{sc}}),
\qquad
L^2(\mathbb R,\mathrm d(\rho_A)_{\mathrm{pp}}).
$$
The spectrum of multiplication by $E$ on each of these spaces is the closed support of the corresponding measure, which is precisely the right-hand side above.
\end{proof}

Let $L_E^2(\mathbb R)$ and $L_O^2(\mathbb R)$ be the closed subspaces of $L^2(\mathbb R)$ consisting of even and odd functions, respectively. Then $L^2(\mathbb R)=L_E^2(\mathbb R)\oplus L_O^2(\mathbb R)$. The maps
$$
    (U_N f)(x)=2^{-1/2}f(|x|),\qquad
    (U_D f)(x)=2^{-1/2}\operatorname{sgn}(x)f(|x|)
$$
define unitary maps
$$
    U_N:L^2(0,\infty)\to L_E^2(\mathbb R),
    \qquad
    U_D:L^2(0,\infty)\to L_O^2(\mathbb R).
$$
Even functions satisfy the Neumann condition at the origin, while odd functions satisfy the Dirichlet condition. Since $Q_A$ is even, the whole-line operator decomposes as
$$
    H_A
    =
    \bigl(U_N H_{A,N}^+ U_N^{-1}\bigr)
    \oplus
    \bigl(U_D H_{A,D}^+ U_D^{-1}\bigr)
$$
on $L_E^2(\mathbb R)\oplus L_O^2(\mathbb R)$. In particular, $H_A$ acts as $-\Delta+Q_A$ on the whole line. Moreover, for each spectral type $\diamond\in\{\mathrm{ac},\mathrm{sc},\mathrm{pp}\}$, we have $\spec_\diamond(H_A)=\spec_\diamond(H_{A,N}^+)\cup\spec_\diamond(H_{A,D}^+)$.

The Dirichlet half-line operator carries the spectral information encoded by $\rho_A$. It remains to exclude singular continuous spectrum from the Neumann half-line operator on the test interval.

\begin{proposition}
\label{prop:dichotomy2}
Let $A$ be a computable binary matrix, and let $H_A$ be the associated whole-line Schr\"odinger operator. Then the following dichotomy holds.
\begin{enumerate}
    \item If some column of $A$ has infinitely many ones, then $\specsc(H_A)\cap D_{1/16}(3/2)\ne \emptyset$;
    \item If every column of $A$ has finitely many ones, then $\specsc(H_A)\cap \overline{D_{1/16}(3/2)}=\emptyset$.
\end{enumerate}
\end{proposition}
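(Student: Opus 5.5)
The plan is to treat the two half-line channels separately, using the decomposition $\specsc(H_A)=\specsc(H_{A,N}^+)\cup\specsc(H_{A,D}^+)$. The Dirichlet channel is handled exactly by \cref{prop:spec_type_encode}. The Neumann channel is controlled by a lower bound on the imaginary part of the Dirichlet Weyl function. Throughout, write $J=(3/2-1/8,\,3/2+1/8)$. This is an open interval containing the closed segment $\overline{D_{1/16}(3/2)}\cap\mathbb R$, and it satisfies $\overline J\subset(5/4,7/4)$.

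\emph{Dirichlet channel.} Since $\rho_A=\rho_0+\epsilon\kappa_A+\epsilon\nu_A$, and $\rho_0$ and $\kappa_A$ are absolutely continuous, uniqueness in \cref{prop:lebesgue_decomp} gives $(\rho_A)_{\mathrm{sc}}=\epsilon(\nu_A)_{\mathrm{sc}}$.
\begin{itemize}
\item In case (i), \cref{extra_needed_lemma} shows that $(\nu_A)_{\mathrm{sc}}$ charges every open subinterval of $\mathcal I$. Hence $\spec((\rho_A)_{\mathrm{sc}})\supseteq\mathcal I\ni 3/2$. By \cref{prop:spec_type_encode}, $3/2\in\specsc(H_{A,D}^+)\subseteq\specsc(H_A)$, which proves (i).
\item In case (ii), $\nu_A$ is absolutely continuous, so $(\rho_A)_{\mathrm{sc}}=0$ and $\specsc(H_{A,D}^+)=\emptyset$.
\end{itemize}
So for (ii) it remains to show that $\specsc(H_{A,N}^+)\cap\overline{D_{1/16}(3/2)}=\emptyset$. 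I will in fact show this in both cases.

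\emph{Neumann channel.} Let $m_D$ be the Dirichlet Weyl--Titchmarsh function of $\tau_A$, a Herglotz function with representing measure $\rho_A$:
$$m_D(z)=a+\int_{\mathbb R}\Bigl(\frac1{\lambda-z}-\frac{\lambda}{1+\lambda^2}\Bigr)\mathrm d\rho_A(\lambda),\qquad a\in\mathbb R.$$
With the standard normalization, the Neumann Weyl function is $m_N=-1/m_D$ (up to sign and real-constant conventions that do not affect imaginary parts). Its representing measure $\mu_N$ is a maximal spectral measure for $H_{A,N}^+$. The key estimate is a Poisson-kernel lower bound, using $2\rho_A\ge\rho_0$ from \cref{prop:rhoA}. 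For $E\in J$ and $0<\varepsilon\le1$,
$$\operatorname{Im}m_D(E+i\varepsilon)=\int\frac{\varepsilon\,\mathrm d\rho_A(\lambda)}{(\lambda-E)^2+\varepsilon^2}\ge\frac12\int_{0}^{\infty}\frac{\varepsilon}{(\lambda-E)^2+\varepsilon^2}\frac{\sqrt\lambda}{\pi}\,\mathrm d\lambda\ge c>0.$$
The constant $c$ is uniform because the density $\sqrt\lambda/\pi$ is bounded below near $\overline J$. It then follows that
$$\operatorname{Im}m_N(E+i\varepsilon)=\frac{\operatorname{Im}m_D(E+i\varepsilon)}{|m_D(E+i\varepsilon)|^2}\le\frac{1}{\operatorname{Im}m_D(E+i\varepsilon)}\le c^{-1}.$$

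\emph{Concluding.} Uniform boundedness of $\operatorname{Im}m_N$ on $J+i(0,1]$ implies, by the standard boundary-value criterion (the singular part of $\mu_N$ is concentrated where $\operatorname{Im}m_N(E+i0)=\infty$; equivalently, Stieltjes inversion with dominated limits), that $\mu_N$ restricted to $J$ is absolutely continuous with density at most $c^{-1}/\pi$. Hence $(\mu_N)_{\mathrm{sc}}(J)=0$. Because $\specsc$ is the closed support of $(\mu_N)_{\mathrm{sc}}$ and $J$ is open, we get $\specsc(H_{A,N}^+)\cap J=\emptyset$. Combined with the Dirichlet channel result and the real-line location of the spectrum, this gives (ii).

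The main obstacle is this Neumann step: making the relation $m_N=-1/m_D$ and the identification of $\mu_N$ with the spectral measure rigorous for the limit-point operator $\tau_A$, with its unbounded potential. I expect this to follow from the Weyl theory in \cite[Section 9]{terschl}, since $\tau_A$ is limit point at $+\infty$ and $0$ is a regular endpoint.
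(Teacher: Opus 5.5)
Your proposal is correct and follows the paper's proof: the same Dirichlet/Neumann channel split, the same use of \cref{extra_needed_lemma} and \cref{prop:spec_type_encode} for the Dirichlet channel, and the same bound $\operatorname{Im}m_N\le 1/\operatorname{Im}m_D$ with $\operatorname{Im}m_D$ bounded below via $2\rho_A\ge\rho_0$. The differences are cosmetic: you integrate the Poisson kernel over all of $(0,\infty)$ rather than over $\mathcal I$, and you conclude by Stieltjes inversion with a density bound rather than by the support criterion \cite[Equation (9.84)]{terschl}; both routes are valid.
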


\begin{proof}
Let $B=D_{1/16}(3/2)$. Then
$$
    B\cap\mathbb R=(23/16,25/16)\subset [5/4,7/4]=\mathcal{I},\quad
    \overline B\cap\mathbb R=[23/16,25/16].
$$
Suppose first that some column of $A$ has infinitely many ones. By \cref{extra_needed_lemma}, the singular continuous part $(\nu_A)_{\mathrm{sc}}$ gives positive measure to every non-empty open subinterval of $\mathcal{I}$. The measures $\rho_0$ and $\kappa_A$ are absolutely continuous, so adding them does not change the singular continuous part except for the factor $\epsilon$ in front of $\nu_A$. Therefore $(\rho_A)_{\mathrm{sc}}$ gives positive measure to every non-empty open subinterval of $\mathcal{I}$. In particular, $(\rho_A)_{\mathrm{sc}}(B\cap\mathbb R)>0$. By \cref{prop:spec_type_encode}, $\specsc(H_{A,D}^+)\cap B\ne\emptyset$. Since
$$
    \specsc(H_A)
    =
    \specsc(H_{A,N}^+)\cup\specsc(H_{A,D}^+),
$$
we conclude that $\specsc(H_A)\cap B\ne\emptyset$.

Now suppose that every column of $A$ has finitely many ones. By \cref{extra_needed_lemma}, $\nu_A$ is absolutely continuous on $\mathcal{I}$. Since $\rho_0$ and $\kappa_A$ are also absolutely continuous, it follows that $\rho_A$ is absolutely continuous. Hence, by \cref{prop:spec_type_encode}, the Dirichlet half-line operator $H_{A,D}^+$ contributes no singular continuous spectrum.

For the Neumann half-line operator, let $m_D$ and $m_N$ denote the corresponding Weyl functions. With the present normalization,
$
m_N(z)=-\tfrac{1}{m_D(z)}.
$
By \cite[Equation (9.84)]{terschl}, the set
$$
\left\{
E\in\mathbb R:
\limsup_{\varepsilon\downarrow0}
\operatorname{Im}m_N(E+i\varepsilon)=\infty
\right\}
$$
is a support for the singular continuous part of the Neumann spectral measure.

Set
$
J=\left[\frac{11}{8},\frac{13}{8}\right].
$
Then
$$
\overline B\cap\mathbb R
=
\left[\frac{23}{16},\frac{25}{16}\right]
\subset J^\circ
\subset J
\subset\mathcal I.
$$
It suffices to bound $\operatorname{Im}m_N(E+i\varepsilon)$ uniformly for $E\in J$ and $0<\varepsilon<1$.

By \cite[Theorem 9.17]{terschl}, the Dirichlet Weyl function has the Herglotz representation
$$
m_D(z)
=
c_0
+
\int_{\mathbb R}
\left(
\frac{1}{\lambda-z}
-
\frac{\lambda}{1+\lambda^2}
\right)
\,\mathrm d\rho_A(\lambda)
$$
for some $c_0\in\mathbb R$. Hence
$$
\operatorname{Im}m_D(E+i\varepsilon)
=
\int_{\mathbb R}
\frac{\varepsilon}
     {(\lambda-E)^2+\varepsilon^2}
\,\mathrm d\rho_A(\lambda).
$$
Since $2\rho_A\ge\rho_0$, on $\mathcal I=[5/4,7/4]$ we have
$$
\mathrm d\rho_A(\lambda)
\ge
c_1\,\mathrm d\lambda,
\qquad
c_1:=\frac{\sqrt{5/4}}{2\pi}.
$$
For $E\in J$, both $E-5/4$ and $7/4-E$ are at least $1/8$. Therefore, for $0<\varepsilon<1$,
$$
\begin{aligned}
\operatorname{Im}m_D(E+i\varepsilon)
&\ge
c_1\int_{5/4}^{7/4}
\frac{\varepsilon}
     {(\lambda-E)^2+\varepsilon^2}
\,\mathrm d\lambda\\
&=
c_1\left[
\arctan\left(\frac{7/4-E}{\varepsilon}\right)
+
\arctan\left(\frac{E-5/4}{\varepsilon}\right)
\right]\\
&\ge
2c_1\arctan\left(\frac{1}{8\varepsilon}\right)
\ge
2c_1\arctan\left(\frac18\right)
=:c_2>0.
\end{aligned}
$$
Let $m_D(E+i\varepsilon)=u+iv$. Since $v\ge c_2$,
$$
\operatorname{Im}m_N(E+i\varepsilon)
=
\operatorname{Im}\left(-\frac1{u+iv}\right)
=
\frac{v}{u^2+v^2}
\le
\frac1v
\le
c_2^{-1}.
$$
Thus the singular continuous part of the Neumann spectral measure vanishes on $J$. Since $[23/16,25/16]\subset J^\circ$, every point of $\overline B\cap\mathbb R$ has a neighborhood of zero singular continuous measure and
$
\specsc(H_{A,N}^+)\cap\overline B=\emptyset.
$
\end{proof}

\subsection{The Lower Bound}

We reduce $\Cof$ to a column condition on computable infinite binary matrices. Recall that
$
    \Cof
    =
    \{e\in\mathbb N:\mathbb N\setminus W_e \text{ is finite}\}.
$
The required reduction is as follows.

\begin{proposition}
\label{prop:matrix_reduction}
There is a computable map $e\mapsto A^{(e)}$, where $A^{(e)}:\mathbb N^2\to\{0,1\}$ is a computable binary matrix, such that $A^{(e)}$ has at least one column with infinitely many ones if and only if $e\in\Cof$. If $e\notin\Cof$, then every column of $A^{(e)}$ has only finitely many ones.
\end{proposition}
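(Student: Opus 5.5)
Column $m$ will be assigned to the guess ``$m$ is an upper bound for $\N\setminus W_e$,'' so that column $m$ has infinitely many ones exactly when $W_e\supseteq\{n\in\N:n\ge m\}$. Then some column has infinitely many ones if and only if $\N\setminus W_e$ is finite, i.e.\ $e\in\Cof$. The only real constraint is that $A^{(e)}_{k,m}$ must be \emph{decidable} uniformly in $(e,k,m)$, whereas ``$n\in W_e$'' is only c.e. So we encode the $\Pi_2^0$ condition ``$\{n\ge m\}\subseteq W_e$'' in the standard way: we put a one each time a new initial segment of the tail is observed to be enumerated.

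Concretely, fix $m$ and $s\ge1$. Define
$$
\ell_{m}(s)=\max\bigl(\{0\}\cup\{\ell\le s: \{m,m+1,\ldots,m+\ell-1\}\subseteq W_{e,s}\}\bigr).
$$
This is the length of the longest initial block of $[m,\infty)$ that has already appeared by stage $s$. It is computable uniformly in $(e,m,s)$, because $W_{e,s}$ is a finite, decidable approximation. Moreover $\ell_m(s)$ is non-decreasing in $s$, since $W_{e,s}\subseteq W_{e,s+1}$ and the cap $\ell\le s$ only relaxes as $s$ grows. Set $A^{(e)}_{k,m}=1$ if $\ell_m(k)>\ell_m(k-1)$ (with $\ell_m(0)=0$), and $A^{(e)}_{k,m}=0$ otherwise. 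The map $e\mapsto A^{(e)}$ is then computable: an index for the matrix is produced uniformly from $e$ by the $s$-$m$-$n$ theorem, and each entry requires only bounded simulations.

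Next we verify the dichotomy. Suppose first that $[m,\infty)\subseteq W_e$. Then for every $L$, each of $m,\ldots,m+L-1$ enters $W_{e,s}$ at some finite stage, so $\ell_m(s)\ge L$ for all large $s$. Hence $\ell_m(s)\to\infty$, and the non-decreasing integer sequence increases infinitely often, so column $m$ has infinitely many ones. Conversely, suppose some $n_0\ge m$ satisfies $n_0\notin W_e$. Then $\ell_m(s)\le n_0-m$ for all $s$, so the non-decreasing bounded integer sequence increases only finitely often, and column $m$ has finitely many ones. Therefore column $m$ is infinite if and only if $[m,\infty)\subseteq W_e$.

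If $e\in\Cof$, choose $m$ larger than every element of the finite set $\N\setminus W_e$; column $m$ then has infinitely many ones. If $e\notin\Cof$, then $\N\setminus W_e$ is infinite, so every tail $[m,\infty)$ contains a non-element of $W_e$, and every column is finite. This gives both statements of \cref{prop:matrix_reduction}.

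The only delicate point is the uniform computability of the matrix together with the monotonicity of $\ell_m$. Monotonicity is what converts ``unbounded'' into ``infinitely many increments,'' so I would check carefully the convention $W_{e,s}=\{n\le s:\ldots\}$ and the cap $\ell\le s$. Both are harmless because they only grow with $s$. This is also the form needed downstream: the matrix must be given effectively, uniformly in its columns, in order for \cref{lem:integral_com_nu} to apply.
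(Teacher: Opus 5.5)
Your proof is correct, but it takes a different route from the paper.

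The paper argues abstractly. It writes $\Cof$ in $\Sigma_3^0$ normal form $(\exists m)\,\psi(e,m)$ with $\psi\in\Pi_2^0$. It then invokes the $\Pi_2^0$-completeness of $\Tot$ to obtain a computable $g$ with $\psi(e,m)\iff g(e,m)\in\Tot$. Finally, it fills column $m$ by stepping $\mathcal T_{g(e,m)}$ through successive inputs, placing a one whenever a halt is observed. This is the same ``advance when halting is seen'' gadget used for $a_e$ in the absolutely continuous section.

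You instead exploit the specific $\Pi_2^0$ condition behind $\Cof$, namely ``$[m,\infty)\subseteq W_e$''. You encode it directly through the monotone stage function $\ell_m(s)$, whose increments are the ones in column $m$. Your monotonicity check (both $W_{e,s}$ and the cap $\ell\le s$ only grow with $s$) and your boundedness check (a missing $n_0\ge m$ forces $\ell_m(s)\le n_0-m$) are exactly what is needed.

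The paper's route buys generality: the same argument reduces every $\Sigma_3^0$ set to the column condition, and it reuses an existing gadget. The cost is reliance on the non-explicit reduction $g$ supplied by the completeness of $\Tot$.

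Your route buys a fully explicit, self-contained construction in which each column has a transparent meaning, with no appeal to the completeness of $\Tot$ at this step. The $\Sigma_3^0$-completeness of $\Cof$ is of course still needed downstream, in \cref{prop:delta3bound} and \cref{thm:sc}.
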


\begin{proof}
Since $\Cof$ is a $\Sigma_3^0$ set, there is a $\Pi_2^0$ formula $\psi(e,m)$ such that
$$
    e\in\Cof
    \quad\Longleftrightarrow\quad
    (\exists m)\,\psi(e,m).
$$
Since $\Tot$ is $\Pi_2^0$-complete, there is a computable function $g$ such that
$$
    \psi(e,m)
    \quad\Longleftrightarrow\quad
    g(e,m)\in\Tot.
$$
Hence
$$
    e\in\Cof
    \quad\Longleftrightarrow\quad
    (\exists m)\bigl(g(e,m)\in\Tot\bigr).
$$
For each $m$, define the $m$th column of $A^{(e)}$ by following $\mathcal T_{g(e,m)}$ on successive inputs. Set $c^{(e)}_{1,m}=(1,1)$. For $j>1$, suppose $c^{(e)}_{j-1,m}=(k^{(e)}_{j-1,m},s^{(e)}_{j-1,m})$. Define
$$
    c^{(e)}_{j,m}
    =
    \begin{cases}
        (k^{(e)}_{j-1,m}+1,1),
        &
        \mathcal T_{g(e,m),s^{(e)}_{j-1,m}}
        (k^{(e)}_{j-1,m})\downarrow,
        \\[1mm]
        (k^{(e)}_{j-1,m},s^{(e)}_{j-1,m}+1),
        &
        \mathcal T_{g(e,m),s^{(e)}_{j-1,m}}
        (k^{(e)}_{j-1,m})\uparrow.
    \end{cases}
$$
Thus, once the computation on the current input is seen to halt, we move to the next input; otherwise, we wait one more stage on the same input. Now define
$$
    A^{(e)}(j,m)
    =
    \begin{cases}
        1,
        &
        \mathcal T_{g(e,m),s^{(e)}_{j,m}}
        (k^{(e)}_{j,m})\downarrow,
        \\[1mm]
        0,
        &
        \mathcal T_{g(e,m),s^{(e)}_{j,m}}
        (k^{(e)}_{j,m})\uparrow.
    \end{cases}
$$
This is computable uniformly in $e,j,m$, since only finite-time simulations are used in computing $c^{(e)}_{j,m}$ and then $A^{(e)}(j,m)$.

If $g(e,m)\in\Tot$, then the computation on each successive input eventually halts. Hence, the process moves through infinitely many inputs, and the $m$th column of $A^{(e)}$ contains infinitely many ones. Conversely, if $g(e,m)\notin\Tot$, then there is an input on which $\mathcal T_{g(e,m)}$ never halts. Once the construction reaches the first such input, it remains there, increasing only the stage parameter; hence every subsequent entry in the $m$th column is zero. Thus, the $m$th column contains only finitely many ones. Therefore $A^{(e)}$ has some column with infinitely many ones if and only if there exists $m$ such that $g(e,m)\in\Tot$, which is equivalent to $e\in\Cof$.
\end{proof}

The preceding construction is uniform in the input matrix $A$. More precisely, there is a single algorithm which, given an index for a computable binary matrix $A$, computes the associated objects
$$
    \nu_A,\quad \rho_A,\quad q_A,\quad Q_A(x)=q_A(|x|),
    \quad\text{and}\quad
    H_A=-\Delta+Q_A
$$
in the sense used above. In particular, given $X\in\mathbb Q_{>0}, N\in\mathbb N, p\in\mathbb N$, the algorithm outputs certified $C^N([-X,X])$-approximations to $Q_A$, with error at most $2^{-p}$ in all derivatives up to order $N$. Equivalently, the map $A\mapsto Q_A$ is computable, uniformly in $A$, as a map from computable binary matrices to smooth potential codes in $(\mathcal O_M^\infty,\mathsf{Code}_M^\infty)$.

\begin{theorem}
\label{thm:sc}
We have $(\mathcal O_M^\infty,\mathsf{Code}_M^\infty,\specsc)\notin\Delta_3^M$.
\end{theorem}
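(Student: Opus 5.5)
The plan is to apply \cref{prop:delta3bound} to the family obtained by composing the reduction of \cref{prop:matrix_reduction} with the uniform construction $A\mapsto Q_A$. For each $e\in\N$, let $A^{(e)}$ be the computable binary matrix from \cref{prop:matrix_reduction}. Write $H_e:=H_{A^{(e)}}=-\Delta+Q_{A^{(e)}}$. The first step is to produce a computable map $e\mapsto s(e)\in\mathsf{Code}_M^\infty$ such that $s(e)$ is a valid smooth source code for $H_e$.

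Computability of $e\mapsto A^{(e)}$ is uniform: an index for the matrix is obtained from $e$. The construction of $q_A$ is uniform in an index for $A$. The relevant ingredients are:
\begin{itemize}
\item the integration against $\nu_A$ in \cref{lem:integral_com_nu};
\item the Schwartz data for $g_A$ in \cref{lem:kappa_a_fv};
\item the fixed rational $\epsilon$ from \cref{prop:rhoA}, which is independent of $A$;
\item the spline approximation of $F_A$ in \cref{lem:FA};
\item the certified solution of the Gelfand--Levitan equation in \cref{lem:compute_K_diag}.
\end{itemize}
Composing these gives, uniformly in $e$, certified approximations of $q_A^{(j)}$ on $[0,X]$. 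We set $Q_A^{(j)}(x)=(\operatorname{sgn}x)^j q_A^{(j)}(|x|)$. By the flatness in \cref{prop:qa_flat}, this is consistent at $x=0$, so point values of all derivatives of $Q_A$ are computable.

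The smooth source code must also supply certified bounds for $\sup_{|x|\le R}|Q_A^{(k)}(x)|$. I would obtain these by computing an approximation $\widetilde q_k$ with $\|\widetilde q_k-q_A^{(k)}\|_{L^\infty([0,R])}<1$ from \cref{lem:compute_K_diag}. I would then bound $\sup|\widetilde q_k|$, since $\widetilde q_k$ is built from explicit splines and finite linear systems, and add $1$. Validity of the code also requires $H_{e}\in\mathcal O_M^\infty$, that is, essential self-adjointness on $C_c^\infty$. This is exactly what was shown via \cref{prop:qa_growth} and Faris--Lavine.

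Next I would verify the spectral dichotomy with the ball $B=D_{1/16}(3/2)$. Here $x=3/2\in\Q+i\Q$ and $r=1/16\in\Q_{>0}$. If $e\in\Cof$, then $A^{(e)}$ has a column with infinitely many ones. Part (i) of \cref{prop:dichotomy2} then gives $\specsc(H_e)\cap B\neq\emptyset$. If $e\notin\Cof$, then every column of $A^{(e)}$ has finitely many ones. Part (ii) of \cref{prop:dichotomy2} then gives $\specsc(H_e)\cap\overline B=\emptyset$. These are precisely the hypotheses of \cref{prop:delta3bound} for the subfamily $\{H_e\}$ with codes $\{s(e)\}$. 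That proposition yields that the subfamily problem is not in $\Delta_3^M$.

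The final step passes to the full class. Suppose a height-two Markov tower solves $(\mathcal O_M^\infty,\mathsf{Code}_M^\infty,\specsc)$. Restricting it to the valid codes $s(e)$ gives a tower for the subfamily, which is a contradiction.

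The main obstacle I expect is the uniformity claim: that one algorithm, given $e$, produces a code with certified derivative bounds. All constants in the error propagation must be computable from $e$ and not merely exist. These include $C_{b,X}$, $B_{R,X}$, and the tolerances in \cref{lem:compute_K_diag}. Much of this is already asserted in the paragraph preceding the theorem, so I would cite it and only check the derivative-bound output and the reflection at $0$ explicitly.
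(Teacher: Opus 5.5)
Your proposal is correct and follows essentially the same route as the paper. Both compose the computable reduction $e\mapsto A^{(e)}$ of \cref{prop:matrix_reduction} with the uniform construction $A\mapsto Q_A$, apply the dichotomy of \cref{prop:dichotomy2} on $D_{1/16}(3/2)$, and conclude via \cref{prop:delta3bound}. Your extra checks (the reflection formula for derivatives at $0$ using flatness, and certified derivative bounds from a uniform approximation within $1$) make explicit what the paper folds into its uniformity paragraph preceding the theorem.
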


\begin{proof}
Suppose, for contradiction, that $(\mathcal O_M^\infty,\mathsf{Code}_M^\infty,\specsc)\in\Delta_3^M$. By the uniformity of the construction above, for every computable infinite binary matrix $A$ we can compute a smooth potential code for $Q_A(x)=q_A(|x|)$, and hence for the associated self-adjoint whole-line Schr\"odinger operator $H_A=-\Delta+Q_A$. Moreover, by \cref{prop:dichotomy2}, $\specsc(H_A)\cap D_{1/16}(3/2)\ne\emptyset$ if $A$ has a column with infinitely many ones, while $\specsc(H_A)\cap \overline{D_{1/16}(3/2)}=\emptyset$ if every column of $A$ has only finitely many ones.

Now apply \cref{prop:matrix_reduction}. Given $e$, compute the matrix $A^{(e)}$. Then $A^{(e)}$ has a column with infinitely many ones if and only if $e\in\Cof$. Set $V_e=Q_{A^{(e)}}$ and $H_e=H_{A^{(e)}}$. The map $e\mapsto V_e$ is computable as a map into $(\mathcal O_M^\infty,\mathsf{Code}_M^\infty)$, again by the uniformity of the construction. Therefore, if $e\in\Cof$ then $\specsc(H_e)\cap D_{1/16}(3/2)\ne\emptyset$, whereas if $e\notin\Cof$ then $\specsc(H_e)\cap \overline{D_{1/16}(3/2)}=\emptyset$. Thus the assumed $\Delta_3^M$-computability of $\specsc$, together with \cref{prop:delta3bound}, would imply that $\Cof\in\Delta_3^0$. This contradicts \cref{prop:complete_class}, since $\Cof$ is $\Sigma_3^0$-complete and so not $\Delta_3^0$.
\end{proof}

\section{Upper Bounds}
\label{sec:upper}

To prove \cref{thm:main_pos}, we combine the resolvent algorithms of \cite{colbrook2019computing} with a compactly supported, sufficiently smooth wavelet basis. Its finite linear span is a core for every operator in $\mathcal D_{N,d}^{\mathrm{BV}}$, and the required matrix elements are computable from $\mathsf{Eval}_{N,d}^{\mathrm{BV}}$. The same construction gives the Markov version through queries to the relevant source codes.

\subsection{Computing the Resolvent}

We use the following form of the positive result from \cite{colbrook2019computing}.

\begin{proposition}
\label{prop:resolv_comput}
Let $\Omega$ be a class of self-adjoint operators on a separable Hilbert space $\mathcal H$. Let $\Lambda$ be an evaluation set for $\Omega$, and let $\mathcal S\subseteq \mathcal H$ be a countable set with dense linear span. Suppose that there are arithmetic algorithms which, given $T\in\Omega$, $v\in\mathcal S$, $z\in\mathbb C\setminus\mathbb R$, and $\eta>0$, produce approximations to
$$
    (T-zI)^{-1}v
    \quad\text{and}\quad
    \innerprod{(T-zI)^{-1}v}{v}
$$
with error at most $C(v,T)\eta$, where $C(v,T)<\infty$ is independent of $z$ and $\eta$. Then
$$
    (\Omega,\Lambda,\specpp)\in\Sigma_2^A,
    \qquad
    (\Omega,\Lambda,\specac)\in\Sigma_2^A,
    \qquad
    (\Omega,\Lambda,\specsc)\in\Sigma_3^A.
$$
\end{proposition}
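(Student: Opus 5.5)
The plan is to follow the architecture of \cite{colbrook2019computing}. Every step there uses the operator only through resolvent vectors and resolvent matrix elements at non-real points, so the task is to check that approximations with error $C(v,T)\eta$ are enough to build the towers. Enumerate $\mathcal S=\{v_1,v_2,\ldots\}$ and orthonormalise the first $n$ vectors by Gram--Schmidt. The inner products $\innerprod{v_i}{v_j}$ are assumed computable from $\Lambda$; otherwise they are approximated to arbitrary precision. The first ingredient is the approximate spectral measure: by Stone's formula,
$$
\mu_{v}^{\epsilon}(\lambda)=\frac{1}{\pi}\operatorname{Im}\innerprod{(T-(\lambda+i\epsilon))^{-1}v}{v}
$$
is the Poisson smoothing of $\mu_v$. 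It is computable with error $C(v,T)\eta/\pi$, with $\eta$ chosen depending on $\epsilon$. Note that $C(v,T)$ is unknown but fixed, so letting $\eta\to0$ in the innermost limit is harmless.

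For the pure point part, consider the projection onto eigenvectors near $\lambda$. Use the quantity $\epsilon\operatorname{Im}\innerprod{(T-\lambda-i\epsilon)^{-1}v}{v}\to\mu_v(\{\lambda\})$, or more precisely $\mu_v((a,b))$ for small intervals obtained as a limit of integrals of $\mu_v^\epsilon$. The $\Sigma_2$ tower is then as follows. At outer level $n_2$, grid the interval $[-n_2,n_2]$ at mesh $2^{-n_2}$. In the inner limit $n_1$, decide approximately whether $\sum_{j\le n_2}\mu_{v_j}$ of a shrinking interval around a grid point carries atomic mass above a threshold. The inner limit computes interval masses exactly, since $\epsilon\to0$ at continuity points, and one arranges to avoid atoms at endpoints by a rational shift. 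The outer limit lets the threshold $\to0$, the mesh $\to0$, and the number of vectors $\to\infty$. The $\Sigma_2$ error control comes from outputting only points that are genuinely within $2^{-n_2}$ of spectral points of the limit set.

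For the absolutely continuous part, use the resolvent vectors. The quantity $\|(T-\lambda-i\epsilon)^{-1}v\|^2\epsilon/\pi$ equals $\mu_v^\epsilon$, and Radon--Nikodym derivative detection uses boundedness of the density in $L^p$ norms of $\mu_v^\epsilon$ restricted to intervals. Concretely, $\mu_{v,\mathrm{ac}}(I)>0$ is witnessed by the limiting $L^1$ mass of truncations $\min(\mu^\epsilon_v,M)$ as $\epsilon\to0$ and then $M\to\infty$. One of these limits can be merged into the inner computation because the truncated mass converges as $\epsilon\to0$, which gives height two. For the singular continuous part, one additional limit separates the sc mass as the total singular mass minus the atomic mass. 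The singular mass is obtained as $\lim_M$ of the mass where $\mu^\epsilon_v>M$, and this extra nesting gives $\Sigma_3$.

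Density of the span of $\mathcal S$ lets the union over $v_j$ exhaust every reducing subspace, so the supports converge in Attouch--Wets. The main obstacle is the one-sided $\Sigma_k$ error control, together with handling the unknown constant $C(v,T)$ uniformly in $z$. I would first try to reproduce the interval-counting arguments of \cite{colbrook2019computing} verbatim, replacing their exact matrix access by these approximate resolvents. I would absorb $C(v,T)$ by taking $\eta=2^{-n_1}$, so that for fixed outer indices the errors vanish in the innermost limit.
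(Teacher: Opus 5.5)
Your top-level reduction is the same as the paper's. The paper gives no independent proof of \cref{prop:resolv_comput}; it invokes \cite{colbrook2019computing} and notes that those towers access $T$ only through the Borel transforms $\innerprod{(T-zI)^{-1}v}{v}$. Several of your remarks are correct:
\begin{itemize}
\item The unknown constant $C(v,T)$ is harmless, because the one-sided condition in \cref{def:scihierarchy} is imposed only on $\Gamma_{n_k}$, after all inner limits.
\item Density of $\operatorname{span}\mathcal S$ gives $\spec_\diamond(T)=\overline{\bigcup_{v\in\mathcal S}\spec(\mu_{v,\diamond})}$.
\item Your absolutely continuous detector is the right one. A single truncation level already suffices for positivity, since $\mu_v^\epsilon\to\mathrm d\mu_{v,\mathrm{ac}}/\mathrm d\lambda$ almost everywhere.
\end{itemize}
Gram--Schmidt is unnecessary, and the hypotheses do not supply the inner products $\innerprod{v_i}{v_j}$ it would need.

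However, the towers you sketch fail at the pure point step, which is exactly the content being borrowed from \cite{colbrook2019computing}. Your inner limit returns interval masses $\mu_v(I)$ from Stone's formula, and thresholding these cannot separate atoms from continuous mass. For any schedule of meshes and thresholds $\tau_{n_2}\to0$, one can choose a purely absolutely continuous $\mu_v$ whose mass in the cell $[0,2^{-n_2}]$ exceeds $\tau_{n_2}$ for every $n_2$. Then $\Gamma_{n_2}$ keeps a spurious point near $0$ and cannot converge to $\specpp(T)=\emptyset$, let alone with $\Sigma_2$ control. Shrinking the interval around a fixed grid point only detects atoms lying exactly on the grid. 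What is missing is a one-limit detector for an atom located anywhere in a fixed cell, for instance
\[
\sup_{\lambda\in\overline I}\epsilon\operatorname{Im}\innerprod{(T-(\lambda+i\epsilon)I)^{-1}v}{v}
=\sup_{\lambda\in\overline I}\int_{\mathbb R}\frac{\epsilon^2\,\mathrm d\mu_v(t)}{(t-\lambda)^2+\epsilon^2}
\ \downarrow\ \max_{\lambda\in\overline I}\mu_v(\{\lambda\})
\qquad(\epsilon\downarrow0).
\]
This follows from monotonicity of the integrand in $\epsilon$ together with a compactness argument. It must be combined with a two-threshold last-visit rule as in \cref{prop:delta3bound}, so that the thresholded inner limit exists even when the limiting value sits at a threshold; your ac tower needs this rule too.

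The singular continuous step has the same defects one level up:
\begin{itemize}
\item ``Singular mass minus atomic mass'' needs $\mu_{v,\mathrm{pp}}(I)$, a sum over possibly infinitely many atoms. You give no construction for it, and any construction needs its own limiting scheme merged with your $M$-limit to keep the total height at three.
\item The mass of $\{\mu_v^\epsilon>M\}$ is not shown to converge as $\epsilon\to0$, because the cutoff is discontinuous at level $M$. The continuous truncation $\int_I(\mu_v^\epsilon-M)_+\,\mathrm d\lambda$ does converge.
\item Both inner limits of the thresholded quantity must be stabilized explicitly.
\end{itemize}
Finally, no rational shift can avoid atoms in general, since the eigenvalues may include every rational. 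None is needed, however, because Stone's formula converges regardless.
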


The scalar Borel transforms $\innerprod{(T-zI)^{-1}v}{v}$ determine the scalar spectral measures of a dense set of vectors. The algorithms of \cite{colbrook2019computing} use boundary behavior of these Borel transforms to recover the pure point, absolutely continuous, and singular continuous supports with the stated SCI heights.

We apply \cref{prop:resolv_comput} to $\mathcal D_{N,d}^{\mathrm{BV}}$. The following certified least-squares argument computes resolvent vectors from matrix elements.

\begin{lemma}
\label{lem:graph_matrix_elt_comp}
Let $S$ be a closed, densely defined, invertible operator on $\mathcal H$, with bounded inverse $S^{-1}$. Let $\{e_j\}_{j\in\mathbb N}\subset \operatorname{Dom}(S)$ be a sequence whose finite linear span is a core for $S$. Assume that a computable number $M$ is known with $M\ge \|S^{-1}\|$. Suppose that there is an arithmetic algorithm which, given $i,j$, computes
$$
    \innerprod{S e_i}{e_j},
    \qquad
    \innerprod{S e_i}{S e_j},
    \qquad
    \innerprod{e_i}{e_j}
$$
to arbitrary accuracy. Then, uniformly in $i$ and $\rho>0$, one can compute a finitely supported vector $u$ such that $\|u-S^{-1}e_i\|\le M\rho$. Consequently, $\innerprod{S^{-1}e_i}{e_i}$ can be computed to arbitrary accuracy.
\end{lemma}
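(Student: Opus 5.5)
The plan is a certified least-squares (Galerkin) argument for the equation $Su=e_i$. For each $n$, let $P_n=\operatorname{span}\{e_1,\ldots,e_n\}$, and consider minimizing the residual $\|Sw-e_i\|$ over $w\in P_n$. Since the span of the $e_j$ is a core, $\{(w,Sw):w\in\bigcup_n P_n\}$ is dense in the graph of $S$. Moreover, $S^{-1}e_i\in\operatorname{Dom}(S)$ with $S(S^{-1}e_i)=e_i$. It follows that
$$\inf_{w\in P_n}\|Sw-e_i\|\to0\qquad (n\to\infty).$$
For any finitely supported $w$ we have
$$\|w-S^{-1}e_i\|=\|S^{-1}(Sw-e_i)\|\le M\|Sw-e_i\|,$$
so it suffices to find a $w$ whose residual is certified to be at most $\rho$.

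The residual is computable exactly from the given matrix elements. Writing $w=\sum_{j\le n}c_je_j$, we get
$$\|Sw-e_i\|^2=\sum_{j,k}c_j\overline{c_k}\innerprod{Se_j}{Se_k}-2\operatorname{Re}\sum_j c_j\innerprod{Se_j}{e_i}+\innerprod{e_i}{e_i}.$$
This is a quadratic form in the coefficient vector $c$, with Gram data supplied by the algorithm. The procedure is then a search. Enumerate $n$ and rational coefficient vectors $c\in(\mathbb Q+i\mathbb Q)^n$ (equivalently, solve the normal equations approximately). For each candidate, compute an upper bound $r(c)$ for the residual squared, using interval arithmetic from the approximate matrix elements with accuracy refined until the error is controlled. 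Halt at the first candidate with $r(c)\le\rho^2$.

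Termination follows from the density statement above. There exist $n$ and exact $w\in P_n$ with residual below $\rho/2$. The residual is continuous in the coefficients, so rational coefficients near those of $w$ also give residual below $\rho/2$. Refining the accuracy of the matrix elements then makes the computed upper bound fall below $\rho^2$. Correctness is immediate: the output $u=w$ satisfies $\|u-S^{-1}e_i\|\le M\rho$ because $r(c)$ is a genuine upper bound.

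The main obstacle I expect is ensuring that the halting test uses only certified upper bounds, since the entries are only approximable. I would handle this by carrying explicit error terms. The quadratic form's error is bounded by the entry tolerance times $(\sum|c_j|)^2+2\sum|c_j|$, which is computable from $c$.

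For the consequence, take $u$ with $\|u-S^{-1}e_i\|\le M\rho$. Then $\innerprod{u}{e_i}=\sum_j c_j\innerprod{e_j}{e_i}$ is computable from the Gram entries. Furthermore,
$$|\innerprod{u}{e_i}-\innerprod{S^{-1}e_i}{e_i}|\le M\rho\|e_i\|,$$
and $\|e_i\|$ is computably bounded via $\innerprod{e_i}{e_i}$. Choosing $\rho$ small gives arbitrary accuracy.
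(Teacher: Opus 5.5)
Your proposal is correct and takes essentially the same route as the paper: enumerate rational-coefficient finite combinations, certify a residual bound $\|Su-e_i\|^2<\rho^2$ from the computable matrix elements, use the graph-norm core property for termination, and conclude via $\|u-S^{-1}e_i\|\le\|S^{-1}\|\,\|Su-e_i\|$. The one detail to make explicit is that no candidate may be refined indefinitely, since one whose residual equals $\rho$ exactly would stall the search: either evaluate each candidate's residual to a fixed tolerance such as $\rho^2/4$, or dovetail over candidates and precision as the paper does.
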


\begin{proof}
Since the finite linear span of the vectors $e_j$ is a core for $S$, it is dense in $\operatorname{Dom}(S)$ with respect to the graph norm $\|u\|_S^2:=\|u\|^2+\|Su\|^2$. In particular, for every $\rho>0$, there exists a finitely supported vector $u$ such that $\|Su-e_i\|<\rho$. Such a vector can be found by enumerating all finitely supported vectors with rational complex coefficients,
$$
    u=\sum_{j=1}^m c_j e_j,
    \qquad c_j\in\mathbb Q+i\mathbb Q.
$$
For each such $u$, the quantity $\|Su-e_i\|^2$ is computable from the assumed matrix elements. Indeed, expanding the square expresses it as a finite combination of terms of the form
$$
    \innerprod{S e_j}{S e_k},
    \qquad
    \innerprod{S e_j}{e_i},
    \qquad
    \innerprod{e_i}{e_i}.
$$
Denote this enumeration by
$
u_1,u_2,\ldots,
$
and let
$
a_m=\|Su_m-e_i\|^2.
$
For every $m$ and $n$, the assumed matrix-element algorithms allow us to compute a rational interval
$
[L_{m,n},U_{m,n}]
$
containing $a_m$, with
$
U_{m,n}-L_{m,n}\le2^{-n}.
$
The search must be dovetailed. At stage $n$, compute such intervals for every $m\le n$, and halt if
$
U_{m,n}<\rho^2
$
for one of these candidates.

This procedure terminates. Indeed, the graph-core property first gives a finitely supported vector $v$ with $\|Sv-e_i\|<\rho/4$. Rational coefficient vectors are dense in the finite-dimensional span containing $v$, also after applying $S$, so the enumeration contains some $u_m$ satisfying
$
\|Su_m-e_i\|<\rho/2.
$
Thus $a_m<\rho^2/4$. Once $n\ge m$ is sufficiently large, the certified interval for this fixed candidate has upper endpoint smaller than $\rho^2$, and the algorithm halts with a vector $u_m$ for which
$
\|Su_m-e_i\|<\rho.
$
Denote the returned vector by $u$. Therefore
$$
    \|u-S^{-1}e_i\|
    =
    \|S^{-1}(Su-e_i)\|
    \le
    \|S^{-1}\|\,\|Su-e_i\|
    \le
    M\rho.
$$
Since $u$ is finitely supported, the quantity $\innerprod{u}{e_i}$ is computable from the matrix elements $\innerprod{e_j}{e_i}$. Moreover,
$$
    \left|
        \innerprod{S^{-1}e_i}{e_i}
        -
        \innerprod{u}{e_i}
    \right|
    \le
    \|S^{-1}e_i-u\|\,\|e_i\|
    \le
    M\rho\,\|e_i\|.
$$
The number $\|e_i\|$ is computable from $\innerprod{e_i}{e_i}$. Hence, by choosing $\rho$ sufficiently small, we compute $\innerprod{S^{-1}e_i}{e_i}$ to arbitrary accuracy.
\end{proof}

For a self-adjoint operator $T$ and $z\in\mathbb C\setminus\mathbb R$, we apply \cref{lem:graph_matrix_elt_comp} to $S=T-zI$. Since $T$ is self-adjoint, $\spec(T)\subset\mathbb R$, and therefore $\|(T-zI)^{-1}\|\le {1}/{|\operatorname{Im} z|}$. Thus we may take $M=|\operatorname{Im} z|^{-1}$. Moreover, if the finite linear span of $\{e_j\}$ is a core for $T$, then it is also a core for $T-zI$, since the graph norms of $T$ and $T-zI$ are equivalent. It therefore suffices, in an appropriate basis, to compute
$$
    \innerprod{(T-zI)e_i}{e_j},
    \qquad
    \innerprod{(T-zI)e_i}{(T-zI)e_j},
    \qquad
    \innerprod{e_i}{e_j}
$$
to arbitrary accuracy. Equivalently, it is enough to compute the corresponding matrix elements involving $T e_i$, $T e_j$, and the basis vectors themselves.

\subsection{Constructing the Basis}

The wavelet construction in \cite{hernandezweiss1996} gives the following basis.

\begin{proposition}
\label{prop:basis_exist}
For every integer $\gamma\ge 0$, there exist real-valued compactly supported functions $\phi^{(\gamma)},\psi^{(\gamma)}\in C_c^\gamma(\mathbb R)$ such that the following hold.
\begin{enumerate}
    \item The function $\phi^{(\gamma)}$ is a scaling function and $\psi^{(\gamma)}$ is the associated mother wavelet. Moreover, $\psi^{(\gamma)}$ may be chosen to have sufficiently many vanishing moments.
    \item If
    $$
        \psi_{j,k}^{(\gamma)}(x)
        =
        2^{j/2}\psi^{(\gamma)}(2^j x-k),
        \qquad
        \phi_k^{(\gamma)}(x)
        =
        \phi^{(\gamma)}(x-k),
    $$
    then
    $$
        \mathcal B^{(\gamma)}
        =
        \{\psi_{j,k}^{(\gamma)}:j\ge 0,\ k\in\mathbb Z\}
        \cup
        \{\phi_k^{(\gamma)}:k\in\mathbb Z\}
    $$
    is an orthonormal basis of $L^2(\mathbb R)$.
    \item The tensor-product family
    $$
        \mathcal B_d^{(\gamma)}
        =
        \left\{
            \bigotimes_{\ell=1}^d f_\ell:
            f_\ell\in\mathcal B^{(\gamma)}
        \right\}
    $$
    is an orthonormal basis of $L^2(\mathbb R^d)$.
\end{enumerate}
\end{proposition}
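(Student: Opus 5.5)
The statement is essentially the classical Daubechies construction combined with the multiresolution framework, so the plan is to assemble it from standard facts rather than prove anything new.

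\textbf{Step 1: choose the wavelet.} Given $\gamma$, I will take a Daubechies compactly supported orthonormal wavelet of order $M$, as constructed in \cite{hernandezweiss1996}, with $M$ chosen large enough that $\phi^{(\gamma)},\psi^{(\gamma)}\in C^\gamma$. This is possible because the Hölder regularity of the Daubechies scaling function grows linearly in $M$ (asymptotically about $0.2M$), so some explicit finite $M$ gives $C^\gamma$. The filter $m_0(\xi)=\bigl(\tfrac{1+e^{-i\xi}}{2}\bigr)^M\mathcal L(\xi)$ has real coefficients, so both functions are real-valued. They are compactly supported, with support in $[0,2M-1]$. The factor $(1+e^{-i\xi})^M$ forces $\widehat\psi$ to vanish to order $M$ at the origin, which gives $M$ vanishing moments; enlarging $M$ therefore gives item (i).

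\textbf{Step 2: the one-dimensional basis (item (ii)).} The multiresolution analysis gives $V_{j+1}=V_j\oplus W_j$, with $\{\phi(\cdot-k)\}_k$ an orthonormal basis of $V_0$ and $\{\psi_{j,k}\}_k$ an orthonormal basis of $W_j$. Combining $\overline{\bigcup_j V_j}=L^2(\mathbb R)$ with the telescoping decomposition $L^2=V_0\oplus\bigoplus_{j\ge0}W_j$ shows that $\mathcal B^{(\gamma)}$ is an orthonormal basis.

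\textbf{Step 3: the tensor-product basis (item (iii)).} I will use the unitary identification $L^2(\mathbb R^d)\cong L^2(\mathbb R)^{\otimes d}$. Elementary tensors of orthonormal bases form an orthonormal basis of the Hilbert tensor product: orthonormality follows from Fubini, and completeness follows because finite sums of products $\prod_\ell g_\ell(x_\ell)$ are dense in $L^2(\mathbb R^d)$.

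\textbf{Main obstacle.} The only nontrivial ingredient is the existence of $C^\gamma$ Daubechies wavelets for every $\gamma$, that is, the regularity estimate on the infinite product $\prod_k m_0(2^{-k}\xi)$. I would cite this from \cite{hernandezweiss1996} rather than reprove it. If computability of $\phi^{(\gamma)}$ is later needed, it follows from the cascade algorithm, which converges with explicit rates given the finite filter.
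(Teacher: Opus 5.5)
Your proposal is correct and takes essentially the paper's route. The paper gives no separate proof: it cites the Daubechies-type compactly supported construction in \cite{hernandezweiss1996}. Your Steps 2 and 3 simply spell out the standard decomposition $L^2(\mathbb R)=V_0\oplus\bigoplus_{j\ge0}W_j$ and the tensor-product completeness argument that this citation covers.
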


We choose the wavelets from a standard computable compactly supported construction, for instance, a sufficiently regular Daubechies family, so that the refinement coefficients, supports, point values, and the variation bounds for all derivatives used below are computable uniformly in the scale and translation indices. We will use the following standard characterization of Sobolev norms in terms of wavelet coefficients.

\begin{proposition}[Consequence of {\cite[Theorem 6.27]{hernandezweiss1996}}]
\label{prop:sobolev_est}
Let $\phi,\psi$ be a compactly supported wavelet pair as above, with regularity $\alpha$ and sufficiently many vanishing moments. Then, for every $0\le s<\alpha$, there exist constants $c,C>0$ such that, for every $f\in H^s(\mathbb R)$,
$$
    c\|f\|_{H^s(\mathbb R)}^2
    \le
    \sum_{k\in\mathbb Z}
    |\innerprod{f}{\phi_k}|^2
    +
    \sum_{j=0}^\infty\sum_{k\in\mathbb Z}
    2^{2js}
    |\innerprod{f}{\psi_{j,k}}|^2
    \le
    C\|f\|_{H^s(\mathbb R)}^2.
$$
\end{proposition}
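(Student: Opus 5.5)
The plan is to reduce the two-sided estimate to a Littlewood--Paley characterization of $H^s(\mathbb R)$ and then compare the dyadic frequency pieces of $f$ with the wavelet projections. Let $P_0$ be the orthogonal projection onto $V_0=\overline{\operatorname{span}}\{\phi_k\}$, and let $Q_j$ be the projection onto $W_j=\overline{\operatorname{span}}\{\psi_{j,k}:k\in\mathbb Z\}$. Orthonormality of $\mathcal B^{(\gamma)}$ (\cref{prop:basis_exist}) gives
$$
\sum_k|\innerprod f{\phi_k}|^2=\|P_0f\|^2,
\qquad
\sum_k|\innerprod f{\psi_{j,k}}|^2=\|Q_jf\|^2 .
$$
So the middle quantity equals $\|P_0f\|^2+\sum_{j\ge0}2^{2js}\|Q_jf\|^2$. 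On the other side, fix a smooth inhomogeneous Littlewood--Paley partition $f=\sum_{l\ge0}\Delta_lf$, where $\Delta_0$ localizes to $|\xi|\lesssim1$ and $\Delta_l$ to $|\xi|\sim2^l$. Then $\|f\|_{H^s}^2\simeq\sum_l2^{2ls}\|\Delta_lf\|^2$. This is the inhomogeneous form of the characterization underlying \cite[Theorem 6.27]{hernandezweiss1996}.

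The comparison rests on two standard inequalities. The first is a Jackson-type estimate coming from the vanishing moments. If $\psi$ has $r>s$ vanishing moments, then $\|Q_jg\|\lesssim2^{-(j-l)r}\|g\|$ whenever $\widehat g$ is supported in $|\xi|\lesssim2^l$ with $l\le j$. The second is a Bernstein/regularity estimate coming from $\psi,\phi\in C^\alpha$. It gives $\|Q_jg\|\lesssim2^{-(l-j)\alpha'}\|g\|$ for $g$ localized at frequency $2^l$ with $l>j$ and any $\alpha'<\alpha$. Combining them yields the almost-orthogonality bound $\|Q_j\Delta_l\|\lesssim2^{-|j-l|\delta_\pm}$, with $\delta_+$ close to $\alpha$ for $l>j$ and $\delta_-=r$ for $l<j$. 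The same bounds, with the roles of $\Delta_l$ and $Q_j$ exchanged, hold for the adjoint products $\Delta_lQ_j$, and analogous bounds hold for $P_0$.

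For the upper bound, write $Q_jf=\sum_lQ_j\Delta_lf$ and similarly for $P_0f$. Multiply by $2^{js}$ and apply Schur's test to the matrix with entries $2^{(j-l)s}2^{-|j-l|\delta_\pm}$. This matrix is bounded on $\ell^2$ precisely when $s<\delta_+\approx\alpha$ and $-s<\delta_-$, and both conditions hold for $0\le s<\alpha$. For the lower bound, expand $f=P_0f+\sum_jQ_jf$, which converges in $L^2$ by completeness of the basis. Then $\Delta_lf=\Delta_lP_0f+\sum_j\Delta_lQ_jf$, and the same Schur argument applies with the transposed kernel, giving $\sum_l2^{2ls}\|\Delta_lf\|^2\lesssim\|P_0f\|^2+\sum_j2^{2js}\|Q_jf\|^2$. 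For $f\in H^s$ all sums converge absolutely, so the rearrangements are justified.

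The main obstacle is the Bernstein/regularity step. One has to show that $W_j$-components of high-frequency functions decay like $2^{-(l-j)\alpha'}$, equivalently that $\|g\|_{H^t}\lesssim2^{jt}\|g\|$ for $g\in V_j$ and $t<\alpha$, when $\alpha$ is non-integer Hölder regularity. I would take this, together with the Jackson estimate, directly from \cite[Theorem 6.27]{hernandezweiss1996}. That theorem is exactly where the hypothesis ``regularity $\alpha$, sufficiently many vanishing moments'' enters. With these two estimates in hand, the remaining Schur-test bookkeeping is routine.
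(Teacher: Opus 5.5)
The paper does not prove this proposition; it imports it directly from the cited theorem of Hern\'andez and Weiss. Your proposal takes a genuinely different, essentially self-contained route, and it is correct:
\begin{itemize}
\item an inhomogeneous Littlewood--Paley characterization of $H^s$;
\item almost-orthogonality of $Q_j$ and $\Delta_l$ via a Jackson estimate (from vanishing moments) and a Bernstein estimate (from regularity);
\item Schur's test to close both inequalities.
\end{itemize}
What it buys over the bare citation is an explicit account of which hypothesis controls which inequality. For $s\ge0$, the upper inequality needs only $r>s$ vanishing moments, and the lower one needs only $s<\alpha$. This matches how \cref{prop:core} uses the result: the upper bound at exponent $s$, the lower bound at exponent $N$.

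One piece of bookkeeping is off: you attached the lower-bound conditions to the upper bound. Write $\delta_-=r$ for $l<j$ and $\delta_+=\alpha'$ for $l>j$.
\begin{itemize}
\item \emph{Upper bound.} The kernel $2^{(j-l)s}\norm{Q_j\Delta_l}$ is dominated by $2^{-(j-l)(r-s)}$ for $l<j$ and by $2^{-(l-j)(\alpha'+s)}$ for $l>j$. It therefore needs $s<r$ and $-s<\alpha'$.
\item \emph{Lower bound.} The transposed kernel is dominated by $2^{-(l-j)(\alpha'-s)}$ for $j<l$ and by $2^{-(j-l)(r+s)}$ for $j>l$. It therefore needs $s<\alpha'$ and $-s<r$.
\end{itemize}
Since you assume $r>s\ge0$ and may choose $\alpha'\in(s,\alpha)$, all four conditions hold, so the conclusion is unaffected.

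You also need not outsource the ``main obstacle''; both estimates are elementary. The Jackson bound follows from $\widehat\psi(\xi)=O(|\xi|^r)$ near $0$ together with the periodization identity
\[
\sum_k|\innerprod{g}{\psi_{j,k}}|^2=\frac{2^j}{2\pi}\int_{-\pi}^{\pi}\Bigl|\sum_{m\in\mathbb Z}\widehat g\bigl(2^j(\eta+2\pi m)\bigr)\overline{\widehat\psi(\eta+2\pi m)}\Bigr|^2\,\mathrm d\eta .
\]
Only the $m=0$ term survives once $\widehat g$ is supported in $|\xi|<2^j\pi$.

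For the Bernstein bound, compact support and the regularity of $\psi$ show that the $L^2$ modulus of smoothness (of order $\lfloor\alpha\rfloor+1$) of $h=\sum_kc_k\psi(\cdot-k)$ is $O(|t|^\alpha)\norm{c}_{\ell^2}$. This embeds $W_0$ boundedly into $H^{\alpha'}$ for every $\alpha'<\alpha$. Rescaling gives $\norm{h}_{H^{\alpha'}}\lesssim2^{j\alpha'}\norm{h}$ on $W_j$, and duality then yields the $2^{-(l-j)\alpha'}$ decay. The same argument applies to $\phi$ and $P_0$.
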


\begin{proposition}
\label{prop:core}
Let $T\in\mathcal D_{N,d}^{\mathrm{BV}}$, and let $\gamma\ge N+1$. Then $V_d^{(\gamma)}:=\operatorname{span}\bigl(\mathcal B_d^{(\gamma)}\bigr)$ is a core for $T$.
\end{proposition}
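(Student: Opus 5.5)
Since $C_c^\infty(\mathbb R^d)$ is a core for $T$ by the definition of $\mathcal D_{N,d}^{\mathrm{BV}}$, it suffices to show two things: that $V_d^{(\gamma)}\subseteq\operatorname{Dom}(T)$, and that every $f\in C_c^\infty(\mathbb R^d)$ is a graph-norm limit of elements of $V_d^{(\gamma)}$. The closure of $T|_{V_d^{(\gamma)}}$ then contains $T|_{C_c^\infty}$, whose closure is $T$. Because $T$ is closed, both inclusions give $\overline{T|_{V_d^{(\gamma)}}}=T$.

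\textbf{Domain.} Each basis element $g=\bigotimes_\ell f_\ell$ lies in $C_c^\gamma(\mathbb R^d)\subseteq C_c^{N+1}$, since $\gamma\ge N+1$. Fix $r$ with $\operatorname{supp}g\subseteq Q_r$. The coefficients restricted to $Q_r$ are bounded, because they lie in $\mathcal A_r$. Hence $\sum_\alpha a_\alpha\partial^\alpha g\in L^2$, with
$$\|Tg\|_{L^2}\le C\,c_{\lceil r\rceil}(T)\|g\|_{W^{N,2}}.$$
To see that $g\in\operatorname{Dom}(T)$ and that $Tg$ equals this pointwise expression, take $g_n=g*\chi_n$ with a mollifier $\chi_n$. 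Then $g_n\in C_c^\infty$ with supports in a fixed cube, and $\partial^\alpha g_n\to\partial^\alpha g$ uniformly for $|\alpha|\le N$. Consequently $g_n\to g$ and $Tg_n\to\sum_\alpha a_\alpha\partial^\alpha g$ in $L^2$. Closedness of $T$ then gives the claim.

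\textbf{Approximation of $C_c^\infty$ functions.} Fix $f\in C_c^\infty$ and a cube $Q_R$ containing its support. Let $P_J$ be the orthogonal projection onto the span of basis elements whose one-dimensional factors have scale $j<J$ and translation index in a finite window, chosen so that all supports meet a neighbourhood of $\operatorname{supp}f$. Every element of $P_Jf$ is supported in a fixed cube $Q_{R'}$, because the wavelets have compact support uniformly in $j\ge0$. By the same bound as above, $\|T(P_Jf-f)\|\le C\,c_{\lceil R'\rceil}\|P_Jf-f\|_{H^N}$. It therefore suffices to prove $\|P_Jf-f\|_{H^N}\to0$. In one dimension this follows from \cref{prop:sobolev_est} with $s=N<\alpha$. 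Regularity $\alpha$ at least $\gamma\ge N+1>N$ is available from the Daubechies choice. The omitted coefficients form a tail of a convergent weighted sum, and coefficients with translations far from the support vanish exactly.

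\textbf{Main obstacle.} The main difficulty is the $d$-dimensional tensor-product version of \cref{prop:sobolev_est}. I would avoid a full characterization. Instead I would bound $\|\partial^\alpha(f-P_Jf)\|_{L^2}$ for $|\alpha|\le N$ by writing $f-P_Jf$ as a telescoping sum over coordinates, $(I-P^{(1)}_J)\otimes I\otimes\cdots+P^{(1)}_J\otimes(I-P^{(2)}_J)\otimes\cdots$. Then $\partial^\alpha=\prod_\ell\partial_\ell^{\alpha_\ell}$ factorizes, and each factor is handled by the one-dimensional estimate. For this I need two facts about the one-dimensional projections: $P^{(\ell)}_J$ is uniformly bounded on $H^{N}(\mathbb R)$, which follows from \cref{prop:sobolev_est}, and $I-P^{(\ell)}_J\to0$ strongly on $H^N(\mathbb R)$. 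Combined with $f\in H^{N}$ in each variable (indeed $f\in H^{N}\otimes\cdots$ mixed Sobolev), these give convergence.
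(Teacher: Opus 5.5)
Your proposal is correct and follows essentially the same route as the paper. As in the paper, you use mollification plus closedness of $T$ to get $V_d^{(\gamma)}\subseteq\operatorname{Dom}(T)$, then truncate the wavelet expansion of $f\in C_c^\infty$ in a fixed cube where the coefficients are bounded, reducing graph-norm convergence to $H^N$-convergence via \cref{prop:sobolev_est}. The one real difference is the case $d>1$: the paper only asserts that a tensor-product Sobolev characterization applies, whereas your telescoping decomposition of $I-\bigotimes_\ell P_J^{(\ell)}$ on the mixed space $H^N(\mathbb R)^{\hat\otimes d}$ needs only the one-dimensional facts (uniform $H^N$-boundedness and strong convergence of $P_J$), which is a more self-contained way to fill that step.
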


\begin{proof}[Proof for $d=1$]
For $d>1$, the same argument applies with multi-indices and the corresponding tensor-product wavelet characterization of Sobolev spaces.

Every compactly supported $C^N$ function belongs to $\operatorname{Dom}(T)$. Let $f\in C_c^N(\mathbb R)$, and let $f_\varepsilon\in C_c^\infty(\mathbb R)$ be a standard mollification of $f$. Then $f_\varepsilon\to f$ in $L^2(\mathbb R)$ as $\varepsilon\to0^+$. Moreover, for every $0\le \alpha\le N$, $\partial^\alpha f_\varepsilon\to    \partial^\alpha f$ in $L^2(\mathbb R)$. For all sufficiently small $\varepsilon$, the functions $f_\varepsilon$ are supported in a fixed compact set $K$. Since the coefficients $a_\alpha$ are locally bounded, each $a_\alpha$ is bounded on $K$. Hence $a_\alpha \partial^\alpha f_\varepsilon\to    a_\alpha \partial^\alpha f$ in $L^2(\mathbb R)$ for every $0\le \alpha\le N$. Therefore
$$
    T f_\varepsilon
    =
    \sum_{\alpha\le N} a_\alpha \partial^\alpha f_\varepsilon
    \to
    \sum_{\alpha\le N} a_\alpha \partial^\alpha f
    \quad\text{in } L^2(\mathbb R).
$$
Since $T$ is closed, it follows that $f\in\operatorname{Dom}(T)$, with $Tf=\sum_{\alpha\le N} a_\alpha \partial^\alpha f$. In particular, since the wavelets in $\mathcal B^{(\gamma)}$ are compactly supported and belong to $C^\gamma$, and since $\gamma\ge N+1$, we have $V^{(\gamma)}:=\operatorname{span}\bigl(\mathcal B^{(\gamma)}\bigr)\subseteq \operatorname{Dom}(T)$.

Since $C_c^\infty(\mathbb R)$ is a core for every operator in $\mathcal D_{N,1}^{\mathrm{BV}}$, it suffices to show that, for every $g\in C_c^\infty(\mathbb R)$, there exists a sequence $g_n\in V^{(\gamma)}$ such that
$$
    g_n\to g
    \quad\text{and}\quad
    Tg_n\to Tg
    \quad\text{in } L^2(\mathbb R).
$$
For brevity, let $\phi=\phi^{(\gamma)}$ and $\psi=\psi^{(\gamma)}$. Fix $g\in C_c^\infty(\mathbb R)$ with $\operatorname{supp}g\subset[-L_g,L_g]$. Its wavelet expansion in $L^2(\mathbb R)$ is
$$
    g
    =
    \sum_{k\in\mathbb Z}
    \innerprod{g}{\phi_k}\phi_k
    +
    \sum_{j=0}^\infty\sum_{k\in\mathbb Z}
    \innerprod{g}{\psi_{j,k}}\psi_{j,k}.
$$
Since $\phi$ is compactly supported and $g$ is compactly supported, only finitely many coefficients $\innerprod{g}{\phi_k}$ are non-zero. Let $J\subset\mathbb Z$ denote this finite set. Similarly, for each fixed $j$, only finitely many coefficients $\innerprod{g}{\psi_{j,k}}$ are non-zero. Let $I_j\subset\mathbb Z$ denote this finite set. Define
$$
    g_n
    =
    \sum_{k\in J}
    \innerprod{g}{\phi_k}\phi_k
    +
    \sum_{j=0}^n\sum_{k\in I_j}
    \innerprod{g}{\psi_{j,k}}\psi_{j,k}.
$$
Then $g_n\in V^{(\gamma)}$. We claim that $g_n\to g$ in $H^N(\mathbb R)$. Choose $s$ such that $N<s<\gamma$. This is possible because $\gamma\ge N+1$. Since $g\in C_c^\infty(\mathbb R)$, we have $g\in H^s(\mathbb R)$. By \cref{prop:sobolev_est},
$$
    \sum_{j=0}^\infty\sum_{k\in\mathbb Z}
    2^{2js}
    |\innerprod{g}{\psi_{j,k}}|^2
    <\infty.
$$
Applying the same wavelet characterization with exponent $N$, we get
$$
\begin{aligned}
    \|g-g_n\|_{H^N(\mathbb R)}^2
    &\lesssim
    \sum_{j>n}\sum_{k\in\mathbb Z}
    2^{2jN}
    |\innerprod{g}{\psi_{j,k}}|^2.
\end{aligned}
$$
Since $N<s$, the last tail tends to $0$ as $n\to\infty$. Hence $g_n\to g$ in $H^N(\mathbb R)$. In particular,
$$
    \partial^\alpha g_n\to \partial^\alpha g
    \quad\text{in } L^2(\mathbb R),
    \qquad
    0\le \alpha\le N.
$$
The convergence also passes through $T$. Since $\phi$ and $\psi$ are compactly supported, and since the only wavelets appearing in the expansion of $g_n$ have supports within a fixed bounded neighborhood of $\operatorname{supp}g$, there is a fixed compact set $K_g\subset\mathbb R$ such that $\operatorname{supp}g_n\subset K_g$ for all $n$. The coefficients $a_\alpha$ are bounded on $K_g$. Therefore, for every $0\le\alpha\le N$, $a_\alpha\partial^\alpha g_n\to a_\alpha\partial^\alpha g$ in $L^2(\mathbb R)$. Consequently,
$$
    Tg_n
    =
    \sum_{\alpha\le N}a_\alpha\partial^\alpha g_n
    \to
    \sum_{\alpha\le N}a_\alpha\partial^\alpha g
    =
    Tg
    \quad\text{in } L^2(\mathbb R).
$$
Thus $g_n\to g$ in the graph norm of $T$. Since $C_c^\infty(\mathbb R)$ is a core for $T$, it follows that $V^{(\gamma)}$ is a core for $T$.
\end{proof}

\subsection{Computation of Matrix Elements}

The wavelet matrix elements are computable from $\mathsf{Eval}_{N,d}^{\mathrm{BV}}$. For $r>0$, let $Q_r=[-r,r]^d$, and let $\mathcal A_r$ be the algebra of pointwise-defined bounded Borel functions $f:Q_r\to\mathbb C$ with finite Hardy--Krause variation anchored at $(r,\ldots,r)$. If $f\in C^d(Q_r;\mathbb C)$, this variation is
$$
    \mathsf{TV}_{Q_r}(f)
    =
    \sum_{\emptyset\ne J\subseteq\{1,\ldots,d\}}
    \int_{[-r,r]^J}
    \left(
        \left|\partial_J\operatorname{Re}f(\widetilde x^{\,J})\right|
        +
        \left|\partial_J\operatorname{Im}f(\widetilde x^{\,J})\right|
    \right)
    \,\mathrm dx_J,
$$
where $\partial_J=\prod_{j\in J}\partial_j$ and
$$
    \widetilde x^{\,J}_j
    =
    \begin{cases}
        x_j, & j\in J,\\
        r, & j\notin J.
    \end{cases}
$$
For a non-smooth real-valued function, $\mathsf{TV}_{Q_r}$ is the sum of the Vitali variations of its restrictions to the non-empty upper faces of $Q_r$; for a complex-valued function, we sum the corresponding variations of its real and imaginary parts. This is a pointwise notion and hence refers to the designated representative. Following \cite{blumlinger1989topological}, define
$$
    \|f\|_r
    =
    \|f\|_{\infty,Q_r}
    +(3^d+1)\mathsf{TV}_{Q_r}(f),
    \qquad
    \|f\|_{\infty,Q_r}:=\sup_{x\in Q_r}|f(x)|.
$$
For $f\in C^d(Q_r;\mathbb C)$, the preceding formula gives
$$
    \mathsf{TV}_{Q_r}(f)
    \le
    \sum_{\emptyset\ne J\subseteq\{1,\ldots,d\}}
    (2r)^{|J|}
    \bigl(
        \|\partial_J\operatorname{Re}f\|_{\infty,Q_r}
        +
        \|\partial_J\operatorname{Im}f\|_{\infty,Q_r}
    \bigr).
$$
In particular,
$$
    \|f\|_r
    \le
    \|f\|_{\infty,Q_r}
    +(3^d+1)
    \sum_{\emptyset\ne J\subseteq\{1,\ldots,d\}}
    (2r)^{|J|}
    \bigl(
        \|\partial_J\operatorname{Re}f\|_{\infty,Q_r}
        +
        \|\partial_J\operatorname{Im}f\|_{\infty,Q_r}
    \bigr).
$$
It is shown in \cite{blumlinger1989topological} that $(\mathcal A_r,\|\cdot\|_r)$ is a Banach algebra. In particular, products and complex conjugates remain in $\mathcal A_r$, and $\|fg\|_r\le\|f\|_r\|g\|_r$.

Hardy--Krause variation is chosen because it gives a deterministic quadrature error bound from point evaluations via Koksma--Hlawka. We use the Koksma--Hlawka inequality in the following form.

\begin{proposition}
\label{prop:koksma}
Let $F$ have bounded Hardy--Krause variation on $[-r,r]^d$. Let $t_1,\ldots,t_m\in[0,1]^d$, and set
$$
    s_q=2r t_q-(r,\ldots,r)\in[-r,r]^d.
$$
Then
$$
    \left|
        \frac{(2r)^d}{m}
        \sum_{q=1}^m F(s_q)
        -
        \int_{[-r,r]^d}F(x)\,\mathrm{d}x
    \right|
    \le
    (2r)^d
    \mathsf{TV}_{[-r,r]^d}(F)
    D_m^\ast(\{t_1,\ldots,t_m\}),
$$
where $D_m^\ast$ denotes the star discrepancy.
\end{proposition}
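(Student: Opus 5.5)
The plan is to reduce the statement to the one-dimensional Koksma--Hlawka inequality through an affine change of variables and a product-structure argument for the variation, which is the standard route. First I will rescale. Define $G(t)=F(2rt-(r,\ldots,r))$ for $t\in[0,1]^d$. Then
$$
\int_{[-r,r]^d}F(x)\,\mathrm dx=(2r)^d\int_{[0,1]^d}G(t)\,\mathrm dt,
\qquad
\frac{(2r)^d}{m}\sum_{q=1}^mF(s_q)=(2r)^d\frac1m\sum_{q=1}^mG(t_q).
$$
The affine map is coordinatewise increasing. It sends the anchor $(1,\ldots,1)$ to $(r,\ldots,r)$, and it sends upper faces of $[0,1]^d$ to upper faces of $[-r,r]^d$. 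Vitali variation on each face is invariant under such monotone reparametrisations, since rectangles map to rectangles and alternating sums are preserved. Hence $\mathsf{TV}_{[0,1]^d}(G)=\mathsf{TV}_{[-r,r]^d}(F)$, and the claim reduces to the unit-cube inequality
$$
\left|\frac1m\sum_{q=1}^mG(t_q)-\int_{[0,1]^d}G\right|\le \mathsf{TV}_{[0,1]^d}(G)\,D_m^\ast.
$$

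The unit-cube inequality is the classical Koksma--Hlawka theorem for Hardy--Krause variation anchored at $(1,\ldots,1)$, where the star discrepancy is defined through anchored boxes $[0,a)$. I would cite it for real-valued functions, for instance from Kuipers--Niederreiter or Niederreiter's monograph. If a proof is wanted, I would give the standard one. One first writes the Hlawka--Zaremba identity expressing the error as a sum over nonempty $J\subseteq\{1,\ldots,d\}$ of $(-1)^{|J|}\int\Delta(\widetilde t^{\,J})\,\mathrm dG_J$. Here $\Delta$ is the local discrepancy of the anchored box, and $G_J$ is the restriction of $G$ to the upper face where the coordinates outside $J$ equal $1$. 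This identity follows by iterated Lebesgue--Stieltjes integration by parts, starting from $G(t)=\sum_J(\pm)\int\1[t\le u]\,\mathrm dG_J(u)$ together with the value at the corner. Bounding each term by $\sup|\Delta|\le D_m^\ast$ times the Vitali variation of $G_J$ then gives the inequality.

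For complex $F$, the paper's $\mathsf{TV}$ is the sum of the variations of the real and imaginary parts. I apply the real inequality to each part and use $|a+ib|\le|a|+|b|$, which yields the stated bound.

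The main obstacle is the justification of the integration-by-parts identity when $F$ is merely a bounded Borel function of bounded Hardy--Krause variation rather than a smooth one. I would handle this by recalling that finite Hardy--Krause variation makes each face restriction a difference of completely monotone functions. These generate signed measures of total variation equal to the Vitali variation, so the Stieltjes integrals make sense pointwise with the designated representative. That is exactly why the anchored, pointwise definition was adopted. If this step is too delicate, I would simply invoke the Koksma--Hlawka theorem as stated in the literature and keep only the rescaling step as the new content.
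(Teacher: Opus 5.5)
Your reduction is what the paper has in mind. The paper gives no argument beyond invoking the classical Koksma--Hlawka inequality, and your affine rescaling (which preserves the Vitali variation on every upper face), the anchoring at $(1,\ldots,1)$, and the splitting into real and imaginary parts are all correct.

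The genuine gap is in the citation step, and you inherit it from the statement as written. The classical theorem in the sources you cite is formulated for nodes in $[0,1)^d$, whereas here $t_q\in[0,1]^d$. With $D_m^\ast$ defined through half-open boxes $[0,a)$, as you propose, the inequality can fail for nodes on the upper boundary when $d\ge2$. Take $d=2$, $m=2$, $t_1=(0,1)$, $t_2=(0,0)$ and $G(u)=\mathbf 1[u_1<v]$ with $0<v<1/2$. Only the face $\{u_2=1\}$ contributes to the variation, so $\mathsf{TV}_{[0,1]^2}(G)=1$. The node $(0,1)$ lies in no box $[0,a)$, so $D_2^\ast=1/2$. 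Yet the quadrature error is $1-v>1/2$. You must therefore do one of two things:
\begin{itemize}
\item assume $t_q\in[0,1)^d$, which is all that is used later, since the Halton points lie in $(0,1)^d$; or
\item define $D_m^\ast$ through closed boxes $[0,a]$.
\end{itemize}

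If you write out the proof yourself, do not represent a face restriction as $G(1,\ldots,1)\pm\nu(\text{box})$ for a fixed box type. For any finite signed measure $\nu$, the map $t\mapsto\nu((t,1])$ is right-continuous and $t\mapsto\nu([t,1])$ is left-continuous. So a pointwise representative such as $\mathbf 1_{\{1/2\}}$ on $[0,1]$ admits no such representation, and the nodes may sit exactly at such jumps.

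Use instead the finite version of the Hlawka--Zaremba identity: discrete summation by parts on a grid containing every coordinate of every node, compared with the lower-corner Riemann sum. This works because:
\begin{itemize}
\item the identity is exact;
\item the boxes it produces are $\prod_{j\in J}[0,a_j)\times[0,1]^{\{1,\ldots,d\}\setminus J}$, whose local discrepancies are bounded by $D_m^\ast$ under either fix above;
\item the mixed differences sum to at most $\mathsf{TV}_{[0,1]^d}(G)$;
\item refining the grid recovers $\int_{[0,1]^d}G$, because functions of bounded Hardy--Krause variation are differences of coordinatewise monotone functions and hence Riemann integrable.
\end{itemize}
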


Thus, to approximate $\int_{[-r,r]^d}F(x)\,\mathrm{d}x$ from point evaluations, it suffices to know a bound on $\mathsf{TV}_{[-r,r]^d}(F)$ and to use sample points with known star discrepancy. We fix the $d$-dimensional Halton sequence $\{t_q\}_{q\ge 1}$. This computable sequence is described in \cite{niederreiter1992random}. Its points are rational and computable from $q$, and there is a computable constant $C_d$ such that
$$
    D_m^\ast(\{t_1,\ldots,t_m\})
    \le
    C_d\frac{(\log m)^d}{m},
    \qquad m\ge 2;
$$
see \cite[Theorem 3.6]{niederreiter1992random}. Fix $\gamma=N+1$, and let $\mathcal B_d=\mathcal B_d^{(\gamma)}=\{e_j\}_{j\in\mathbb N}$. Choose the wavelet basis once and for all so that the supports, point values, and the required derivative and variation bounds of its elements are computable uniformly in $j$.

\begin{lemma}
\label{lem:matrix_elt_comp}
Let $T=\sum_{|\alpha|\le N}a_\alpha \partial^\alpha$ be an operator in $\mathcal D_{N,d}^{\mathrm{BV}}$, and let $\mathcal B_d=\{e_j\}_{j\in\mathbb N}$ be the basis fixed above. From $\mathsf{Eval}_{N,d}^{\mathrm{BV}}$, one can compute, uniformly in $i,j$, the matrix elements
$$
    \innerprod{T e_i}{e_j}
    \quad\text{and}\quad
    \innerprod{T e_i}{T e_j}
$$
to arbitrary accuracy. The same assertion holds in the Markov formulation, with the point evaluations and variation bounds supplied by a source code.
\end{lemma}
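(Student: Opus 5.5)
Both matrix elements reduce to integrals of products of coefficients with smooth wavelet derivatives over a known compact cube. We would evaluate these by Koksma--Hlawka quadrature (\cref{prop:koksma}), using the Halton sequence, with the error controlled through the Banach algebra property of $(\mathcal A_r,\|\cdot\|_r)$.

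\textbf{Step 1: reduction to finitely many integrals.} For basis elements $e_i,e_j$ (tensor products of computable compactly supported $C^{N+1}$ wavelets), we first compute an integer $r$ with $\operatorname{supp}e_i\cup\operatorname{supp}e_j\subset Q_r$. Then
$$
\innerprod{Te_i}{e_j}=\sum_{|\alpha|\le N}\int_{Q_r}a_\alpha\,\partial^\alpha e_i\,\overline{e_j}\,\mathrm dx .
$$
Similarly, $\innerprod{Te_i}{Te_j}$ is a finite sum over $|\alpha|,|\beta|\le N$ of
$$
\int_{Q_r}a_\alpha\overline{a_\beta}\,\partial^\alpha e_i\,\overline{\partial^\beta e_j}\,\mathrm dx .
$$
Each integrand is a product of factors evaluable pointwise at rational points. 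The coefficient values come from $\mathsf{Eval}_{N,d}^{\mathrm{BV}}$. The wavelet derivatives of order at most $N$ are computable to arbitrary accuracy because the basis was fixed with this computability property. The integrand equals the designated representatives almost everywhere, so the integrals are the correct inner products.

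\textbf{Step 2: variation bounds.} For each integrand $F$ we need a computable bound on $\mathsf{TV}_{Q_r}(F)$. We set $B=c_r(T)$, which bounds $\|a_\alpha\|_r$, and $\|\overline{a_\beta}\|_r=\|a_\beta\|_r$ as well. The wavelet factors $\partial^\alpha e_i$ are tensor products of one-dimensional functions of class at least $C^1$. Their $\|\cdot\|_r$ norms are bounded by the displayed $C^d$-type estimate, or directly via the computable variation bounds fixed with the basis. We expect this to be the delicate point. The mixed derivatives $\partial_J\partial^\alpha e_i$ involve up to order $N+d$, and these need not exist classically when $\gamma=N+1$. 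We would avoid them by using product structure: a tensor product $\prod_\ell f_\ell(x_\ell)$ has Hardy--Krause variation controlled by the one-dimensional variations $\mathsf{TV}(f_\ell)$ and sup norms. Each $f_\ell$ here is a derivative of order at most $N$ of a $C^{N+1}$ function, so it is $C^1$, and its variation is $\int|f_\ell'|$, which is computably bounded. Submultiplicativity $\|fg\|_r\le\|f\|_r\|g\|_r$ then gives a computable bound $\mathsf{TV}_{Q_r}(F)\le\|F\|_r\le B^2\,W_{i,j}$ with $W_{i,j}$ computable.

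\textbf{Step 3: quadrature with certified errors.} Given a target accuracy $2^{-p}$, we choose $m$ with
$$
(2r)^d\,\mathsf{TV}\,C_d\frac{(\log m)^d}{m}<2^{-p-2}.
$$
We evaluate $F$ at the points $s_q=2rt_q-(r,\dots,r)$. The coefficient values are exact (arithmetic model) or approximated to any precision (Markov model). The wavelet values are approximated to precision sufficient, given $B$, to keep the accumulated rounding error below $2^{-p-2}$. We sum these for each of the finitely many $(\alpha,\beta)$ terms. By \cref{prop:koksma}, the total error is below $2^{-p}$ after scaling the tolerances by the number of terms.

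Finally, every choice above depends only on $i,j,p$, on $c_r(T)$, and on the queried point values. The procedure is therefore a uniform arithmetic algorithm. In the Markov formulation, the same steps are run as a Turing machine querying the source code for approximate values and bounds.
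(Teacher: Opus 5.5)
Your proposal is correct and follows essentially the same route as the paper: you reduce each matrix element to finitely many integrals over a computable cube $Q_r$, bound the Hardy--Krause variation of each integrand using submultiplicativity in $\mathcal A_r$ together with the tensor-product structure of the wavelet derivatives, and then apply Koksma--Hlawka with Halton points and certified pointwise evaluations. The only ingredient you use without stating it is that $e_i\in\operatorname{Dom}(T)$ with $Te_i=\sum_{|\alpha|\le N}a_\alpha\partial^\alpha e_i$; the paper obtains this from \cref{prop:core}.
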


\begin{proof}
By \cref{prop:core}, each $e_i$ belongs to $\operatorname{Dom}(T)$. Hence $T e_i=\sum_{|\alpha|\le N}a_\alpha \partial^\alpha e_i$. Therefore
\begin{equation}
\label{eqn:6_first}
    \innerprod{T e_i}{e_j}
    =
    \sum_{|\alpha|\le N}
    \int_{\mathbb R^d}
    a_\alpha(x)\partial^\alpha e_i(x)\overline{e_j(x)}
    \,\mathrm{d}x,
\end{equation}
and
\begin{equation}
\label{eqn:6_second}
    \innerprod{T e_i}{T e_j}
    =
    \sum_{|\alpha|,|\beta|\le N}
    \int_{\mathbb R^d}
    a_\alpha(x)\overline{a_\beta(x)}
    \partial^\alpha e_i(x)
    \overline{\partial^\beta e_j(x)}
    \,\mathrm{d}x.
\end{equation}
It suffices to treat the terms in \eqref{eqn:6_second}; those in \eqref{eqn:6_first} are analogous. Fix $\alpha,\beta$ with $|\alpha|,|\beta|\le N$, and define
$$
    F_{\alpha,\beta,i,j}(x)
    =
    a_\alpha(x)\overline{a_\beta(x)}
    \partial^\alpha e_i(x)
    \overline{\partial^\beta e_j(x)}.
$$
The functions $e_i$ and $e_j$ are compactly supported, and their supports are computable from $i,j$. Thus we can compute a rational $r>0$ such that $\operatorname{supp} e_i\cup \operatorname{supp} e_j\subset [-r,r]^d$. Then
$$
    \int_{\mathbb R^d}F_{\alpha,\beta,i,j}(x)\,\mathrm{d}x
    =
    \int_{[-r,r]^d}F_{\alpha,\beta,i,j}(x)\,\mathrm{d}x.
$$
The function $F_{\alpha,\beta,i,j}$ belongs to $\mathcal A_r$. Indeed, $a_\alpha,a_\beta\in\mathcal A_r$ by the definition of $\mathsf{Eval}_{N,d}^{\mathrm{BV}}$, while $\partial^\alpha e_i$ and $\partial^\beta e_j$ belong to $\mathcal A_r$. Since the basis functions are tensor products of one-dimensional compactly supported wavelets, every derivative $\partial^\alpha e_i$ with $|\alpha|\le N$ is a finite tensor product of compactly supported one-dimensional $C^1$ functions. Such tensor products have bounded Hardy--Krause variation, with computable $\mathcal A_r$-bounds. By submultiplicativity of the $\mathcal A_r$-norm,
$$
\begin{aligned}
    \|F_{\alpha,\beta,i,j}\|_r
    &\le
    \|a_\alpha\|_r
    \|a_\beta\|_r
    \|\partial^\alpha e_i\|_r
    \|\partial^\beta e_j\|_r.
\end{aligned}
$$
The first two factors are bounded by $\mathsf{Eval}_{N,d}^{\mathrm{BV}}$, and the last two factors are computable from the fixed wavelet basis. Hence we can compute a bound $V_{\alpha,\beta,i,j,r}$ such that
$$
    \mathsf{TV}_{[-r,r]^d}(F_{\alpha,\beta,i,j})
    \le
    V_{\alpha,\beta,i,j,r}.
$$
Given a target error $\varepsilon>0$, choose $m$ so large that
$$
    (2r)^d
    V_{\alpha,\beta,i,j,r}
    C_d\frac{(\log m)^d}{m}
    <
    \frac{\varepsilon}{2}.
$$
Using the first $m$ Halton points, form
$$
    \frac{(2r)^d}{m}
    \sum_{q=1}^m F_{\alpha,\beta,i,j}(s_q).
$$
At each sample point $s_q$, the values of $a_\alpha(s_q)$ and $a_\beta(s_q)$ are computable from $\mathsf{Eval}_{N,d}^{\mathrm{BV}}$, while the wavelet factors are computable directly. Compute each value $F_{\alpha,\beta,i,j}(s_q)$ to sufficient accuracy so that the accumulated error in the finite average is smaller than $\varepsilon/2$. Then \cref{prop:koksma} shows that the resulting number approximates $\int_{[-r,r]^d}F_{\alpha,\beta,i,j}(x)\,\mathrm{d}x$ with error at most $\varepsilon$. Summing over the finitely many pairs $\alpha,\beta$ in \eqref{eqn:6_second} computes $\innerprod{T e_i}{T e_j}$ to arbitrary accuracy. The same argument applied to \eqref{eqn:6_first}, with $G_{\alpha,i,j}(x)=a_\alpha(x)\partial^\alpha e_i(x)\overline{e_j(x)}$, computes $\innerprod{T e_i}{e_j}$. Thus both matrix elements are computable uniformly in $i,j$.
\end{proof}

\begin{proof}[Proof of \cref{thm:main_pos}]
Let $\mathcal S=\mathcal B_d=\{e_j\}_{j\in\mathbb N}$. By \cref{prop:core}, the linear span of $\mathcal S$ is a core for each $T\in\mathcal D_{N,d}^{\mathrm{BV}}$. By \cref{lem:matrix_elt_comp}, the matrix elements $\innerprod{T e_i}{e_j}$ and $\innerprod{T e_i}{T e_j}$ are computable from $\mathsf{Eval}_{N,d}^{\mathrm{BV}}$, uniformly in $i,j$. Fix $z\in\mathbb C\setminus\mathbb R$, and set $S=T-zI$. Since $T$ is self-adjoint, $\spec(T)\subset\mathbb R$. Hence $S$ is invertible and $\|S^{-1}\|=\|(T-zI)^{-1}\|\le|\operatorname{Im}z|^{-1}$. The finite linear span of $\mathcal S$ is also a core for $S$, since the graph norms of $T$ and $T-zI$ are equivalent.

The matrix elements required by \cref{lem:graph_matrix_elt_comp} are computable. Indeed, with our convention that the inner product is linear in the first argument, we have
$$
\begin{aligned}
    \innerprod{S e_i}{e_j}
    &=
    \innerprod{T e_i}{e_j}
    -
    z\innerprod{e_i}{e_j}, \\
    \innerprod{S e_i}{S e_j}
    &=
    \innerprod{T e_i}{T e_j}
    -
    \overline z\,\innerprod{T e_i}{e_j}
    -
    z\,\innerprod{e_i}{T e_j}
    +
    |z|^2\innerprod{e_i}{e_j}.
\end{aligned}
$$
Since $\{e_i\}_{i\in\mathbb N}$ is an orthonormal basis, $\innerprod{e_i}{e_j}=\delta_{ij}$. Moreover, $\innerprod{e_i}{T e_j}=\overline{\innerprod{T e_j}{e_i}}$, which is computable by \cref{lem:matrix_elt_comp}. Therefore the hypotheses of \cref{lem:graph_matrix_elt_comp} are satisfied with $M=|\operatorname{Im}z|^{-1}$. Given a required accuracy $\eta>0$, take $\rho=\eta|\operatorname{Im}z|$. Then \cref{lem:graph_matrix_elt_comp} gives an approximation to $(T-zI)^{-1}e_\ell$ with error at most $\eta$. Taking the inner product with $e_\ell$, and using $\|e_\ell\|=1$, gives an approximation to the resolvent matrix element $\innerprod{(T-zI)^{-1}e_\ell}{e_\ell}$ with error at most $\eta$.

Thus the hypotheses of \cref{prop:resolv_comput} are satisfied for $(\mathcal D_{N,d}^{\mathrm{BV}},\mathsf{Eval}_{N,d}^{\mathrm{BV}})$. Hence, $(\mathcal D_{N,d}^{\mathrm{BV}},\mathsf{Eval}_{N,d}^{\mathrm{BV}},\specpp)\in \Sigma_2^A$, $(\mathcal D_{N,d}^{\mathrm{BV}},\mathsf{Eval}_{N,d}^{\mathrm{BV}},\specac)\in \Sigma_2^A$, and $(\mathcal D_{N,d}^{\mathrm{BV}},\mathsf{Eval}_{N,d}^{\mathrm{BV}},\specsc)\in \Sigma_3^A$. If the same data are supplied by source code, Turing machines can simulate the same algorithms, since every step uses only finite rational operations together with queries to the source code for point evaluations and variation bounds. This gives the Markov version, with $\Sigma_2^A$ replaced by $\Sigma_2^M$, and $\Sigma_3^A$ replaced by $\Sigma_3^M$.
\end{proof}

\section{Consequences for Certification}
\label{sec:prov}

The Markov lower bounds also obstruct formal certification. By a theory we mean either a first-order axiom system in a fixed language containing the language of arithmetic, or a first-order theory, such as $\mathsf{ZFC}$, equipped with a fixed interpretation of arithmetic. Denote its axiom set by $\operatorname{Ax}(\mathfrak T)$.

We call $\mathfrak T$ \emph{effectively axiomatized} if there is an algorithm which, on input $n$, outputs the code of an axiom $\phi_n$ of $\mathfrak T$, and if the sequence $\{\phi_n\}_{n\in\mathbb N}$ enumerates exactly the axioms: $\operatorname{Ax}(\mathfrak T)=\{\phi_n:n\in\mathbb N\}$. We say that $\mathfrak T$ is \emph{arithmetically sound} if, under the fixed interpretation of arithmetic, it proves the axioms of Peano arithmetic and every arithmetical sentence provable from $\mathfrak T$ is true in the standard model of $\mathbb N$.

Suppose that the operators $A_e$ are effectively described, uniformly in $e\in\mathbb N$, and that a fixed tower of algorithms, possibly of height greater than one, computes $\Xi(A_e)$. The closed set $\Xi(A_e)$ is determined by the countable family of inclusion statements
$$
\Xi(A_e)\cap U\neq\emptyset,
\qquad
\Xi(A_e)\cap \overline U=\emptyset,
$$
where $U$ ranges over balls with rational radius and center in $\mathbb Q+i\mathbb Q$. Because $A_e$ and the tower are effectively specified, each inclusion statement has a uniform arithmetical formulation. This is the arithmetization used below. For fixed $e$ and $U$, one may therefore ask whether the corresponding sentence is provable, refutable, or independent over a theory such as $\mathsf{ZFC}$.

\subsection{Establishing Non-Provability from Non-Computability}

The next proposition converts computability lower bounds into non-provability.

\begin{proposition}
\label{prop:non_prov}
Let $S\subseteq \mathbb N$. Suppose that there is a computable map $e\mapsto s(e)$ into source codes for a Markov spectral problem $(\Omega,\Lambda,\Xi)$, and let $A_e$ be the operator described by $s(e)$. Suppose that, for some fixed open ball $U\subseteq\mathbb C$ with rational center and rational radius, we have
$$
    e\in S
    \implies
    \Xi(A_e)\cap U\ne\emptyset,
\qquad
    e\notin S
    \implies
    \Xi(A_e)\cap\overline U=\emptyset.
$$
Let $\mathfrak T$ be an arithmetically sound, effectively axiomatized theory, and assume that the assertions above are represented, uniformly in $e$, by arithmetical sentences about the source code $s(e)$.

If $S$ is not computably enumerable, then there exists $e \in S$ such that the true statement $\Xi(A_e) \cap U \ne \emptyset$ is not provable in $\mathfrak T$. If $\N \setminus S$ is not computably enumerable, then there exists $e \notin S$ such that the true statement $\Xi(A_e) \cap \overline U = \emptyset$ is not provable in $\mathfrak T$.
\end{proposition}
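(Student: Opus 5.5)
We argue by contraposition, using the standard fact that the set of theorems of an effectively axiomatized theory is computably enumerable. Let $\varphi_e$ denote the arithmetical sentence representing $\Xi(A_e)\cap U\neq\emptyset$, and $\chi_e$ the sentence representing $\Xi(A_e)\cap\overline U=\emptyset$. By hypothesis, the maps $e\mapsto\ulcorner\varphi_e\urcorner$ and $e\mapsto\ulcorner\chi_e\urcorner$ are computable. The proof has three steps.

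First, since $\mathfrak T$ is effectively axiomatized, the set of $\mathfrak T$-provable sentences is computably enumerable. We enumerate all finite sequences of formulas and check mechanically whether each is a derivation, with every axiom used certified by the enumeration $\{\phi_n\}$. Composing with the computable map $e\mapsto\ulcorner\varphi_e\urcorner$, the set $P=\{e:\mathfrak T\vdash\varphi_e\}$ is computably enumerable. Similarly, $P'=\{e:\mathfrak T\vdash\chi_e\}$ is computably enumerable.

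Second, we use soundness to place these sets correctly. If $e\in P$, then $\varphi_e$ is a provable arithmetical sentence, hence true, so $\Xi(A_e)\cap U\ne\emptyset$. This forces $e\in S$: otherwise $\Xi(A_e)\cap\overline U=\emptyset$, which contradicts $U\subseteq\overline U$. Thus $P\subseteq S$. Likewise, if $e\in P'$ then $\chi_e$ is true, so $\Xi(A_e)\cap\overline U=\emptyset$. This excludes $e\in S$, since otherwise $\Xi(A_e)\cap U\ne\emptyset$, so $P'\subseteq\mathbb N\setminus S$. Here we use that the representation is faithful, meaning each sentence is true in $\mathbb N$ exactly when the spectral assertion holds.

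Third, we conclude. If every $e\in S$ had $\varphi_e$ provable, then $S=P$ would be computably enumerable, a contradiction. Hence some $e\in S$ has $\varphi_e$ true (by the dichotomy) but not provable. The argument for $\mathbb N\setminus S$ with $P'$ is symmetric.

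The only delicate point is the faithfulness of the arithmetization: we need $\varphi_e,\chi_e$ to be genuinely arithmetical sentences whose truth in the standard model coincides with the spectral statements, uniformly in $e$. Since the statement assumes this representation, we invoke it directly. We note only that soundness must be applied in the arithmetical form, which is why arithmetical soundness rather than mere consistency is required. The rest is the routine observation that provability is $\Sigma_1^0$.
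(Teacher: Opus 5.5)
Your proof is correct and follows essentially the same route as the paper's. The paper likewise enumerates $\mathfrak T$-proofs, dovetailed with the indices, to list exactly those $e$ whose inclusion sentence is provable. It uses arithmetical soundness to show every listed index lies in $S$, and concludes that provability for every $e\in S$ would make $S$ computably enumerable; the exclusion statements and $\mathbb N\setminus S$ are handled symmetrically.
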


\begin{proof}
We prove the first claim. Assume, to the contrary, that $\Xi(A_e)\cap U\ne\emptyset$ is provable in $\mathfrak T$ for every $e\in S$. Since $\mathfrak T$ is effectively axiomatized, its proofs can be enumerated, and the sentence expressing $\Xi(A_e)\cap U\ne\emptyset$ can be constructed effectively from $e$. Dovetail the proof enumeration with the indices: at stage $s$, inspect the first $s$ candidate proofs and the first $s$ indices, and list every $e\le s$ for which one of these proofs has the corresponding inclusion sentence as its conclusion. Arithmetical soundness prevents any false sentence from entering the list. If $e\notin S$, then $\Xi(A_e)\cap\overline U=\emptyset$, so the corresponding inclusion sentence is false. Every listed index therefore belongs to $S$, while the contrary assumption ensures that every element of $S$ eventually enters the list. This would make $S$ computably enumerable, a contradiction. The second claim follows by the same argument applied to the exclusion statements $\Xi(A_e)\cap\overline U=\emptyset$ and the set $\mathbb N\setminus S$.
\end{proof}

\subsection{A Concrete Result}

We apply \cref{prop:non_prov} to the one-dimensional Schr\"odinger families constructed above.

\begin{theorem}
\label{thm:main_non_prov}
Let $\mathfrak T$ be an arithmetically sound, effectively axiomatized theory. Let $I=(\alpha,\beta)\subseteq\mathbb R$ be a non-empty open interval with $\alpha,\beta\in\mathbb Q$, and let $\diamond\in\{\mathrm{pp},\mathrm{ac},\mathrm{sc}\}$. Then there exist self-adjoint one-dimensional Schr\"odinger operators $ H_\diamond^+,\ H_\diamond^- \in \mathcal O_M^\infty $ with real-valued $C^\infty$ potentials, given by source codes in $\mathsf{Code}_M^\infty$, such that the statement $ \spec_\diamond(H_\diamond^+)\cap I\ne\emptyset $ is true but not provable in $\mathfrak T$, and the statement $ \spec_\diamond(H_\diamond^-)\cap\overline I=\emptyset $ is true but not provable in $\mathfrak T$.
\end{theorem}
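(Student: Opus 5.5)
The plan is to apply \cref{prop:non_prov} to each of the three dichotomy families already constructed, after first transporting the fixed test balls to the arbitrary interval $I$ by an affine change of variables. For $\diamond=\mathrm{pp}$ we use the family $H_e^G$ with $S=\mathbb N\setminus\Tot$ and ball $D_{1/8}(-3/4)$ (\cref{prop:pp_dichotomy}). For $\diamond=\mathrm{ac}$ we use $H_e^B$ with $S=\mathbb N\setminus\Tot$ and $D_{1/4}(1)$ (\cref{prop:ae_dichot}). For $\diamond=\mathrm{sc}$ we use $H_{A^{(e)}}$ with $S=\Cof$ and $D_{1/16}(3/2)$ (\cref{prop:dichotomy2}, \cref{prop:matrix_reduction}). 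Since $\Tot$ is $\Pi_2^0$-complete, neither $\Tot$ nor its complement is computably enumerable (both lie outside $\Sigma_1^0\cup\Pi_1^0$). Likewise $\Cof$ is $\Sigma_3^0$-complete, so neither $\Cof$ nor $\mathbb N\setminus\Cof$ is c.e. Hence both conclusions of \cref{prop:non_prov} apply in each case, giving $H_\diamond^+$ with $e\in S$ and $H_\diamond^-$ with $e\notin S$.

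\textbf{Rescaling to $I$.} Let $c=(\alpha+\beta)/2$ and $\ell=(\beta-\alpha)/2$, and let $(x_0,r)$ be the centre and radius of the relevant ball. With $\lambda=\ell/r\in\mathbb Q_{>0}$, the map $E\mapsto \lambda(E-x_0)+c$ sends $B\cap\mathbb R$ onto $I$. We realize it at the operator level by dilation and shift. If $H=-\Delta+V$, then the unitary dilation $(D f)(x)=\mu^{1/2}f(\mu x)$ with $\mu^2=\lambda$ gives $DHD^{-1}=\lambda^{-1}\cdot(\ldots)$. Concretely, $\lambda H+(c-\lambda x_0)$ is unitarily equivalent to $-\Delta+\widetilde V$ with
$$\widetilde V(x)=\lambda V(\sqrt{\lambda}\,x)+c-\lambda x_0 .$$
Unitary equivalence preserves each spectral subspace, and the affine map on the spectral parameter transports $\spec_\diamond$ accordingly. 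Since $\spec_\diamond\subset\mathbb R$, the condition $\spec_\diamond\cap U\ne\emptyset$ is equivalent to $\spec_\diamond\cap I\ne\emptyset$, where $U$ is the image disc, centred at $c$ with radius $\ell$, and similarly for the closures.

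There is one wrinkle: $\sqrt\lambda$ may be irrational. It is still a computable real, and $\widetilde V$ is then computable with all derivatives. Derivative bounds are obtained from those of $V$ on $[-\lceil\sqrt\lambda R\rceil,\lceil\sqrt\lambda R\rceil]$ multiplied by rational upper bounds on $\lambda\,\lambda^{k/2}$. Essential self-adjointness on $C_c^\infty$ is preserved because the dilation maps $C_c^\infty$ to itself. So $e\mapsto\widetilde s(e)$ is a computable map into $\mathsf{Code}_M^\infty$.

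\textbf{Arithmetization.} The remaining hypothesis of \cref{prop:non_prov} is that the statements ``$\spec_\diamond(H_e)\cap U\ne\emptyset$'' are uniformly expressible by arithmetical sentences. I expect this to be the main point needing care, although by the discussion preceding \cref{prop:non_prov} it follows from the upper-bound towers. \cref{thm:main_pos}, in its Markov form, gives a computable tower $\Gamma_{n_k,\dots,n_1}(s)$ with $k\le 3$ converging to $\spec_\diamond$ in $d_{\mathrm{AW}}$. Then $\spec_\diamond\cap U\ne\emptyset$ is equivalent to $\dist(c,\spec_\diamond)<\ell$, and membership of a limit's distance function in a rational interval is expressible via iterated $\forall\exists$ quantifiers over the computable finite outputs. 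This yields arithmetical sentences uniformly in $e$.

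Alternatively, and more simply, one can bypass \cref{prop:non_prov}'s dependence on the true spectral sentence. Take as the sentence any arithmetical formula that defines the spectral statement correctly in the standard model; soundness of $\mathfrak T$ is only used for such sentences. With the arithmetization in hand, the enumeration argument of \cref{prop:non_prov} applies verbatim for each $\diamond$ and each sign, completing the proof.
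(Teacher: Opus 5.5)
Your proposal is correct and follows essentially the same route as the paper. Both use the affine normalization $-\frac{\mathrm d^2}{\mathrm dx^2}+aV(\sqrt a\,x)+b$, which is unitarily equivalent to $aH+b$, to move the fixed test balls of the three dichotomy families ($S=\mathbb N\setminus\Tot$ for pp and ac, $S=\Cof$ for sc) onto $I$, note that neither $S$ nor its complement is computably enumerable, and then apply \cref{prop:non_prov}; your remarks on the irrational dilation factor and on arithmetization via the Markov towers match how the paper handles these in the normalization step and in the discussion preceding \cref{prop:non_prov}.
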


\begin{proof}
The dichotomies in \cref{sec:pp,sec:ac,sec:sc} use fixed test regions. An affine normalization moves each region to an arbitrary rational interval. Let $H=-\frac{\mathrm d^2}{\mathrm dx^2}+V(x)$ be a one-dimensional Schr\"odinger operator. For $a>0$ and $b\in\mathbb R$, set $q=\sqrt a$, and define
$$
    H^{(a,b)}
    =
    -\frac{\mathrm d^2}{\mathrm dx^2}
    +
    aV(qx)
    +
    b.
$$
Let $U_q:L^2(\mathbb R)\to L^2(\mathbb R)$ be the unitary map $ (U_q f)(x)=q^{-1/2}f(x/q). $ Then $H^{(a,b)}=a\,U_q^{-1}HU_q+b$. Hence, for each spectral type $\diamond\in\{\mathrm{pp},\mathrm{ac},\mathrm{sc}\}$, $ \spec_\diamond(H^{(a,b)}) = a\,\spec_\diamond(H)+b. $ Unitary equivalence preserves spectral type, and the affine map $E\mapsto aE+b$ sends the pure point, absolutely continuous, and singular continuous parts of the spectral measure to the corresponding parts for $H^{(a,b)}$. Moreover, if $V$ is given by a smooth source code, then so is $x\mapsto aV(qx)+b$, uniformly in the source code for $V$ and in computable $a>0$ and $b$. Its derivatives satisfy
$$
    \frac{\mathrm d^k}{\mathrm dx^k}
    \bigl(aV(qx)+b\bigr)
    =
    a q^k V^{(k)}(qx)
    +
    b\1_{k=0}.
$$
Thus, derivative values can be obtained by querying the original source code for $V^{(k)}$ at $qx$. If the source code is formulated using rational inputs, the value at the computable point $qx$ is obtained by rational approximation, using the local derivative bounds supplied by the source code. Similarly, if the original code gives a bound $C_{k,R'}$ for $V^{(k)}$ on $[-R',R']$, and $R'$ is any rational number with $R'>qR$, then
$$
    \sup_{|x|\le R}
    \left|
        \frac{\mathrm d^k}{\mathrm dx^k}
        \bigl(aV(qx)+b\bigr)
    \right|
    \le
    a q^k C_{k,R'}+|b|\1_{k=0}.
$$
This gives the required transformed smooth source code.

Now fix $\diamond\in\{\mathrm{pp},\mathrm{ac},\mathrm{sc}\}$. The lower-bound construction for this spectral type gives a computable Markov family $\{H_e^\diamond\}_{e\in\mathbb N}\subseteq\mathcal O_M^\infty$, a set $S_\diamond\subseteq\mathbb N$, and a fixed open ball $U_\diamond=D_{r_\diamond}(c_\diamond)\subseteq\mathbb C$ with $r_\diamond,c_\diamond\in\mathbb Q$, such that
$$
    e\in S_\diamond
    \implies
    \spec_\diamond(H_e^\diamond)\cap U_\diamond\ne\emptyset,
\qquad
    e\notin S_\diamond
    \implies
    \spec_\diamond(H_e^\diamond)\cap\overline{U_\diamond}=\emptyset.
$$
Here $S_{\mathrm{pp}}=S_{\mathrm{ac}}=\mathbb N\setminus\Tot$ and $S_{\mathrm{sc}}=\Cof$. For $\diamond=\mathrm{pp}$, this is the dichotomy established in \cref{sec:pp}; for $\diamond=\mathrm{ac}$, this is the dichotomy established in \cref{sec:ac}; and for $\diamond=\mathrm{sc}$, this is the dichotomy established in \cref{subsec:dichot}.

Let
$$
    U_I
    =
    D_{(\beta-\alpha)/2}\left(\frac{\alpha+\beta}{2}\right),\qquad
    a_\diamond
    =
    \frac{(\beta-\alpha)/2}{r_\diamond}
    \in\mathbb{Q}_{>0},
    \qquad
    b_\diamond
    =
    \frac{\alpha+\beta}{2}
    -
    a_\diamond c_\diamond\in\mathbb{Q}.
$$
Then $a_\diamond U_\diamond+b_\diamond=U_I$. Define $\widetilde H_e^\diamond=(H_e^\diamond)^{(a_\diamond,b_\diamond)}$. By the preceding normalization, the map $e\mapsto \widetilde H_e^\diamond$ is again a computable Markov family in $\mathcal O_M^\infty$. Moreover,
$$
    e\in S_\diamond
    \implies
    \spec_\diamond(\widetilde H_e^\diamond)\cap U_I\ne\emptyset,
\qquad
    e\notin S_\diamond
    \implies
    \spec_\diamond(\widetilde H_e^\diamond)\cap\overline{U_I}=\emptyset.
$$
Since all operators involved are self-adjoint, their spectral-type sets are subsets of $\mathbb R$. Therefore $\spec_\diamond(\widetilde H_e^\diamond)\cap U_I\ne\emptyset$ is equivalent to $\spec_\diamond(\widetilde H_e^\diamond)\cap I\ne\emptyset$, and $\spec_\diamond(\widetilde H_e^\diamond)\cap\overline{U_I}=\emptyset$ is equivalent to $\spec_\diamond(\widetilde H_e^\diamond)\cap\overline I=\emptyset$.

Neither $S_\diamond$ nor $\mathbb N\setminus S_\diamond$ is computably enumerable. For $\diamond=\mathrm{pp},\mathrm{ac}$, this follows from the fact that $\Tot\notin\Delta_2^0$. Indeed, if either $\Tot$ or its complement were computably enumerable, then $\Tot$ would be $\Delta_2^0$. For $\diamond=\mathrm{sc}$, this follows from the fact that $\Cof\notin\Delta_3^0$, since computably enumerable and co-computably enumerable sets are both $\Delta_3^0$.

Applying \cref{prop:non_prov} to the transformed family gives an index $e^+\in S_\diamond$ such that the true statement $\spec_\diamond(\widetilde H_{e^+}^\diamond)\cap I\ne\emptyset$ is not provable in $\mathfrak T$, and an index $e^-\notin S_\diamond$ such that the true statement $\spec_\diamond(\widetilde H_{e^-}^\diamond)\cap\overline I=\emptyset$ is not provable in $\mathfrak T$. Taking $H_\diamond^+=\widetilde H_{e^+}^\diamond$ and $H_\diamond^-=\widetilde H_{e^-}^\diamond$ proves the theorem.
\end{proof}

Taking $\mathfrak T=\mathsf{ZFC}$, with arithmetic interpreted in the usual set-theoretic way, we obtain the following conditional statement: if $\mathsf{ZFC}$ is arithmetically sound, then the spectral statements above are not provable in $\mathsf{ZFC}$. Note that arithmetical soundness implies consistency, but is strictly stronger than consistency, since consistency alone does not prevent a theory from proving false arithmetical statements.

\subsection{A Hierarchy of Non-Provability}

The proof of \cref{thm:main_non_prov} uses only that neither the relevant set $S$ nor its complement is computably enumerable. The lower bounds are stronger: the pure point and absolutely continuous problems lie at the $\Sigma_2^M$ level, whereas the singular continuous problem lies at the $\Sigma_3^M$ level. They yield a corresponding hierarchy of non-provability.

We first specify the lower-level arithmetical information available to the theory. Suppose
$$
    S=\{e\in\mathbb N:(\exists m)\,\psi(e,m)\},
$$
where $\psi$ is a $\Pi_{n-1}^0$ formula. Define the instance-truth oracle for this particular predicate by
$$
    \mathsf{Truth}_\psi(e,m)
    =
    \begin{cases}
        1, & \mathbb N\models\psi(e,m),\\
        0, & \mathbb N\not\models\psi(e,m).
    \end{cases}
$$
Thus $\mathsf{Truth}_\psi$ decides every instance of the fixed predicate $\psi$; it is not being identified with an oracle for all true $\Pi_{n-1}^0$ sentences. Denote the corresponding oracle theory by $\mathfrak T+\mathsf{Truth}_\psi$: proofs are ordinary proofs from $\mathfrak T$, but with access to $\mathsf{Truth}_\psi$. Equivalently, the theorems of this theory are computably enumerable relative to $\mathsf{Truth}_\psi$. We call such an oracle theory arithmetically sound if every arithmetical sentence in its theorem set is true in the standard model of $\mathbb N$.

\begin{theorem}
\label{thm:non_prov_hier}
Let $\mathfrak T$ be an effectively axiomatized theory extending Peano arithmetic. Let $I=(\alpha,\beta)\subseteq\mathbb R$ be a non-empty open interval with $\alpha,\beta\in\mathbb Q$.

For $\diamond\in\{\mathrm{pp},\mathrm{ac}\}$, let $S_2=\mathbb N\setminus\Tot$. Choose a representation
$$
    e\in S_2
    \quad\Longleftrightarrow\quad
    (\exists m)\,\psi_2(e,m),
$$
with $\psi_2\in\Pi_1^0$. If $\mathfrak T+\mathsf{Truth}_{\psi_2}$ is arithmetically sound, then, for each $\diamond\in\{\mathrm{pp},\mathrm{ac}\}$, there exists a self-adjoint one-dimensional Schr\"odinger operator $H_\diamond\in\mathcal O_M^\infty$, with real-valued $C^\infty$ potential and given by a source code in $\mathsf{Code}_M^\infty$, such that the statement $\spec_\diamond(H_\diamond)\cap \overline I=\emptyset$ is true but not provable in $\mathfrak T+\mathsf{Truth}_{\psi_2}$.

For $\diamond=\mathrm{sc}$, let $S_3=\Cof$. Choose a representation
$$
    e\in S_3
    \quad\Longleftrightarrow\quad
    (\exists m)\,\psi_3(e,m),
$$
with $\psi_3\in\Pi_2^0$. If $\mathfrak T+\mathsf{Truth}_{\psi_3}$ is arithmetically sound, then there exists a self-adjoint one-dimensional Schr\"odinger operator $H_{\mathrm{sc}}\in\mathcal O_M^\infty$, with real-valued $C^\infty$ potential and given by a source code in $\mathsf{Code}_M^\infty$, such that the statement $\specsc(H_{\mathrm{sc}})\cap \overline I=\emptyset$ is true but not provable in $\mathfrak T+\mathsf{Truth}_{\psi_3}$.
\end{theorem}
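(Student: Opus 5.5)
The plan is to relativize the argument of \cref{prop:non_prov}, using the dichotomies from \cref{sec:pp,sec:ac,sec:sc} after the affine normalization from the proof of \cref{thm:main_non_prov}. That normalization gives computable Markov families $\widetilde H_e^\diamond$ and the interval $I$ with
$$
e\in S\implies \spec_\diamond(\widetilde H_e^\diamond)\cap I\ne\emptyset,\qquad
e\notin S\implies \spec_\diamond(\widetilde H_e^\diamond)\cap\overline I=\emptyset,
$$
where $S=S_2=\mathbb N\setminus\Tot$ for $\diamond\in\{\mathrm{pp},\mathrm{ac}\}$ and $S=S_3=\Cof$ for $\diamond=\mathrm{sc}$. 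The exclusion statement then holds exactly for $e\in\mathbb N\setminus S$.

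Suppose, towards a contradiction, that for every $e\notin S$ the exclusion sentence is provable in $\mathfrak T+\mathsf{Truth}_\psi$. The theorems of this oracle theory are computably enumerable relative to $\mathsf{Truth}_\psi$, and the exclusion sentence for $e$ is constructed effectively from $e$. Dovetailing as in \cref{prop:non_prov}, we list every $e$ whose exclusion sentence appears as a theorem. Arithmetic soundness guarantees that no $e\in S$ is listed, since for such $e$ the exclusion sentence is false. Hence $\mathbb N\setminus S$ would be c.e.\ relative to $\mathsf{Truth}_\psi$.

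The remaining step is to show that $\mathbb N\setminus S$ is not c.e.\ in $\mathsf{Truth}_\psi$. The oracle $\mathsf{Truth}_\psi$ is the characteristic function of a $\Pi_{n-1}^0$ relation, so it is computable from $\emptyset^{(n-1)}$. Consequently every set c.e.\ in $\mathsf{Truth}_\psi$ is $\Sigma_n^0$ by Post's theorem.
\begin{itemize}
\item For $n=2$, the set $\mathbb N\setminus S_2=\Tot$ is $\Pi_2^0$-complete, so it is not $\Sigma_2^0$.
\item For $n=3$, the set $\mathbb N\setminus S_3=\mathbb N\setminus\Cof$ is $\Pi_3^0$-complete by \cref{prop:complete_class}, so it is not $\Sigma_3^0$.
\end{itemize}
In either case the contradiction shows that some $e^-\notin S$ has a true but unprovable exclusion statement. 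We then take $H_\diamond=\widetilde H_{e^-}^\diamond$.

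The main obstacle is making the arithmetization and oracle formalism precise. Two points need care. The exclusion statements must be arithmetical sentences that the theory can be asked to prove. The enumeration of oracle proofs must also be genuinely relative to $\mathsf{Truth}_\psi$, with only finitely many oracle queries per proof, so that Post's theorem applies. I would handle this by treating the oracle as adjoining the true atomic facts $\psi(e,m)$ or $\neg\psi(e,m)$ as axioms. Under that convention the theorem set is $\Sigma_1$ in the oracle, and the $\Sigma_n^0$ bound follows directly.
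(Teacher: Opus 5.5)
Your proof is correct, and it shares the paper's skeleton. Both use the affine normalization to $I$, the contrary assumption that every true exclusion sentence is provable, and enumeration of the oracle theory's theorems relative to $\mathsf{Truth}_\psi$. Both also use arithmetical soundness to keep every $e\in S$ out of that enumeration, and the bound $\mathsf{Truth}_\psi\le_T\emptyset^{(n-1)}$.

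The difference is in how the contradiction is extracted. The paper runs two searches in parallel. One queries $\mathsf{Truth}_\psi$ on $\psi(e,1),\psi(e,2),\dots$ to confirm $e\in S$. The other searches for a proof of the exclusion sentence to confirm $e\notin S$. Together these make $S$ decidable relative to the oracle, hence $\Delta_n^0$, contradicting $S\notin\Delta_n^0$.

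You drop the first search. You only conclude that $\mathbb N\setminus S$ is c.e.\ in the oracle, hence $\Sigma_n^0$ by Post's theorem. This contradicts the $\Pi_n^0$-completeness of $\Tot$ and of $\mathbb N\setminus\Cof$.

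Your route is the direct relativization of the second clause of \cref{prop:non_prov}. It uses nothing about $\psi$ beyond $\mathsf{Truth}_\psi\le_T\emptyset^{(n-1)}$, so it would apply verbatim to any oracle at that level. The paper's route uses the representation $S=\{e:(\exists m)\,\psi(e,m)\}$ essentially, which is why the theorem is phrased in terms of $\psi$. In exchange, it needs only the weaker-looking fact $S\notin\Delta_n^0$. The two facts are equivalent here, because the representation already gives $S\in\Sigma_n^0$.
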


This gives a predicate-relative hierarchy of non-provability. For pure point and absolutely continuous spectrum, the conclusion persists after adjoining the truth values of all instances of the fixed $\Pi_1^0$ predicate $\psi_2$ used in the theorem. For singular continuous spectrum, it persists after adjoining the truth values of all instances of the fixed $\Pi_2^0$ predicate $\psi_3$. These are predicate-specific instance-truth oracles; the theorem does not assert that either oracle decides every true sentence at the corresponding level of the arithmetical hierarchy.

\begin{proof}[Proof of \cref{thm:non_prov_hier}]
We treat the three spectral types uniformly. Let $S$ be either $\mathbb N\setminus\Tot$ or $\Cof$, and suppose $S=\{e\in\mathbb N:(\exists m)\,\psi(e,m)\}$, where $\psi\in\Pi_{n-1}^0$. In the pure point and absolutely continuous cases $n=2$, while in the singular continuous case $n=3$.

The relevant lower-bound construction supplies a computable Markov family of Schr\"odinger operators $H_e=-\frac{\mathrm d^2}{\mathrm dx^2}+V_e$ and a fixed rational open interval $J=(\gamma,\delta)\subseteq\mathbb R$ such that
$$
    e\in S
    \implies
    \spec_\diamond(H_e)\cap J\ne\emptyset,
\qquad
    e\notin S
    \implies
    \spec_\diamond(H_e)\cap \overline J=\emptyset.
$$
For $\diamond=\mathrm{pp}$ and $\diamond=\mathrm{ac}$, this follows from the dichotomies established in \cref{sec:pp} and \cref{sec:ac}. For $\diamond=\mathrm{sc}$, this follows from \cref{subsec:dichot}, taking $J=D_{1/16}(3/2)\cap\mathbb R=(23/16,25/16)$.

Choose the affine parameters
$$
    a=\frac{\beta-\alpha}{\delta-\gamma}\in\mathbb{Q}_{>0},
    \qquad
    b=\alpha-a\gamma\in\mathbb{Q}.
$$
Then $aJ+b=I$ and $a\overline J+b=\overline I$. Let $q=\sqrt a$, and define
$$
    \widetilde H_e
    =
    -\frac{\mathrm d^2}{\mathrm dx^2}
    +
    a V_e(qx)
    +
    b.
$$
As in the proof of \cref{thm:main_non_prov}, this transformation is effective on smooth source codes, uniformly in $e$, and $\spec_\diamond(\widetilde H_e)=a\,\spec_\diamond(H_e)+b$. Hence
$$
    e\in S
    \implies
    \spec_\diamond(\widetilde H_e)\cap I\ne\emptyset,
\qquad
    e\notin S
    \implies
    \spec_\diamond(\widetilde H_e)\cap \overline I=\emptyset.
$$
Assume, to the contrary, that every true exclusion statement $\spec_\diamond(\widetilde H_e)\cap\overline I=\emptyset$ with $e\notin S$ is provable in $\mathfrak T+\mathsf{Truth}_\psi$. Then $S$ would be computable relative to $\mathsf{Truth}_\psi$.

Given $e$, run two searches in parallel. The first search asks $\mathsf{Truth}_\psi$ whether $\psi(e,m)$ holds, for $m=1,2,\ldots$. If the oracle ever answers yes, then $e\in S$. The second search enumerates proofs in the theory $\mathfrak T+\mathsf{Truth}_\psi$, looking for a proof of $\spec_\diamond(\widetilde H_e)\cap \overline I=\emptyset$. By the contradiction assumption, this second search halts whenever $e\notin S$. By arithmetical soundness, the second search cannot halt when $e\in S$, because in that case $\spec_\diamond(\widetilde H_e)\cap I\ne\emptyset$, and $I\subseteq \overline I$. Thus, the two parallel searches decide membership in $S$ relative to the oracle $\mathsf{Truth}_\psi$.

Since $\psi\in\Pi_{n-1}^0$, the oracle $\mathsf{Truth}_\psi$ is computable from $0^{(n-1)}$. Every set computable relative to this oracle therefore lies in $\Delta_n^0$, so the preceding decision procedure would imply $S\in\Delta_n^0$. This contradicts \cref{prop:complete_class}: $\mathbb N\setminus\Tot$ is $\Sigma_2^0$-complete and not in $\Delta_2^0$, whereas $\Cof$ is $\Sigma_3^0$-complete and not in $\Delta_3^0$. Hence some true exclusion statement of the asserted form is not provable in the corresponding oracle theory.
\end{proof}

\section*{Statements and Declarations}

\paragraph{Data availability.} No datasets were generated or analyzed during the current study.

\paragraph{Competing interests.} The authors declare that they have no competing interests.

\bibliographystyle{abbrv}
\bibliography{spectral_type.bib}
\end{document}